\title{The BGMN conjecture via stable pairs}
\author{Jenia Tevelev}
\author{Sebasti\'an Torres}
\dedicatory{In memory of M. S. Narasimhan}
\newcommand{\ST}[1]{{\color{blue} #1}}
\theoremstyle{plain}
\newtheorem{theorem}{Theorem}[section]
\newtheorem{proposition}[theorem]{Proposition}
\newtheorem{lemma}[theorem]{Lemma}
\newtheorem{corollary}[theorem]{Corollary}
\theoremstyle{definition}
\newtheorem{definition}[theorem]{Definition}
\newtheorem{notation}[theorem]{Notation}
\newtheorem{remark}[theorem]{Remark}
\theoremstyle{remark}
\newtheorem{claim}[theorem]{Claim}
\newtheorem{step}{Step}
\numberwithin{equation}{section}
\DeclareMathOperator{\spec}{Spec}
\DeclareMathOperator{\git}{/\!\!/}
\DeclareMathOperator{\Hom}{Hom}
\DeclareMathOperator{\Ext}{Ext}
\DeclareMathOperator{\Spec}{Spec}
\DeclareMathOperator{\Pic}{Pic}
\DeclareMathOperator{\codim}{codim}
\DeclareMathOperator{\Id}{Id}
\DeclareMathOperator{\weight}{weight}
\DeclareMathOperator{\weights}{weights}
\DeclareMathOperator{\gr}{gr}
\DeclareMathOperator{\rank}{rk}
\DeclareMathOperator{\Quot}{Quot}
\DeclareMathOperator{\sHom}{\mathscr{H}\text{\kern -3pt {\calligra\large om}}\,}
\DeclareMathOperator{\sExt}{\mathscr{E}\text{\kern -3pt {\calligra\large xt}}\,}
\DeclareMathOperator{\Sym}{Sym}
\DeclareMathOperator{\Bl}{Bl}
\DeclareMathOperator{\pt}{pt}
\DeclareMathOperator{\Res}{Res}
\DeclareMathOperator{\ch}{ch}
\DeclareMathOperator{\td}{td}
\DeclareMathOperator{\sign}{sgn}
\DeclareMathOperator{\supp}{supp}
\DeclareMathOperator{\mult}{mult}
\DeclareMathOperator{\End}{End}
\DeclareMathOperator{\topp}{top}
\newcommand{\win}{w}
\DeclareMathOperator{\cD}{\mathcal{D}}
\DeclareMathOperator{\cG}{\mathcal{G}}
\DeclareMathOperator{\cI}{\mathcal{I}}
\DeclareMathOperator{\cM}{\mathcal{M}}
\DeclareMathOperator{\cE}{\mathcal{E}}
\DeclareMathOperator{\cF}{\mathcal{F}}
\DeclareMathOperator{\cZ}{\mathcal{Z}}
\DeclareMathOperator{\cL}{\mathcal{L}}
\DeclareMathOperator{\cO}{\mathcal{O}}
\DeclareMathOperator{\cP}{\mathcal{P}}
\DeclareMathOperator{\cA}{\mathcal{A}}
\DeclareMathOperator{\cH}{\mathcal{H}}
\DeclareMathOperator{\bP}{{\mathbb{P}}}
\DeclareMathOperator{\bA}{{\mathbb{A}}}
\DeclareMathOperator{\bR}{{\mathbb{R}}}
\DeclareMathOperator{\bB}{{\mathbb{B}}}
\DeclareMathOperator{\bD}{{\mathbb{D}}}
\pgfplotsset{compat=1.17}
\def\bC{\mathbb{C}}
\begin{document}

\maketitle

\begin{abstract}
Let $C$ be a smooth  projective curve of genus $g\ge2$ and let $N$ be the moduli space of stable vector bundles on $C$ of rank $2$ and fixed determinant of odd degree.
We~construct a semi-orthogonal decomposition of $D^b(N)$ conjectured by Narasimhan and by Belmans, Galkin and Mukhopadhyay. It has two blocks for each $i$-th symmetric power of~$C$ for $i=0,\ldots,g-2$ and one block for the $(g-1)$-st symmetric power. We conjecture that the subcategory generated by our blocks has a trivial semi-orthogonal complement, proving  the full BGMN conjecture. Our proof is based on an analysis of wall-crossing between moduli spaces of stable pairs, combining classical vector bundles techniques with the method of windows.
\end{abstract}

\section{Introduction}

Let $C$ be a smooth complex projective curve of genus $g\ge2$. Let $N=M_C(2,\Lambda)$ be the moduli space of stable vector bundles on $C$ of rank $2$ and fixed determinant $\Lambda$ of odd degree. It is a smooth  Fano variety of index~$2$, with $\Pic N=\mathbb{Z}\cdot\theta$ for some ample line bundle $\theta$. 

\begin{theorem}\label{maintheorem}
$D^b(N)$ has a semi-orthogonal decomposition $\langle \cP,\cA\rangle$, where 
\begin{equation}\label{SODonN}
\begin{matrix}
  \hskip-6pt\cA=\langle
&\theta^*\otimes\cG_0,
&(\theta^*)^2\otimes\cG_2,
&(\theta^*)^3\otimes\cG_4,
&(\theta^*)^4\otimes\cG_6,
&\ldots,&
\\
&\ldots,
&(\theta^*)^4\otimes\overline{\cG}_7,
&(\theta^*)^3\otimes\overline{\cG}_5,
&(\theta^*)^2\otimes\overline{\cG}_3,
&\theta^*\otimes\overline{\cG}_1,&
\\
&\cG_0,
&\theta^*\otimes\cG_2,
&(\theta^*)^2\otimes\cG_4,
&(\theta^*)^{3}\otimes\cG_6,
&\ldots,&
\\
&\ldots
&(\theta^*)^3\otimes\overline{\cG}_7,
&(\theta^*)^2\otimes\overline{\cG}_5,
&\theta^*\otimes\overline{\cG}_3,
&\overline{\cG}_1
&\rangle\\
\end{matrix}
\end{equation}
Each subcategory $\cG_i\simeq D^b(\Sym^i C)$ (resp. $\overline{\cG}_i\simeq D^b(\Sym^i C)$) is embedded in $D^b(N)$ by a fully faithful Fourier--Mukai functor with kernel given by the $i$-th tensor bundle $\cE^{\boxtimes i}$ (resp. $\overline{\cE}^{\boxtimes i}$) (see Section~\ref{TensorBundles}) 
of the Poincar\'e bundle $\cE$ on $C\times N$ normalized so that $\det\cE_x\simeq\theta$ for every $x\in C$.

There are two blocks isomorphic to
$D^b(\Sym^i C)$ for each $i=0,\ldots,g-2$ and one block isomorphic to $D^b(\Sym^{g-1}C)$, which 
appears on the $1$st or $2$nd line of~\eqref{SODonN}, depending on parity of $g$.
\end{theorem}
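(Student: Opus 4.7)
My plan is to follow a Thaddeus-style wall-crossing for moduli spaces of stable pairs $(E,s)$ on $C$, where $E$ is a rank $2$ bundle with $\det E \simeq \Lambda$ and $s$ is a nonzero global section, and to transport semi-orthogonal decompositions across the walls by the method of windows. As the stability parameter $\sigma$ varies, one gets a finite chain of projective moduli spaces
\[
M_0 \dashrightarrow M_1 \dashrightarrow \cdots \dashrightarrow M_{g-1}
\]
related by explicit birational flips. At one end, $M_0$ is a projective space (resp.\ a projective bundle over a symmetric power of $C$), whose derived category has a trivial Beilinson/Orlov-type decomposition. At the other end, $M_{g-1}$ is a projective bundle over $N$, so by the projective bundle formula $D^b(M_{g-1})$ is a direct sum of copies of $D^b(N)$ twisted by powers of $\theta$, and the desired decomposition of $D^b(N)$ will be extracted from the one of $D^b(M_{g-1})$.

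The core of the argument is the analysis of the $k$-th wall $M_{k-1}\dashrightarrow M_k$. Each flip is dominated by a common master space realized as a VGIT problem, and both $D^b(M_{k-1})$ and $D^b(M_k)$ are windows in the equivariant derived category of the master space in the sense of Halpern-Leistner and Ballard--Favero--Katzarkov. The window-shift functor comparing them differs from the identity by a mutation through a subcategory equivalent to $D^b(\Sym^k C)$, because the unstable locus collapsing under the flip is the projectivization of an $\Ext^1$-bundle built from the universal pair, and the base of this projectivization is precisely a Brill--Noether / Hecke-type locus isomorphic to $\Sym^k C$. Matching the universal pair on this locus with the Poincar\'e bundle produces the Fourier--Mukai kernel $\cE^{\boxtimes k}$, so each wall contributes exactly one block of the form $(\theta^*)^{j}\otimes \cG_k$ for some $j$ determined by the window shift. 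Iterating through all $g-1$ walls, together with the $\theta^*$-twisted copies produced by the projective bundle formula on $M_{g-1}$, yields the full list in \eqref{SODonN}.

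The main obstacle will be bookkeeping of twists and multiplicities. First, one must match at each stage the line bundle $\cO(1)$ of the window shift, and the tautological bundle of the final projective bundle over $N$, with the specific powers of $\theta^*$ that appear in \eqref{SODonN}; a miscount by one anywhere would spoil the pairing between the two ``columns'' of blocks. Second, and more delicately, one must show that the wall-crossings produce exactly two copies of each $\cG_i$ for $i\le g-2$ and a single copy of $\cG_{g-1}$, with the parity of $g$ controlling whether the unpaired middle block sits on the first or second row of \eqref{SODonN}; this requires tracking the alternating sign/length pattern of the window shifts through the whole chain. Once the blocks are in place, fully-faithfulness of each Fourier--Mukai functor $\Phi_{\cE^{\boxtimes i}}$ and the semi-orthogonality between them reduce to $\Ext$-computations for tensor powers of $\cE$ on $C\times N$, which can be handled by the classical Hecke-correspondence and Narasimhan--Ramanan techniques together with the geometry of the symmetric products $\Sym^i C$.
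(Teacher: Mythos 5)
Your plan contains a fundamental misconception about the geometry of the final Thaddeus space that breaks the proposed mechanism for passing from $D^b(M_{g-1})$ to $D^b(N)$. When $d=\deg\Lambda=2g-1$, the Euler characteristic is $\chi(E)=d+2-2g=1$, so a generic stable $E$ has $h^0(E)=1$ and the Abel--Jacobi map $\xi:M_{g-1}\to N$ is \emph{birational}, not a projective bundle. Its fibers $\mathbb{P}H^0(E)$ jump in dimension over Brill--Noether loci. Consequently the projective bundle formula does not apply, there is no ``direct sum of copies of $D^b(N)$ twisted by powers of $\theta$,'' and the step of ``extracting'' the decomposition of $D^b(N)$ from $D^b(M_{g-1})$ in the way you describe is unavailable. (The actual paper descends by the elementary observation that $\xi_*\cO_{M_{g-1}}=\cO_N$, so $\xi^*$ is fully faithful, and the decomposition is built on $M_{g-1}$ directly with the universal pair, then pushed down.)

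The second gap is in your middle step. You are right that each wall-crossing $M_{k-1}\dashrightarrow M_k$ is a VGIT flip producing, via Halpern-Leistner/BFK, semi-orthogonal blocks equivalent to $D^b(\Sym^kC)$. But those blocks are supported on the flipped projective bundle $\mathbb{P}W_k^+$ (torsion objects, Fourier--Mukai kernels $Rj_*(L\pi^*(\cdot)\otimes\cO_\pi(l))$), whereas the target blocks $\cG_i$ are images of Fourier--Mukai functors with kernel the tensor vector bundles $\cE^{\boxtimes i}$ of the Poincar\'e bundle. You assert that matching the universal pair on the flipped locus with the Poincar\'e bundle identifies the one with the other; the paper explicitly discusses this strategy (comparing the two kinds of Fourier--Mukai functors via mutations through the Koszul complex of the tautological section) and reports that it is intractable except for the first wall $M_0\dashrightarrow M_1$. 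The wall-crossing blocks do not descend to $N$ and are not associated with the universal bundle, so they cannot simply ``become'' $\cG_k$ after the final step.

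What the paper actually does instead: it constructs the functors $\Phi_\alpha^i$ with kernels $F^{\boxtimes\alpha}$ directly on each $M_i$, proves full faithfulness and semi-orthogonality via the Bondal--Orlov criterion, which reduces everything to $\Gamma$-acyclicity of a large family of vector bundles $G_D^\vee\otimes G_{D'}\otimes\Lambda_M^t\otimes\zeta^{a}\otimes\theta^{b}$ on $M_i$. Windows are used as a \emph{computational} tool — to show that the same object has the same $R\Gamma$ on either side of a wall when its $\lambda$-weights lie in the admissible range — rather than as a structural tool that produces the blocks. The hard analytic inputs (Theorem~\ref{full vanishing of Lambda*} on $R\Gamma(\Lambda_M^{-k})$, the deformation and stable-deformation arguments for tensor bundles, an ad-hoc residue identity matching $\chi(\cO_i(1,i-1))$ with $\chi(L_i)$ on $\Sym^iC$, and a Hecke-type blow-up construction) are essential and have no counterpart in your outline. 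So your proposal, while starting from the right geometric setup (Thaddeus spaces plus windows), is not a different valid route to the theorem but rather a version of the approach the authors considered and found to be unworkable.
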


The blocks appearing in \eqref{SODonN} cannot be further decomposed \cite{lin}.
Remarkably, our decomposition is compatible with  the results of Mu\~{n}oz  \cites{munoz1,munoz2,munoz3} (cf.~\cite[Proposition 6.4.2]{belmans21}), that the operator of the quantum multiplication by $c_1(N)$ on the quantum cohomology $QH^\bullet(N)$ 
has eigenvalues $8\lambda$, where 
$$\lambda=(1-g),\ (2-g)\sqrt{-1},\ (3-g),\ldots,\ (g-3),\ (g-2)\sqrt{-1},\ (g-1)$$
and the eigenspace of $8\lambda$ is isomorphic to  $H^\bullet(\Sym^{g-1-|\lambda|}C)$.
There are many other results, e.g.~  \cites{delbano,lee}, on cohomology and motivic decomposition of $N$ compatible with ~\eqref{SODonN}. This provides an ample evidence towards the expectation that $\cP=0$.
We hope to address this question in the future, as well as to use our methods to study properties of analogous Fourier--Mukai functors for moduli spaces of vector bundles of higher rank on curves and for moduli spaces of sheaves with $1$-dimensional support on K3 surfaces.

Partial results towards Theorem~\ref{maintheorem} have appeared in the literature. The case $g=2$ is a classical theorem of Bondal and Orlov \cite[Theorem 2.9]{bondal-orlov}, who also proved that $\cP=0$ in that case. Fonarev and Kuznetsov \cite{fonarev} proved that
$D^b(C)\hookrightarrow D^b(N)$ if $C$ is a hyperelliptic curve using 
an explicit description of $N$ due to Desale and Ramanan \cite{desale}.
They also proved that $D^b(C)\hookrightarrow D^b(N)$ for a general curve $C$ by a deformation argument.  Narasimhan proved that $D^b(C)\hookrightarrow D^b(N)$ for all curves \cites{narasimhan1,narasimhan2} using Hecke correspondences. He also showed that one can add the line bundles $\cO$ and $\theta^*$ to $D^b(C)$ to start a semi-orthogonal decomposition of $D^b(N)$.

In \cite{belmans}, Belmans and Mukhopadhyay work with the moduli space $M_C(r,\Lambda)$ of vector bundles of rank $r$ and determinant $\Lambda$, where $r\ge 2$ and $\deg \Lambda=1$. They show that there is a fully faithful embedding $D^b(C)\hookrightarrow D^b(M_C(r,\Lambda))$ provided the genus is sufficiently high. Moreover, they use this embedding to find the start of a semi-orthogonal decomposition of $D^b(M_C(r,\Lambda))$ of the form $\theta^*$, $D^b(C)$, $\cO$, $\theta^*\otimes D^b(C)$, this way extending the decomposition on $N=M_C(2,\Lambda)$ present in \cite{narasimhan2}.
Belmans, Galkin and Mukhopadhyay have conjectured, independently of Narasimhan,  that $D^b(N)$ should have a semi-orthogonal decomposition with blocks $D^b(\Sym^i C)$ (see \cites{oberwolfach, lee}), and have collected additional evidence towards this conjecture in \cite{belmans21}. Lee and Narasimhan \cite{lee-narasimhan}
proved using Hecke correspondences that, if $C$ is non-hyperelliptic and $g\ge 16$, there is a fully faithful functor $D^b(\Sym^2C)\hookrightarrow D^b(N)$ whose image is left semi-orthogonal to the copy of $D^b(C)$ obtained earlier. They also introduced tensor bundles $\cE^{\boxtimes i}$ of the Poincar\'e bundle (see Section~\ref{TensorBundles}), which we discovered independently.
If $D\in\Sym^i C$ is a reduced sum of points $x_1+\ldots+x_i$, the fiber $(\cE^{\boxtimes i})_D$ is a vector bundle on $N$ isomorphic to the tensor product $\cE_{x_1}\otimes\ldots\otimes\cE_{x_i}$. If the points have multiplicities, 
$(\cE^{\boxtimes i})_D$ is a deformation of the tensor product over $\bA^1$ (see Corollary~\ref{standarddeformation}). 

Instead of using Hecke correspondences (although they do make a guest appearance in Section~\ref{,jshBDFjhsbf}), 
we prove Theorem~\ref{maintheorem} by analyzing  Fourier--Mukai 
functors given by tensor bundles $F^{\boxtimes i}$ of the universal bundle $F$ on the moduli space of stable pairs
$(E,\phi)$, where $E$ is a rank-two vector bundle on $C$ with fixed odd determinant line bundle of degree $d$ and $\phi\in H^0(E)$ is a non-zero section.
The stability condition on these spaces depends on a parameter, and we use extensively results of Thaddeus \cite{thaddeus} on wall-crossing. 
If $d=2g-1$ then there is a well-known diagram of flips
\begin{equation}
\begin{tikzcd}[cramped,sep=scriptsize]
& \tilde{M}_2\arrow[dl]\arrow[dr] && \tilde{M}_3\arrow[dl]\arrow[dr] && \tilde{M}_{g-1}\arrow[dl]\arrow[dr] & \\
M_1\arrow[d] & & M_2 & & \cdots & & M_{g-1}\arrow[d]\\
M_0 &&  &&  &&  N
\end{tikzcd}
\end{equation}
where $M_0={\mathbb{P}}^{3g-3}$,
$M_1$ is the blow up of $M_0$ in $C$, the rational map $M_{i-1}\dashrightarrow M_{i}$ is a standard flip of projective bundles over $\Sym^{i} C$, and $\xi:\,M_{g-1}\to N$
is a birational Abel--Jacobi map with fiber $\mathbb{P}H^0(E)$ over a stable vector bundle~$E$. Accordingly, $D^b(M_{i})$ has a semi-orthogonal decomposition into $D^b(M_{i-1})$ and several blocks equivalent to $D^b(\Sym^{i}C)$ with torsion supports (see Proposition~\ref{potashnik embedding} or \cite{bfr}). While these decompositions do not descend to $N$ and are not associated with the universal bundle, they are useful. Philosophically,  tensor bundles on $\Sym^iC\times N$ are related to 
exterior powers of the tautological bundle of the universal bundle, which  appear
in the Koszul complex of the  tautological section that vanishes on the flipped  locus. One can  try to connect two Fourier--Mukai functors via  mutations. In practice, this Koszul complex is difficult to analyze except for $M_1$ (see~Section~\ref{M1 section}). We followed another strategy towards proving Theorem~\ref{maintheorem}. 

In order to prove semi-orthogonality in \eqref{SODonN} and full faithfulness of the Fourier--Mukai functors via the Bondal--Orlov criterion, we had to investigate coherent cohomology for a large class of vector bundles.
The main difficulty in this kind of analysis is to find a priori numerical bounds on the class of acyclic vector bundles to get the induction going. 

\begin{definition}
    For an object $\cF$ in the derived category of a scheme $M$, we say that $\cF$ is $\Gamma$-acyclic if $R\Gamma (\cF)=0$. That is, for us $\Gamma$-acyclicity will mean vanishing of \emph{all} cohomology groups, including $H^0(\cF)$. Other authors have used the term \emph{immaculate} for this property (cf. \cite{immaculate}).
\end{definition}
Theorem \ref{maintheorem} then requires the proof of $\Gamma$-acyclicity for several vector bundles. It~is worth emphasizing that the moduli space $N$ depends on the complex structure of the curve~$C$ by a classical theorem of Mumford and Newstead \cite{mumford-newstead} later extended by Narasimhan and Ramanan \cite{narasimhan-ramanan75}.
The uniform shape of Theorem~\ref{maintheorem} is thus a surprisingly strong statement about coherent cohomology of vector bundles on $N$ that does not involve any conditions of the Brill--Noether type.
Our approach utilizes the method of windows into derived categories of GIT quotients of Teleman, Halpern--Leistner, and Ballard--Favero--Katzarkov \cites{teleman,dhl,katzarkov}
to systematically predict behavior of coherent cohomology under wall-crossing.
This dramatically reduces otherwise unwieldy cohomological computations to a few key cases, which can be analyzed using other techniques. Rather unexpectedly, one of the difficult ingredients in the proof is acyclicity of certain line bundles (see Section~\ref{,jshBDFjhsbf}). While cohomology of line bundles on the space of stable pairs was extensively studied in \cite{thaddeus} in order to prove the Verlinde formula, the line bundle that we need is outside of the scope of that paper.

Analogous recent applications of windows to moduli spaces include the proof of the Manin--Orlov conjecture on $\bar M_{0,n}$ by Castravet and Tevelev \cites{castravet1,castravet1.1,castravet2} and analysis of Bott vanishing on GIT quotients by Torres \cite{myself}.

We are grateful to Pieter Belmans for bringing the papers \cites{oberwolfach,belmans21} to our attention,
to Elias Sink for useful comments, 
and to the anonymous referee for sending us a long list of corrections and thoughtful suggestions.
J.T. was supported by the NSF grant DMS-2101726. 
Some results of the paper have first appeared in the PhD thesis \cite{mythesis} of S.T.


\section{Tensor vector bundles}\label{TensorBundles}

Let $C$ be a smooth projective curve  over $\mathbb{C}$. For integers $\alpha\geq 1$ and $1\leq j\leq \alpha$, let $\pi_j:C^\alpha\to C$ be the $j$-th projection  and $\tau:C^\alpha\to \Sym^\alpha C$ the categorical $S_\alpha$-quotient, where $S_\alpha$ is the symmetric group. 
Since $C^\alpha$ is Cohen--Macaulay (in fact smooth), $\Sym^\alpha C$ is smooth, and $\tau$ is equi-dimensional, we conclude that $\tau$ is flat by miracle flatness. Therefore, any base change
$\tau:\,C^\alpha\times M\to \Sym^\alpha C\times M$ is also a finite and flat categorical $S_\alpha$-quotient, where $M$ is any scheme over $\mathbb{C}$. The constructions in this section are functorial in $M$.
In~the following sections, $M$ will be one of the moduli spaces we consider. 

\begin{notation}
    For an $S_\alpha$-equivariant vector bundle $\cE$ on $C^\alpha\times M$, we will denote by $\tau_*^{S_\alpha}\cE$ the $S_\alpha$-invariant part of the pushforward $\tau_*\cE$.
\end{notation}

\begin{lemma}
Let $\cE$ be an $S_\alpha$-equivariant locally free sheaf on $C^\alpha\times M$. Then $\tau_*\cE$ and $\tau_*^{S_\alpha}\cE$
are locally free sheaves on $\Sym^\alpha C\times M$.
\end{lemma}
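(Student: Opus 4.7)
My plan is to reduce both statements to standard facts about finite flat morphisms of locally Noetherian schemes, and then to split off the invariant part using a Reynolds operator. The first assertion will follow because the pushforward of a locally free sheaf along a finite flat morphism is again locally free, while the second will follow by exhibiting $\tau_*^{S_\alpha}\cE$ as a direct summand of $\tau_*\cE$ via the averaging idempotent $\frac{1}{\alpha!}\sum_{\sigma\in S_\alpha}\sigma$, which is available because we are in characteristic zero.

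For the first statement, I would work locally at a point $y$ of $\Sym^\alpha C\times M$: the morphism $\tau$ there corresponds to a finite flat extension $A\to B$ of Noetherian local rings, and any such extension is automatically free, with $B\cong A^{\oplus \alpha!}$. The stalk $(\tau_*\cE)_y$ decomposes as a finite product of the stalks $\cE_x$ at the preimages $x\in\tau^{-1}(y)$; each $\cE_x$ is a free module over the corresponding local ring of $C^\alpha\times M$, which in turn is free over $A$ by the previous sentence. Composing these freenesses shows that $(\tau_*\cE)_y$ is a free $A$-module, and hence $\tau_*\cE$ is locally free on $\Sym^\alpha C\times M$, of rank $\alpha!\cdot\rk\cE$.

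For the second statement, I would observe that the $S_\alpha$-equivariant structure on $\cE$ together with the identity $\tau\circ\sigma=\tau$ induces an $S_\alpha$-action on $\tau_*\cE$ by $\cO_{\Sym^\alpha C\times M}$-linear automorphisms, whose invariant subsheaf is by definition $\tau_*^{S_\alpha}\cE$. In characteristic zero, the Reynolds operator $e=\frac{1}{\alpha!}\sum_{\sigma\in S_\alpha}\sigma$ is an $\cO$-linear idempotent endomorphism of $\tau_*\cE$ whose image is $\tau_*^{S_\alpha}\cE$, realizing the latter as a direct summand of the locally free sheaf $\tau_*\cE$. On the locally Noetherian scheme $\Sym^\alpha C\times M$, any coherent direct summand of a locally free sheaf is itself locally free, which finishes the proof.

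I do not anticipate any real obstacle: the nontrivial inputs are the miracle flatness of $\tau$ already established before the statement of the lemma, and the characteristic-zero hypothesis underlying the Reynolds operator. The only small bookkeeping issue is to make sure the $S_\alpha$-action on $\tau_*\cE$ is correctly derived from the equivariant structure (so that the idempotent is genuinely $\cO$-linear on the base rather than merely on the cover); this is immediate from $\tau\circ\sigma=\tau$.
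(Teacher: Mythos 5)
Your overall strategy agrees with the paper's: both use that $\tau$ is finite and flat to get local freeness of $\tau_*\cE$, and both then split off the invariants as a direct summand (your Reynolds idempotent is exactly the paper's observation that $E^{S_\alpha}$ is a direct summand of $E$ as an $R^{S_\alpha}$-module). The Reynolds-operator half of your argument is correct.

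However, the stalk-level justification you give for local freeness of $\tau_*\cE$ is wrong as written. If $y\in\Sym^\alpha C\times M$ has more than one preimage, then $B:=(\tau_*\cO_{C^\alpha\times M})_y$ is a finite flat $A$-algebra over the local ring $A=\cO_{Y,y}$, but it is \emph{not} a local ring, only a semilocal one; and unless $A$ is Henselian, $B$ does \emph{not} split as a product of local rings. (Already for $\tau:\bA^1\to\bA^1$, $t\mapsto t^2$, and $y=1$, the ring $B$ is a semilocal domain with two maximal ideals, hence has no nontrivial idempotents.) Consequently, the stalk $(\tau_*\cE)_y$ is not a product $\prod_{x\in\tau^{-1}(y)}\cE_x$. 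Worse, each local ring $\cO_{C^\alpha\times M,x}$, being a further localization of $B$, is typically \emph{not} finite over $A$, hence not free over $A$; so the chain of freenesses you propose to compose does not exist. Your quoted standard fact is true, but the argument you offer for it fails.

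The fix is to argue as the paper does, with projective rather than free modules, and without trying to decompose the fiber ring: $B$ is a finite flat module over the Noetherian local ring $A$, hence free; $(\tau_*\cE)_y$ is a finitely generated projective $B$-module (because $\cE$ is locally free and $\Spec B$ is a semilocal scheme), hence a direct summand of $B^s$ for some $s$, hence a direct summand of a free $A$-module, hence projective over $A$, hence free over the local ring $A$. This is precisely the paper's proof, which works on $S_\alpha$-equivariant affine charts $\Spec R$ with $R^{S_\alpha}\subset R$ in place of $A\subset B$, avoiding the spurious product decomposition entirely.
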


\begin{proof} The scheme
$C^\alpha\times M$ is covered by $S_\alpha$-equivariant affine charts $\Spec R$ and $\tau^*$ is given by the inclusion of invariants $R^{S_\alpha}\subset R$. Since $R$ is a finitely generated and flat $R^{S_\alpha}$-module, it is also a projective $R^{S_\alpha}$-module. Let $E=H^0(\Spec R,\cE)$. Since $E$ is a projective $R$-module, it is a direct summand of $R^s$ for some $s$. It follows that $E$ is a projective $R^{S_\alpha}$-module, i.e.~$\tau_*\cE$ is locally free. Since $E^{S_\alpha}$ is a direct summand of $E$ as an $R^{S_\alpha}$-module, it is also a projective $R^{S_\alpha}$-module. Therefore,
$\tau_*^{S_\alpha}\cE$ is a locally free sheaf as well.
\end{proof}

\begin{definition}\label{tensor products and sums}
    For any vector bundle $\mathcal{F}$ on $C\times M$, we  define the following {\em tensor vector bundles} on $\Sym^\alpha C\times M$,
$$
    \mathcal{F}^{\boxtimes\alpha}=\tau_*^{S_\alpha}\left(\bigotimes_{j=1}^\alpha\pi_j^* \mathcal{F}\right)\quad\hbox{\rm and}\quad
    \overline{\mathcal{F}}^{\boxtimes\alpha}=\tau_*^{S_\alpha}\left(\bigotimes_{j=1}^\alpha\pi_j^* \mathcal{F}\otimes\sign\right),
$$
    where $S_\alpha$ acts on $C^\alpha$ and also permutes the  factors of the corresponding vector bundle on $C^\alpha$. Here  $\sign$ is the sign representation of $S_\alpha$.
\end{definition}

\begin{lemma}\label{tensorfunctorial}
The formation of tensor vector bundles is functorial in $M$, that is, given a morphism $f:\,M'\to M$ and its base changes $C\times M'\to C\times M$ and $\Sym^\alpha C\times M'\to \Sym^\alpha C\times M$, which we also denote by $f$, we have
$$
f^*(\mathcal{F}^{\boxtimes\alpha})=(f^*\mathcal{F})^{\boxtimes\alpha}\quad\hbox{\rm and}\quad
f^*(\overline{\mathcal{F}}^{\boxtimes\alpha})=\overline{(f^*\mathcal{F})}^{\boxtimes\alpha}.
$$
\end{lemma}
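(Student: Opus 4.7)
The plan is to reduce to two base change compatibilities, each essentially formal: one for pushforward along the finite (hence affine) morphism $\tau$, and one for the formation of $S_\alpha$-invariants. These combine with the evident identity $f^*\bigl(\bigotimes_j \pi_j^*\mathcal{F}\bigr) = \bigotimes_j \pi_j^* f^* \mathcal{F}$, using that the projections $\pi_j$ on $C^\alpha \times M'$ and their $S_\alpha$-equivariant structure are pulled back from $C^\alpha \times M$.

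First, I would record the base change identity $f^*\tau_*\mathcal{G} \cong \tau'_* f^* \mathcal{G}$ for any quasi-coherent $\mathcal{G}$, where $\tau': C^\alpha\times M'\to \Sym^\alpha C\times M'$ denotes the base change of $\tau$. This holds for arbitrary base change along any affine morphism, with no flatness hypothesis: on matching affine charts it amounts to the tautology that if $R^{S_\alpha}\to R$ is a ring extension, $R^{S_\alpha}\to R'$ any ring map, and $M$ an $R$-module, then the $R'$-module $M\otimes_{R^{S_\alpha}}R'$ is computed identically by \emph{pushforward then pullback} and by \emph{pullback then pushforward}. Specialised to $\mathcal{G} = \bigotimes_j \pi_j^* \mathcal{F}$, this yields $f^* \tau_*\bigl(\bigotimes_j \pi_j^* \mathcal{F}\bigr) \cong \tau'_*\bigl(\bigotimes_j \pi_j^* f^* \mathcal{F}\bigr)$ as $S_\alpha$-equivariant sheaves.

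Second, I would check that $S_\alpha$-invariants commute with $f^*$. Working over $\mathbb{C}$, the Reynolds operator $p = \tfrac{1}{\alpha!}\sum_{\sigma\in S_\alpha}\sigma$ is an $\mathcal{O}$-linear idempotent endomorphism of $\tau_*\bigl(\bigotimes_j \pi_j^* \mathcal{F}\bigr)$ with image $\mathcal{F}^{\boxtimes\alpha}$, exhibiting this tensor bundle as a canonical direct summand. Pullback preserves direct summands cut out by idempotents, so $f^*(\mathcal{F}^{\boxtimes\alpha})$ is the image of the base-changed projector, which by the previous step equals $(f^* \mathcal{F})^{\boxtimes \alpha}$. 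The signed version is identical with $p$ replaced by $\tfrac{1}{\alpha!}\sum_\sigma \sign(\sigma)\sigma$. I do not expect any real obstacle here: the only pitfall is keeping straight the two commuting $S_\alpha$-actions, on the base $C^\alpha$ and on the tensor factors, both of which are manifestly preserved by base change.
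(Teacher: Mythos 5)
Your proof is correct and gives a slightly more elementary argument than the paper's one-line invocation of flatness of $\tau$ together with cohomology and base change. You rightly observe that flatness is not needed for the first step: since $\tau$ is finite, hence affine, $\tau_*$ has no higher direct images and commutes with arbitrary base change; on affine charts this is the tautology that for a ring extension $A\to R$, a ring map $A\to A'$, and an $R$-module $M$, one has $M\otimes_A A'\cong M\otimes_R(R\otimes_A A')$ compatibly as modules over $A'$ and over $R\otimes_A A'$, and the isomorphism is manifestly $S_\alpha$-equivariant. The Reynolds-operator splitting is the clean characteristic-zero mechanism for seeing that $S_\alpha$-invariants form a canonical direct summand and hence commute with any additive functor such as $f^*$; the paper leaves this step implicit. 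Both routes are valid; yours simply localizes the use of flatness of $\tau$ to where it is genuinely needed elsewhere in the section (for instance, to conclude local freeness of $\tau_*\cE$ in the preceding lemma).
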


\begin{proof}
Since $\tau$ is flat, this follows from cohomology and base change.
\end{proof}



For a divisor $D\in\Sym^\alpha C$ and a vector bundle $\mathcal{G}$ on $\Sym^\alpha C\times M$, we write $\mathcal{G}_D:=\mathcal{G}|_{\{D\}\times M}$. We usually view $\mathcal{G}_D$ as a vector bundle on $M$.

\begin{remark}\label{empty divisor}
    For the empty divisor $D=0$, we have $\mathcal{G}_0\simeq \cO_M$.
\end{remark}

\begin{lemma}\label{differentpoints}
If  $D=\sum\alpha_kx_k$ with $x_k\neq x_l$ for $k\neq l$, then 
we have
\begin{gather}
\begin{aligned}\label{restriction to different points}
    \left(\mathcal{F}^{\boxtimes\alpha}\right)_{D}=\bigotimes\left( \mathcal{F}^{\boxtimes\alpha_k}\right)_{\alpha_kx_k},\qquad
    \left(\overline{\mathcal{F}}^{\boxtimes\alpha}\right)_{D}=\bigotimes\left( \overline{\mathcal{F}}^{\boxtimes\alpha_k}\right)_{\alpha_kx_k}.
\end{aligned}
\end{gather}
\end{lemma}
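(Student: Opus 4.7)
The plan is to exploit a local product structure on $\Sym^\alpha C$ near $D$ and transport it through the functor $\tau_*^{S_\alpha}$; the base factor $M$ plays no role beyond the functoriality of Lemma~\ref{tensorfunctorial}, so it is suppressed throughout. Since $x_1,\ldots,x_r$ are pairwise distinct, the first step is to choose pairwise disjoint open neighborhoods $V_k\ni x_k$ in $C$ and consider the addition map $a\colon \Sym^{\alpha_1}(V_1)\times\cdots\times\Sym^{\alpha_r}(V_r)\to \Sym^\alpha C$, $(D_1,\ldots,D_r)\mapsto \sum_k D_k$. One checks that $a$ is an open immersion onto a Zariski neighborhood $U$ of $D$, which is immediate because a divisor supported in $\bigsqcup V_k$ decomposes uniquely into pieces supported in each $V_k$.

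The second step is to describe $\tau^{-1}(U)\subset C^\alpha$ as an $S_\alpha$-scheme. Setting $H := S_{\alpha_1}\times\cdots\times S_{\alpha_r}\subset S_\alpha$ and letting $W := V_1^{\alpha_1}\times\cdots\times V_r^{\alpha_r}$ be embedded in $C^\alpha$ by placing the $V_k$-coordinates in a fixed consecutive block of slots, one has $\tau^{-1}(U)=\bigsqcup_{\sigma\in S_\alpha/H}\sigma\cdot W$, with $S_\alpha$ acting freely and transitively on the $\alpha!/\prod\alpha_k!$ components and stabilizing $W$ precisely by $H$. Moreover, $\tau|_W$ factors as the external product $\prod_k \tau_k$ of the symmetric-group quotients $\tau_k\colon V_k^{\alpha_k}\to \Sym^{\alpha_k}(V_k)$, and the restriction of $\bigotimes_{j=1}^\alpha \pi_j^*\mathcal{F}$ to $W$ is $H$-equivariantly identified with the external tensor product $\boxtimes_{k=1}^r\bigl(\bigotimes_{j=1}^{\alpha_k}\pi_j^*\mathcal{F}\bigr)$.

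A Mackey-type identification then reduces $(\tau_*(-))^{S_\alpha}$ over $U$ to $(\tau|_W)_*((-)|_W)^H$, because the action on the component set is free and transitive. Since $H$ is the direct product of its symmetric-group factors, invariants commute with the external tensor product, yielding $\mathcal{F}^{\boxtimes\alpha}|_U\cong \boxtimes_{k=1}^r \mathcal{F}^{\boxtimes\alpha_k}|_{U_k}$, where $U_k\subset\Sym^{\alpha_k}C$ is the chosen neighborhood of $\alpha_k x_k$. Restricting to $D\in U$, which corresponds to $(\alpha_1 x_1,\ldots,\alpha_r x_r)$ under $a$, gives the first claimed identity. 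The sign-twisted version follows identically, using that the restriction to $H$ of the $\sign$ character of $S_\alpha$ is the external tensor product of the $\sign$ characters on each factor; the only step requiring care is this Mackey identification, which is straightforward here because the action on components is free.
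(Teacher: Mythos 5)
Your argument is essentially the paper's own proof, written out in full: the paper disposes of this lemma in two sentences by observing that $\tau\colon C^\alpha\to\Sym^\alpha C$ is, locally near $D$, isomorphic to the product of the quotients $\prod C^{\alpha_k}\to\prod\Sym^{\alpha_k}C$, and that $\sign$ restricts on $\prod S_{\alpha_k}$ to the tensor product of sign characters — exactly the content of your decomposition of $\tau^{-1}(U)$ into $S_\alpha$-translates of $W$ and the Mackey identification of invariants. The one slip is your claim that the $V_k$ can be taken to be pairwise disjoint \emph{Zariski} opens giving a Zariski neighborhood $U$ of $D$: on an irreducible curve any two nonempty Zariski opens meet, so this is impossible, and one must instead work \'etale-locally (as the paper does) or analytically; since the statement concerns only the fiber of a vector bundle at the point $D$, this substitution is harmless and the rest of your argument goes through verbatim.
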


\begin{proof}
Indeed, the  quotient $\tau:\,C^{\alpha}\to \Sym^{\alpha} C$  is \'etale-locally near $D\in \Sym^{\alpha} C$ isomorphic to the product of quotients 
$\prod C^{\alpha_k}\to\prod \Sym^{\alpha_k} C$.
Moreover, the stabilizer of the point $D$ under the $S_\alpha$-action is $\prod S_{\alpha_k}$, and $\sign$ restricts to the tensor product of sign representations of $\prod S_{\alpha_k}$.
\end{proof}

Consider the non-reduced scheme $\mathbb{D}_\alpha=\spec \mathbb{C}[t]/t^\alpha$, with maps $\pt \xhookrightarrow{\imath}\mathbb{D}_\alpha\xrightarrow{\rho}\pt$ given by the obvious pullbacks $\mathbb{C}\xrightarrow{\rho^\#}\mathbb{C}[t]/t^\alpha\xrightarrow{\imath^\#}\mathbb{C}$. We still write $\imath$ and $\rho$ for the base changes to $M$ of these morphisms, that is, $M\xrightarrow{\imath}\mathbb{D}_\alpha\times M\xrightarrow{\rho}M$. For a locally free sheaf $\mathcal{F}$ on $\mathbb{D}_\alpha\times M$, we denote by $\mathcal{F}_0=\imath^*\mathcal{F}$ its restriction to $M$.

\begin{definition}
For two vector bundles $\cF$, $\cG$ on a scheme $M$, we will say that $\cF$ is a \emph{deformation of $\cG$ over $\bA^1$} if there is a coherent sheaf $\Tilde{\cF}$ on $M\times \bA^1$, flat over $\bA^1$, with $\tilde{\cF}|_t\simeq \cF$ for $t\neq 0$, while $\tilde{\cF}|_{0}\simeq \cG$.
\end{definition}


\begin{lemma}\label{deformation lemma}
Every locally free sheaf $\mathcal{F}$ on $\mathbb{D}_\alpha\times M$ is a deformation of $\rho^*\mathcal{F}_0$ over $\mathbb{A}^1$. In particular, $\rho_*\mathcal{F}$ is a deformation of $\mathcal{F}_0^{\oplus\alpha}$  over $\mathbb{A}^1$.
\end{lemma}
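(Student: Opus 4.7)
The plan is to realize the deformation geometrically via the natural $\mathbb{G}_m$-action on $\mathbb{D}_\alpha$ that scales the parameter, $t\mapsto \lambda t$. Concretely, I would define a morphism
\[
\Phi:\,\mathbb{D}_\alpha\times M\times\mathbb{A}^1\longrightarrow \mathbb{D}_\alpha\times M
\]
corresponding to the $\mathcal{O}_M$-algebra homomorphism that sends the generator $t$ of the target ring $\mathcal{O}_M[t]/t^\alpha$ to $\lambda t$ in $\mathcal{O}_M[t]/t^\alpha\otimes\mathbb{C}[\lambda]$. This makes sense since $(\lambda t)^\alpha=\lambda^\alpha t^\alpha=0$. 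The candidate deformation is $\tilde{\mathcal{F}}:=\Phi^*\mathcal{F}$ on $\mathbb{D}_\alpha\times M\times\mathbb{A}^1$. Pulling back a locally free sheaf gives a locally free sheaf, so $\tilde{\mathcal{F}}$ is flat over its total space and \emph{a fortiori} over $\mathbb{A}^1$.

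Next I would check the two fibers directly. For $\lambda\neq 0$ the restriction $\Phi|_\lambda$ is an automorphism of $\mathbb{D}_\alpha\times M$ (induced by the automorphism $t\mapsto\lambda t$ of $\mathbb{D}_\alpha$), so $\tilde{\mathcal{F}}|_\lambda\simeq \mathcal{F}$. At $\lambda=0$ the map $\Phi|_0$ factors as
\[
\mathbb{D}_\alpha\times M\xrightarrow{\rho} M\xhookrightarrow{\imath}\mathbb{D}_\alpha\times M,
\]
so $\tilde{\mathcal{F}}|_0=\rho^*\imath^*\mathcal{F}=\rho^*\mathcal{F}_0$. This proves the first assertion.

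For the second statement, I would push $\tilde{\mathcal{F}}$ forward by the projection $\rho:\,\mathbb{D}_\alpha\times M\times \mathbb{A}^1\to M\times\mathbb{A}^1$. Since $\rho$ is finite flat and $\tilde{\mathcal{F}}$ is locally free, $\rho_*\tilde{\mathcal{F}}$ is again locally free and commutes with arbitrary base change. For $\lambda\neq 0$ this gives $(\rho_*\tilde{\mathcal{F}})|_\lambda\simeq\rho_*\mathcal{F}$, and at $\lambda=0$ the projection formula yields
\[
(\rho_*\tilde{\mathcal{F}})|_0\simeq\rho_*\rho^*\mathcal{F}_0\simeq\mathcal{F}_0\otimes \rho_*\mathcal{O}_{\mathbb{D}_\alpha\times M}\simeq\mathcal{F}_0^{\oplus \alpha}.
\]
Once the family $\Phi^*\mathcal{F}$ is written down the rest is essentially bookkeeping. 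The only point that needs care is verifying that $\Phi$ is well defined as a morphism of schemes, which is exactly the identity $(\lambda t)^\alpha=0$; there is no substantive obstacle.
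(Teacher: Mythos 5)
Your proof is correct and follows essentially the same approach as the paper: both construct the deformation by pulling back $\mathcal{F}$ along the scaling map $t\mapsto \lambda t$ (which is the paper's $\lambda^*:t\mapsto ts$), observe that the fiber at a nonzero parameter is $\mathcal{F}$ because the scaling is then an automorphism, and note that the fiber at zero factors through $\rho$ then $\imath$, giving $\rho^*\mathcal{F}_0$. The derivation of the second statement by pushing the family forward along $\rho$ and applying the projection formula is likewise the same route the paper takes.
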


\begin{proof}
Let $\lambda:\mathbb{A}^1_s\times\mathbb{D}_\alpha\to\mathbb{D}_\alpha$ be the map defined by its pullback $\lambda^\#:t\mapsto ts$, and also denote by $\lambda$ its base change to $M$. We claim that the locally free sheaf $\lambda^*\mathcal{F}$ gives the required deformation. Indeed, the restriction of $\lambda^*\mathcal{F}$ to $\{s_0\}\in\mathbb{A}^1_s$ is the pullback of $\mathcal{F}$ along the composition $b_{s_0}=\lambda\circ j_{s_0}$
$$
\mathbb{D}_\alpha\times M\xhookrightarrow{j_{s_0}}\mathbb{D}_\alpha\times\mathbb{A}^1_s\times M\xrightarrow{\lambda}\mathbb{D}_\alpha\times M
$$
determined by its pullback $b_{s_0}^\#:t\mapsto s_0t$. When $s_0\neq 0$, $b_{s_0}^*\mathcal{F}\simeq\mathcal{F}$. On the other hand, when $s_0=0$, the map $b_0$ factors as the composition
$$
\mathbb{D}_\alpha\times M\xrightarrow{\rho} M\xrightarrow{\imath}\mathbb{D}_\alpha\times M
$$
so $b_0^*\mathcal{F}=\rho^*\imath^*\mathcal{F}=\rho^*\mathcal{F}_0$, as desired. The last statement follows from projection formula and the fact that $\rho_*\rho^*\mathcal{O}_M\simeq\mathcal{O}_M^{\oplus\alpha}$.
\end{proof}

Suppose $D=\alpha x$ is a fat point, i.e.~a divisor given by a single point $x$ with multiplicity $\alpha$, and let $t$ be a local parameter on $C$ at $x$. 
Note that the notation $\cO_D$ is unfortunately ambiguous, because it can denote both the structure sheaf of the subscheme $D\subset C$ and the skyscraper sheaf of the point $\{D\}\in\Sym^\alpha C$.
When confusion is possible, we denote the latter sheaf by 
$\cO_{\{D\}}$.
Then
\begin{align}\label{the ring B alpha}
\tau^*\mathcal{O}_{\{D\}}\simeq\frac{\mathbb{C}[t_1,\ldots,t_\alpha]}{(\sigma_1,\ldots,\sigma_\alpha)}
\end{align}
is the so-called {\em covariant algebra},
where $\sigma_1,\ldots,\sigma_\alpha$ are the elementary symmetric functions in variables $t_j=\pi_j^*(t)$. Call $\mathbb{B}_\alpha=\spec \tau^*\mathcal{O}_{\{D\}}$. By the Newton formulas, $t_j^\alpha=0$ for every $j=1,\ldots,\alpha$, and in particular, every map $\pi_j:\,\mathbb{B}_\alpha\to C$ factors through~$\mathbb{D}_\alpha$. By abuse of notation, we have a diagram of morphisms
\begin{equation}\label{diagram tau}
\begin{tikzcd}
&\mathbb{B}_\alpha\times M \arrow[r,"\pi_j"]\arrow{dr}[swap]{\tau} &\mathbb{D}_\alpha\times M \arrow[r,"q"]\arrow[d,bend left=20,"\rho"] & C\times M \arrow[dl] \\
& & M\arrow[u,bend left = 20,"\imath"] &
\end{tikzcd}
\end{equation}

\begin{corollary}\label{standarddeformation}
Let $D=x_1+\ldots+x_\alpha$ (possibly with repetitions). Then both $\left(\mathcal{F}^{\boxtimes\alpha}\right)_D$ and $\left(\overline{\mathcal{F}}^{\boxtimes\alpha}\right)_D$ 
are deformations of $\mathcal{F}_{x_1}\otimes\ldots\otimes\mathcal{F}_{x_\alpha}$
over $\mathbb A^1$.
\end{corollary}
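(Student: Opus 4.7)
The plan is to combine Lemmas~\ref{differentpoints} and~\ref{deformation lemma} into an explicit flat family over $\mathbb{A}^1$. First, Lemma~\ref{differentpoints} reduces the problem to a single fat point $D = \alpha x$: if $D = \sum_k \alpha_k x_k$ with distinct $x_k$ then $(\mathcal{F}^{\boxtimes\alpha})_D$ factors as $\bigotimes_k (\mathcal{F}^{\boxtimes\alpha_k})_{\alpha_k x_k}$, and the tensor product (over a single common $\mathbb{A}^1$-parameter) of the per-factor deformations constructed below is again a flat deformation with the correct generic and special fibers, since each factor will be locally free on $M \times \mathbb{A}^1$.

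For a single fat point, set $\mathcal{G} := \mathcal{F}|_{\mathbb{D}_\alpha \times M}$, pulled back along the $\alpha$-th infinitesimal neighborhood of $x$ in $C$. By Lemma~\ref{deformation lemma}, $\lambda^*\mathcal{G}$ is a locally free sheaf on $\mathbb{A}^1_s \times \mathbb{D}_\alpha \times M$, flat over $\mathbb{A}^1_s$, with fiber $\mathcal{G}$ at $s \neq 0$ and $\rho^*\mathcal{F}_x$ at $s = 0$. I would then pull this back along each $\pi_j : \mathbb{B}_\alpha \to \mathbb{D}_\alpha$ from diagram~\eqref{diagram tau} and tensor, obtaining the $S_\alpha$-equivariant locally free sheaf
$$\tilde{\mathcal{G}} := \bigotimes_{j=1}^{\alpha} \pi_j^* \lambda^* \mathcal{G}$$
on $\mathbb{A}^1_s \times \mathbb{B}_\alpha \times M$; the equivariance is automatic because $S_\alpha$ permutes the projections $\pi_j$. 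Applying $\tau_*^{S_\alpha}$ along the finite flat quotient $\mathbb{A}^1_s \times \mathbb{B}_\alpha \times M \to \mathbb{A}^1_s \times M$ yields a locally free sheaf $\mathcal{H}$ on $\mathbb{A}^1_s \times M$. Flat base change identifies $\mathcal{H}|_s \cong (\mathcal{F}^{\boxtimes\alpha})_{\alpha x}$ for $s \neq 0$ directly from the definition of $\mathcal{F}^{\boxtimes\alpha}$.

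The crux is the fiber at $s = 0$. The restriction $\tilde{\mathcal{G}}|_0$ is the pullback of $\mathcal{F}_x^{\otimes\alpha}$ from $M$, with $S_\alpha$ acting simultaneously by coordinate permutation on $\mathbb{B}_\alpha$ and factor permutation on the tensor, so pushing forward and taking invariants gives
$$\mathcal{H}|_0 \cong (B_\alpha \otimes_{\mathbb{C}} \mathcal{F}_x^{\otimes \alpha})^{S_\alpha},$$
where $B_\alpha$ is the covariant algebra of~\eqref{the ring B alpha}. The main obstacle is to recognize this as $\mathcal{F}_x^{\otimes\alpha}$. For this I would invoke the classical Chevalley--Shephard--Todd theorem, which identifies $B_\alpha$ with the regular representation $\mathbb{C}[S_\alpha]$, combined with the standard twist isomorphism $\mathbb{C}[S_\alpha] \otimes V \cong \mathbb{C}[S_\alpha] \otimes V_{\mathrm{triv}}$, $g \otimes v \mapsto g \otimes g^{-1}v$, under which the $S_\alpha$-invariants collapse to $V$ itself. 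Taking $V = \mathcal{F}_x^{\otimes\alpha}$ completes the argument. For the $\overline{\mathcal{F}}^{\boxtimes\alpha}$ variant, the same argument applies after inserting $\sign$ into the tensor, since the regular representation absorbs one-dimensional characters.
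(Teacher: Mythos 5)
Your proof is correct and follows the paper's argument exactly: reduce to a single fat point via Lemma~\ref{differentpoints}, build the explicit $S_\alpha$-equivariant flat family by pulling back the $\lambda^*$-construction of Lemma~\ref{deformation lemma} along the $\pi_j$, push forward along the finite flat $\tau$, and identify the special fiber using the fact that the covariant algebra $\mathcal{O}_{\mathbb{B}_\alpha}$ is the regular representation of $S_\alpha$. You are slightly more explicit than the paper in one place --- you correctly write the special fiber as $(\mathcal{O}_{\mathbb{B}_\alpha}\otimes\mathcal{F}_x^{\otimes\alpha})^{S_\alpha}$ with the \emph{diagonal} $S_\alpha$-action and then carry out the twist $g\otimes v\mapsto g\otimes g^{-1}v$, whereas the paper's projection-formula step pulls the invariants past the nontrivially-acting $\mathcal{F}_x^{\otimes\alpha}$ (legitimate precisely because $\mathcal{O}_{\mathbb{B}_\alpha}$ is the regular representation, i.e.\ by the same twist) --- and you cite Chevalley--Shephard--Todd for the regular-representation fact where the paper obtains it more directly from flatness of $\tau$; both are standard routes to the same classical statement.
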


\begin{proof}
By (\ref{restriction to different points}), it suffices to consider the case when $D=\alpha x$. Using the notation as in the diagram (\ref{diagram tau}), the restriction $\left(\mathcal{F}^{\boxtimes\alpha}\right)_D$ can be written as $\tau_*^{S_\alpha}\left(\bigotimes\pi_j^*q^*\mathcal{F}\right)$, by flatness of $\tau$. The construction of Lemma \ref{deformation lemma} commutes with the $S_\alpha$-action, 
so $\tau_*^{S_\alpha}\left(\bigotimes\pi_j^*q^*\mathcal{F}\right)$
is a deformation of $\tau_*^{S_\alpha}\left(\bigotimes\pi_j^*\rho^*\mathcal{F}_x\right)$ over $\bA^1$, since $(q^*\mathcal{F})_0=\mathcal{F}_x=\left.\mathcal{F}\right|_{\{x\}\times M}$. Note $\pi_j^*\rho^*=\tau^*$, so using the projection formula, we get that $\left(\mathcal{F}^{\boxtimes\alpha}\right)_D$ is a deformation of $\left(\bigotimes_{j=1}^\alpha\mathcal{F}_x\right)\otimes\tau_*^{S_\alpha}(\mathcal{O}_{\mathbb{B}_\alpha\times M})$, and similarly, $\left(\overline{\mathcal{F}}^{\boxtimes\alpha}\right)_D$ is a deformation of $\left(\bigotimes_{j=1}^\alpha\mathcal{F}_x\right)\otimes\tau_*^{S_\alpha}(\mathcal{O}_{\mathbb{B}_\alpha\times M}\otimes\sign)$. By flatness of the quotient 
$C^\alpha\to \Sym^\alpha C$, the covariant algebra $\mathcal{O}_{\mathbb{B}_\alpha}$ (\ref{the ring B alpha}) is  the regular representation $\bC[S_\alpha]$ of $S_\alpha$. It follows that it contains the trivial and the sign representations each with multiplicity $1$, and therefore $\tau_*^{S_\alpha}(\mathcal{O}_{\mathbb{B}_\alpha\times M})=\tau_*^{S_\alpha}(\mathcal{O}_{\mathbb{B}_\alpha\times M}\otimes\sign)=\mathcal{O}_M$. This concludes the proof.
\end{proof}

\begin{remark}\label{remark equivariantdeformation}
If we have a $G$-action on $M$ and a $G$-equivariant bundle $\cF$, then the deformations constructed in the proofs of Lemma \ref{deformation lemma} and Corollary \ref{standarddeformation} are also $G$-equivariant, i.e. given by a $G$-equivariant bundle on $\bA^1\times M$. This is because the
map $\lambda: \bA^1_s \times \bD_\alpha \times M \to \bD_\alpha \times M$ is given by the identity on the factor $M$, hence $\lambda$
is $G$-invariant. Thus, the pull-back $\lambda^*\cF$ of a $G$-equivariant sheaf is naturally again a $G$-equivariant sheaf.
\end{remark}

\begin{definition}\label{stable deformation}
    A vector bundle $\mathcal{F}$ on a scheme
     $M$ is said to be a {\em stable deformation} of a vector bundle $\mathcal{G}$ \emph{over $\mathbb A^1$} if there is some vector bundle $\mathcal{K}$ such that $\mathcal{F}\oplus\mathcal{K}$ is a deformation of a direct sum $\mathcal{G}^{\oplus r}$ for some $r>0$.
\end{definition}

\begin{proposition}\label{proposition stable deformation}
Let $D=x+\tilde D$. Then the vector bundle  $\left(\mathcal{F}^{\boxtimes\alpha}\right)_D$ is a stable deformation of the 
vector bundle
$\mathcal{F}_x\otimes \left(\mathcal{F}^{\boxtimes(\alpha-1)}\right)_{\tilde D}$ over $\bA^1$.\end{proposition}

\begin{proof}
By Lemma~\ref{differentpoints}, it suffices to consider the case $D=\alpha x$. Let $W_\alpha=\mathbb{C}^\alpha$ be the tautological representation of $S_\alpha$, which splits as a sum of the trivial and the standard representations, $W_\alpha=\mathbb{C}\oplus V_\alpha$. For any $S_\alpha$-equivariant vector bundle $\mathcal{E}$ on $\mathbb{B}_\alpha\times M$, we  have
\begin{align}\label{rep E splits}
\tau_*^{S_\alpha}(\mathcal{E}\otimes W_\alpha)=\tau_*^{S_\alpha}(\mathcal{E})\oplus \tau_*^{S_\alpha}(\mathcal{E}\otimes V_\alpha).
\end{align}
On the other hand, we have $W_\alpha=\mathbb{C}[S_\alpha/S_{\alpha-1}]$, where $S_{\alpha-1}\hookrightarrow S_\alpha$ is the inclusion given by fixing the $\alpha$-th element. Then, by Frobenius reciprocity, $\tau_*^{S_\alpha}(\mathcal{E}\otimes W_\alpha)=\tau_*^{S_{\alpha-1}}(\mathcal{E})=\rho_*\circ(\pi_\alpha)^{{S_{\alpha-1}}}_*(\mathcal{E})$, where $\pi_\alpha$ is the $\alpha$-th projection. By Lemma \ref{deformation lemma} this bundle is a deformation of $\left((\pi_\alpha)^{S_{\alpha-1}}_*\mathcal{E}\right)_0^{\oplus\alpha}$ over $\bA^1$. Now let $\mathcal{E}$ be $\bigotimes\pi_j^*q^*\mathcal{F}$. Then $\tau_*^{S_\alpha}(\mathcal{E})$ is precisely $\left(\mathcal{F}^{\boxtimes\alpha}\right)_D$ and, by projection formula,
\begin{align*}
    \left((\pi_\alpha)^{S_{\alpha-1}}_*\mathcal{E}\right)_0&=\mathcal{F}_x\otimes\left((\pi_\alpha)_*^{S_{\alpha-1}}\left(\bigotimes_{j=1}^{\alpha-1}\pi_j^*q^*\mathcal{F}\right)\right)_0\\
    &=\mathcal{F}_x\otimes(\pi_\alpha)_*^{S_{\alpha-1}}\bigotimes_{j=1}^{\alpha-1}(\pi_j^*q^*\mathcal{F})|_{t_\alpha=0}=\mathcal{F}_x\otimes \left(\mathcal{F}^{\boxtimes(\alpha-1)}\right)_{(\alpha-1)x}
\end{align*}
since the subscheme $(t_\alpha=0)\subset \mathbb{B}_\alpha$ is isomorphic to $\mathbb{B}_{\alpha-1}$ and the restriction of $\pi_\alpha$ to it is isomorphic to the quotient $\tau$ (for the group $S_{\alpha-1}$).
\end{proof}

\begin{remark}\label{semi-continuityremark}
We will use stable deformations for semi-continuity arguments.
If~$\mathcal{F}$ is a stable deformation of $\mathcal{G}$, $M$ is proper and $H^p(\mathcal{G})=0$, then, by the semi-continuity theorem, $H^p(\mathcal{F})=0$, too. In particular, if $\mathcal{G}$ is $\Gamma$-acyclic, then so is~$\mathcal{F}$.
\end{remark}

\begin{remark}\label{eulerdef} Let $D=x_1+\tilde D$, $\tilde D=x_2+\ldots+x_\alpha$ (possibly with repetitions). Suppose $M$ is proper.
Since $\left(\mathcal{F}^{\boxtimes\alpha}\right)_D$ and 
$\mathcal{F}_{x_1}\otimes \left(\mathcal{F}^{\boxtimes(\alpha-1)}\right)_{\tilde D}$ are both deformations of $\mathcal{F}_{x_1}\otimes\ldots\otimes\mathcal{F}_{x_\alpha}$ over $\bA^1$ by Corollary~\ref{standarddeformation},
they have the same Euler characteristic. Combining this with Remark~\ref{semi-continuityremark}, if $H^p\left(\mathcal{F}_x\otimes \left(\mathcal{F}^{\boxtimes(\alpha-1)}\right)_{\tilde D}\right)=0$ for $p>0$ then  both
$H^p\left(\left(\mathcal{F}^{\boxtimes\alpha}\right)_D\right)=0$ for $p>0$ and 
$H^0\left(\left(\mathcal{F}^{\boxtimes\alpha}\right)_D\right)=H^0\left(\mathcal{F}_x\otimes \left(\mathcal{F}^{\boxtimes(\alpha-1)}\right)_{\tilde D}\right)$. The same results hold for $\left(\overline{\mathcal{F}}^{\boxtimes\alpha}\right)_D$ and $\mathcal{F}_x\otimes
\left(\overline{\mathcal{F}}^{\boxtimes(\alpha-1)}\right)_{\tilde D}$.
\end{remark}

\section{Wall-crossing on moduli spaces of stable pairs}\label{ThaddeusSpaces}

Let $C$ be a smooth projective curve of genus $g\geq 2$ over $\mathbb{C}$. In \cite{thaddeus}, Thaddeus studies moduli spaces of pairs $(E,\phi)$, where $E$ is a rank-two vector bundle on $C$ with fixed determinant line bundle $\Lambda$ and $\phi\in H^0(E)$ is a non-zero section.
We use these results extensively and so, for ease of reference, try to follow the notation in \cite{thaddeus} as closely as possible. 
We always assume that $d=\deg E>0$.
For a given choice of a parameter $\sigma\in\mathbb{Q}$ the following stability condition is imposed: for every line subbundle $L\subset E$, one must have
$$
\deg L \leq  
\begin{cases}
\frac{d}{2} -\sigma\quad & \text{ if }\phi\in H^0(L),\\
\frac{d}{2} +\sigma \quad & \text{ if }\phi\notin H^0(L).
\end{cases}
$$
Throughout the text, we work with the general assumption  $\sigma\in (0,d/2]$, which guarantees the existence of stable pairs, see \cite[1.3]{thaddeus}. The next lemma follows the ideas of \cite[2.1]{thaddeus}:

\begin{lemma}\label{smooth_stack}
For a given line bundle $\Lambda$ of degree $d$,
the moduli stack $\cM_\sigma(\Lambda)$ of semi-stable pairs is a smooth algebraic stack.
\end{lemma}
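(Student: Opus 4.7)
The statement has two parts. For algebraicity I would invoke the standard boundedness for semistable pairs (see \cite[\S 1]{thaddeus}): for $\sigma\in(0,d/2]$, the semistable pairs with fixed numerical invariants form a bounded family, so after twisting by a sufficiently ample line bundle every such pair can be realized as a quotient of a fixed locally free sheaf equipped with a distinguished global section. This presents $\cM_\sigma(\Lambda)$ as an open substack of the quotient, by a general linear group, of a locally closed subscheme of a Quot scheme, hence as a global-quotient algebraic stack; openness of the $\sigma$-semistable locus inside the stack of all pairs is standard.

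Smoothness is the main content. Write $\sHom(E,E)_0$ for the trace-free subsheaf of $\sHom(E,E)$. The deformation theory of a pair $(E,\phi)$ with fixed determinant $\Lambda$ is controlled by the two-term complex
$$ C^\bullet_{(E,\phi)}:\ \sHom(E,E)_0\xrightarrow{\ \alpha\,\mapsto\,\alpha(\phi)\ }E $$
placed in degrees $0$ and $1$: the tangent space is $\mathbb{H}^1(C^\bullet_{(E,\phi)})$ and obstructions lie in $\mathbb{H}^2(C^\bullet_{(E,\phi)})$, the trace-free condition encoding the fixing of the determinant. The hypercohomology long exact sequence yields
$$ H^1(\sHom(E,E)_0)\longrightarrow H^1(E)\longrightarrow\mathbb{H}^2(C^\bullet_{(E,\phi)})\longrightarrow H^2(\sHom(E,E)_0)=0 $$
since $C$ is a curve, so smoothness reduces to the surjectivity of $H^1(\sHom(E,E)_0)\to H^1(E)$.

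I would prove this surjectivity by a sheaf-level analysis of the evaluation map $\psi:\alpha\mapsto\alpha(\phi)$. Let $D=\operatorname{div}(\phi)$. At any point $x$ where $\phi(x)\ne 0$, completing $\phi(x)$ to a local frame and varying $\alpha$ over diagonal and off-diagonal traceless matrices shows that $\psi_x$ is surjective; at a zero of order $k$, writing $\phi=t^k e_1$ in a local frame shows that $\operatorname{im}\psi$ is locally $t^k E$. Thus $\operatorname{im}\psi=E(-D)$, giving short exact sequences
$$ 0\to K\to \sHom(E,E)_0\to E(-D)\to 0,\qquad 0\to E(-D)\to E\to E|_D\to 0, $$
with $K$ a line bundle and $E|_D$ a torsion sheaf. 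Since $H^1(E|_D)=0$ and $H^2(K)=0$ on the curve $C$, both induced maps on $H^1$ are surjective, and hence so is their composition. Therefore $\mathbb{H}^2(C^\bullet_{(E,\phi)})=0$ and $\cM_\sigma(\Lambda)$ is smooth at $(E,\phi)$.

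The main obstacle is not the cohomological vanishing, which is automatic on a curve, but the clean formulation of the deformation theory of pairs with fixed determinant and the identification of the obstruction space with $\mathbb{H}^2(C^\bullet_{(E,\phi)})$. This is most efficiently handled by the standard cocycle argument comparing pair deformations over $\Spec\bC[\varepsilon]/(\varepsilon^2)$ and lifts to $\Spec\bC[\varepsilon]/(\varepsilon^3)$, or by citing the analogous set-up in \cite[\S 2]{thaddeus}. Notably, the stability parameter $\sigma$ plays no role in the smoothness argument; it enters only through openness of the semistable locus.
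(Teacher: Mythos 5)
Your proof is correct, and it follows the paper's overall strategy: both arguments identify the obstruction space with the cokernel of the map $H^1(\sHom(E,E)_0)\to H^1(E)$ induced by $\alpha\mapsto\alpha(\phi)$ (in the paper this appears as $\Ext^1(E,E)_0\to\Ext^1(\cO_C,E)$ coming from the truncation triangle of $[\cO_C\xrightarrow{\phi}E]$), so everything hinges on proving that this map is surjective. Where you diverge is in how that surjectivity is established. The paper dualizes: by Serre duality it suffices to show that the sheaf map $E^*(K_C)\to\sHom(E,E)_0(K_C)$, $\xi\mapsto(\phi\otimes\xi)_0$, is injective, which follows from $\phi\ne0$ because a nonzero scalar matrix cannot have rank $1$. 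You instead compute the image of the evaluation map at the sheaf level, showing $\im\psi=E(-D)$ with $D$ the vanishing divisor of $\phi$, and then conclude by $H^2(K)=0$ and $H^1(E|_D)=0$ on a curve. Both are two-line arguments once set up; the paper's duality trick is slightly slicker and avoids the local frame analysis, while your version is more elementary and makes the failure locus of surjectivity of $\psi$ explicit (it is supported exactly on $\operatorname{div}(\phi)$), which some readers may find more transparent. Your treatment of algebraicity via the Quot-scheme presentation, and your deferral of the fixed-determinant deformation theory to the standard cocycle argument or to \cite[\S 2]{thaddeus}, match what the paper does implicitly.
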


\begin{proof}
$\cM_\sigma (\Lambda)$ is a fiber of the morphism
$\cM_\sigma^d\to\Pic^d(C)$, $(E,\phi)\mapsto\det E$, from the stack of semi-stable pairs $(E,\phi)$, where $E$ is a degree $d$ vector bundle.
We first  show that $\cM_\sigma^d$
is smooth. Obstructions to deformations of a morphism of sheaves $\phi$ from a fixed source $\cO_C$ to a varying target $E$ lie in $\Ext^1([\cO_C\mathop{\to}\limits^\phi E], E)$.
The~truncation exact triangle of the complex $[\cO_C\mathop{\to}\limits^\phi E]$ yields an exact sequence
$$
\Ext^1(E,E)\mathop{\to}\limits^\phi \Ext^1(\cO_C,E)\to\Ext^1([\cO_C\mathop{\to}\limits^\phi E], E)\to0.
$$
We claim that the first map is surjective, so obstructions vanish. By~Serre duality, it suffices to prove injectivity of the map of sheaves $E^*(K_C)\mathop{\to}\limits^\phi 
E^*\otimes E(K_C)$ and this follows from $\phi\ne0$ (cf. the proof of \cite[2.1]{thaddeus}). Next we consider obstructions to deformations of $(E,\phi)$ fixing the determinant, which  amounts to studying the map
$\Ext^1(E,E)_0\mathop{\to}\limits^\phi \Ext^1(\cO_C,E)$, where 
$\Ext^1(E,E)_0$ denotes 
traceless endomorphisms. However, this map 
is also surjective because the Serre-dual map is induced by the map of sheaves
$E^*(K_C)\mathop{\to}\limits^\phi 
\cE\! nd(E)_0(K_C)$, where $\cE\! nd(E)_0$ is identified with the quotient of $\cE\! nd(E)$ by the subspace of scalar multiples of the identity. This map is still injective, as a non-zero scalar multiple of the identity cannot have rank~$1$.
\end{proof}

The moduli space $M_\sigma(\Lambda)$ of $S$-equivalence classes of stable pairs exists as a projective variety and, in the case there is no strictly semi-stable locus, it is smooth, isomorphic to the stack $\cM_\sigma(\Lambda)$ and carries a universal bundle $F$ with a universal section $\tilde{\phi}:\cO_{C\times M_\sigma (\Lambda)}\to F$. A salient point is that stable  pairs, unlike stable vector bundles, don't have any automorphisms besides the identity \cite[1.6]{thaddeus}. Note that non-trivial multiples of the identity are not automorphisms, as they do not preserve the section $\phi$.

The spaces $M_\sigma(\Lambda)$ can be obtained as GIT quotients as follows (see \cite[\S1]{thaddeus} for further details). Let $\chi=\chi(E)=d+2-2g$. For $d\gg 0$, every bundle $E$ of rank $2$ and $\det E=\Lambda$ is generated by global sections, and $\chi=h^0(E)$. Then $M_\sigma(\Lambda)$ is a GIT quotient of $U\times \mathbb{PC}^\chi$ by $SL_\chi$, where $U\subset \Quot$ is the locally closed subscheme of the  Quot scheme \cite{groth} corresponding to locally free quotients $\mathcal{O}_C^\chi\twoheadrightarrow E$ inducing an isomorphism $s:\mathbb{C}^\chi\xrightarrow{\sim}H^0(E)$ and such that $\wedge^2 E=\Lambda$. The isomorphism $s$ induces a map $\wedge^2\mathbb{C}^\chi\to H^0(\Lambda)$, and we get an inclusion $U\times\mathbb{PC}^\chi\hookrightarrow \mathbb{P}\Hom\times\mathbb{PC}^\chi$, where we write $\mathbb{P}\Hom$ for $\mathbb{P}\Hom(\wedge^2\mathbb{C}^\chi,H^0(\Lambda))$, and a quotient $s:\mathcal{O}_C^\chi\twoheadrightarrow E$ on the left is sent to the induced map in the first coordinate. Then $M_\sigma(\Lambda)$ can be seen as the GIT quotient of a closed subset of $\mathbb{P}\Hom\times\mathbb{PC}^\chi$ by $SL_\chi$, where the linearization is given by $\mathcal{O}(\chi+2\sigma, 4\sigma)$.

For arbitrary $d$, we pick any effective divisor $D$ on $C$ with $\deg D \gg 0$, and $M_\sigma(\Lambda)$ can be seen as the closed subset of $M_\sigma(\Lambda(2D))$ consisting of pairs $(E,\phi)$ such that $\phi|_D=0$. This way, $M_\sigma(\Lambda)$ is a GIT quotient by $SL_{\chi'}$, with $\chi'=d+2-2g+2\deg D$, of the closed subset $X\subset U'\times \mathbb{PC}^{\chi'}$ determined by the condition that $\phi$ vanishes along $D$ \cite[1.9 \& 1.20]{thaddeus}.
Regardless of the GIT, 
the embedding 
$M_\sigma(\Lambda)\subset
M_\sigma(\Lambda(2D))
$
will play an important role in our induction arguments.

\begin{remark}\label{SL vs PGL}
Scalar matrices in $SL_{\chi'}$ act trivially on $U\times\mathbb{PC}^{\chi'}$, so the action factors through the quotient $SL_{\chi'}\to PGL_{\chi'}$. If we replace $\mathcal{O}(\chi'+2\sigma, 4\sigma)$ by its $\chi'$-th power, this line bundle carries a $PGL_{\chi'}$-linearization and $M_\sigma(\Lambda)$ can also be written as a GIT quotient 
$X\git PGL_{\chi'}$. Moreover, the moduli stack $\cM_\sigma(\Lambda)$
is isomorphic to the corresponding GIT quotient stack~$[X^{ss}/PGL_{\chi'}]$.
\end{remark}

For fixed $\Lambda$ but varying $\sigma$, the spaces $M_\sigma(\Lambda)$ are all GIT quotients of the same scheme, with different stability conditions. The GIT walls occur when $\sigma\in d/2+\mathbb{Z}$, and for $0\leq i\leq v=\lfloor(d-1)/2\rfloor$ we have different GIT chambers with moduli spaces $M_0,M_1,\ldots,M_{v}$, where $M_i=M_i(\Lambda)=M_\sigma(\Lambda)$ for $\sigma \in (\max(0,d/2-i-1),d/2-i)$. These $M_i$ are smooth projective rational varieties of dimension $d+g-2$, see \cite[2.2 \& 3.6]{thaddeus}. Indeed, $M_0=\mathbb{P}H^1(C, \Lambda^{-1})$ is a projective space, $M_1$ is a blow-up of $M_0$ along a copy of $C$ embedded by the complete linear system of $\omega_C \otimes \Lambda$, and the remaining ones are small modifications of $M_1$.
More precisely, for each $0\leq i \leq v= \lfloor(d-1)/2\rfloor$ there are projective bundles $\mathbb{P}W_{i}^+$ and $\mathbb{P}W_{i}^-$ over the symmetric product $\Sym^iC$, of (projective) ranks $d+g-2i-2$, $i-1$, respectively, with embeddings $\mathbb{P}W_{i}^+\hookrightarrow M_i$ and $\mathbb{P}W_{i}^-\hookrightarrow M_{i-1}$, and such that $\mathbb{P}W_i^+$ parametrizes the pairs $(E,\phi)$ appearing in $M_i$ but not in $M_{i-1}$, while $\mathbb{P}W_i^-$ parametrizes those appearing in $M_{i-1}$ but not in $M_i$. 

We have a diagram of flips \eqref{diagram Mi},
where $\tilde{M}_{i}$ is the blow-up of $M_{i-1}$ along $\mathbb{P}W_i^-$ and also the blow-up of $M_i$ along~$\mathbb{P}W_i^+$.
Here $N$ is the moduli space of ordinary slope-semistable vector bundles as in the Introduction and the map $M_{v}\to N$ is an ``Abel-Jacobi'' map with fiber $\mathbb{P}H^0(C,E)$ over a vector bundle $E$. If $d \geq 2g-1$ the Abel-Jacobi map is surjective, and if $d=2g-1$ it is a birational morphism (see \cite[\S 3]{thaddeus} for details). 

\begin{equation}\label{diagram Mi}
\begin{tikzcd}[cramped,sep=scriptsize]
& \tilde{M}_2\arrow[dl]\arrow[dr] && \tilde{M}_3\arrow[dl]\arrow[dr] && \tilde{M}_v\arrow[dl]\arrow[dr] & \\
M_1\arrow[d] & & M_2 & & \cdots & & M_v\arrow[d]\\
M_0 &&  &&  &&  N
\end{tikzcd}
\end{equation}

\begin{notation}
By abuse of notation, we will sometimes write $M_i(d)$ to denote the moduli space $M_i=M_i(\Lambda)$, where $d=\deg \Lambda$. 
\end{notation}

\begin{notation}
In what follows, $v$ will always denote $\lfloor (d-1)/2\rfloor$.
\end{notation}

The Picard group of $M_1=\Bl_CM_0$ is generated by a hyperplane section $H$ in $M_0=\mathbb{P}^{d+g-2}$ and the exceptional divisor $E_1$ of the morphism $M_1\to M_0$. Since the maps $M_i\dashrightarrow M_{i+1}$ are small birational modifications for each $i\geq 1$, there are natural isomorphisms $\Pic M_1\simeq\Pic M_i$, $i\geq 1$. The following notation is taken from \cite[\S 5]{thaddeus}.

\begin{definition}\label{notation O(m,n)}
For each $m$, $n$, we denote the line bundle $\mathcal{O}_{M_1}((m+n)H-nE_1)$ by $\mathcal{O}_{1}(m,n)$, while $\mathcal{O}_i(m,n)$ will denote the image of $\mathcal{O}_{M_1}(m,n)$ under the isomorphism $\Pic M_1\simeq\Pic M_i$.
\end{definition}

\begin{remark}\label{ample cone on Mi}
By \cite[5.3]{thaddeus}, the ample cone of $M_i$ is  bounded by $\cO_i(1,i-1)$ and $\cO_i(1,i)$ for $0<i<v$, while the ample cone of $M_v$ is bounded below by $\cO_v(1,v-1)$ and  contains the cone bounded on the other side by $\cO_v(2,d-2)$. In other words, the ray bounding the cone above has slope at least $(d-2)/2$.
\end{remark}

\begin{remark}\label{restriction of O(m,n)}
For any effective divisor $D$ on $C$ of $\deg D=\alpha$, we have a closed immersion $M_{i-\alpha}(\Lambda (-2D))\hookrightarrow M_i(\Lambda)$,
as the locus of pairs $(E,\phi)$ where the section $\phi$ vanishes along $D$ \cite[1.9]{thaddeus}. The~restriction of $\cO_i (m,n)$ to $M_{i-\alpha}(\Lambda(-2D))$ is $\cO_{i-\alpha}(m,n-m\alpha)$ \cite[5.7]{thaddeus}. If $i-\alpha=0$, the restriction of $\cO_i(m,n)$ to $M_0(\Lambda(-2D))=\bP^r$ is $\cO_{\bP^r}(n+m(1-i))$. This follows from \cite[7.5]{thaddeus} together with the fact that, for an embedding $\bP^{r}=M_0(\Lambda(-2x))\hookrightarrow M_1(\Lambda)$, $\cO_{M_1}(E_1)$ restricts to $\cO_{\bP^r}(-1)$ while $\cO_{M_1}(H)$ restricts to $\cO_{\bP^r}$.
\end{remark}

Suppose $d\gg 0$. Then 
the universal bundle $F$ on $M_i\times C$ is the descent from the equivariant vector bundle $\mathcal{F}(1)$ on $X\times C\subset U\times\mathbb{PC}^\chi\times C$, where $\mathcal{O}^\chi\twoheadrightarrow \mathcal{F}$ is the universal quotient bundle over $U\times C$, and the universal section $\tilde{\phi}$ descends from the universal section of~$\mathcal{F}(1)$ \cite[1.19]{thaddeus}. Let $\pi:C\times M_i\to M_i$ be the projection. For every $i\geq 1$, the determinant of cohomology line bundle $\det\pi_!F$ (cf. \cite{det-coh}) descends from $\mathcal{O}(0,\chi)$ on $\mathbb{P}\Hom\times\mathbb{PC}^\chi$ \cite[5.4 \& proof of 5.14]{thaddeus}. On $M_1$, $\det\pi_!F$ corresponds to $\mathcal{O}_{M_1}\bigl((g-d-1)H-(g-d)E_1\bigr)
=\mathcal{O}_1(-1,g-d)$. For $x\in C$, call $F_x=F|_{\{x\}\times M}$. The line bundle $\det F_x=\wedge^2F_x$ does not depend on $x$, and it is the descent of $\mathcal{O}(1,2)$  on $\mathbb{P}\Hom\times\mathbb{PC}^\chi$. It corresponds to $\mathcal{O}_{M_1}(E_1-H)=\mathcal{O}_i(0,-1)$ \cite[5.4 \& proof of 5.14]{thaddeus}.

For arbitrary $d$, consider an embedding $\imath:M_i\hookrightarrow M'= M_\sigma (\Lambda (2D))$, $\deg D\gg 0$, as above, and let $F'$ be the universal bundle on $M'$. Then we have a short exact sequence \cite[1.20]{thaddeus}
\begin{align}\label{usefulseq}
0\to F\to\imath^*F'\to \imath^*F'|_{D\times M_{i}}\to 0.
\end{align}
In particular, $F$ is the descent from an object on $X\times C\subset U'\times \mathbb{PC}^{\chi'}\times C$. The same is true for $\det\pi_!F$ and $\wedge^2F_x$.

\begin{lemma}\label{usefuliso}
$F_x\simeq\imath^*F'_x$ for every $x\in C$.
\end{lemma}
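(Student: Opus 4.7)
The plan is to read the short exact sequence \eqref{usefulseq} as identifying $F$ with an explicit twist of $\imath^{*}F'$ on $C\times M_{i}$, and then restrict to the slice $\{x\}\times M_{i}$. First, observe that the surjection $\imath^{*}F'\to\imath^{*}F'|_{D\times M_{i}}$ in \eqref{usefulseq} is nothing but the restriction to the closed subscheme $D\times M_{i}\subset C\times M_{i}$, so its kernel is $\imath^{*}F'\otimes \mathcal{I}_{D\times M_{i}}$, where $\mathcal{I}_{D\times M_{i}}$ is the ideal sheaf of $D\times M_{i}$. Because $D$ is an effective Cartier divisor on $C$, the product $D\times M_{i}$ is an effective Cartier divisor in $C\times M_{i}$, and there is a canonical isomorphism $\mathcal{I}_{D\times M_{i}}\simeq \pi_{C}^{*}\mathcal{O}_{C}(-D)$, where $\pi_{C}\colon C\times M_{i}\to C$ is the projection.

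Combining these identifications, \eqref{usefulseq} yields an isomorphism of locally free sheaves $F\simeq \imath^{*}F'\otimes \pi_{C}^{*}\mathcal{O}_{C}(-D)$ on $C\times M_{i}$. Restricting to $\{x\}\times M_{i}$ gives
\[
F_{x}\simeq \imath^{*}F'_{x}\otimes \mathcal{O}_{C}(-D)|_{x},
\]
in which the rightmost factor is the fiber of a line bundle at a single point, hence a one-dimensional complex vector space. Tensoring the rank-two vector bundle $\imath^{*}F'_{x}$ on $M_{i}$ with this line produces a vector bundle isomorphic to $\imath^{*}F'_{x}$, which is the content of the lemma.

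I do not anticipate any substantive obstacle here: the argument is essentially a book-keeping exercise built directly from \eqref{usefulseq}. The only mild nuance is that the resulting isomorphism is canonical when $x\notin D$ (the section $1$ trivializes $\mathcal{O}_{C}(-D)$ outside $D$, and one can also read this off from \eqref{usefulseq} directly, since the cokernel vanishes after restriction to $\{x\}\times M_{i}$), whereas for $x\in D$ an identification of the fiber $\mathcal{O}_{C}(-D)|_{x}$ with $\mathbb{C}$ depends on a choice of trivialization, yielding only a non-canonical isomorphism; this does not affect the statement, which asserts merely the existence of an isomorphism.
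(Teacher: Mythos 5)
Your argument is correct, and it takes a cleaner, more global route than the paper's. The paper's proof tensors \eqref{usefulseq} with the skyscraper $\cO_{\{x\}\times M_i}$ and analyzes the resulting four-term Tor sequence, splitting into the cases $x\notin D$ (where the Tor and tensor terms vanish) and $x\in D$ (where both extra terms are $\imath^*F'_x$ and a rank count forces the middle map to vanish, so the sequence splits into two isomorphisms). You instead observe that, since the surjection $\imath^*F'\to\imath^*F'|_{D\times M_i}$ is the restriction to the Cartier divisor $D\times M_i=\pi_C^{-1}(D)$, the kernel is $\imath^*F'\otimes\pi_C^*\cO_C(-D)$, which yields the global isomorphism $F\simeq\imath^*F'\otimes\pi_C^*\cO_C(-D)$; restricting to $\{x\}\times M_i$ then kills the line-bundle factor (a one-dimensional vector space) and no case distinction is needed. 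Your route buys a stronger, uniform statement (a global sheaf isomorphism, not just a fiberwise one), at the modest cost of using that the surjection in \eqref{usefulseq} is the canonical restriction map --- this is what the notation and the reference to Thaddeus intend, but the paper's Tor-based argument is careful not to lean on it, needing only the isomorphism class of the cokernel. Both proofs ultimately exploit that $D$ is Cartier (yours via $\cI_{D\times M_i}\simeq\pi_C^*\cO_C(-D)$, the paper's via the free resolution $\cO_C(-D)\to\cO_C\to\cO_D$ used to compute $\mathcal{Tor}^1$).
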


\begin{proof}
We tensor \eqref{usefulseq} with $\cO_{\{x\}\times M_i}$, which gives an exact sequence
$$0\to 
{\mathcal Tor}^1_{C\times M_i}(\imath^*F'|_{D\times M_{i}},\cO_{\{x\}\times M_i})\to
F_x\to\imath^*F'_x\to\qquad\qquad\qquad$$
$$\qquad\qquad\qquad\to \imath^*F'|_{D\times M_{i}}\otimes_{C\times M_i}\cO_{\{x\}\times M_i}\to 0.
$$
If $x\not\in D$ then 
${\mathcal Tor}^1(\imath^*F'|_{D\times M_{i}},\cO_{\{x\}\times M_i})=\imath^*F'|_{D\times M_{i}}\otimes\cO_{\{x\}\times M_i}=0$ and we get $F_x\simeq\imath^*F'_x$. If $x\in D$ then
${\mathcal Tor}^1_C(\cO_D,\cO_x)\simeq
\cO_D\otimes_C\cO_x\simeq\cO_x$, and the sequence splits into two isomorphisms, 
$\imath^*F'_x\simeq F_x$ and
$\imath^*F'_x\simeq\imath^*F'_x$.
\end{proof}

\begin{lemma}\label{Fx=O+O(-1)}
    On $M_0=\bP^r$, $F_x\simeq\cO_{\bP^r}\oplus\cO_{\bP^r}(-1)$.
\end{lemma}

\begin{proof}
    In fact, $F_x$ is a rank-two bundle on $\bP^r$, carrying a nowhere vanishing section, and with determinant $\cO_{\bP^r}(-1)$. Hence, $F_x$ must be isomorphic to $\cO_{\bP^r}\oplus\cO_{\bP^r}(-1)$.
\end{proof}


\begin{definition}\label{notation important line bundles}
We introduce notation for some important line bundles:
$$\psi^{-1}:=\det\pi_!F=\mathcal{O}_i(-1,g-d),$$
$$\Lambda_M:=\wedge^2F_x=\mathcal{O}_i(0,-1),$$
$$\zeta:=\psi\otimes\Lambda_M^{d-2g+1}=\cO_i(1,g-1)
$$
and
$$\theta:=\psi^2\otimes\Lambda_M^\chi=\mathcal{O}_i(2,d-2),$$
where $\chi=d+2-2g$ (cf. \cite[Proposition 2.1]{narasimhan1}).
\end{definition}

\begin{lemma}\label{BasicKoszul}
For a point $x\in C$ and every $i\ge1$, we have 
exact sequences
\begin{align}\label{reduction F dual}
0\to \Lambda_M^{-1} \to F_x^\vee \to \mathcal{O}_{M_i(\Lambda)} \to \mathcal{O}_{M_{i-1}(\Lambda(-2x))} \to 0
\end{align}
and 
\begin{align}\label{reduction F non-dual}
    0\to\mathcal{O}_{M_i(\Lambda)}\to F_x\to \Lambda_M\to \left.\Lambda_M\right|_{M_{i-1}(\Lambda(-2x))}\to 0.
\end{align}
\end{lemma}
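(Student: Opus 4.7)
My plan is to read both sequences as Koszul complexes attached to the restriction of the universal section. The universal section $\tilde\phi:\cO_{C\times M_i}\to F$ restricts, over the divisor $\{x\}\times M_i$, to a section $\phi_x:\cO_{M_i(\Lambda)}\to F_x$ of a rank-$2$ vector bundle. By Remark~\ref{restriction of O(m,n)} (applied to the divisor $D=x$ of degree $\alpha=1$), the zero scheme of $\phi_x$ equals the image of the closed embedding $M_{i-1}(\Lambda(-2x))\hookrightarrow M_i(\Lambda)$, since this is precisely the locus of pairs $(E,\phi)$ for which $\phi$ vanishes at $x$.

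Next I would verify that $\phi_x$ is a \emph{regular} section, so that its Koszul complex is a resolution of the structure sheaf of the zero scheme. A dimension count gives $\dim M_i(\Lambda)=d+g-2$ and $\dim M_{i-1}(\Lambda(-2x))=(d-2)+g-2=d+g-4$, so the vanishing locus has codimension $2$, matching the rank of $F_x$. Since $M_i(\Lambda)$ is smooth (Lemma~\ref{smooth_stack}) and Cohen--Macaulay, this expected-codimension property is enough to conclude that the Koszul complex of $\phi_x$ is exact:
\[
0\to \Lambda^2 F_x^\vee \to F_x^\vee \to \cO_{M_i(\Lambda)}\to \cO_{M_{i-1}(\Lambda(-2x))}\to 0.
\]
Because $F_x$ has rank $2$, $\Lambda^2 F_x^\vee=(\det F_x)^{-1}=\Lambda_M^{-1}$, and this is exactly \eqref{reduction F dual}.

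For \eqref{reduction F non-dual} I would simply twist the resolution above by the line bundle $\Lambda_M=\det F_x$, using the standard identification $F_x^\vee\otimes\det F_x\simeq F_x$ valid for every rank-$2$ bundle. This converts the first term $\Lambda_M^{-1}$ into $\cO_{M_i(\Lambda)}$, the second $F_x^\vee$ into $F_x$, the third $\cO_{M_i(\Lambda)}$ into $\Lambda_M$, and the fourth $\cO_{M_{i-1}(\Lambda(-2x))}$ into $\Lambda_M|_{M_{i-1}(\Lambda(-2x))}$, producing the desired sequence. The only step I expect to require real care is the regularity of $\phi_x$: one must confirm that, for every $i\ge 1$, the embedded locus $M_{i-1}(\Lambda(-2x))$ is non-empty and has codimension exactly $2$, so that no spurious embedded components cause the Koszul complex to fail exactness. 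Once this is in place, everything else is purely formal.
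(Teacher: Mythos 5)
Your proof is correct and follows essentially the same route as the paper: both identify the zero scheme of $\phi_x$ with $M_{i-1}(\Lambda(-2x))$ via Remark~\ref{restriction of O(m,n)}, observe it has the expected codimension $2$ so the Koszul complex of the regular section is exact, and use $\Lambda^2 F_x^\vee=\Lambda_M^{-1}$. The only cosmetic difference is that the paper obtains \eqref{reduction F non-dual} from the dual Koszul complex, while you twist \eqref{reduction F dual} by $\Lambda_M$ and use $F_x^\vee\otimes\det F_x\simeq F_x$ — these are equivalent, and your route avoids having to identify the $\mathscr{E}xt^2$ term.
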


\begin{proof}
By Remark~\ref{restriction of O(m,n)}, the zero locus of the section $\phi_x$ of $F_x$ is smooth and has codimension~$2$.
Therefore, the Koszul complex and the dual Koszul complex of $(F_x,\phi_x)$
are exact.
\end{proof}

\begin{definition}
Let $M=M_i(\Lambda)$ be a moduli space in the interior of a GIT chamber, as above, and let $F$ be the universal bundle on $C\times M$. 
We apply the constructions of Section~\ref{TensorBundles} to $F$. In particular, for a divisor $D\in\Sym^\alpha C$, we will denote
$$
    G_D=\left(F^{\boxtimes\alpha}\right)_D\quad\hbox{\rm and}\quad
    \overline{G}_D=\left(\overline{F}^{\boxtimes\alpha}\right)_D.
$$
We write $G_D^\vee$, $\overline{G}_D^\vee$ for their respective duals.
\end{definition}

\begin{lemma}\label{braid lemma}
    We have the following formulas:
    \begin{align*}
        (F^\vee)^{\boxtimes\alpha} \simeq \bigl((\Lambda^{\vee})^{\boxtimes\alpha}\boxtimes\Lambda_M^{-\alpha}\bigr)\otimes F^{\boxtimes\alpha},\\        G_D^\vee\simeq\left(\overline{F^\vee}^{\boxtimes\alpha}\right)_D,\quad \overline{G}_D^\vee\simeq \left((F^\vee)^{\boxtimes\alpha}\right)_D.
    \end{align*}
\end{lemma}

\begin{proof}
Let us denote
$$
{\widehat{\Lambda^\vee}}^{\boxtimes\alpha}:=\bigotimes_{j=1}^\alpha\pi_j^*(\Lambda^\vee),\qquad\qquad \widehat{F}^{\boxtimes\alpha}:=\bigotimes_{j=1}^\alpha\pi_j^*F,
$$
which are bundles on $C^\alpha$ and $C^\alpha\times M$, respectively. By \cite[Theorem 2.3]{kempfs}, $(\Lambda^\vee)^{\boxtimes\alpha}$ is the descent of $\widehat{\Lambda^\vee}^{\boxtimes\alpha}$, so we have
$$
(F^\vee)^{\boxtimes\alpha}=\tau_*^{S_\alpha}\left(\bigl(\widehat{\Lambda^\vee}^{\boxtimes\alpha}\boxtimes\Lambda_M^{-\alpha}\bigr)\otimes \widehat{F}^{\boxtimes\alpha}\right)\simeq\bigl((\Lambda^\vee)^{\boxtimes\alpha}\boxtimes\Lambda_M^{-\alpha}\bigr)\otimes \tau_*^{S_\alpha}\left(\widehat{F}^{\boxtimes\alpha}\right).
$$
The latter expression is precisely $\bigl((\Lambda^{\vee})^{\boxtimes\alpha}\boxtimes\Lambda_M^{-\alpha}\bigr)\otimes F^{\boxtimes\alpha}$.

We write $\cO_{\Sym^\alpha C}(-\Delta/2):=\tau_*^{S_\alpha}(\cO_{C^\alpha}\otimes\sign)$, a line bundle on $\Sym^\alpha C$ such that $\cO_{\Sym^\alpha C}(-\Delta/2)^{\otimes 2}\simeq\cO_{\Sym^\alpha C}(-\Delta)$, where $\Delta\subset\Sym^\alpha C$ is the diagonal divisor.
The morphism $\tau$ is ramified along $B=\tau^{-1}(\Delta)$ generically of order $2$, so $\cO_{C^\alpha}(B)$ is a relative dualizing sheaf for $\tau$. The equivariant structure on $\cO_{C^\alpha}(B)$ is dual to the equivariant structure of the ideal sheaf $\cO_{C^\alpha}(-B)\subset\cO_{C^\alpha}$. Since the local equation of $B$ is anti-invariant, $\cO_{C^\alpha}(B)\simeq \tau^*\cO_{\Sym^\alpha C}(\Delta/2)\otimes\sign.$ 

By duality,
\begin{align*}
\left((F^\vee)^{\boxtimes\alpha}\right)^{\vee}&\simeq \tau_*^{S_\alpha}\left(\widehat{F}^{\boxtimes\alpha}(B)\right)\simeq \tau_*^{S_\alpha}\left(\widehat{F}^{\boxtimes\alpha}\otimes\sign\right)(\Delta/2)\simeq\overline{F}^{\boxtimes\alpha}(\Delta/2)
\end{align*}

Restrictig to a divisor $D\in \Sym^\alpha C$, we obtain
$$
\left((F^\vee)^{\boxtimes\alpha}\right)_D^{\vee}\simeq\left(\overline{F}^{\boxtimes\alpha}\right)_D
$$
and similarly, arguing with $F^\vee$ in place of $F$, we get
$$
\left(F^{\boxtimes\alpha}\right)_D^{\vee}\simeq\left(\overline{F^\vee}^{\boxtimes\alpha}\right)_D.
$$
This completes the proof.
\end{proof}

\begin{corollary}\label{relation G bar and G}
We have $G_D^\vee\simeq \overline{G}_D\otimes\Lambda_M^{-\deg D}$ and $G_D\simeq \overline{G}_D^\vee\otimes\Lambda_M^{\deg D}$.
\end{corollary}

\begin{proof}
This follows from restricting $(F^\vee)^{\boxtimes\alpha} \simeq \bigl((\Lambda^{\vee})^{\boxtimes\alpha}\boxtimes\Lambda_M^{-\alpha}\bigr)\otimes F^{\boxtimes\alpha}$ to $\{D\}\times M$.
\end{proof}

Consider again the diagram (\ref{diagram Mi}). The wall between two consecutive chambers $M_{i-1}$ and $M_i$ occurs at $\sigma=d/2-i$. The birational transformation $M_{i-1}\dashrightarrow M_i$ is an isomorphism outside of the loci $\mathbb{P}W_i^-\subset M_{i-1}$, $\mathbb{P}W_i^+\subset M_i$, where $W_i^-$ and $W_i^+$ are vector bundles over the symmetric product $\Sym^iC$ of rank $i$ and $d+g-1-2i$, respectively. We have a diagram
\begin{equation*}
\begin{tikzcd}[cramped,sep=scriptsize]
& \tilde{M}\arrow[dl]\arrow[dr] &\\
M_{i-1}=M_{\sigma+\epsilon}\arrow[dr] & & M_{\sigma-\epsilon}=M_i\arrow[dl]\\
& M_\sigma &
\end{tikzcd}
\end{equation*}
where $\tilde{M}$ is both the blow-up of $M_{\sigma+\epsilon}=M_{i-1}$ along $\mathbb{P}W_i^-$ and the blow-up of $M_{\sigma-\epsilon}=M_i$ along $\mathbb{P}W_i^+$. The variety  $M_\sigma$ is singular, obtained from the contraction to $\Sym^iC$ of the exceptional locus $\mathbb{P}W_{i}^-\times_{\Sym^iC}\mathbb{P}W_i^+ \subset \tilde{M}$.

When $d\gg 0$, $M_{\sigma\pm\epsilon}(\Lambda)$ and $M_{\sigma}(\Lambda)$ are obtained as GIT quotients of $U\times\mathbb{PC}^\chi$, with $\chi=d+2-2g$. When $d$ is arbitrary, take an effective divisor $D'$ of large degree, so that $M_\sigma\hookrightarrow M'_\sigma:=M_\sigma(\Lambda(2D'))$, where $M'_\sigma$ is a GIT quotient with a semi-stable locus $X'\subset U'\times\mathbb{PC}^{\chi'}$, $\chi'=d+2-2g+2\deg D'$. The spaces $M_{\sigma\pm\epsilon}(\Lambda)$ and $M_{\sigma}(\Lambda)$ are then GIT quotients by $SL_{\chi'}$ of a closed subset of $U'\times\mathbb{PC}^{\chi'}$ determined by the condition that in the pair $(E',\phi')$, the section $\phi'$ vanishes along $D'$. If we call $\mathcal{L}_\pm$, $\mathcal{L}_0$ the corresponding linearizations, we can write $X\subset X'$, the semi-stable locus of $\cL_0$, as the  union $X=X^{ss}(\mathcal{L}_+)\cup X^{ss}(\mathcal{L}_-)\sqcup Z$, where the locus $Z=X^{u}(\mathcal{L}_+)\cap X^{u}(\mathcal{L}_-)$ corresponds to pairs $(E',\phi')$, such that $E'$ splits as 
$$
E'=L'\oplus K',
$$
with $\deg L'=i+\deg D'$, $\deg K'=d-i+\deg D'$, and $\phi'\in H^0(L')$ vanishes along $D'$ (see \cite[1.4]{thaddeus}). The map $\mathcal{O}_C^{\chi'}\twoheadrightarrow E'$ is then given by a block-diagonal matrix $(\mathcal{O}_C^a\twoheadrightarrow L')\oplus(\mathcal{O}_C^b\twoheadrightarrow K')$, where $a=h^0(L')$, $b=h^0(K')$ and $a+b=h^0(L'\oplus K')=\chi'$. The strictly semi-stable locus $X^{sss}(\cL_0)=X^{u}(\mathcal{L}_+)\cup X^{u}(\mathcal{L}_-)$ consists of the orbits whose closure intersects $Z$ (cf. \cite[Remark 7.4]{potashnik}).

Using techniques from \cite{dhl} and \cite{katzarkov}, we compare the derived categories on either side of the wall $M_\sigma$. We write $M_{\sigma\pm\epsilon}=X\git_{\mathcal{L}_\pm}PGL_{\chi'}$ (cf. Remark \ref{SL vs PGL})
and take Kempf--Ness stratifications of the unstable loci $X^{u}(\mathcal{L}_\pm)$ with strata $S^j_{\pm}$ determined by pairs $(Z^j,\lambda^j_\pm)$, where $\lambda^j_-(t)=\lambda^j_+(t)^{-1}$ are one-parameter subgroups and $Z^j$ is the fixed locus of $\lambda^j=\lambda^j_+$ (see \cite[\S 2.1]{dhl} for details).

\begin{remark}\label{unique KN stratum}
From the discussion above, it follows that in this case the KN stratification of the unstable locus in $X$ with respect to $\mathcal{L}_\pm$ has only one stratum $S_\pm$, parametrizing framed extensions as in \cite[(3.2),(3.3)]{thaddeus}. 
In the notation of \cite[\S2]{dhl}, the stratum $S_{\pm}$ is determined by the pair $(Z,\lambda)$, where $\lambda=\lambda_+=\mathbb{G}_m$ is the stabilizer of $Z$, and some power of $\lambda$ acts on a split bundle $E'=L'\oplus K'$ by $(t^b,t^{-a})$. 
\end{remark}

\begin{remark}\label{Stacky Z}
Let $\mathfrak{Z}$ be the stack $[Z/\mathbb{L}]$, where $\mathbb{L}$ is the Levi subgroup, i.e. the centralizer of $\lambda$ in $PGL_{\chi'}$. We have a short exact sequence of groups
$1\to\mathbb{G}_m\to \mathbb{L}\to PGL_a\times PGL_b\to 1$ with $\mathbb{G}_m=\lambda$ acting on $Z$ trivially and $[Z/PGL_a\times PGL_b]\simeq \Sym^iC$. Indeed, the action of $PGL_a\times PGL_b$ on $Z$ is free, and each orbit is determined by a divisor $D\in \Sym^iC$, where $D+D'$ is the zero locus of the section $\phi'\in H^0(L')$. Therefore $\mathfrak{Z}\simeq [\Sym^i C/\mathbb{G}_m]$, with the trivial action of $\mathbb{G}_m$.
\end{remark}

For $\sigma=d/2-i$ with $1<i\leq v$, $M_{\sigma\pm\epsilon}$ ($\epsilon>0$) is isomorphic to the corresponding quotient stack, since the action of $PGL_{\chi'}$ is free on the stable locus by \cite[1.6]{thaddeus}. Let $\eta_{\pm}=\weight_{\lambda_\pm} \left.\det\mathcal{N}^\vee_{S_{\pm}/X}\right|_{Z}$. For~any choice of an integer $\win$, $D^b(M_{\sigma\pm\epsilon})$ is equivalent to the window subcategory $G_{\win}^{\pm}\subset D^b([X/PGL_{\chi'}])$ determined by objects having $\lambda_{\pm}$-weights in the range $[\win,\ \win+\eta_\pm)$ for the unique stratum $S_\pm$ (see \cite[Theorem 2.10]{dhl}). If~$\weight_{\lambda}\omega_X|_{Z}=\eta_--\eta_+>0$, we get an embedding $D^b(M_{\sigma+\epsilon})\subset D^b(M_{\sigma-\epsilon})$ (see~\cite[Proposition 4.5]{dhl} and the Remark following it).


\begin{lemma}\label{eta computation}
In the wall-crossing between the spaces $M_{\sigma+\epsilon}(\Lambda)=M_{i-1}$ and $M_{\sigma-\epsilon}(\Lambda)=M_{i}$, 
the window has width
$\eta_+=i$, 
$\eta_-=d+g-1-2i$.
\end{lemma}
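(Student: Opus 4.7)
The plan is to compute $\eta_{\pm}$ directly from its definition as the $\lambda_{\pm}$-weight on $\det \mathcal{N}^\vee_{S_\pm/X}|_Z$, after first describing the action of $\lambda$ on the normal bundle to $Z$ in $X$ and then quotienting by the group-directions to pass to the moduli stack. The key observation will be that, on the normal bundle to $S_{\pm}$ in $X$ modulo the corresponding part of the Lie algebra $\mathfrak{pgl}_{\chi'}$, the weights of $\lambda$ are homogeneous (they all equal $\pm(a+b)$), so the computation reduces to a rank count.

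First I will fix a split point of $Z$ representing a pair $(E'=L'\oplus K',\phi'\in H^0(L'))$ with $\deg L'=i+\deg D'$, and describe $\lambda$ explicitly: it acts on $H^0(L')=\mathbb{C}^a$ with weight $b$ and on $H^0(K')=\mathbb{C}^b$ with weight $-a$, where $a=h^0(L')$, $b=h^0(K')$. Correspondingly, it acts on $L'$ with weight $b$ and on $K'$ with weight $-a$. The ``standard rescaling'' is to pass from the $SL_{\chi'}$-generator $\lambda$ to the primitive cocharacter of $PGL_{\chi'}/\mathbb{L}$ which is the quotient of $\lambda$ by $\chi'=a+b$; under this rescaling the weights on $L'$ and $K'$ become, effectively, $+1$ and $-1$.

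Next I will decompose $T_X|_Z$ into $\lambda$-weight spaces and quotient by $\mathfrak{pgl}_{\chi'}$. Using the description of $X$ as a locus in $U'\times\mathbb{P}\mathbb{C}^{\chi'}$ from Section~\ref{ThaddeusSpaces}, the tangent to $X$ at the split point decomposes into pieces of weight $0$, $-(a+b)$, and $+(a+b)$ (the only weights that appear because $L'$ and $K'$ are the only $\lambda$-isotypic components of $E'$, and $\phi'$ lies in the $L'$-summand); correspondingly $\mathfrak{pgl}_{\chi'}$ has off-diagonal blocks $\mathrm{Hom}(\mathbb{C}^b,\mathbb{C}^a)$ and $\mathrm{Hom}(\mathbb{C}^a,\mathbb{C}^b)$ of weight $\pm(a+b)$ and dimension $ab$ each. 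Cancelling these $ab$-dimensional group directions, the normal bundle $\mathcal{N}_{S_+/X}|_Z$ (respectively $\mathcal{N}_{S_-/X}|_Z$) lives entirely in weight $-(a+b)$ (respectively $+(a+b)$), so each weight sum reduces to $\pm(a+b)\cdot \mathrm{rk}(\mathcal{N})$.

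Finally, to identify the ranks I will use the geometric picture from Thaddeus: on the quotient stack side, these normal bundles are precisely the normal bundles of the flipped loci $\mathbb{P}W_i^+\subset M_i$ and $\mathbb{P}W_i^-\subset M_{i-1}$. By the structure of the flip, these normal bundles have ranks $\mathrm{rk}\,W_i^-=i$ and $\mathrm{rk}\,W_i^+=d+g-1-2i$, respectively. Substituting and rescaling by $a+b$ gives $\eta_+=i$ and $\eta_-=d+g-1-2i$, as claimed. The main obstacle will be verifying the homogeneity of weights on the normal directions (step 2): one needs to check carefully that, after subtracting the contribution from $\mathfrak{pgl}_{\chi'}$, the deformation directions coming from varying the section $\phi'$ (which a priori carry the weights $\pm a$ or $\pm b$) are absorbed into the group-orbit directions, so that only the $\pm(a+b)$ weight pieces survive in the normal to the stratum.
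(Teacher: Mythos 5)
Your overall strategy coincides with the paper's: show that the $\lambda$-weights on $\mathcal{N}^\vee_{S_\pm/X}|_Z$ are homogeneous of weight $\pm\chi'$, so that $\eta_\pm$ is $\chi'$ times the codimension of $S_\pm$, and then read that codimension off from Thaddeus's description of the flipping loci ($\rank W_i^-=i$, $\rank W_i^+=d+g-1-2i$). Your rank identifications are correct and agree with what the paper uses. The difference is how the homogeneity of the weights is established: the paper simply cites \cite[Lemma 7.6]{potashnik} (applied first to $\mathcal{N}^\vee_{S/X'}$ and then restricted to $X$), whereas you propose to redo that computation from scratch by decomposing $T_X|_Z$ and cancelling the $\mathfrak{pgl}_{\chi'}$-directions. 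That is a perfectly reasonable alternative, and the ingredients you list (the off-diagonal blocks $\mathrm{Hom}(\mathbb{C}^b,\mathbb{C}^a)$, $\mathrm{Hom}(\mathbb{C}^a,\mathbb{C}^b)$ of weight $\pm\chi'$ and dimension $ab$; the Quot-scheme tangent $\mathrm{Hom}(K,E')$ with $K=K_a\oplus K_b$) are the right ones.

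There is a gap, though, and you flag it yourself: you do not actually carry out the verification that the weights on the normal to the stratum are exclusively $\pm\chi'$. This is the crux, and it is what the Potashnik citation supplies in the paper. On the other hand, the specific worry you raise at the end is a red herring: the tangent directions to $\mathbb{P}\mathbb{C}^{\chi'}$ at the split point $[\phi':0]$ do \emph{not} carry weights $\pm a$ or $\pm b$. Because everything is twisted by $\mathcal{O}_{\mathbb{P}\mathbb{C}^{\chi'}}(1)$ (equivalently, because one looks at the projective tangent space and subtracts the weight $b$ of the base point), those directions carry weights $0$ (along $\mathbb{C}^a/\mathbb{C}\phi'$) and $-\chi'$ (along $\mathbb{C}^b$), so there is nothing to ``absorb into the group directions'' there; the honest work is in the Quot-scheme factor, not in the $\phi'$-directions. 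Also, your description of the ``standard rescaling'' is not quite right: the $1$-PS $\lambda=(t^b,t^{-a})$ does not become a primitive cocharacter after dividing by $\chi'$; the rescaling is a purely formal division of all weights by $\chi'$, as the paper indicates, and is convenient precisely because every weight that actually appears is a multiple of $\chi'$ once homogeneity is established. None of this invalidates your plan, but the remaining work is exactly the computation you defer, and the place where you anticipate difficulty is not where the difficulty actually sits.
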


\begin{proof}


We use the notation as in the discussion above, with $M_\sigma\hookrightarrow M'_\sigma:=M_\sigma(\Lambda(2D'))$, $D'$ effective with $\deg D'\gg 0$.
For $\mathcal{L}_\pm$, there is no strictly semi-stable locus and in fact $PGL_{\chi'}$ acts freely on the semi-stable locus \cite[1.6]{thaddeus}, so 
$M_{i-1}=M_{\sigma+\epsilon}(\Lambda)=X\git_{\mathcal{L}_+}SL_{\chi'}$ and $M_i=M_{\sigma-\epsilon}(\Lambda)=X\git_{\mathcal{L}_-}SL_{\chi'}$
are isomorphic to the quotient stacks $[{X}^{ss}(\mathcal{L}_\pm)/PGL_{\chi'}]$ (cf. Remark \ref{SL vs PGL}).
By Lemma~\ref{smooth_stack}, both $[X/PGL_{\chi'}]$ and $[X'/PGL_{\chi'}]$ are smooth quotient stacks of dimension $d+g-2$ and $d+g-2+2\deg D'$, respectively, and thus $X$ and $X'$ are both smooth and $X\subset X'$ is a local complete intersection cut out precisely by the $2\deg D'$ conditions imposed by the vanishing of a section along~$D'$.

Recall that the unique KN stratum of $X^{u}(\cL_\pm)$ is determined by $(Z,\lambda)$ (cf. Remark \ref{unique KN stratum}), where for a pair $(E',\phi')\in Z$, the bundle $E'=L'\oplus K'$ is acted on by (some power of) $\lambda=\mathbb{G}_m$ by $(t^b,t^{-a})$. 
We will first compute the weights with respect to this action, and later rescale according to the parametrization that describes the whole one-parameter subgroup.
By \cite[Lemma 7.6]{potashnik} and its proof,
the $\lambda$-weights of $\mathcal{N}^\vee_{S_\pm/ X'}$ on $Z$ are all $\pm (a+b)=\pm \chi'$ or $0$.
Then the weights of $\mathcal{N}_{S_\pm/X}^\vee $ are all $\pm \chi'$, and $\eta_{\pm}=\weight_{\lambda_\pm}\left.\det \mathcal{N}^\vee_{S_{\pm}/X}\right|_Z$ is just the codimension of $S_\pm\subset X$.
Since $S_\pm$ is the bundle $W_i^\pm$ on $Z$, we have $\codim (S_\pm\subset X)=\rank W_i^\mp$, so that $\eta_+=i\chi'$ and $\eta_-=(d+g-1-2i)\chi'$.

As a one-parameter subgroup of $PGL_{\chi'}$, $\lambda$ is given by sending $t\mapsto diag(s^b,\ldots,s^b,s^{-a},\ldots,s^{-a})$, where $s^{\chi'}=t$. Note that this is well defined, since when replacing $s$ by $\xi s$, with $\xi$ a $\chi'$-th root of unity, the matrix $\lambda(t)$ gets scaled by $\xi^b=\xi^{-a}$. Therefore, all weights computed above need to be rescaled by $1/\chi'$. This gives the formulas in the statement.
\end{proof}

Using this we obtain the following result.

\begin{proposition}\label{potashnik embedding}
For $1\leq i \leq \frac{d+g-1}{3}$
(resp., $i\geq \frac{d+g-1}{3}$)
there is an admissible
embedding $D^b(M_{i-1})\hookrightarrow D^b(M_i)$ (resp.,  $D^b(M_{i})\hookrightarrow D^b(M_{i-1})$). When $1<i\leq \frac{d+g-1}{3}$, the admissible embedding can be chosen to be the window subcategory $G_0^+\subset D^b(M_i)$ determined by the range of weights $[0,i)\subset [0,d+g-1-2i)$ (cf. \cite{dhl}) 
and moreover there is a semi-orthogonal decomposition 
\begin{align}\label{sod bfr}
    D^b(M_i)=\langle D^b(M_{i-1}),D^b(\Sym^iC),\ldots,D^{b}(\Sym^iC)\rangle
\end{align}
with $\mu=d+g-3i-1$ copies of $D^b(\Sym^iC)$
given by the fully faithful images of functors $Rj_*\bigl(L\pi^*(\cdot)\otimes^L\mathcal{O}_\pi (l)\bigr):D^b(\Sym^iC)\to D^b(M_i)$ for $l=0,\ldots,\mu-1$, where $\pi:\mathbb{P}W_i^+\to \Sym^iC$ is the projection and $j:\mathbb{P}W_i^+\hookrightarrow M_i$ the inclusion.
\end{proposition}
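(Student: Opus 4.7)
\medskip

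The plan is to apply the windows machinery of Halpern-Leistner \cite{dhl} and Ballard-Favero-Katzarkov \cite{katzarkov} to the wall-crossing data already assembled in Lemma~\ref{eta computation} and Remarks~\ref{unique KN stratum}, \ref{Stacky Z}. By \cite[Theorem 2.10]{dhl}, for every integer $\win$ and every choice of sign, the restriction functor induces an equivalence $G_\win^\pm \xrightarrow{\sim} D^b(M_{\sigma\pm\epsilon})$, where $G_\win^\pm\subset D^b([X/PGL_{\chi'}])$ is the subcategory of complexes whose $\lambda_\pm$-weights on the unique KN stratum $S_\pm$ lie in $[\win,\win+\eta_\pm)$. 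Lemma~\ref{eta computation} gives $\eta_+=i$ and $\eta_-=d+g-1-2i$ after the standard rescaling.

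Taking $\win=0$ and comparing the two windows inside the common ambient category $D^b([X/PGL_{\chi'}])$, one has $G_0^+\subset G_0^-$ precisely when $\eta_+\leq \eta_-$, i.e.\ $i\leq (d+g-1)/3$, producing the admissible embedding $D^b(M_{i-1})\hookrightarrow D^b(M_i)$. The inequality and the inclusion reverse when $i\geq (d+g-1)/3$, yielding the opposite embedding $D^b(M_i)\hookrightarrow D^b(M_{i-1})$; this part also requires checking that $\weight_\lambda \omega_X|_Z=\eta_--\eta_+$ has the correct sign, which it does by the calculation in Lemma~\ref{eta computation}.

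For the semi-orthogonal decomposition~\eqref{sod bfr}, the complement of $G_0^+$ inside $G_0^-$ is generated by $\lambda$-equivariant objects concentrated in the ``missing'' weights $i,i+1,\ldots,\eta_--1$, a total of $\mu=\eta_--\eta_+=d+g-1-3i$ integers. By the general wall-crossing description in \cite{dhl} (and in the form used in \cite{bfr} for exactly this type of flip), each such weight contributes one copy of $D^b(\mathfrak{Z})_{\text{fixed weight}}$, where $\mathfrak{Z}$ is the stack of Remark~\ref{Stacky Z}. Since $\mathfrak{Z}\simeq[\Sym^iC/\mathbb{G}_m]$ with trivial $\mathbb{G}_m$-action, each weight component is canonically equivalent to $D^b(\Sym^iC)$, giving the desired $\mu$ copies.

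The last step is to identify these $\mu$ abstract embeddings with the concrete geometric functors $Rj_*(L\pi^*(\cdot)\otimes^L\cO_\pi(l))$. On the GIT side, the fixed-point stratum $\mathfrak{Z}$ corresponds, after passage to the GIT quotient by the Levi complement of $\lambda$, to $\mathbb{P}W_i^+\subset M_i$ (the locus of pairs appearing in $M_i$ but not in $M_{i-1}$), and the $\mathbb{G}_m$-weight decomposition matches the grading on the projective bundle $\pi:\mathbb{P}W_i^+\to\Sym^iC$ by powers of $\cO_\pi(1)$. Tracing through the window equivalence, the embedding of a complex $\cF$ of weight $l\in\{0,\ldots,\mu-1\}$ is realised by tensoring $\pi^*\cF$ with $\cO_\pi(l)$ on $\mathbb{P}W_i^+$ and pushing forward along $j$; this is the Orlov-style blow-up/projective-bundle picture applied to the resolution $\tilde M_i\to M_i$. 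The main obstacle is precisely this last identification: matching the weight grading on the boundary category with the tautological $\cO_\pi(l)$-twists requires careful bookkeeping of the weight of $\Lambda_M$ and of the normal bundle of $\mathbb{P}W_i^+$ inside $M_i$, but once these conventions are fixed the identification follows from the universal property of the window equivalence or, alternatively, by invoking \cite{bfr} directly.
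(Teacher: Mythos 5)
Your proposal follows essentially the same route as the paper: Lemma~\ref{eta computation} for $\eta_\pm$, the window embedding of \cite{dhl} (the paper cites \cite[Proposition 4.5 and Remark 4.6]{dhl}), the semi-orthogonal complement built from $\mu=\eta_--\eta_+$ copies of $D^b(\mathfrak{Z})_k$ via \cite[Theorem 2.10]{dhl} and \cite[Theorem 1]{katzarkov}, and the identification $D^b(\mathfrak{Z})\simeq D^b_{\mathbb{G}_m}(\Sym^iC)$ from Remark~\ref{Stacky Z} to realize the blocks as $Rj_*(L\pi^*(\cdot)\otimes^L\cO_\pi(l))$. The only divergence is that the paper dispatches $i=1$ separately by Orlov's blow-up formula, since the wall-crossing $M_0\dashrightarrow M_1$ is a blow-up rather than a flip and the quotient-stack/KN setup in Section~\ref{ThaddeusSpaces} is only established for $1<i\le v$; you should add that one line for the case $i=1$.
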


The semi-orthogonal decomposition \eqref{sod bfr} follows from \cite{bfr}, as the birational transformation between $M_{i-1}$ and $M_i$ is a standard flip of projective bundles over $\Sym^iC$. Here we provide an alternative proof for this case. We also note that \cite[Corollary 8.1]{potashnik} shows the admissible embeddings $D^b(M_{i-1})\hookrightarrow D^b(M_i)$ when $i$ is in the specified range. 

As explained in the introduction, Proposition~\ref{potashnik embedding} does not provide a semi-orthogonal decomposition with Fourier--Mukai functors associated with Poincar\'e bundles and it is not used in our paper. However, we find this result relevant.

\begin{proof}
If $i=1$, this follows from Orlov's blow-up formula \cite{orlov}. Let $i>1$.
From Lemma \ref{eta computation}, $\weight_\lambda\omega_X|_Z=\eta_--\eta_+=(d+g-1-3i)$.
By \cite[Proposition 4.5 and Remark 4.6]{dhl}, and since $M_{\sigma\pm\epsilon}\simeq [{X}^{ss}(\mathcal{L}_\pm)/PGL_{\chi'}]$, we get a window embedding $D^b(M_{\sigma+\epsilon})\subset D^b(M_{\sigma-\epsilon})$ if $\eta_+\leq\eta_-$ and the other way around if $\eta_+\geq \eta_-$. 
Moreover, if $G_{\win}^+= D^b(M_{\sigma+\epsilon})$ is a window, determined by the range of weights $[\win,\ \win+\eta_+)\subset [\win,\ \win+\eta_-)$,
then \cite[Theorem 2.11]{dhl} and \cite[Theorem 1]{katzarkov} give semi-orthogonal blocks $D^b(\mathfrak{Z})_k$, so that 
\begin{align}\label{sod katzarkov SymC}
D^b(M_{\sigma-\epsilon})=\langle G_{\win}^+,D^b(\mathfrak{Z})_{\win},\ldots,D^b(\mathfrak{Z})_{\win+\mu-1}\rangle,
\end{align}
where $\mu=\eta_--\eta_+$ and $\mathfrak{Z}=[Z/\mathbb{L}]$ is the quotient stack by the Levi subgroup. 
By Remark \ref{Stacky Z}, 
$D^b(\mathfrak{Z})=D_{\mathbb{G}_m}^b(\Sym^iC)$, so the blocks in (\ref{sod katzarkov SymC}) are given by the fully faithful images of $Rj_*\bigl(L\pi^*(\cdot)\otimes^L\mathcal{O}_\pi (l)\bigr):D^b(\Sym^iC)\to D^b(M_i)$ for $l\in[\win,\ \win+\mu)$, where $\pi:\mathbb{P}W_i^+\to \Sym^iC$ is the projection and $j:\mathbb{P}W_i^+\hookrightarrow M_i$ the inclusion. Taking $\win=0$ gives the claim.
\end{proof}

\begin{corollary}\label{potashnik embedding corollary}
If $d\le 2g-1$, then $D^b(M_{i-1})\subset D^b(M_i)$ for any $1\leq i\leq v$.
\end{corollary}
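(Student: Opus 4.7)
The plan is to deduce this immediately from Proposition~\ref{potashnik embedding}. That proposition gives an admissible embedding $D^b(M_{i-1})\hookrightarrow D^b(M_i)$ precisely in the range $1\le i\le \frac{d+g-1}{3}$, so the only thing to verify is that, under the hypothesis $d\le 2g-1$, the full interval $1\le i\le v=\lfloor(d-1)/2\rfloor$ lies inside this range.

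Concretely, I would split into the two parity cases. If $d$ is odd, then $v=(d-1)/2$, and the inequality $v\le (d+g-1)/3$ rearranges to $3(d-1)\le 2(d+g-1)$, i.e. $d\le 2g+1$. If $d$ is even, then $v=(d-2)/2$, and the inequality $v\le (d+g-1)/3$ rearranges to $d\le 2g+4$. Both bounds are comfortably implied by $d\le 2g-1$, so in either case every index $i$ with $1\le i\le v$ satisfies $i\le \frac{d+g-1}{3}$.

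Applying Proposition~\ref{potashnik embedding} to each such $i$ then gives the chain of admissible embeddings
\[
D^b(M_0)\hookrightarrow D^b(M_1)\hookrightarrow\cdots\hookrightarrow D^b(M_v),
\]
which is the desired statement. There is no real obstacle here: once the arithmetic inequality $\lfloor(d-1)/2\rfloor\le(d+g-1)/3$ has been checked under the assumption $d\le 2g-1$, the corollary follows formally from the window embedding already established. The only subtlety worth flagging is the edge case $i=1$, which is handled in Proposition~\ref{potashnik embedding} by Orlov's blow-up formula rather than by the windows comparison, but this is already subsumed into the statement of that proposition and requires no additional argument.
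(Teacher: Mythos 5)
Your argument is correct and is essentially the same as the paper's, which observes $i\le(d-1)/2\le g-1$ and hence $3i\le d+g-2<d+g-1$, rather than splitting by parity of $d$; both just verify $v\le\frac{d+g-1}{3}$ and cite Proposition~\ref{potashnik embedding}. The edge case $i=1$ is indeed already subsumed in that proposition via Orlov's blow-up formula, as you note.
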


\begin{proof}
In this case $i \leq (d-1)/2\leq g-1$, so the inequality $i < (d+g-1)/3$ holds for every $i$.
\end{proof}

Consider an object $G$ in $D^b([X/PGL_{\chi'}])$ descending to some objects on $D^b(M_{i-1})$ and $D^b(M_i)$. We can use windows to determine when such object can ``cross the wall''. Namely, if the weights of $G$ are in the required range, cohomology groups will be the same on either side. By abuse of notation, we often denote in the same way both the object on $D^b([X/PGL_{\chi'}])$ and the objects it descends to in $M_{\sigma\pm\epsilon}(\Lambda)$.

\begin{theorem}\label{windows wall-crossing}
Let $\sigma=d/2-i$, $1<i\leq v$. If $A$, $B$ are objects in $D^b([X/PGL_{\chi'}])$, with $\lambda=\lambda_+$-weights satisfying the inequalities
\begin{align}\label{weights within eta}
1+2i-d-g<\weight_\lambda B|_Z-\weight_\lambda A|_Z <i
\end{align}
then
$R\Hom_{M_{\sigma+\epsilon}}(A,B)=R\Hom_{M_{\sigma-\epsilon}}(A,B)$.
In particular, if 
$
1+2i-d-g<\weight_\lambda B|_Z <i
$
then 
$
R\Gamma_{M_{i-1}}(B)=R\Gamma_{M_{i}}(B)
$.
\end{theorem}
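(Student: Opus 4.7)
The plan is to show that, under the stated weight hypothesis, both sides agree with $R\Hom_{[X/PGL_{\chi'}]}(A,B)$, via the window/KN-stratification machinery of Halpern--Leistner \cite{dhl}. Setting $G=PGL_{\chi'}$ and $F=R\sHom_{[X/G]}(A,B)$, so that $R\Hom(A,B)=R\Gamma(F)$ on each of the three stacks involved, I would reduce to verifying the vanishing $R\Gamma_{S_\pm/G}(F)=0$ of local cohomology along each stratum, using the localization triangles
\[
R\Gamma_{S_\pm/G}(F)\to R\Gamma_{[X/G]}(F)\to R\Gamma_{M_{\sigma\pm\epsilon}}(F).
\]

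The key input is the DHL vanishing: for a KN stratum $S$ with destabilizing one-parameter subgroup $\lambda$ and window width $\eta=\weight_\lambda\det\mathcal N^\vee_{S/X}|_Z$, the local cohomology $R\Gamma_{S/G}(F)$ vanishes as soon as all $\lambda$-weights of $F|_Z$ are strictly less than $\eta$. In our situation, Remark~\ref{unique KN stratum} ensures that each unstable locus $X^u(\mathcal L_\pm)$ is a single KN stratum supported on the common fixed locus $Z$ (the split bundles) with inverse one-parameter subgroups $\lambda_\pm=\lambda^{\pm 1}$, while Lemma~\ref{eta computation} computes the window widths $\eta_+=i$ and $\eta_-=d+g-1-2i$. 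Since $\lambda$ acts trivially on $Z$ and the notation $\weight_\lambda A|_Z$ presupposes that $A|_Z$ and $B|_Z$ have single $\lambda$-weights, the restriction $F|_Z$ carries a single $\lambda_+$-weight $\delta=\weight_{\lambda_+}B|_Z-\weight_{\lambda_+}A|_Z$ and, dually, a single $\lambda_-$-weight equal to $-\delta$.

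Applying the DHL vanishing to $S_+$ then requires $\delta<\eta_+=i$, which is precisely the upper bound in the hypothesis, while applying it to $S_-$ requires $-\delta<\eta_-=d+g-1-2i$, equivalently $\delta>1+2i-d-g$, which is the lower bound. Both conditions are in force, so $R\Gamma_{S_\pm/G}(F)=0$ on both sides, and I conclude
\[
R\Hom_{M_{\sigma+\epsilon}}(A,B)=R\Gamma_{[X/G]}(F)=R\Hom_{M_{\sigma-\epsilon}}(A,B).
\]
The ``in particular'' statement is the case $A=\mathcal O$, whose restriction to $Z$ has $\lambda_+$-weight zero.

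The main point to handle carefully is the precise formulation of the vanishing criterion: it is \emph{not} the naive requirement that $A$ and $B$ lie individually in a common window, which would give only the strictly smaller symmetric range $|\delta|<\min(\eta_+,\eta_-)$. Instead, it is the single-sided upper bound on the $\lambda$-weights of the Hom-complex $R\sHom(A,B)$, applied separately to each of the two opposite strata, that together produce the asymmetric interval $(-\eta_-,\eta_+)$ in the theorem. Once this is set up correctly, no further input beyond the single-stratum structure from Remark~\ref{unique KN stratum} and the weight computation of Lemma~\ref{eta computation} is needed.
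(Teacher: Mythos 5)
Your argument is correct and follows essentially the same route as the paper: both reduce the statement to the Quantization Theorem of Halpern--Leistner \cite[Theorem 3.29]{dhl}, applied once to each of the two opposite strata $S_+$ and $S_-$ with widths $\eta_+=i$ and $\eta_-=d+g-1-2i$ from Lemma~\ref{eta computation}, using that a $\lambda_-$-weight is minus the corresponding $\lambda_+$-weight. The paper just cites Theorem~3.29 twice directly, whereas you unwind it to the localization triangle and the vanishing of local cohomology $R\Gamma_{S_\pm/G}$, but that is precisely what the cited theorem encapsulates. One small imprecision in your write-up: you assert that the notation $\weight_\lambda A|_Z$ presupposes $A|_Z$ and $B|_Z$ each have a single $\lambda$-weight, but the theorem is actually applied in the paper (e.g.\ in the proof of Theorem~\ref{vanishing with Z on Mi}) with $B$ having a whole range of weights; the hypothesis should be read as all differences of weights lying in the interval, and then all weights of $F|_Z=R\sHom(A,B)|_Z$ lie in $(-\eta_-,\eta_+)$, so the DHL vanishing still applies verbatim and your argument goes through unchanged.
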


\begin{proof}
By Lemma \ref{eta computation}, \eqref{weights within eta} is equivalent to inequalities  
$$-\eta_-<\weight_\lambda B|_Z-\weight_\lambda A|_Z <\eta_+,$$
so the Quantization Theorem \cite[Theorem 3.29]{dhl} implies that
\begin{align*}
R\Hom_{M_{\sigma+\epsilon}}(A,B)=R\Hom_{[X/PGL_{\chi'}]}(A,B)=R\Hom_{M_{\sigma-\epsilon}}(A,B).
\end{align*}
Indeed, the first equality follows directly from \cite[Theorem 3.29]{dhl} applied on $M_{\sigma+\epsilon}$, while the second is the same theorem applied on $M_{\sigma-\epsilon}$, using the fact that
$
\weight_{\lambda_-} B|_Z-\weight_{\lambda_-} A|_Z=-(\weight_\lambda B|_Z-\weight_\lambda A|_Z)
$.
\end{proof}

We finish this section with the computation of all weights that we need in order to construct the semi-orthogonal decompositions.

\begin{theorem}\label{weights computations of all bundles}
The objects of the form $F_x$, $\Lambda_M$, $\psi$, $\zeta$, $G_D$ on both $M_{i-1}$ and $M_i$ are the descents of objects $\Tilde{F}_x$, $\Tilde{\Lambda}_M$, $\Tilde{\psi}$, $\Tilde{\zeta}$, $\Tilde{G}_D$ on $D^b([X/PGL_{\chi'}])$ that
have $\lambda$-weights
\begin{align*}
&\weights_\lambda \Tilde{F}_x|_Z=\{0,-1\}\\
&\weight_\lambda \Tilde{\Lambda}_M|_Z=-1\\
&\weight_\lambda \Tilde{\psi}|_Z=d+1-g-i\\
&\weight_\lambda \Tilde{\zeta}|_Z=g-i\\
&\weights_\lambda \Tilde{G}_D|_Z=\{0,-1,\ldots,-\deg D\}.
\end{align*}
\end{theorem}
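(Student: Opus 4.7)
The strategy is to lift each bundle in the list to an explicit $PGL_{\chi'}$-equivariant object on the GIT model $X$ (or on the auxiliary $X'$) and read off its $\lambda$-weights at a point of $Z$. The single underlying input is that $\lambda$ acts on a split bundle $E' = L' \oplus K'$ by $(t^b, t^{-a})$ with $a = h^0(L')$, $b = h^0(K')$, $a + b = \chi'$, and that on the line $\mathbb{C}\phi' \subset \mathbb{C}^{\chi'}$ it acts by $t^b$, so $\mathcal{O}_{\mathbb{PC}^{\chi'}}(1)|_{[\phi']}$ carries weight $-b$. All weights below are rescaled by $1/\chi'$ at the end, as in the proof of Lemma~\ref{eta computation}.

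For $F_x$ and $\Lambda_M$ I will use that $F' = \mathcal{F}'(1)$: at $(p,x) \in Z \times C$ the weights $\{b, -a\}$ of $\mathcal{F}'_x$ are shifted by $-b$, giving $\{0, -\chi'\}$; summing gives weight $-\chi'$ on $\Lambda^2 F_x$. For $M \hookrightarrow M'$, Lemma~\ref{usefuliso} identifies $F_x$ with $\imath^* F'_x$, so the formulas transfer unchanged. For $\psi$ I will first compute $\psi'$ on $M'$ by decomposing $R\pi'_! F'|_Z$ into $\lambda$-weight spaces of ranks $\chi(L')$ and $\chi(K')$ (via Riemann--Roch) carrying weights $0$ and $-\chi'$; this produces a formula containing $\deg D'$. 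To descend to $M$, I will apply $\det \pi_!$ to the exact sequence \eqref{usefulseq}, obtaining $\psi = \imath^* \psi' \otimes \det \pi_*(F'|_{D' \times M})$, and then perform a short analogous weight computation on the correction term $F'|_{D' \times M}$. The two $\deg D'$ contributions cancel to yield the intrinsic rescaled weight $d + 1 - g - i$, and the value for $\zeta = \psi \otimes \Lambda_M^{d - 2g + 1}$ follows algebraically.

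For $G_D$ with $D = x_1 + \cdots + x_\alpha$, Corollary~\ref{standarddeformation} realizes $G_D$ as a flat $\bA^1$-deformation of $F_{x_1} \otimes \cdots \otimes F_{x_\alpha}$, and Remark~\ref{remark equivariantdeformation} allows this deformation to be chosen $PGL_{\chi'}$-equivariantly with trivial $\lambda$-action on $\bA^1$. Since $\lambda$-weights are locally constant in such a family over a connected base, the weight multi-set of $G_D|_Z$ equals that of the tensor product, whose weights are $\alpha$-fold sums of elements of $\{0, -1\}$ and thus fill out exactly $\{0, -1, \ldots, -\alpha\}$.

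The main obstacle I anticipate is the bookkeeping in the $\psi$-computation: both $\psi'|_Z$ and the $D'$-correction have rescaled weights containing $\deg D'$, and the proof hinges on the clean cancellation of these auxiliary dependencies through \eqref{usefulseq}; once this cancellation is verified, every remaining weight is a one-line consequence of the initial $(t^b, t^{-a})$ and $\mathcal{O}(1)$-weight input.
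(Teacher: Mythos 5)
Your proposal follows essentially the same route as the paper: it uses the same $\lambda$-action $(t^b,t^{-a})$ on $L'\oplus K'$ together with the $\mathcal{O}(1)$-weight $-b$ to compute the weights of $\mathcal{F}'(1)$, transfers $F_x$ and $\Lambda_M$ via Lemma~\ref{usefuliso}, passes from $\psi'$ to $\psi$ through the exact sequence \eqref{usefulseq} (with the same cancellation of the $\deg D'$-contribution), derives $\zeta$ algebraically, and handles $G_D$ via the $SL_{\chi'}$-equivariant $\bA^1$-deformation of Corollary~\ref{standarddeformation} and Remark~\ref{remark equivariantdeformation}. The reasoning and computations agree with the paper's proof.
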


\begin{proof}
Let $\sigma=d/2-i$ and embed $\imath:M_\sigma(\Lambda)\hookrightarrow M'_\sigma =M_\sigma (\Lambda (2D'))$ for an effective divisor $D'$, $\deg D'\gg 0$, as usual.
Recall that the universal bundle $F'$ on $C\times M'_{\sigma\pm\epsilon}$ is the descent of $\mathcal{F}'(1)$ on $C\times X'\subset C\times U'\times\mathbb{PC}^{\chi'}$, where $\mathcal{F}'$ is the universal family on $C\times U'$ \cite[1.19]{thaddeus}.
Let us compute the $\lambda$-weights of $\mathcal{F}'_x(1)$ on the $\sigma$-strictly semi-stable locus, for a point $x\in C$. The fiber of $\mathcal{F}'_x$ over $L'\oplus K'$ is $L'_x\oplus K'_x$, which is acted on with weights $b$ in the first component and $-a$ in the second. Since the $\lambda$-weight of $\mathcal{O}_{\mathbb{PC}^{\chi'}}(1)$ over the section $(\phi',0)$ is $-b$, the weights of $\mathcal{F}'_x(1)$ are $0$ and $-a-b=-\chi'$. By Lemma \ref{usefuliso}, we have $F_x \simeq \imath^*F'_x$. Hence, $F_x$ also is the descent of an object with weights $0$ and $-\chi'$.

The bundle $\det {\pi_!}F'$ descends from $\det {\pi_!}\mathcal{F}'(1)$. On the fiber of $\pi_!\mathcal{F}'$ over $L'\oplus K'$, $\lambda$ acts on $H^0(L')\oplus H^0(K')$ with weights $b$ and $-a$, with multiplicities $h^0(L')=a$ and $h^0(K')=b$, respectively. Taking tensor product with $\mathcal{O}_{\mathbb{PC}^{\chi'}}(1)$ shifts each weight by $-b$, and then taking the determinant we get $\weight_\lambda \det {\pi_!}\mathcal{F}'(1)|_{Z'}=0\cdot a+(-a-b)\cdot b=-b\chi'$.
For $\det F'_x$, which is the descent of $\det\mathcal{F}'_x(1)$, we see that $\lambda$ acts with weights $b,-a$ on $L'_x\oplus K'_x$ and then shifting by $-b$ and taking determinants we get $\weight_\lambda\det\mathcal{F}'_x(1)|_{Z'}=-a-b=-\chi'$.

Now for the universal bundle $F$ on $C\times M_{\sigma\pm\epsilon}(\Lambda)$, we use the short exact sequence \eqref{usefulseq}.
From this we see that $\Lambda_M=\det F_x\simeq\det F'_x$ is the descent of an object with $\lambda$-weight equal to $-\chi'$. Also, since $\det\pi_!F'|_{D'\times M_{\sigma\pm\epsilon}}=\det\bigoplus_{x\in D'}F'_x=(\det F'_x)^{\deg D'}$, we obtain that $\psi^{-1}=\det \pi_!F=\det {\pi_!}F'\otimes (\det F'_x)^{-\deg D'}$ 
is the descent of an object with $\lambda$-weight equal to $-b\chi'+\deg D' \chi'$. Recall $\deg L'=i+\deg D'$, $\deg K'=d-i+\deg D'$ (see the discussion before Remark \ref{unique KN stratum}), so by Riemann-Roch $b=h^0(K')=d-i+\deg D'+1-g$ and the weight of ${\psi}$ is $-\chi'(\deg D'-b)=\chi'(d+1-g-i)$. As for $\zeta=\psi\otimes\Lambda_M^{d-2g+1}$, the weights must be $(d+1-g-i-(d-2g+1))\chi'=(g-i)\chi'$. Rescaling everything by $1/\chi'$ as in Lemma \ref{eta computation}, we get the weights as in the statement.

Finally, we consider  $G_D$. Let $D=x_1+\ldots+x_\alpha$. 
Since by construction tensor bundles are functorial in $M$,
the bundle $G_D$ is the descent of a vector bundle $(\cE^{\boxtimes \alpha})_D$ on $X$, where $M=X\git SL_{\chi'}$ and $\cE$ descends to~$F$. By Lemma \ref{standarddeformation}, $(\cE^{\boxtimes \alpha})_D$ is a deformation of $\cE_{x_1}\otimes\ldots\otimes\cE_{x_\alpha}$, and the deformation can be chosen to be $SL_{\chi'}$-equivariant (see Remark \ref{remark equivariantdeformation}). Therefore, $(\cE^{\boxtimes \alpha})_D$ has the same weights as the tensor product $\cE_{x_1}\otimes\ldots\otimes\cE_{x_\alpha}$, i.e. $0,-1,\ldots,-\alpha$.
\end{proof}

\begin{remark}\label{weights of line bundles}
Observe that $\cO_i(1,0)=\psi\otimes\Lambda_M^{d-g}$ and $\cO_i(0,1)=\Lambda_M^{-1}$, so we can use the previous theorem to see that in general, a line bundle $\cO_i(m,n)$ is the descent on both $M_{i-1}$ and $M_i$ of an object having $\lambda$-weight $m (1-i)+n$ on the strictly semi-stable locus of the wall.
\end{remark}

\section{Acyclic vector bundles on $M_i$ -- easy cases}

In order to prove Theorem \ref{maintheorem}, we will first construct fully faithful functors $\Phi_\alpha^i:D^b(\Sym^\alpha C)\hookrightarrow D^b(M_i)$ for $1\leq\alpha\leq i$ and show that, after suitable twists, the essential images of these functors are semi-orthogonal to each other in the required way (see Theorem \ref{fully faithfulness}, Definition \ref{definition ABCD} and Theorem \ref{the sod on Mi} below). By means of Bondal-Orlov's criterion \cite{bondal-orlov}, this reduces to the computation of $R\Gamma$ for a large class of vector bundles on $M_i$. In particular, we will need to prove $\Gamma$-acyclicity for several of these vector bundles.

\begin{theorem}\label{vanishing with Z on Mi}
Let $d>2$ and $1\leq i\leq v$. Let $D=x_1+\ldots+x_\alpha$, $D'=y_1+\ldots+y_\beta$ (possibly with repetitions). Suppose
$$\deg D-g<t<
d-\deg D'-i-1.$$
Then
\begin{equation}\label{eq easy vanishing}
R\Gamma_{M_i(d)}\left(\bigotimes_{k=1}^\alpha F_{x_k}^\vee\otimes \bigotimes_{k=1}^\beta F_{y_k}\otimes\Lambda_M^t\otimes \zeta^{-1}\right)=0.
\end{equation}
\end{theorem}

\begin{remark}\label{remark easy vanishing}
By Corollary \ref{standarddeformation} and semi-continuity, the same vanishing holds if in \eqref{eq easy vanishing} we replace $\bigotimes_{k=1}^\alpha F_{x_k}^\vee$ by either $G_D^\vee$ or $\overline{G}_D^\vee$ and $\bigotimes_{k=1}^\beta F_{y_k}$ by either $G_{D'}$ or $\overline{G}_{{D}'}$.
\end{remark}

We start with a lemma.

\begin{lemma}\label{O(-kH+lE)}
$R\Gamma_{M_1(d)}(\mathcal{O}_{M_1(d)}(-kH+lE_1))=0$ for $0<k\leq d+g-2$ and $0\leq l\leq d+g-4$. In particular, taking $t=k=l$ we get $R\Gamma_{M_1(d)}(\Lambda_M^t)=0$ for $0< t \leq d+g-4$.
\end{lemma}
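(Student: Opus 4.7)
The plan is to push forward along the blowup morphism $\pi:M_1\to M_0=\mathbb{P}^{d+g-2}$ and reduce to the classical cohomology of $\mathcal{O}(-k)$ on projective space. Since $\mathcal{O}_{M_1}(-kH)=\pi^*\mathcal{O}_{M_0}(-k)$, the projection formula gives
$$R\pi_*\mathcal{O}_{M_1}(-kH+lE_1)=\mathcal{O}_{M_0}(-k)\otimes R\pi_*\mathcal{O}_{M_1}(lE_1),$$
so the problem reduces to verifying $R\pi_*\mathcal{O}_{M_1}(lE_1)=\mathcal{O}_{M_0}$ for every $0\le l\le d+g-4$, and then invoking the standard vanishing $R\Gamma(\mathbb{P}^{d+g-2},\mathcal{O}(-k))=0$ that holds in the range $0<k\le d+g-2$.

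The key technical ingredient is the following standard identity for the blowup $\pi:\tilde X\to X$ of a smooth subvariety $Y\subset X$ of codimension $c\ge 2$: $R\pi_*\mathcal{O}_{\tilde X}(lE)=\mathcal{O}_X$ for every $0\le l\le c-1$. The base case $l=0$ is $R\pi_*\mathcal{O}_{\tilde X}=\mathcal{O}_X$ (rational singularities of smooth blowups), and the inductive step follows from the short exact sequence
$$0\to\mathcal{O}_{\tilde X}((l-1)E)\to\mathcal{O}_{\tilde X}(lE)\to\mathcal{O}_E(-l)\to 0$$
together with $Rp_*\mathcal{O}_E(-l)=0$ on the projective bundle $p:E=\mathbb{P}(N_{Y/X})\to Y$, which itself reduces to the fiberwise vanishing $H^\bullet(\mathbb{P}^{c-1},\mathcal{O}(-l))=0$ for $0<l\le c-1$. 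In our situation $Y=C\subset M_0=\mathbb{P}^{d+g-2}$ has codimension $c=d+g-3$, so the admissible range becomes $0\le l\le d+g-4$, exactly matching the hypothesis.

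Combining the two ingredients yields the main vanishing. For the ``in particular'' statement, Definition~\ref{notation O(m,n)} gives $\Lambda_M=\mathcal{O}_1(0,-1)=\mathcal{O}_{M_1}(-H+E_1)$, hence $\Lambda_M^t=\mathcal{O}_{M_1}(-tH+tE_1)$ corresponds to $k=l=t$, and both constraints $0<k\le d+g-2$, $0\le l\le d+g-4$ hold simultaneously precisely when $1\le t\le d+g-4$. There is no serious obstacle here: the argument is essentially bookkeeping around two classical computations, with the only subtlety being the codimension count $\codim(C\subset M_0)=d+g-3$, which fixes the admissible range on the exceptional twist.
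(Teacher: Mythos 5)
Your proof is correct, and it is essentially the same argument as the paper's, just factored slightly differently. The paper twists the tautological exact sequence $0\to\cO_{M_1}\to\cO_{M_1}(E_1)\to\cO_\pi(-1)\to 0$ by $\cO_{M_1}(-kH)$ and powers of $\cO_{M_1}(E_1)$ and applies $R\Gamma$ directly (invoking the vanishing of $\cO_\pi(-l)$ on the $\mathbb{P}^{d+g-4}$-bundle $E_1\to C$ together with $R\Gamma(\cO_{M_1}(-kH))=0$), whereas you first establish $R\pi_*\cO_{M_1}(lE_1)=\cO_{M_0}$ by the same exact sequence and induction and then invoke the projection formula and the vanishing of $\cO_{\bP^{d+g-2}}(-k)$; both hinge on the identical codimension count $\codim(C\subset M_0)=d+g-3$ and the same two classical vanishing ranges, so the difference is one of packaging rather than substance.
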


\begin{proof}
Consider the short exact sequence
\begin{align}\label{short ex seq from lemma}
0\to\mathcal{O}_{M_1(d)}\to\mathcal{O}_{M_1(d)}(E_1)\to\mathcal{O}_\pi(-1)\to 0,
\end{align}
where $E_1=\mathbb{P}W_1^+$ and $\pi:E_1\to C$ is the $\mathbb{P}^r$-bundle, $r=d+g-4$. $\mathcal{O}_{M_1(d)}(-kH)$ is $\Gamma$-acyclic provided $0<k\leq d+g-2=\dim M_1(d)$. Then twisting (\ref{short ex seq from lemma}) by $\mathcal{O}_{M_1(d)}(-kH)$ and taking a long exact sequence in cohomology gives $\Gamma$-acyclicity of $\mathcal{O}_{M_1(d)}(-kH+E_1)$ for such $k$. Similarly, twisting by powers of $\mathcal{O}_{M_1(d)}(E_1)$ and using induction, we get that $R\Gamma_{M_1(d)} (\mathcal{O}_{M_1(d)}(-kH+lE))=0$ as well, since $\mathcal{O}_\pi(-l)$ is $\Gamma$-acyclic for $0< l\leq d+g-4$.
\end{proof}

We will prove Theorem \ref{vanishing with Z on Mi} by induction, starting with the base case $i=1$.

\begin{lemma}\label{vanishing with Z on M1}
The statement of Theorem~\ref{vanishing with Z on Mi} holds for $i=1$.
\end{lemma}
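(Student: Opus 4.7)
The plan is to induct on $\alpha+\beta$ where $\alpha=\deg D$ and $\beta=\deg D'$, allowing the degree $d$ to vary so that the argument can freely pass to the smaller moduli spaces $M_0(\Lambda(-2x))$ of degree $d-2$.

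The base case $\alpha=\beta=0$ reduces to the $\Gamma$-acyclicity of the line bundle $\Lambda_M^t\otimes\zeta^{-1}=\mathcal{O}_1(-1,1-g-t)=\mathcal{O}_{M_1}(-(g+t)H+(g+t-1)E_1)$ on $M_1(d)$. The inequalities $1-g<t<d-2$ translate exactly into $0<g+t\le d+g-2$ and $0\le g+t-1\le d+g-4$, so Lemma \ref{O(-kH+lE)} applies.

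For the inductive step, assume $\alpha\ge 1$ (the case $\alpha=0$, $\beta\ge 1$ is entirely symmetric, using the companion Koszul sequence \eqref{reduction F non-dual} for $F_y$ in place of \eqref{reduction F dual}). Write $D=x+\tilde D$. The dualized variant of Proposition \ref{proposition stable deformation}, which applies to $\overline{F^\vee}^{\boxtimes\alpha}$ by essentially the same proof, exhibits $G_D^\vee$ as a stable deformation over $\mathbb{A}^1$ of $F_x^\vee\otimes G_{\tilde D}^\vee$, so by Remark \ref{semi-continuityremark} it suffices to prove the $\Gamma$-acyclicity of
$$F_x^\vee\otimes G_{\tilde D}^\vee\otimes G_{D'}\otimes\Lambda_M^t\otimes\zeta^{-1}.$$
Tensoring the exact four-term Koszul complex \eqref{reduction F dual} with $G_{\tilde D}^\vee\otimes G_{D'}\otimes\Lambda_M^t\otimes\zeta^{-1}$ produces another exact complex; its hypercohomology therefore vanishes, and the associated spectral sequence reduces the target to the $\Gamma$-acyclicity of the three other terms of the complex: the same expression with $\Lambda_M^{t-1}$ in place of $F_x^\vee\otimes\Lambda_M^t$ on $M_1(d)$; the same expression with $\mathcal{O}_{M_1(d)}$ in place of $F_x^\vee$ on $M_1(d)$; and the restriction of this last sheaf to $M_0(\Lambda(-2x))$. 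The first two fall under the inductive hypothesis applied with $D$ replaced by $\tilde D$ (and, in the first case, with $t$ replaced by $t-1$); the required numerical bounds are straightforward consequences of the original hypotheses.

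The remaining term, living on $M_0(\Lambda(-2x))\simeq\mathbb{P}^{d+g-4}$, is the heart of the matter, but the analysis simplifies substantially there. Since in the $M_0$-chamber for $\Lambda(-2x)$ every stable pair has a nowhere-vanishing section, the four-term Koszul complexes \eqref{reduction F dual} and \eqref{reduction F non-dual} collapse to genuine short exact sequences $0\to\Lambda_M^{-1}\to F_y^\vee\to\mathcal{O}\to 0$ and $0\to\mathcal{O}\to F_y\to\Lambda_M\to 0$ on $M_0(\Lambda(-2x))$, valid for every $y\in C$; for $y=x$ the section inherited from $M_1(\Lambda)$ vanishes identically, but one has instead the nowhere-vanishing section of the fiber of the universal bundle of $M_0(\Lambda(-2x))$ itself, via the identification $F|_{C\times M_0(\Lambda(-2x))}\simeq F'\otimes\pi_C^*\mathcal{O}_C(x)$ coming from the explicit embedding $M_0(\Lambda(-2x))\hookrightarrow M_1(\Lambda)$. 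Because the collapsed sequences have no third ``restriction'' term, iterating them in combination with Proposition \ref{proposition stable deformation} and Remark \ref{semi-continuityremark} cleanly reduces the restricted sheaf to a collection of line bundles on the projective space $M_0(\Lambda(-2x))$, whose acyclicity follows from the standard vanishing $H^\bullet(\mathbb{P}^N,\mathcal{O}(k))=0$ for $-N\le k\le -1$. The main technical obstacle is the bookkeeping needed to ensure that all numerical bounds are preserved throughout the iteration; this is where the sharpness of the hypotheses $\deg D-g<t<d-\deg D'-2$ plays its role.
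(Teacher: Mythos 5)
Your proof is correct and follows essentially the same path as the paper: induction on $\deg D+\deg D'$, the base case settled by Lemma~\ref{O(-kH+lE)}, and the inductive step via stable deformation (Proposition~\ref{proposition stable deformation}, Remark~\ref{semi-continuityremark}) and the Koszul complexes of Lemma~\ref{BasicKoszul}, with the restriction to $M_0(\Lambda(-2x))$ handled by reduction to line bundles on $\mathbb{P}^{d+g-4}$. The only cosmetic difference is in the $M_0$ step: the paper simply invokes the splitting $F_y|_{M_0}\simeq\mathcal{O}\oplus\mathcal{O}(-1)$ from \cite[3.2]{thaddeus} together with Corollary~\ref{standarddeformation}, whereas you re-derive the same conclusion by iterating the collapsed Koszul short exact sequences on $\mathbb{P}^{d+g-4}$ and note the (correct) identification $F|_{C\times M_0(\Lambda(-2x))}\simeq F'\otimes\pi_C^*\mathcal{O}_C(x)$; both routes produce exactly the same range of line-bundle twists, so the numerical bookkeeping you defer does close up.
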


\begin{proof}
Let $\alpha=\deg D$, $\beta=\deg D'$.
We are given that
$\alpha-g<t<d-\beta-2$.
We do induction on $\alpha+\beta$. If $\alpha=\beta=0$, 
we have to check that $\Lambda_M^t\otimes \zeta^{-1}=-(t+g)H+(g+t-1)E_1$ is $\Gamma$-acyclic on $M_1(d)$. By Lemma $\ref{O(-kH+lE)}$, this holds provided
$ 0<t+g \leq d+g-2$ and $0\leq g+t-1 \leq d+g-4$,
which is true by hypothesis.

If $\alpha>0$, we write $D=\tilde{D}+x_\alpha$.
Consider the exact sequence (\ref{reduction F dual}) from Lemma~\ref{BasicKoszul} and twist it by $U:=\bigotimes_{k=1}^{\alpha-1} F_{x_k}^\vee\otimes \bigotimes_{k=1}^\beta F_{y_k}\otimes\Lambda_M^t\otimes \zeta^{-1}$ to get 
\begin{align}\label{twisted seq}
0\to \Lambda_M^{-1}\otimes U \to \bigotimes_{k=1}^\alpha F_{x_k}^\vee\otimes \bigotimes_{k=1}^\beta F_{y_k}\otimes\Lambda_M^t\otimes \zeta^{-1} \to U \to U|_{M_0(d-2)} \to 0.
\end{align}
The restriction of $F_y$ to $M_0(d-2)=\mathbb{P}^{r}$, $r=d+g-4$, is equal to $\mathcal{O}_{\mathbb{P}^r}\oplus \mathcal{O}_{\mathbb{P}^r}(-1)$ by Lemma \ref{Fx=O+O(-1)}. Therefore, we see that the restriction of the bundle
$\bigotimes_{k=1}^{\alpha-1} F_{x_k}^\vee\otimes \bigotimes_{k=1}^\beta F_{y_k}\otimes\Lambda_M^t\otimes \zeta^{-1}$ to ${M_0(d-2)}$
is a sum of bundles $\bigoplus \mathcal{O}_{\mathbb{P}^r}(s_j)\otimes \mathcal{O}_{\mathbb{P}^r}(1-t-g)$, with $-\beta\leq s_j\leq \alpha-1$ (cf. Remark \ref{restriction of O(m,n)}). These are all $\Gamma$-acyclic on $\mathbb{P}^{d+g-4}$, since by hypothesis
\begin{align}\label{ineq on P^r}
    \alpha-t-g <0,\quad 
    -\beta+1-t-g\geq-(d+g-4).
\end{align}

The other two terms from the sequence \eqref{twisted seq} are $\bigotimes_{k=1}^{\alpha-1} F_{x_k}^\vee\otimes \bigotimes_{k=1}^\beta F_{y_k}\otimes\Lambda_M^t\otimes \zeta^{-1}$ and $\bigotimes_{k=1}^{\alpha-1} F_{x_k}^\vee\otimes \bigotimes_{k=1}^\beta F_{y_k}\otimes\Lambda_M^{t-1}\otimes \zeta^{-1}$
We observe that they both satisfy the inequalities of the hypothesis, so by induction they are $\Gamma$-acyclic on $M_1(d)$.

Similarly, if $\beta>0$ we write $D'=\tilde{D'}+y_\beta$ and use the exact sequence
\begin{align*}
    0\to\mathcal{O}_{M_1(d)}\to F_{y_\beta}\to \Lambda_M\to \left.\Lambda_M\right|_{M_0(d-2)}\to 0,
\end{align*}
twisted with $\bigotimes_{k=1}^{\alpha} F_{x_k}^\vee\otimes \bigotimes_{k=1}^{\beta-1} F_{y_k}\otimes\Lambda_M^t\otimes \zeta^{-1}$. The resulting term on the right is
a sum $\bigoplus \mathcal{O}_{\mathbb{P}^r}(s_j)\otimes \mathcal{O}_{\mathbb{P}^r}(-t-g)$, with $-\beta+1\leq s_j\leq \alpha$, and it is again $\Gamma$-acyclic by the same inequalities (\ref{ineq on P^r}). Finally, the remaining two terms 
are $\Gamma$-acyclic by induction, and we conclude that $R\Gamma_{M_1(d)}(\bigotimes_{k=1}^{\alpha} F_{x_k}^\vee\otimes \bigotimes_{k=1}^\beta F_{y_k}\otimes\Lambda_M^t\otimes \zeta^{-1})=0$.
\end{proof}

\begin{proof}[Proof of Theorem~\ref{vanishing with Z on Mi}]
Let $\alpha=\deg D$ and $\beta=\deg D'$.
We do induction on $i$. If $i=1$, this is Lemma \ref{vanishing with Z on M1}. Let $i>1$ and suppose the statement holds for $i-1$. For $t$ in the given range, we have
$$
R\Gamma_{M_{i-1}(d)}\left(\bigotimes_{k=1}^{\alpha} F_{x_k}^\vee\otimes \bigotimes_{k=1}^\beta F_{y_k}\otimes\Lambda_M^t\otimes \zeta^{-1}\right)=0
$$
by induction hypothesis.
Consider the wall-crossing between $M_{i-1}$ and $M_i$. Here, the bundle $\bigotimes_{k=1}^{\alpha} F_{x_k}^\vee\otimes \bigotimes_{k=1}^\beta F_{y_k}\otimes\Lambda_M^t\otimes \zeta^{-1}$ descends from an object with weights
$
\{-\beta-t+i-g,\ldots, \alpha-t+i-g\}
$
(see Theorem \ref{weights computations of all bundles}). Our hypothesis guarantees that $\alpha-t+i-g<i=\eta_+$ and $-\beta-t+i-g>1+2i-d-g=-\eta_-$, that is, all these weights live in the range $(-\eta_-,\ \eta_+)$. By Theorem \ref{windows wall-crossing} this implies
$
R\Gamma_{M_i(d)}(\bigotimes_{k=1}^{\alpha} F_{x_k}^\vee\otimes \bigotimes_{k=1}^\beta F_{y_k}\otimes\Lambda_M^t\otimes \zeta^{-1})=R\Gamma_{M_{i-1}(d)}(\bigotimes_{k=1}^{\alpha} F_{x_k}^\vee\otimes \bigotimes_{k=1}^\beta F_{y_k}\otimes\Lambda_M^t\otimes \zeta^{-1})=0,
$
as desired.
\end{proof}

\section{A fully faithful embedding $D^b(C)\subset D^b(M_1)$}\label{M1 section}

The following Theorem \ref{Poincare is fully faithful on M1} is a special case of Theorem \ref{fully faithfulness}, and will be needed for our proof of the latter. Namely, the result of Theorem \ref{Poincare is fully faithful on M1} will be used in Sections \ref{hard section} and \ref{fully section}, in results that are necessary for Theorem \ref{fully faithfulness}. While Theorem \ref{Poincare is fully faithful on M1} could be avoided by including it as a step of a more complicated inductive proof, we find it more convenient to prove it first, both to make the inductions less cumbersome and to introduce some ideas that will help understand the general picture.

We assume that $v\ge1$, i.e. $d\ge3$. As before, let $E_1\subset M_1$ be the exceptional locus of the blow-up $M_1\to M_0$ along $C\subset M_0$. By Orlov's blow-up formula \cite{orlov}, we have a fully faithful functor $\Psi:D^b(C)\hookrightarrow D^b(M_1)$, corresponding to the Fourier--Mukai transform given by $\mathcal{O}_{Z}(E_1)$, where $Z=C\times_C E_1$. 
Now consider the Fourier--Mukai transform 
$$\Phi_F=Rp_*(Lq^*(\cdot)\otimes^L F):D^b(C)\to D^b(M_1)$$
determined by the universal bundle $F$ on $C\times M_1$.

\begin{theorem}\label{Poincare is fully faithful on M1}
The functor $\Phi_F$ is fully faithful.
\end{theorem}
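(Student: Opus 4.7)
The plan is to verify the Bondal--Orlov fully faithfulness criterion for the kernel $F$. Since $\dim C = 1$, this reduces to showing: (i) for $x \ne y$ in $C$, $R\Hom_{M_1}(F_x, F_y) = 0$, and (ii) for each $x \in C$, $R\Hom_{M_1}(F_x, F_x) \cong \bC \oplus \bC[-1]$ with the Kodaira--Spencer map $T_x C \to \Ext^1(F_x, F_x)$ an isomorphism. Both conditions amount to computing the cohomology of the vector bundles $F_x^\vee \otimes F_y$ on $M_1$.

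The main tool is the pair of four-term exact sequences from Lemma~\ref{BasicKoszul}, one for $F_x^\vee$ and one for $F_y$:
\[
0 \to \Lambda_M^{-1} \to F_x^\vee \to \cO_{M_1} \to \cO_{M_0(\Lambda(-2x))} \to 0,
\]
\[
0 \to \cO_{M_1} \to F_y \to \Lambda_M \to \Lambda_M|_{M_0(\Lambda(-2y))} \to 0.
\]
Splitting each into two short exact sequences and tensoring, $F_x^\vee \otimes F_y$ acquires a filtration whose associated graded pieces are built from $\cO_{M_1}$ and $\Lambda_M^{\pm 1}$, together with their (derived) restrictions to the linear subvarieties $M_0(\Lambda(-2x))$, $M_0(\Lambda(-2y))$, and their intersection in $M_1$. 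The bulk pieces on $M_1$ have controlled $R\Gamma$: $R\Gamma(\cO_{M_1}) = \bC$ since $M_1$ is rational, $R\Gamma(\Lambda_M) = 0$ by Lemma~\ref{O(-kH+lE)}, and $R\Gamma(\Lambda_M^{-1})$ can be read off by Serre duality. The boundary pieces, via the restriction formula of Remark~\ref{restriction of O(m,n)}, become line bundles on projective subspaces of $M_0 = \bP^{d+g-2}$ whose cohomology is elementary. Assembling the resulting long exact sequences then yields the vanishing in (i) for $x \ne y$ and the required two-term answer in (ii).

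For the Kodaira--Spencer condition in (ii), it suffices to note that this map is identified via the universal property of $F$ with the classifying map of the family $\{F_x\}_{x \in C}$; since $F$ is the non-trivial universal bundle, the map is non-zero between two $1$-dimensional vector spaces, hence an isomorphism. The main obstacle I expect is the combinatorial bookkeeping of the graded pieces of $F_x^\vee \otimes F_y$, and in particular verifying that the derived intersection of $M_0(\Lambda(-2x))$ and $M_0(\Lambda(-2y))$ inside $M_1$ is the underived transverse intersection $M_0(\Lambda(-2x-2y))$ (of codimension $4$) for $x \ne y$, so that no spurious $\mathcal{T}or$-terms enter the computation. Once this is settled, the remaining vanishing and cancellations follow from routine cohomology of line bundles on $M_1$ and on linear subspaces of $M_0$.
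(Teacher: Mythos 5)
Your overall plan — reduce to Bondal--Orlov and compute $R\Gamma(F_x^\vee\otimes F_y)$ via the Koszul sequences of Lemma~\ref{BasicKoszul} — points in the right direction, and a direct computation of this kind can indeed be made to work. However, the paper takes a genuinely different route that you should be aware of, and your proposal as written has two concrete gaps.

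\emph{The paper's route.} Rather than computing $R\Gamma(F_x^\vee\otimes F_y)$ from scratch, the paper compares $\Phi_F$ with the functor $\Psi$ coming from Orlov's blow-up formula, whose kernel $\cO_Z(E_1)$ restricts, after twisting the Koszul resolution \eqref{resolution Z} by $\Lambda_M$, to $\Psi(\cO_x)\simeq[\cO_{M_1}\to F_x\to\Lambda_M]\simeq\cO_{\bP_x^r}(-1)$. Since $\Psi$ is already known to be fully faithful, $R\Hom(\Psi(\cO_x),\Psi(\cO_y))$ is known in advance. The double complex built from the two Koszul resolutions has $F_x^\vee\otimes F_y$ as its central term; all other terms are shown to be $\Gamma$-acyclic or to contribute the universal section (Lemmas~\ref{RGamma(lambda-1)=0} and~\ref{acyclicty of Fx, Fx*}), and the hypercohomology spectral sequence then forces $R\Gamma(F_x^\vee\otimes F_y)$ to equal the known answer. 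This sidesteps any ideal-sheaf manipulations and avoids the $\mathcal{T}or$ bookkeeping you anticipate.

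\emph{First gap.} Your geometric picture of the $x\neq y$ case is wrong. In $M_1$, stability forces $\deg Z(\phi)\le1$, so the loci $M_0(\Lambda(-2x))$ and $M_0(\Lambda(-2y))$ are \emph{disjoint} for $x\neq y$; their intersection is empty, not a codimension-$4$ subvariety. The locus $M_0(\Lambda(-2x-2y))$ you name embeds in $M_2$, not $M_1$. The conclusion you want (no $\mathcal{T}or$-contributions for $x\neq y$) is still true, but for the simpler reason of disjoint supports; the transversality argument you gesture at would be vacuous.

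\emph{Second gap.} The hard case is $x=y$, and your proposal does not address it. You appeal to the Kodaira--Spencer map being an isomorphism, but that is not part of the Bondal--Orlov criterion: what is actually required is $\Hom(F_x,F_x)=\bC$ and $\Ext^i(F_x,F_x)=0$ for $i$ outside $[0,\dim C]=[0,1]$, and both are cohomological statements that must be established by the same kind of computation as the off-diagonal case. Here the self-intersection of $\bP_x^r\subset M_1$ genuinely enters: the graded piece $\cI_{\bP_x^r}\otimes\cI_{\bP_x^r}$ differs from $\cI_{\bP_x^r}^2$ by a $\mathcal{T}or$-term $\Lambda^2N^\vee_{\bP_x^r}$ supported on $\bP_x^r$, and $R\Gamma(\cI_{\bP_x^r}^2\otimes\Lambda_M)$ contributes a nonzero $H^1$. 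These contributions can be made to cancel correctly, but the filtration argument is more delicate than ``routine cohomology of line bundles,'' and as written your proposal leaves precisely this point open. The paper's detour through $\Psi$ is designed to avoid exactly this difficulty.
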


We need a few constructions and lemmas first. Observe that $Z=C\times_C E_1$ is supported precisely on the zero locus of the universal section $\tilde{\phi}:\mathcal{O}_{C\times M_1} \to F$. 
Indeed, pairs $(E,\phi)$ in $\mathbb{P}W_1^+=E_1$ parametrize extensions 
\begin{align*}
0\to\mathcal{O}_C(x)\to E\to \Lambda(-x)\to 0
\end{align*}
with the canonical section $\phi\in H^0(C,\mathcal{O}_C(x))$ vanishing on $x\in C$ \cite[3.2]{thaddeus}, and in fact $\tilde{\phi}$ has no zeros outside this locus, since $M_1\backslash E_1$ consists of extensions $0\to \mathcal{O}_C\to E\to\Lambda\to 0$ together with a (constant) section $\phi\in H^0(C,\mathcal{O}_C)$ \cite[3.1]{thaddeus}. Since $Z$ has codimension $2$, we have a Koszul resolution
\begin{align}\label{resolution Z}
\left[\wedge^2 F^\vee\to F^\vee \xrightarrow{\tilde{\phi}}\mathcal{O}_{C\times M_1}\right]\xrightarrow{\sim} \mathcal{O}_Z.
\end{align}

\begin{lemma}\label{RGamma(lambda-1)=0}
$R\Gamma_{M_1} (\Lambda_M^{-1})=0$.
\end{lemma}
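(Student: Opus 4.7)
The goal is to show $R\Gamma_{M_1}(\Lambda_M^{-1})=0$. By Definition \ref{notation important line bundles} we have $\Lambda_M=\mathcal{O}_1(0,-1)$, and by Definition \ref{notation O(m,n)} this is $\mathcal{O}_{M_1}(E_1-H)$; hence $\Lambda_M^{-1}\simeq \mathcal{O}_{M_1}(H-E_1)$. Note that this line bundle is outside the range of Lemma \ref{O(-kH+lE)} (which handles $-kH+lE_1$ with $k>0$), so we need a direct argument. The plan is to exploit the blowup structure $\pi_0:M_1\to M_0=\mathbb{P}^{d+g-2}$ along the curve $C$ embedded by the complete linear system of $\omega_C\otimes\Lambda$.

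First I would write down the standard short exact sequence
\begin{equation*}
0\to \mathcal{O}_{M_1}(H-E_1)\to \mathcal{O}_{M_1}(H)\to \mathcal{O}_{E_1}(H)|_{E_1}\to 0
\end{equation*}
and argue that it suffices to show the restriction map induces an isomorphism on $R\Gamma$ for the two outer terms (since there are only two terms in the long exact sequence that could contribute). For the middle term, $\mathcal{O}_{M_1}(H)=\pi_0^*\mathcal{O}_{M_0}(1)$ so $R\pi_{0,*}\mathcal{O}_{M_1}(H)=\mathcal{O}_{M_0}(1)$ and thus $R\Gamma_{M_1}(\mathcal{O}_{M_1}(H))=H^0(\mathbb{P}^{d+g-2},\mathcal{O}(1))$, concentrated in degree zero with dimension $d+g-1$.

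For the right term, the exceptional divisor $E_1=\mathbb{P}W_1^+\xrightarrow{\pi}C$ is a projective bundle and the restriction of $H$ to $E_1$ is the pullback of $\mathcal{O}_{M_0}(1)|_C=\omega_C\otimes\Lambda$, using the description of $C\hookrightarrow M_0$ via the complete linear system of $\omega_C\otimes\Lambda$. Since $R\pi_*\mathcal{O}_{E_1}=\mathcal{O}_C$, projection formula gives $R\Gamma(E_1,\mathcal{O}_{E_1}(H))=R\Gamma(C,\omega_C\otimes\Lambda)$, which by Serre duality equals $H^1(C,\Lambda^{-1})^\vee$, again concentrated in degree zero with dimension $d+g-1$ by Riemann--Roch (as $\deg\Lambda=d>0$ forces $H^0(\Lambda^{-1})=0$).

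Finally, the restriction map $H^0(M_1,\mathcal{O}_{M_1}(H))\to H^0(E_1,\mathcal{O}_{E_1}(H))$ coincides, under the above identifications, with the restriction map $H^0(M_0,\mathcal{O}_{M_0}(1))\to H^0(C,\omega_C\otimes\Lambda)$. By the very definition of the embedding $C\hookrightarrow M_0=\mathbb{P}H^1(C,\Lambda^{-1})$ as the one cut out by the complete linear system $|\omega_C\otimes\Lambda|$, this restriction map is an isomorphism. The associated long exact sequence then forces $H^i(M_1,\Lambda_M^{-1})=0$ for all $i$, yielding the claim. The only genuinely delicate point is correctly identifying $\mathcal{O}_{M_0}(1)|_C$ with $\omega_C\otimes\Lambda$ and confirming completeness of this linear system, which is where Serre duality and Riemann--Roch on the curve do the key work; the rest is a routine blowup computation.
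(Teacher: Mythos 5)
Your proof is correct and follows essentially the same route as the paper's: pass to the short exact sequence $0\to\mathcal{O}_{M_1}(H-E_1)\to\mathcal{O}_{M_1}(H)\to\mathcal{O}_{E_1}(H)\to0$, identify the cohomology of the two outer terms, and conclude. The only cosmetic difference is that you package both vanishings into the single observation that the restriction map $H^0(M_0,\mathcal{O}(1))\to H^0(C,\omega_C\otimes\Lambda)$ is an isomorphism by completeness of the linear system, whereas the paper splits this into a direct geometric argument for $H^0(\Lambda_M^{-1})=0$ (no hyperplane contains $C$) followed by a Riemann--Roch dimension count to kill $H^1$.
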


\begin{proof}
Recall $\Lambda_M^{-1}=\mathcal{O}_{M_1}(H-E_1)$. We have an exact sequence
\begin{align*}
0\to \mathcal{O}_{M_1}(H-E_1)\to\mathcal{O}_{M_1}(H)\to\mathcal{O}_{E_1}(H)\to 0,
\end{align*}
so it suffices to show that $j^*:H^i(M_1,\cO_{M_1}(H))\xrightarrow{\sim}H^i(E_1,\cO_{E_1}(H))$ for every $i$, where $j:E_1\hookrightarrow M_1$ is the inclusion. For each $i$, consider the commutative diagram
\begin{equation}\label{some diagram}
\begin{tikzcd}
& H^i(M_1,\cO_{M_1}(H)) \arrow[r,"j^*"] & H^i(E_1,\cO_{E_1}(H))   \\
& H^i(M_0,\cO_{M_0}(H)) \arrow[u,"\pi^*"]\arrow[r,"\imath^*"] & H^i(C,\cO_{C}(H))\arrow[u,"q^*"]
\end{tikzcd}
\end{equation}
where $\imath:C\hookrightarrow M_0=\bP^{d+g-2}$ is the inclusion, $\pi:M_1=\Bl_CM_0\to M_0$ is the blow-up along $C$, and $q=\pi|_{E_1}:E_1\to C$, which is a $\bP^r$-bundle. Hence, both vertical arrows in \eqref{some diagram} are isomorphisms. Indeed, these pullbacks are fully faithful at the level of derived categories. Moreover, $\imath:C\hookrightarrow M_0$ is the embedding by the complete linear system $|\omega_C\otimes\Lambda|$ \cite[3.4]{thaddeus}. Therefore, $\cO_C(H)\simeq \omega_C\otimes\Lambda$ and $\imath^*:H^0(M_0,\cO_{M_0}(H))\to H^0(C,\cO_{C}(H))$ is an isomorphism. For $i>0$, $H^i(M_0,\cO_{M_0}(H))=0$ because $M_0$ is a projective space. On the other hand, since Since $\deg \omega_C\otimes\Lambda >\deg \omega_C$, we also have $H^i(C,\cO_{C}(H))=0$ for $i>0$. In summary, the two vertical maps and the lower horizontal map in the commutative diagram are isomorphisms for all $i$. Hence, the same holds for the upper horizontal map.
\end{proof}

\begin{lemma}\label{acyclicty of Fx, Fx*}
Let $x\in C$. Then $R\Gamma_{M_1}(F_x^\vee)=0$, while $R\Gamma_{M_1} (F_x)=\mathbb{C}$, with $H^0(M_1,F_x)=\mathbb{C}$ given by restriction of the universal section $\tilde{\phi}$ of $F$ to $\{x\}\times M_1$.
\end{lemma}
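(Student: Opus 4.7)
The plan is to apply the two Koszul-type four-term exact sequences of Lemma~\ref{BasicKoszul} with $i=1$, namely
\[
0 \to \Lambda_M^{-1} \to F_x^\vee \to \mathcal{O}_{M_1} \to \mathcal{O}_{Z_x} \to 0
\qquad\text{and}\qquad
0 \to \mathcal{O}_{M_1} \to F_x \to \Lambda_M \to \Lambda_M|_{Z_x} \to 0,
\]
where I write $Z_x := M_0(\Lambda(-2x))$ for the codimension-two zero locus of $\tilde{\phi}_x : \mathcal{O}_{M_1} \to F_x$. Each sequence will be split into two short exact sequences, reducing the problem to computing $R\Gamma$ of the four ``outer'' sheaves $\Lambda_M^{\pm 1}$, $\mathcal{O}_{Z_x}$ and $\Lambda_M|_{Z_x}$, together with the relevant restriction maps.

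For $F_x^\vee$ the argument will be straightforward: $\Lambda_M^{-1}$ is $\Gamma$-acyclic by Lemma~\ref{RGamma(lambda-1)=0}; $R\Gamma(\mathcal{O}_{M_1}) = \mathbb{C}$ since $M_1$ is smooth projective rational; $R\Gamma(\mathcal{O}_{Z_x}) = \mathbb{C}$ since $Z_x \cong \mathbb{P}^{d+g-4}$; and the restriction map on $H^0$ is the identity on constants. The two long cohomology sequences then cancel everything and yield $R\Gamma(F_x^\vee) = 0$.

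For $F_x$ the left end contributes $R\Gamma(\mathcal{O}_{M_1}) = \mathbb{C}$ in degree $0$, and its generator is transported to $F_x$ by the Koszul map $\mathcal{O}_{M_1} \to F_x$, which by construction of the resolution is the universal section $\tilde{\phi}_x$. Acyclicity of the right-hand two-term complex $[\Lambda_M \to \Lambda_M|_{Z_x}]$ will then complete the proof. For $\Lambda_M = \mathcal{O}_{M_1}(-H + E_1)$, acyclicity is the case $k = l = 1$ of Lemma~\ref{O(-kH+lE)}. For $\Lambda_M|_{Z_x}$ the key step is to identify $Z_x$ geometrically: in $M_1 \setminus E_1 \cong M_0$ every pair comes from an extension $0 \to \mathcal{O} \to E \to \Lambda \to 0$ whose canonical section is fiber-wise nonzero, so $Z_x \subset E_1$; matching dimensions with the fiber description of $E_1 = \mathbb{P}W_1^+ \to C$ from \cite[3.2]{thaddeus} forces $Z_x$ to be the fiber over $x$, a copy of $\mathbb{P}^{d+g-4}$. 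Since $Z_x$ is contracted to the point $x \in C \subset M_0$, we have $\mathcal{O}_{M_1}(-H)|_{Z_x} = \mathcal{O}$, while $\mathcal{O}_{M_1}(E_1)|_{E_1} = \mathcal{O}_\pi(-1)$ restricts on a fiber to $\mathcal{O}(-1)$; hence $\Lambda_M|_{Z_x} \cong \mathcal{O}_{\mathbb{P}^{d+g-4}}(-1)$, which is acyclic.

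I expect the main obstacle to be this geometric identification of $Z_x$: it is presented a priori as the scheme-theoretic zero locus of a section, and one must verify that the inclusion $M_0(\Lambda(-2x)) \hookrightarrow M_1(\Lambda)$ factors through the exceptional divisor $E_1$. Once that is in hand, restricting the two factors of $\Lambda_M$ across the blow-up map is a direct calculation and the remaining analysis is a standard long-exact-sequence chase.
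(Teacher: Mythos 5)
Your proof is correct and uses the same basic ingredients as the paper: both Koszul sequences from Lemma~\ref{BasicKoszul} with $Z_x = M_0(\Lambda(-2x))$ identified as the fiber $\mathbb{P}^{d+g-4}_x$ of $E_1\to C$, the restriction $\Lambda_M|_{Z_x}\simeq\mathcal{O}_{\mathbb{P}^r_x}(-1)$, and the acyclicity of $\Lambda_M^{\pm1}$. For $R\Gamma(F_x)$ the argument coincides with the paper's. For $R\Gamma(F_x^\vee)$ you take a slightly different (and cleaner) route: the paper's spectral sequence leaves open the possibility $h^0(F_x^\vee)=h^1(F_x^\vee)=1$ and rules it out by observing that a global section of $F_x^\vee$ would have to lift through $\Lambda_M^{-1}$, which has none, whereas you note directly that $H^0(\mathcal{O}_{M_1})\to H^0(\mathcal{O}_{Z_x})$ is an isomorphism of one-dimensional spaces (restriction of constants), hence $\mathcal{I}_{Z_x}$ is $\Gamma$-acyclic and $F_x^\vee$ is squeezed between the two acyclic sheaves $\Lambda_M^{-1}$ and $\mathcal{I}_{Z_x}$. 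The two deductions carry the same content, but yours avoids the case analysis. One minor caution: the identification $Z_x\simeq\mathbb{P}^r_x$ is not literally forced by ``matching dimensions''; the precise reason is that $Z_x$ is, by Remark~\ref{restriction of O(m,n)}, the copy of $M_0(\Lambda(-2x))$ inside $M_1(\Lambda)$ cut out by $\phi(x)=0$, and on $E_1=\mathbb{P}W_1^+$ the pairs with $\phi(x)=0$ are exactly those over $x\in C$ by the description in \cite[3.2]{thaddeus}. Your later computation of $\Lambda_M|_{Z_x}$ via $\mathcal{O}_{E_1}(E_1)=\mathcal{O}_\pi(-1)$ is correct.
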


\begin{proof}
Consider the resolution (\ref{resolution Z}) and restrict to $\{x\}\times M_1$ to get
\begin{align}\label{resolution O}
\left[\Lambda_M^{-1}\to F_x^\vee\to\mathcal{O}_{M_1} \right]\xrightarrow{\sim}\mathcal{O}_{\mathbb{P}_x^r}
\end{align}
where $\mathbb{P}_x^r=M_0(\Lambda(-2x))$ is the fiber over $x\in C\subset M_0$ along the blow-up $\pi:M_1\to M_0$. We twist by $\Lambda_M=\mathcal{O}_{M_1}(E_1-H)$ to get
\begin{align}\label{resolution O(-1)}
\left[\mathcal{O}_{M_1}\xrightarrow{\tilde{\phi}} F_x\to\Lambda_{M}\right]\xrightarrow{\sim} \mathcal{O}_{\mathbb{P}_x^r}(-1),
\end{align}
using that $F_x^\vee\otimes\Lambda_M= F^\vee_x \otimes(\wedge^2F_x)\simeq F_x$ and that $\mathcal{O}_{M_1}(H)$ restricts trivially to $\mathcal{O}_{\mathbb{P}_x^r}$
(see Lemma~\ref{BasicKoszul} for a generalization of \eqref{resolution O} and \eqref{resolution O(-1)}). It is well-known that $R\Gamma(\cO_{\bP^r_x}(-1))=0$. By Lemma 4.2, we also have $R\Gamma (\Lambda_M)=0$. Hence, by \eqref{resolution O(-1)}, $\Tilde{\phi}$ induces an isomorphism $R\Gamma (\cO_{M_1}) \simeq R\Gamma (F_x)$. As $M_1$ is a blow up of a projective space along a smooth center, we get $R\Gamma (F_x) \simeq R\Gamma (\cO_{M_1}) \simeq \mathbb{C}$, with $H^0(M_1,F_x)=\mathbb{C}$ given by restriction of $\Tilde{\phi}$ to $\{x\}\times M_1$.


To show that $R\Gamma_{M_1}(F_x^\vee)=0$, we apply $R\Gamma$ to (\ref{resolution O}). We already know $R\Gamma_{M_1}(\Lambda_M^{-1})=0$ by Lemma \ref{RGamma(lambda-1)=0}, so it suffices to show that that the restriction map $H^i(M_i,\cO_{M_i}) \to H^i(\bP^r_x,\cO_{\bP^r_x})$ is an isomorphism for every $i$. For $i > 0$, both vector spaces vanish, because we have a projective space and a blow-up of a projective space. For $i = 0$, we have an isomorphism of one-dimensional
vector spaces because this is just restriction of constant sections.
\end{proof}

\begin{proof}[Proof of Theorem \ref{Poincare is fully faithful on M1}]

By Bondal-Orlov's criterion \cite{bondal-orlov}, in order to show full faithfulness of $\Phi_F$ we only need to consider the sheaves $\Phi_F(\mathcal{O}_x)=F_x$ for closed points $x\in C$.
On the other hand, consider the functor $\Psi$ from Orlov's blow-up formula, with Fourier--Mukai kernel $\mathcal{O}_Z(E_1)$, $Z=C\times_C E_1$. We can compute $\Psi(\mathcal{O}_x)=\Phi_{\mathcal{O}_Z(E_1)}(\mathcal{O}_x)$ for a point $x\in C$ using \eqref{resolution Z} as follows. As before, let $\mathbb{P}_x^r=M_0(\Lambda(-2x))$ denote the fiber over $x\in C\subset M_0$ along the blow-up. The fact that $\mathcal{O}_{M_1}(H)$ restricts trivially to this fiber implies that both $\Lambda_M$ and $\mathcal{O}_{M_1}(E_1)$ restrict to $\mathcal{O}_{\mathbb{P}_x^r}(-1)$ there. Now we restrict \eqref{resolution Z} to $\{x\}\times M_1$ and twist it by $\Lambda_M$ to get $\Phi_{\mathcal{O}_Z(E_1)}(\mathcal{O}_x)\simeq\left[\mathcal{O}_{M_1}\to F_x\to\Lambda_{M}\right]
\simeq\mathcal{O}_{\mathbb{P}_x^r}(-1)$,
as in (\ref{resolution O(-1)}).
Since we already know that $\Psi$ is fully faithful, we have
\begin{align}\label{RGamma orlov}
\Hom_{D^b(M_1)}\bigl(\Psi(\mathcal{O}_x),\Psi(\mathcal{O}_y)[k]\bigr)=\begin{cases}
0 \quad &\text{if }x\neq y\\
0 \quad &\text{if }x=y \text{ and }k\neq 0,1\\
\mathbb{C} &\text{if }x=y \text{ and }k= 0,1.
\end{cases}
\end{align}
But $R\Hom_{D^b(M_1)}\bigl(\Psi(\mathcal{O}_x),\Psi(\mathcal{O}_y)\bigr)\simeq R\Gamma \circ R\cH\!om\bigl(\Psi(\mathcal{O}_x),\Psi(\mathcal{O}_y)\bigr)$ can also be obtained as follows: take $R\cH\!om\bigl(\Psi(\mathcal{O}_x),\Psi(\mathcal{O}_y)\bigr)\simeq \Psi(\mathcal{O}_x)^\vee\otimes^L\Psi(\mathcal{O}_y)$ as an inner tensor product obtained from the double complex
\begin{equation} \label{double complex Fx}
\begin{tikzcd}[cramped,sep=scriptsize]
\mathcal{O}_{M_1}\arrow[r] & F_x^\vee\otimes\Lambda_M \arrow[r] &\Lambda_M\\
\Lambda_M^{-1}\otimes F_y \arrow[r]\arrow[u] &F_x^\vee\otimes F_y \arrow[r]\arrow[u] &F_y \arrow[u]\\
\Lambda_M^{-1}\arrow[r]\arrow[u] & F_x^\vee \arrow[u]\arrow[r] &\mathcal{O}_{M_1}\arrow[u]
\end{tikzcd}
\end{equation}
which produces the total complex 
\begin{gather}\label{long complex Fx}
\begin{aligned}
&\left[\Lambda_M^{-1}\to F_x^\vee\oplus F_y^\vee\to\mathcal{O}_{M_1}^{\oplus 2}\oplus(F_x^\vee\otimes F_y)\to F_x\oplus F_y \to \Lambda_M\right] \\
\simeq &\phantom{[}\Psi(\mathcal{O}_x)^\vee\otimes^L\Psi(\mathcal{O}_y),
\end{aligned}
\end{gather}
again using $F_x= F_x^\vee\otimes \Lambda_M$. Recall that our descriptions of $\Psi(\mathcal{O}_y)$ and $\Psi(\mathcal{O}_x)^\vee$ were obtained from the Koszul resolution \eqref{resolution O(-1)} and its dual. In particular, the maps $\cO_{M_1}\to F_x^\vee\otimes\Lambda_M=F_x$ and $\cO_{M_1}\to F_y$ appearing in \eqref{double complex Fx} correspond to the restriction of the universal section $\Tilde{\phi}$ to $\{x\}\times M_1$ and $\{y\}\times M_1$, respectively.

The hypercohomology $R\Gamma$ of (\ref{long complex Fx}) can be computed by taking the spectral sequence with first page $E_1^{p,q}=H^q(X,\cF^p) \Rightarrow H^{p+q} (X,\cF^{\bullet})$. On the other hand, we know that $R\Gamma$ of this complex is given by (\ref{RGamma orlov}). We will combine these to show that
\begin{align}\label{abzdhadjatej}
R\Gamma (F_x^\vee\otimes F_y)=\begin{cases}
0 \quad &\text{if }x\neq y\\
\mathbb{C}\oplus\mathbb{C}[-1] &\text{if }x=y.
\end{cases}
\end{align}

By Lemma \ref{O(-kH+lE)}, $R\Gamma (\Lambda_M)=0$, and by Lemma \ref{RGamma(lambda-1)=0} $R\Gamma (\Lambda_M^{-1})=0$. Also, Lemma \ref{acyclicty of Fx, Fx*} computes hypercohomology of both $F_x$ and $F_x^\vee$. Summing up, applying $R\Gamma$ to (\ref{long complex Fx}) yields a spectral sequence $E_1^{p,q}$ of the form
\begin{equation*}
\begin{tikzcd}[cramped, sep=scriptsize]
\vdots & \vdots & \vdots & \vdots & \vdots\phantom{.} \\
0\arrow[r] &0\arrow[r] & H^1(F_x^\vee\otimes F_y) \arrow[r] & 0\arrow[r] & 0\phantom{.}\\
0\arrow[r] &0\arrow[r] & H^0(\mathcal{O}_{M_1})^{\oplus 2}\oplus H^0 (F_x^\vee\otimes F_y) \arrow[r] & H^0(F_x) \oplus H^0(F_y) \arrow[r] & 0,
\end{tikzcd}
\end{equation*}
where the map $H^0(\mathcal{O}_{M_1})^{\oplus 2}\to H^0(F_x) \oplus H^0(F_y)$ is the isomorphism $\mathbb{C}^2\xrightarrow{\sim}\mathbb{C}^2$ given by the universal section in each coordinate, by Lemma \ref{acyclicty of Fx, Fx*} and the discussion above. Since this spectral sequence converges to (\ref{RGamma orlov}), we obtain \eqref{abzdhadjatej}.
\end{proof}

\section{Acyclicity of powers of $\Lambda_M^\vee$}\label{,jshBDFjhsbf}

The goal of the present section is to prove the following generalization of Lemma~\ref{RGamma(lambda-1)=0}:

\begin{theorem}\label{full vanishing of Lambda*}
Suppose $2<d\leq 2g+1$ and $1\le k\leq l\leq v$. Then
$$
R\Gamma_{M_l(d)}(\Lambda_M^{-k})=0.
$$
\end{theorem}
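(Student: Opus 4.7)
The argument proceeds by induction on $l$, with a secondary induction on $d$; the base case $l=1$ (which forces $k=1$) is precisely Lemma~\ref{RGamma(lambda-1)=0}. For the inductive step, by Remark~\ref{weights of line bundles} the line bundle $\Lambda_M^{-k}$ descends to an object of $\lambda$-weight $k$ at the strictly semistable locus of the wall between $M_{l-1}(d)$ and $M_l(d)$. Since $l\le v\le (d-1)/2$, we have $1+2l-d-g\le -g<0<k$, so whenever $k<l$ the weight lies strictly inside the window of Theorem~\ref{windows wall-crossing}, giving
\[
R\Gamma_{M_l(d)}(\Lambda_M^{-k})=R\Gamma_{M_{l-1}(d)}(\Lambda_M^{-k})=0
\]
by the inductive hypothesis.

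The diagonal case $k=l$ is the crux, as the weight saturates $\eta_+=l$ and direct wall-crossing is unavailable. I would tensor the four-term Koszul sequence \eqref{reduction F dual} from Lemma~\ref{BasicKoszul} by $\Lambda_M^{-(l-1)}$ to obtain
\[
0\to \Lambda_M^{-l}\to F_x^\vee\otimes\Lambda_M^{-(l-1)}\to \Lambda_M^{-(l-1)}\to \left.\Lambda_M^{-(l-1)}\right|_{M_{l-1}(d-2)}\to 0.
\]
The penultimate term is $\Gamma$-acyclic by the previous case ($k=l-1<l$), and the rightmost restriction is $\Gamma$-acyclic by the inductive hypothesis applied to $M_{l-1}(d-2)$ (the small cases $d\le 4$ fall under the base $l=1$). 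Splitting the sequence into two short exact sequences and taking cohomology produces an isomorphism
\[
R\Gamma_{M_l(d)}(\Lambda_M^{-l})\cong R\Gamma_{M_l(d)}(F_x^\vee\otimes\Lambda_M^{-(l-1)}),
\]
reducing the theorem to the auxiliary vanishing of the right-hand side.

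The main obstacle is thus to prove this auxiliary vanishing. Since $F_x^\vee\otimes\Lambda_M^{-(l-1)}$ has $\lambda$-weights $\{l-1,l\}$ at every wall, its weight $l$ still obstructs wall-crossing at the wall of index $l$. Fortunately wall-crossing at higher walls is allowed: iteratively applying Theorem~\ref{windows wall-crossing} propagates the question up to $R\Gamma_{M_v(d)}(F_x^\vee\otimes\Lambda_M^{-(l-1)})$. The final step, which I expect to be the technical heart of the argument, would combine further applications of Lemma~\ref{BasicKoszul} at auxiliary points $y\in C$ with the $\zeta^{-1}$-twisted acyclicity of Theorem~\ref{vanishing with Z on Mi}, arranging the bookkeeping so as not to loop back to the original isomorphism. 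This reflects the paper's own remark that the relevant line bundle lies ``outside of the scope'' of the Verlinde computations of~\cite{thaddeus}.
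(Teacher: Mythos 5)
Your reduction of the off-diagonal case $k<l$ to $k=l$ via windows is exactly the paper's Lemma~\ref{windowforward}, and the observation that $l=1$ is covered by Lemma~\ref{RGamma(lambda-1)=0} is also right. The Koszul manipulation giving
\[
R\Gamma_{M_l(d)}(\Lambda_M^{-l})\cong R\Gamma_{M_l(d)}\bigl(F_x^\vee\otimes\Lambda_M^{-(l-1)}\bigr)
\]
is also correct, subject to the inductive hypotheses you invoke.

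The problem is that this last isomorphism does not reduce the difficulty of the diagonal case; it only re-expresses it. The new object $F_x^\vee\otimes\Lambda_M^{-(l-1)}$ has $\lambda$-weights $\{l-1,l\}$, so the weight $l$ still saturates $\eta_+=l$ at wall $l$, exactly as $\Lambda_M^{-l}$ did. Pushing up to $M_v$ leaves you with the same bundle and no new leverage. Your proposal to finish with Theorem~\ref{vanishing with Z on Mi} cannot work as stated: that theorem only handles objects carrying an explicit $\zeta^{-1}$ factor, and $\zeta^{-1}=\cO(-1,1-g)$ is not a power of $\Lambda_M=\cO(0,-1)$, so no Koszul resolution at auxiliary points (which only produces shifts by $\Lambda_M^{\pm1}$ and restrictions to $M_{i-1}(d-2)$) will introduce it. Even Serre duality does not rescue this, since it produces a $\zeta^{-1}\otimes\theta^{-1}$ factor, again outside the reach of Theorem~\ref{vanishing with Z on Mi}. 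The other natural candidate, Theorem~\ref{vanishing without Z on Mi}, does cover $F_x^\vee\otimes\Lambda_M^{-(l-1)}$, but its proof (via Lemma~\ref{bottleneck} and Proposition~\ref{GD=C}) already relies on Theorem~\ref{full vanishing of Lambda*}, so invoking it would be circular. You are right that this is the technical heart, and the step you leave open is essentially the entire content of the theorem.

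What the paper actually does for $k=l$ is entirely different and considerably more geometric. It twists the structure sequence of the divisor $E_i\subset M_i$ by $\cO_i(1,i-1)$, reducing the claim to $R\Gamma_{M_i}(\cO_i(1,i-1))\cong R\Gamma_{E_i}(\cO_{E_i}(1,i-1))$. A chain of conductor sequences (Lemma~\ref{commutative diagram conductor}, Corollary~\ref{induction corollary}), justified by showing each $E_i^\alpha$ is seminormal and Cohen--Macaulay with multicross singularities, collapses the right-hand side to $R\Gamma(\Sym^iC, L_i(d))$. The resulting isomorphism $R\Gamma_{M_i(d)}(\cO_i(1,i-1))\cong R\Gamma_{\Sym^iC}(L_i(d))$ is then established for $d\ge i+g$ by combining Kawamata--Viehweg and Kodaira vanishing with a nontrivial residue computation matching Euler characteristics (Lemmas~\ref{cohomology of (1,i-1)}--\ref{base case d}), and extended to all $d$ in range by a downward induction on $d$ using a Hecke-type correspondence (Lemma~\ref{blowup lemma}) and Koll\'ar vanishing. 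None of these ingredients appear in your plan, and the line bundle $\cO_i(1,i-1)$, rather than the universal bundle, is the object that makes the argument tractable.
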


$\Gamma$-acyclicity of these negative powers of $\Lambda_M$ will be crucial for the cohomology computations in the upcoming sections.

\begin{lemma}\label{noglobalsections}
Under the assumptions of Theorem~\ref{full vanishing of Lambda*},
$
H^0(M_l(d),\Lambda_M^{-k})=0.
$
\end{lemma}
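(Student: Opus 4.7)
The plan is to reduce the statement to a classical non-degeneracy question on $M_0 = \mathbb{P}^{d+g-2}$. The argument has three steps: a small-modification reduction to $M_1$, an application of the blow-up formula, and a partial-derivative computation.

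First, for each $2 \le i \le l$, the birational transformation $M_{i-1} \dashrightarrow M_i$ is a small modification, being an isomorphism away from $\mathbb{P}W_i^{\pm}$, which have codimensions $i \ge 2$ in $M_i$ and $d+g-1-2i \ge 2$ in $M_{i-1}$ (using $g \ge 2$ and $i \le v \le (d-1)/2$). Under the canonical identification $\Pic M_1 \simeq \Pic M_l$ from Definition~\ref{notation O(m,n)}, the line bundle $\Lambda_M^{-k}$ pulls through on both sides, so Hartogs extension across the codimension-$\ge 2$ flipped loci gives
\[
H^0(M_l(d),\Lambda_M^{-k}) \simeq H^0(M_1(d),\Lambda_M^{-k}).
\]

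Next, on $M_1 = \Bl_C M_0$, I would use $\Lambda_M^{-k} = \mathcal{O}_{M_1}(kH - kE_1)$, the standard identity $R\pi_*\mathcal{O}_{M_1}(-kE_1) = \mathcal{I}_C^k$ (valid for the blow-up along a smooth center in a smooth ambient, with vanishing higher direct images), and the projection formula to obtain
\[
H^0(M_1(d),\Lambda_M^{-k}) \simeq H^0\bigl(\mathbb{P}^{d+g-2},\mathcal{I}_C^k(k)\bigr).
\]
This is the space of degree-$k$ hypersurfaces in $\mathbb{P}^{d+g-2}$ vanishing to order $\ge k$ along $C$.

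The main conceptual input is the final step. For a nonzero $f \in H^0(\mathbb{P}^{d+g-2},\mathcal{I}_C^k(k))$, every $(k-1)$-st order partial derivative $\partial^I f$ is a \emph{linear} form vanishing identically along $C$. But the embedding $C \hookrightarrow M_0$ is given by the complete linear system $|\omega_C \otimes \Lambda|$, so its image is non-degenerate in $\mathbb{P}^{d+g-2}$ --- the same observation underlying Lemma~\ref{RGamma(lambda-1)=0}. Hence $\partial^I f = 0$ for every multi-index $I$ with $|I|=k-1$, and a homogeneous polynomial of degree $k$ all of whose $(k-1)$-st partials vanish is itself zero, a contradiction.

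The main obstacle is really just recognizing the correct three-step reduction; the new ingredient beyond Lemma~\ref{RGamma(lambda-1)=0} is the elementary upgrade from the case $k=1$ (no hyperplane contains $C$) to higher $k$ by applying partial derivatives and invoking non-degeneracy again. The bookkeeping in Step 1 is the most delicate point, as one must verify that the small-modification codimension bound $d + g - 1 - 2i \ge 2$ holds throughout the relevant range, which it does precisely because $g \ge 2$ and $i \le v$.
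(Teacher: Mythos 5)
Your proof is correct, and the reduction steps (small modification down to $M_1$, then the identification $H^0(M_1,kH-kE_1)\simeq H^0(\bP^{d+g-2},\mathcal{I}_C^k(k))$) match the paper's, which likewise reduces to showing that no degree-$k$ hypersurface in $\bP^{d+g-2}$ vanishes along $C$ with multiplicity $\ge k$. Where you genuinely diverge is the final non-degeneracy step. The paper argues geometrically: it picks $r+1$ points of $C$ in linearly general position ($r=d+g-2$), runs a rational normal curve $R$ through them, and computes on the blow-up at those points that the proper transforms satisfy $\tilde D\cdot\tilde R\le kr-k(r+1)<0$, forcing $R\subset D$; since such $R$ can pass through a general point of $\bP^r$, this is absurd. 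You instead differentiate: all $(k-1)$-st partials of $f$ are linear forms vanishing on $C$, hence zero by linear non-degeneracy of the embedding $|\omega_C\otimes\Lambda|$ (the same input as Lemma~\ref{RGamma(lambda-1)=0}), and a degree-$k$ form with vanishing $(k-1)$-st partials is zero in characteristic~$0$. Your route is more elementary and self-contained --- it needs only characteristic~$0$ and non-degeneracy, with no general-position or moving argument for rational normal curves --- and it isolates the clean classical fact that a non-degenerate irreducible variety cannot lie with multiplicity $m$ on a hypersurface of degree $m$. The paper's intersection-theoretic argument is a touch more geometric but requires the auxiliary existence and mobility of the rational normal curves. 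Both are valid; yours could be substituted without loss.
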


\begin{proof}
Since $M_l$ is isomorphic to $M_1$ in codimension~$1$, it suffices to prove that
$
H^0(M_1,\Lambda_M^{-k})=
H^0(M_1,kH-kE_1)=0.
$
Recall that $M_1$ is the blow-up of $\bP^r$ in $C$ embedded by a complete linear system of $K_C+\Lambda$, $r=d+g-2$, $E_1$ is the exceptional divisor and $H$ is a hyperplane divisor. The claim is that there is no hypersurface $D\subset \bP^{r}$ of degree $k$ that vanishes along $C$ with multiplicity $\ge k$.
We argue by contradiction.
Choose $r+1$ points $p_1,\ldots,p_{r+1}\in C$ in linearly general position.
Then $D$ vanishes at these points with multiplicity $\ge k$. Let $R$ be a rational normal curve passing through   $p_1,\ldots,p_{r+1}$. Let $\tilde R$ and $\tilde D$ be the proper transforms of $R$ and $D$ in $\Bl_{p_1,\ldots,p_{r+1}}\bP^r$. Then $\tilde D\cdot\tilde R\le kr-k(r+1)<0.$
It follows that $R\subset D$. But we can choose $R$ passing through a general point of $\bP^r$, which is a contradiction.
\end{proof}

\begin{lemma}\label{windowforward}
Under the assumptions of Theorem~\ref{full vanishing of Lambda*},
if 
$
R\Gamma_{M_k(d)}(\Lambda_M^{-k})=0,
$
then
$
R\Gamma_{M_l(d)}(\Lambda_M^{-k})=0.
$
\end{lemma}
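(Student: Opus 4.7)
The plan is to deduce the vanishing on $M_l(d)$ from the assumed vanishing on $M_k(d)$ by iteratively applying the wall-crossing result Theorem~\ref{windows wall-crossing} at each of the walls between $M_{j-1}(d)$ and $M_j(d)$ for $j = k+1, k+2, \ldots, l$. This is precisely the kind of situation the windows machinery of Section~\ref{ThaddeusSpaces} was set up to handle: once the equivariant weight of the line bundle of interest is known on the strictly semi-stable locus, it suffices to check that it lies in the allowed window at every wall to be crossed.

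The key input is the weight of $\Lambda_M^{-k}$. By Definition~\ref{notation important line bundles} we have $\Lambda_M = \cO_j(0,-1)$, hence $\Lambda_M^{-k} = \cO_j(0,k)$, and by Remark~\ref{weights of line bundles} this line bundle descends from an object in $D^b([X/PGL_{\chi'}])$ whose $\lambda$-weight on the strictly semi-stable locus equals $m(1-j)+n = k$, independently of $j$. To apply Theorem~\ref{windows wall-crossing} at wall $j$, I must verify the inequality
$$1+2j-d-g \;<\; k \;<\; j.$$
The upper bound $k<j$ is immediate since $j \geq k+1$. For the lower bound $2j < k+d+g-1$, the worst case is $j=l$, and since $l \leq v = \lfloor (d-1)/2 \rfloor$ we have $2l \leq d-1$, so $2l - (d-1) \leq 0 < k+g$, which gives the strict inequality. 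Note also that the hypothesis $k \geq 1$ forces $j \geq k+1 \geq 2 > 1$, so the condition $1 < j \leq v$ in Theorem~\ref{windows wall-crossing} is satisfied at every wall being crossed.

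Applying Theorem~\ref{windows wall-crossing} inductively for $j = k+1, \ldots, l$ then yields
$$R\Gamma_{M_l(d)}(\Lambda_M^{-k}) \;=\; R\Gamma_{M_{l-1}(d)}(\Lambda_M^{-k}) \;=\; \cdots \;=\; R\Gamma_{M_k(d)}(\Lambda_M^{-k}) \;=\; 0,$$
with the final vanishing being the hypothesis. The argument is essentially routine bookkeeping on weights; there is no genuine obstacle, and in particular the upper bound $d \leq 2g+1$ in the ambient theorem is not used here — only the combinatorial fact that $v \leq (d-1)/2$ is needed to keep the weight $k$ inside the window throughout the range.
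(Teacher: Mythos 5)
Your proof is correct and follows essentially the same route as the paper: identify the $\lambda$-weight of $\Lambda_M^{-k}$ as $k$ (via Theorem~\ref{weights computations of all bundles}, which underlies Remark~\ref{weights of line bundles}), check that $k$ lies strictly inside the window $(1+2j-d-g,\,j)$ at every wall $j=k+1,\ldots,l$, and apply Theorem~\ref{windows wall-crossing} iteratively. Your version is a bit more explicit than the paper's about the inductive step and about verifying that $j>1$ at each wall crossed, but the argument is the same.
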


\begin{proof}
By Theorem~\ref{weights computations of all bundles}, in the wall between $M_{l-1}$ and $M_l$, $\Lambda_M^{-k}$ descends from an object of weight $k$, with $-\eta_-<k<\eta_+$ when $k<l\leq v$, that is, $1+2l-d-g<k<l$ for $l$ in that range. This way, $0=R\Gamma_{M_k}(\Lambda_M^{-k})=R\Gamma_{M_l}(\Lambda_M^{-k})$ for $l\geq k$ by Theorem~\ref{windows wall-crossing}
\end{proof}

\begin{definition}
For $0\leq\alpha\leq i$, we introduce the following loci:
\begin{align*}
    E_i^\alpha &:= \{(E,s)\mid Z(s)\subset C \text{ has degree }\geq\alpha\}\subset M_i,\\
    \cD_i^\alpha &:=\{(D,E,s)\mid s|_D=0\}\subset \Sym^\alpha C\times M_i,\\
    R_i^\alpha&:=\{(D,E,s)\mid s|_D=0\text{ and }Z(s)\text{ has degree }\geq\alpha+1\}\subset \cD_i^\alpha,
\end{align*}
where $Z(s)$ denotes the zero locus subscheme of the section $s$.
\end{definition}

Note that $E_i^i$ is precisely $\mathbb{P}W_i^+$ \cite[proof of 3.2]{thaddeus}, while $E_i^1=E_i$ is the proper transform of $E_1$ under the birational equivalence given by (\ref{diagram Mi}). Recall $\cO(E_i)=\cO_i(1,-1)$ according to Definition \ref{notation O(m,n)}.
For a divisor $D\in \Sym^\alpha C$, we observe that the fiber $(\cD_i^\alpha)_D$ along the projection $\Sym^\alpha C\times M_{i}\to \Sym^\alpha C$ is isomorphic to $M_{i-\alpha}(\Lambda (-2D))$, see Remark~\ref{restriction of O(m,n)} or
\cite[1.9]{thaddeus}. Similarly, $(R_i^\alpha)_D\simeq E_{i-\alpha}(\Lambda(-2D))$. In particular, $\cD_i^\alpha$ is smooth, and we have a diagram
\begin{equation}\label{normalization of Ei}
\begin{tikzcd}
& R_i^\alpha \arrow[r,hook]\arrow[d] & \cD_i^\alpha \arrow[d,"\nu"]  \\
& E_i^{\alpha+1}\arrow[r,hook] & E_i^\alpha
\end{tikzcd}
\end{equation}
where $\nu$ is the normalization morphism.

\begin{lemma}\label{commutative diagram conductor}
We have the following commutative diagram
\begin{equation}\label{conductor}
    \begin{tikzcd}
    0 \arrow[r] & \nu_*\cO_{D_i^\alpha}(-R_i^\alpha) \arrow[r] &\nu_*\cO_{D_i^\alpha}\arrow[r] &\nu_*\cO_{R_i^\alpha} \arrow[r] &0\\
    0\arrow[r] &\mathcal{I}_{E_i^{\alpha+1}}\arrow{u}[sloped]{\sim}\arrow[r] &\cO_{E_i^\alpha}\arrow[u,hook]\arrow[r,"\beta"] &\cO_{E_i^{\alpha+1}}\arrow[u,hook]\arrow[r] &0
    \end{tikzcd}
\end{equation}
where $\mathcal{I}_{E_i^{\alpha+1}}\simeq \nu_*\cO_{D_i^\alpha}(-R_i^\alpha)$ is the conductor sheaf of the normalization (\ref{normalization of Ei}) and $R_i^\alpha$ (resp.~$E_i^{\alpha+1}$)
is a conductor subscheme in $\cD_i^\alpha$ (resp.~$E_i^\alpha$).
\end{lemma}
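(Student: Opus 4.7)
The plan is to verify three facts which together imply the diagram (\ref{conductor}): (a) $\nu$ is the normalization of $E_i^\alpha$; (b) the conductor ideal of this normalization, viewed as an ideal of $\cO_{E_i^\alpha}$, coincides with $\mathcal{I}_{E_i^{\alpha+1}}$; and (c) viewed instead as an ideal of $\nu_*\cO_{\cD_i^\alpha}$, it coincides with $\nu_*\cO_{\cD_i^\alpha}(-R_i^\alpha)$. Granted these, the top row of (\ref{conductor}) is the pushforward under the finite (hence affine) morphism $\nu$ of the short exact sequence attached to $R_i^\alpha \subset \cD_i^\alpha$, the bottom row is that of $E_i^{\alpha+1} \subset E_i^\alpha$, the left vertical arrow is the conductor identification, and the middle and right verticals are the canonical inclusions induced by the normalization and by its restriction $R_i^\alpha \to E_i^{\alpha+1}$. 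Commutativity is then tautological.

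For (a), $\cD_i^\alpha$ is smooth: it fibers over the smooth base $\Sym^\alpha C$ with smooth fibers $M_{i-\alpha}(\Lambda(-2D))$. The morphism $\nu$ is finite, since the fiber over $(E,s)$ is the finite set of effective sub-divisors $D \leq Z(s)$ of degree $\alpha$, and birational, being an isomorphism over the open complement $E_i^\alpha \setminus E_i^{\alpha+1}$ where $D = Z(s)$ is uniquely determined. A finite birational morphism from a normal scheme is the normalization.

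For (b) and (c), the main computation is local, at a generic point of $E_i^{\alpha+1}$. At such a pair $(E,s)$ with $Z(s) = x_1 + \ldots + x_{\alpha+1}$ consisting of distinct simple zeros, one shows that étale-locally $E_i^\alpha$ is a transverse union of $\alpha+1$ smooth branches indexed by the choice of which $x_j$ to omit from $D$, and $\nu$ separates these branches. For such a transverse multi-cross, the conductor is exactly the reduced ideal of the central singular locus, matching $\mathcal{I}_{E_i^{\alpha+1}}$ there, and its preimage in $\cD_i^\alpha$ is the reduced ideal of the $\alpha+1$ disjoint sheets, matching $\nu_*\cO_{\cD_i^\alpha}(-R_i^\alpha)$.

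The main obstacle is upgrading this generic-point analysis to the deeper strata $E_i^{\alpha+k}$ for $k \geq 2$ and to points where $Z(s)$ carries multiplicities, e.g.\ $Z(s) = \beta x$. I would handle multiplicities via an étale-local product decomposition of $\cD_i^\alpha$ near a divisor $\sum n_j x_j$, analogous to Lemma~\ref{differentpoints}, reducing to the concentrated case at a single point where the local picture is still a transverse union of smooth branches parametrized by sub-divisor choices. To match scheme structures globally along $E_i^{\alpha+1}$, I would observe that $\nu_*\cO_{\cD_i^\alpha}$ is a finite $\cO_{E_i^\alpha}$-algebra and that both $\mathcal{I}_{E_i^{\alpha+1}}$ and the conductor are depth-$\geq 2$ (equivalently $S_2$) ideals in $\cO_{E_i^\alpha}$, so their agreement on the codimension-one stratum forces agreement everywhere. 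With (a)--(c) established, the commutative diagram with exact rows in (\ref{conductor}) is the standard diagram associated to the two complementary descriptions of a conductor ideal.
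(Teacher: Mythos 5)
Your overall structure — local multicross analysis at a generic point of $E_i^{\alpha+1}$, then globalization from codimension one — is the same as the paper's, and your verification that $\nu$ is the normalization (finite birational, source smooth) agrees with the remark preceding the lemma. The generic-point picture you describe, a transverse union of $\alpha+1$ smooth branches indexed by the choice of $x_j$ to omit, is exactly the Claim the paper proves by analyzing the flip $M_\alpha\dashrightarrow M_{\alpha+1}$. So far so good.

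The gap is in the globalization step. You assert that both the conductor ideal and $\mathcal{I}_{E_i^{\alpha+1}}$ are ``depth $\geq 2$ (equivalently $S_2$) ideals in $\cO_{E_i^\alpha}$'' and then invoke agreement in codimension one. But this assertion is not free: the conductor $\sHom_{\cO_{E_i^\alpha}}(\nu_*\cO_{\cD_i^\alpha},\cO_{E_i^\alpha})$ is $S_2$ only if $\cO_{E_i^\alpha}$ is $S_2$, and $\mathcal{I}_{E_i^{\alpha+1}}$ is $S_2$ only if, in addition, $E_i^{\alpha+1}$ has no embedded points — and neither fact is apparent a priori. Establishing exactly these properties is the substantive content of the lemma: the paper runs an induction on $\alpha$ showing $E_i^\alpha$ is Cohen--Macaulay and semi-normal (starting from $E_i^1$ a hypersurface, using the multicross picture in codimension one, Greco's criterion that semi-normality plus $S_2$ in codimension one gives semi-normality, Traverso's theorem that semi-normality forces the conductor subscheme to be reduced, Greco's bound that the conductor's associated primes have height one, and a lemma of Roberts passing Cohen--Macaulayness from $R_i^\alpha\subset\cD_i^\alpha$, a Cartier divisor in a smooth scheme, to $E_i^{\alpha+1}$ to feed the next step of the induction). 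The semi-normality route is also what identifies the conductor subscheme directly as the \emph{reduced} scheme on the non-normal locus, sidestepping the need to separately verify $\mathcal{I}_{E_i^{\alpha+1}}$ is $S_2$. Without this inductive bootstrap your $S_2$ comparison has no foundation, so the argument as written does not close.
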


\begin{proof}
From the flipping diagram \eqref{diagram Mi},
$E_\alpha^\alpha\subset M_\alpha$ is the projective bundle $\bP W_\alpha^+$ and $E_{\alpha+1}^{\alpha}
\subset M_{\alpha+1}$ is isomorphic to $E_\alpha^\alpha$ away from $E_{\alpha+1}^{\alpha+1}\simeq\bP W_{\alpha+1}^+$.

\begin{claim}
$E_{\alpha+1}^{\alpha}$ has a multicross singularity 
generically along $E_{\alpha+1}^{\alpha+1}$
(concretely, this means that a general section of $E_{\alpha+1}^{\alpha}$ that intersects  $E_{\alpha+1}^{\alpha+1}$
in a point is \'etale locally isomorphic to the union of coordinate axes in $\bA^{\alpha+1}$).
\end{claim}

Given the claim, and since multicross singularities are semi-normal \cite{LV},  $E_{\alpha+1}^\alpha$ has semi-normal singularities in codimension~$1$. For $i>\alpha+1$, $E_i^\alpha$ is isomorphic to $E_{\alpha+1}^\alpha$ in codimension~$2$, and so also has semi-normal singularities in codimension~$1$.
Next we argue by induction on $\alpha$ that $\cD_i^\alpha \to E_i^\alpha$ has reduced conductor subschemes $E_{i}^{\alpha+1}\subset E_i^\alpha$ and $R_i^\alpha \subset \cD_i^\alpha$ and $E_{i}^{\alpha+1}$ is Cohen-Macaulay and semi-normal, and in particular that we have a commutative diagram \eqref{conductor}.

Indeed, $E^1_i\subset M_i$ is Cohen--Macaulay as a hypersurface in a smooth variety. Suppose $E_i^\alpha$ is Cohen--Macaulay. Since it is semi-normal in codimension~$1$ by the above, it is semi-normal everywhere \cite[Corollary 2.7]{greco}. Therefore, its conductor subschemes in $E_i^\alpha$ and $\cD_i^\alpha$ are both reduced  \cite[Lemma 1.3]{traverso} and all of their associated primes have height~$1$ in $E_i^\alpha$ and $\cD_i^\alpha$, respectively \cite[Lemma 7.4]{greco}. It follows that these conductor subschemes are equal to $E_i^{\alpha+1}$ and 
$R_i^\alpha$, respectively. Finally,
$R_i^\alpha\subset \cD_i^\alpha$ is Cohen--Macaulay as a hypersurface in a smooth variety and therefore
$E_i^{\alpha+1}\subset E_i^{\alpha}$ is also Cohen--Macaulay \cite[Theorem 2.2]{roberts}, and we can proceed with induction.

It remains to prove the claim.
We analyze the flipping diagram \eqref{diagram Mi} between the spaces $M_\alpha$ and $M_{\alpha+1}$, where 
$M_\alpha$ contains projective bundles 
$\bP W_{\alpha+1}^-$ (over $\Sym^{\alpha+1}C$) and $\bP W_{\alpha}^+\simeq E_\alpha^\alpha$ (over $\Sym^{\alpha}C$) of dimensions $2\alpha+1$ and $d+g-2-\alpha$, respectively. 
What is their intersection over a point $D'\in\Sym^{\alpha+1}C$, for simplicity a reduced sum of points?
By \cite[3.3]{thaddeus}, $\bP W_{\alpha+1}^-$ parametrizes pairs $(E,\phi)$ that appear in extensions $$0\to L\to E\to\Lambda\otimes L^{-1}\to0$$
with $\deg L=d-\alpha-1$ and $\phi\not\in H^0(L)$. Projecting $\phi$ to $\Lambda\otimes L^{-1}$ gives a non-zero vector $\gamma\in H^0(\Lambda\otimes L^{-1})$ with $Z(\gamma)=D'$, so that
$\Lambda\otimes L^{-1}=\cO(D')$, where $\deg D'=\alpha+1$ (this gives the map from $\bP W_{\alpha+1}^-$ to $\Sym^{\alpha+1}C$). Moreover, at $D'$ the section lifts to a section of $\cO_{D'}\otimes L\simeq \cO_{D'}\otimes \Lambda(-D')$, and this vector $p\in H^0(\cO_{D'}\otimes \Lambda(-D'))$ 
(determined uniquely up to a scalar) determines $(E,\phi)$ uniquely \cite[3.3]{thaddeus}.

The same pair $(E,\phi)$ belongs to $\bP W_\alpha^+$ if it can be given by an extension
$$0\to \cO(D)\to E\to\Lambda(-D)\to0$$
with $\phi\in H^0(\cO(D))$ and $\deg D=\alpha$ \cite[3.2]{thaddeus}.
Since $\phi$ vanishes at $D$ and its image in $\cO(D')$ vanishes at $D'$, we have $D\subset D'$.
Since we assume that $D'$ is a reduced divisor, there are exactly $\alpha+1$ choices for $D$. 
Since $p$ has to vanish at points of $D\subset D'$, there is exactly one vector 
$p\in H^0(\cO_{D'}\otimes \Lambda(-D'))$ 
(up to a multiple) that works for a given choice of $D$. Moreover, in this way we get a basis of 
$H^0(\cO_{D'}\otimes \Lambda(-D'))\simeq\bC^{\alpha+1}$.
It follows that, over $D'\in\Sym^{\alpha+1}C$, 
$\bP W_{\alpha+1}^-$ and $\bP W_{\alpha}^+\simeq E_\alpha^\alpha$
intersect in $\alpha+1$ reduced points which form a basis of the projective space
$(\bP W_{\alpha+1}^-)_{D'}\simeq\bP^\alpha$.

The strict transform of $\bP W_\alpha^+$ in $M_{\alpha+1}$ is $E_{\alpha+1}^\alpha$, which contains the bundle $\bP W_{\alpha+1}^+$ of dimension $d+g-3-\alpha$ (the flipped locus). After the flip, linearly independent intersection points in
$(\bP W_{\alpha+1}^-)_{D'}\cap \bP W_\alpha^+$ become linearly independent normal directions of branches of $E_{\alpha+1}^\alpha$ along $\bP W_{\alpha+1}^+$, i.e. $E_{\alpha+1}^\alpha$ has a multicross singularity in codimension~$1$, as claimed. We illustrate the geometry of $M_{\alpha}$, $M_{\alpha+1}$ and the common resolution $\tilde M_{\alpha+1}$ in Figure~\ref{FlipPicture}.
\end{proof}

\begin{figure}[htbp]
\includegraphics[width=4in]{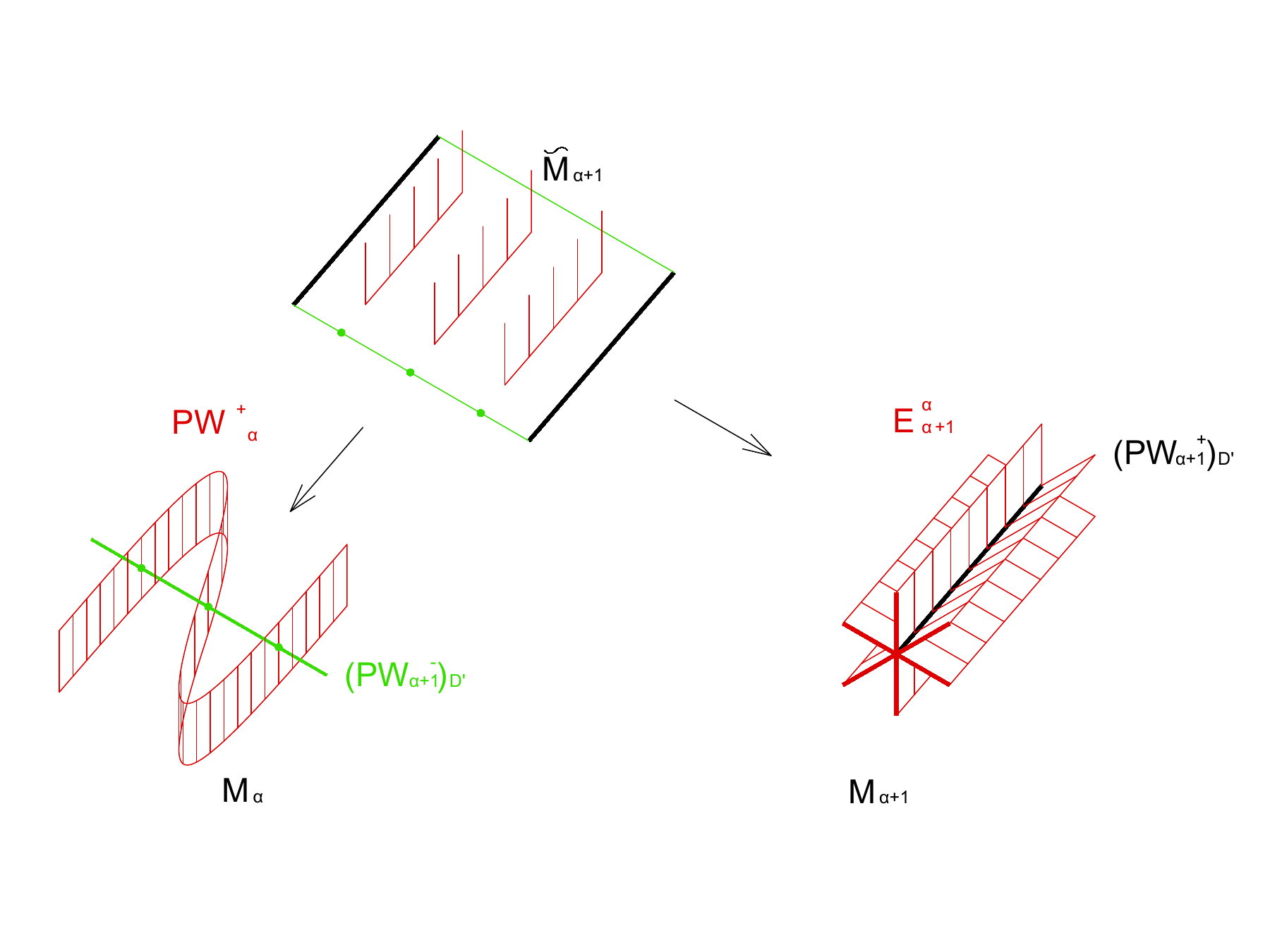}
\caption{Common resolution $\tilde M_{\alpha+1}$ of $M_{\alpha}$ and $M_{\alpha+1}$.}
\label{FlipPicture}
\end{figure}

\begin{corollary}\label{induction corollary}
If the claim of Theorem~\ref{full vanishing of Lambda*}
is proved for  $1\leq k=l\leq i-1$, then, for $1\leq\alpha\leq i-1$, $R\Gamma_{M_i} (\cO_{E_i^\alpha}(1,i-1))\simeq R\Gamma_{M_i} (\cO_{E_i^{\alpha+1}}(1,i-1))$ via $R\Gamma(\beta)$.
\end{corollary}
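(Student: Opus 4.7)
The plan is to twist the lower row of diagram~\eqref{conductor} by $\cO_i(1,i-1)$, apply $R\Gamma_{M_i}$, and reduce the statement to showing
\begin{equation*}
R\Gamma_{M_i}\bigl(\mathcal{I}_{E_i^{\alpha+1}}\otimes\cO_i(1,i-1)\bigr)=0.
\end{equation*}
Since $\cO_i(1,i-1)$ is a line bundle, tensoring preserves the exact sequence, so if the displayed cohomology vanishes then the long exact sequence gives that $R\Gamma(\beta\otimes\id_{\cO_i(1,i-1)})$ is an isomorphism.

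To compute the vanishing, I would use the identification $\mathcal{I}_{E_i^{\alpha+1}}\simeq \nu_*\cO_{\cD_i^\alpha}(-R_i^\alpha)$ from \eqref{conductor}. Composing with the closed immersion $E_i^\alpha\hookrightarrow M_i$ gives a finite morphism $\pi:\cD_i^\alpha\to M_i$, and the projection formula turns the problem into computing
\begin{equation*}
R\Gamma_{\cD_i^\alpha}\bigl(\cO_{\cD_i^\alpha}(-R_i^\alpha)\otimes \pi^*\cO_i(1,i-1)\bigr).
\end{equation*}
Next I would factor this through the other projection $p:\cD_i^\alpha\to\Sym^\alpha C$, which is proper and (by miracle flatness, since $\cD_i^\alpha$ and $\Sym^\alpha C$ are smooth and the fibers $M_{i-\alpha}(\Lambda(-2D))$ are equidimensional of the expected dimension) flat. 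By cohomology and base change, it suffices to show that the restriction of $\cO_{\cD_i^\alpha}(-R_i^\alpha)\otimes\pi^*\cO_i(1,i-1)$ to the fiber over every $D\in\Sym^\alpha C$ is $\Gamma$-acyclic.

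The restriction computation uses Remark~\ref{restriction of O(m,n)} twice: on the fiber $M_{i-\alpha}(\Lambda(-2D))$, the line bundle $\cO_i(1,i-1)$ restricts to $\cO_{i-\alpha}(1,i-1-\alpha)$, while $\cO_{\cD_i^\alpha}(-R_i^\alpha)$ restricts to $\cO(-E_{i-\alpha}^1)=\cO_{i-\alpha}(-1,1)$ since $(R_i^\alpha)_D\simeq E_{i-\alpha}^1$ and $\cO(E_{i-\alpha}^1)=\cO_{i-\alpha}(1,-1)$. The product is $\cO_{i-\alpha}(0,i-\alpha)=\Lambda_M^{-(i-\alpha)}$. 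Since $1\le i-\alpha\le i-1$ and, from $2i\le d-1$ together with $\alpha\ge 1$, we have $2(i-\alpha)\le (d-2\alpha)-1$ and $d-2\alpha>2$, the hypothesis of Theorem~\ref{full vanishing of Lambda*} applies to $M_{i-\alpha}(d-2\alpha)$ with $k=l=i-\alpha$, giving the desired fiberwise vanishing. The only subtle point I anticipate is the bookkeeping around the restriction formula and verifying that the numerical range $1\le i-\alpha\le v(d-2\alpha)$ and $d-2\alpha\in (2,2g+1]$ is in fact covered by the assumed case of Theorem~\ref{full vanishing of Lambda*}; the rest is routine diagram chasing and base change.
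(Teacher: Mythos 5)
Your proposal is correct and follows essentially the same route as the paper's proof: twist the conductor exact sequence by $\cO_i(1,i-1)$, use the identification of the ideal sheaf with the pushforward $\nu_*\cO_{\cD_i^\alpha}(-R_i^\alpha)$ together with finiteness of $\nu$, then reduce along the fibration $p:\cD_i^\alpha\to\Sym^\alpha C$ to the fiberwise computation on $M_{i-\alpha}(\Lambda(-2D))$, where the bundle becomes $\Lambda_M^{-(i-\alpha)}$ and the inductive hypothesis from Theorem~\ref{full vanishing of Lambda*} applies. The extra verification of the numerical ranges $2<d-2\alpha\le 2g+1$ and $i-\alpha\le v(d-2\alpha)$ that you carry out explicitly is a genuine (if routine) point that the paper leaves implicit, and your justification of it is sound.
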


\begin{proof}
Twisting by $\cO_i(1,i-1)$ and applying $R\Gamma$ to the bottom sequence in (\ref{conductor}), we see that it suffices to show $\mathcal{I}_{E_i^{\alpha+1}}(1,i-1)\simeq\nu_*\cO_{\cD_i^\alpha}(-R_i^\alpha)(1,i-1)$ is $\Gamma$-acyclic. But $\nu$ is a finite map, so this is equivalent to $\Gamma$-acyclicity of $\cO_{\cD_i^\alpha}(-R_i^\alpha)(1,i-1)$. Using the Leray spectral sequence for the fibration $p:\cD_i^\alpha\to \Sym^\alpha C$, it suffices to prove that 
$R\Gamma \bigl(\cO_{\cD_i^\alpha,D}(-R_{i,D}^\alpha)(1,i-1)\bigr)=0.$ 
Under the isomorphism $(\cD_i^\alpha)_D\simeq M_{i-\alpha}(\Lambda(-2D))$, $R_i^\alpha\subset \Sym^\alpha C\times M_i$ restricts to $E_{i-\alpha}^1$ on $M_{i-\alpha}(\Lambda(-2D))$, while $\cO_i (m,n)$ on $M_i(\Lambda)$ restricts to $\cO(m,n-m\alpha)$ on $M_{i-\alpha}(\Lambda(-2D))$ (cf. Remark \ref{restriction of O(m,n)}). Therefore,
$$
R\Gamma_{M_i(d)} \bigl(\cO_{\cD_i^\alpha,D}(-R_{i,D}^\alpha)(1,i-1)\bigr)=R\Gamma_{M_{i-\alpha}(d-2\alpha)}(\Lambda_M^{\alpha-i})
$$
which is zero by hypothesis. \end{proof}

\begin{lemma}\label{cohomology of (1,i-1)}
Suppose $d\leq 2g+1$. Then for $1\leq i\leq d+1-g$, $i\le v$ we have $H^p\bigl(M_i(d),\cO_i(1,i-1)\bigr)=0$ for any $p>0$.
\end{lemma}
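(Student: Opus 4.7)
The strategy is induction on $i$, reducing the cohomology of $\cO_i(1, i-1)$ on $M_i(d)$ to that on smaller moduli spaces via the Koszul-type sequence from Lemma \ref{BasicKoszul}.

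\textbf{Base case} ($i = 1$): The line bundle $\cO_1(1, 0) = \cO_{M_1(d)}(H)$ is pulled back from $\cO_{\bP^r}(1)$ along the blow-up morphism $M_1(d)\to M_0(d)=\bP^{r}$ with $r=d+g-2$, and $R^{>0}\pi_*\cO_{M_1}=0$, so
$R\Gamma(M_1(d),\cO_1(1,0)) = R\Gamma(\bP^{r},\cO(1))$ is concentrated in degree $0$.

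\textbf{Inductive step} ($i\geq 2$): Pick any point $x\in C$ and twist the exact sequence \eqref{reduction F dual} by $\cO_i(1, i-1)$. Using the identification $\Lambda_M^{-1}=\cO_i(0,1)$ from Definition \ref{notation important line bundles}, and Remark \ref{restriction of O(m,n)} which gives $\cO_i(1,i-1)\bigr|_{M_{i-1}(\Lambda(-2x))}=\cO_{i-1}(1,i-2)$, the twisted sequence becomes
\begin{equation*}
0 \to \cO_i(1, i) \to F_x^\vee \otimes \cO_i(1, i-1) \to \cO_i(1, i-1) \to \cO_{i-1}(1, i-2)\bigr|_{M_{i-1}(\Lambda(-2x))} \to 0.
\end{equation*}
Observe that $\cO_{i-1}(1,i-2)$ on $M_{i-1}(d-2)$ is exactly the line bundle to which the Lemma applies with parameters $(i-1,d-2)$: the conditions $d-2\le 2g+1$ and $i-1\le v(d-2)=\lfloor(d-3)/2\rfloor$ follow from the original hypothesis, and $i-1\le (d-2)+1-g=d-1-g$ holds whenever $i\le d-g$. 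Hence by induction this rightmost term is $\Gamma$-acyclic in positive degrees, and the positive-degree cohomology of $\cO_i(1,i-1)$ is controlled by the hypercohomology of the two-term complex $[\cO_i(1,i)\to F_x^\vee\otimes\cO_i(1,i-1)]$.

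\textbf{Handling the middle terms}: The bundle $\cO_i(1, i) = \cO_i(1,(i+1)-1)$ has weight $0$ at the wall between $M_i$ and $M_{i+1}$, so (when $i<v$ and the window inequalities are satisfied) Theorem \ref{windows wall-crossing} yields $R\Gamma_{M_i}(\cO_i(1,i))= R\Gamma_{M_{i+1}}(\cO_{i+1}(1,i))$, a bundle of the same form $\cO_j(1,j-1)$ with $j=i+1$, so the Lemma applies inductively (to a case with larger $i$ within the same $d$).  For the second middle term $F_x^\vee\otimes\cO_i(1,i-1)\cong F_x\otimes\cO_i(1,i)$, a further Koszul reduction via \eqref{reduction F non-dual} twisted by $\cO_i(1,i)$ rewrites it as a three-term complex whose outer terms are $\cO_i(1,i)$ and a restriction of $\cO_i$-type line bundle to $M_{i-1}(\Lambda(-2x))$, again of the same form and handled by induction.

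\textbf{Main obstacle}: The tight hypothesis $i\leq d+1-g$ is exactly what is required to keep the induction closing, but it creates an edge case at $i=d+1-g$ (and similarly at $i=v$) where the reduction to $(i-1,d-2)$ falls outside the hypothesis range.  These boundary cases must be treated separately, most plausibly by the direct computation $R\Gamma_{M_1(d)}(\cO_1(1,i-1)) = R\Gamma(\bP^{d+g-2}, I_C^{i-1}(i))$ obtained by pushing forward along $M_1(d)\to M_0(d)$, together with the filtration of $I_C^{i-1}$ by powers of the ideal sheaf of $C\hookrightarrow\bP^{d+g-2}$ (embedded by $|\omega_C\otimes\Lambda|$) with graded pieces $\Sym^{k-1}(N_{C/\bP^{r}}^\vee)$; the remaining vanishings on $C$ then follow from Serre duality and the degree bounds encoded by the hypotheses $d\leq 2g+1$ and $i\leq d+1-g$.
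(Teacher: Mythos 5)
Your proposal takes a genuinely different route from the paper, which uses no Koszul sequence at all: the paper first finds some $k\ge i$, $k\le v$, for which $\cO_k(1,i-1)\otimes\omega_{M_k}^{-1}$ lies in the nef cone of $M_k(d)$ described in Remark~\ref{ample cone on Mi} (this is where both hypotheses $d\le 2g+1$ and $i\le d+1-g$ enter, via the two inequalities $3i\le d+g-1$ and $4(d-2)\ge 2(d+g+i-5)$), applies Kawamata--Viehweg on $M_k$ to kill higher cohomology there, and then walks the answer back down from $M_k$ to $M_i$ by the Quantization Theorem, since the weight $i-l$ of $\cO(1,i-1)$ lies strictly inside the window $(1+2l-d-g,\,l)$ for all $i<l\le k$. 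Unfortunately your Koszul approach has a real gap: the induction is not well-founded. Your base case is $i=1$, so the induction runs upward in $i$, yet to handle the middle term $\cO_i(1,i)$ you invoke Theorem~\ref{windows wall-crossing} to identify $R\Gamma_{M_i}(\cO_i(1,i))$ with $R\Gamma_{M_{i+1}}(\cO_{i+1}(1,i))$ and then ``apply the Lemma inductively to a case with larger $i$ within the same $d$.'' That is circular: at the step for $i$ you may not assume the statement for $i+1$. Fixing the direction by starting from a top case would require exhibiting such a top case independently, which you have not done.

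There are two further soft spots. First, the claim that the positive cohomology of $\cO_i(1,i-1)$ is ``controlled by the two-term complex $[\cO_i(1,i)\to F_x^\vee\otimes\cO_i(1,i-1)]$'' is not accurate: the $4$-term exact complex gives a spectral sequence converging to zero, and to isolate $H^{>0}(\cO_i(1,i-1))$ one also needs the $H^0$ of the rightmost term and control of the differentials, not just higher-cohomology vanishing of the neighbors. Second, your ``main obstacle'' paragraph concedes that the hypothesis $i\le d+1-g$ fails to be inherited by $(i-1,d-2)$ at the boundary $i=d+1-g$, and proposes a direct computation via $I_C^{i-1}$ on $\bP^{d+g-2}$ that is never carried out; since that boundary value is exactly the one the Lemma is tightest at, this is not a minor edge case to defer. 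As written, the proposal is not a proof; the paper's Kawamata--Viehweg plus wall-crossing argument avoids all of these issues by doing no induction whatsoever.
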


\begin{proof}
Recall that $
\omega_{M_k}=\cO_{M_k}(-3,4-d-g)
$ for every $1 \leq k \leq v$ (see \cite[6.1]{thaddeus}). First, we see that there is some $i \leq k \leq v$ such that the bundle $\cO_{M_k}(1,i-1)\otimes \omega_{M_k}^{-1}=\cO_{M_k}(4,d+g+i-5)$ is big and nef. By the description of the ample cones in Remark \ref{ample cone on Mi}, it suffices to check that $(4,d+g+i-5) \in \bR^2$ lies in the closed cone bounded below by the ray through $(1,i-1)$ and above by the ray through $(2,d-2)$. Considering the slopes, this is equivalent to
$
i-1\leq \frac{d+g+i-5}{4}\leq \frac{d-2}{2}.
$
The inequality on the left is equivalent to $3i\leq d+g-1$, which is guaranteed by the fact that $i\leq v=\lfloor(d-1)/2\rfloor$ and $d\leq 2g+1$. The other inequality is  equivalent to $i\leq d+1-g$, which is given as a hypothesis. Therefore, there is some $k\geq i$, $k\le v$ such that $\cO_{M_k}(1,i-1)\otimes \omega_{M_k}^{-1}$ is big and nef. By  the Kawamata--Viehweg vanishing theorem, $H^p(M_k,\cO_k(1,i-1))=0$ for $p>0$.

Now, we claim that in fact
\begin{equation}\label{wall-crossing (1,i-1)}
R\Gamma_{M_i}(\cO_i(1,i-1))=R\Gamma_{M_{i+1}}(\cO_{i+1}(1,i-1))=\ldots=R\Gamma_{M_k}(\cO_{k}(1,i-1)).
\end{equation}
Indeed, in the wall-crossing between $M_{l-1}$ and $M_l$, there are windows of width $\eta_+=l$ and $\eta_-=d+g-1-2l$ and $\cO_l(1,i-1)$, $\cO_{l-1}(1,i-1)$ both descend from the same object, that has $\lambda$-weight $i-l$ (see Proposition \ref{weights computations of all bundles} and Remark \ref{weights of line bundles}). By Theorem~\ref{windows wall-crossing}, we will have $R\Gamma_{M_{l-1}}(\cO_{l-1}(1,i-1))=R\Gamma_{M_l}(\cO_l(1,i-1))$ whenever 
\begin{equation}\label{ineq i-l}
    1+2l-d-g<i-l<l.
\end{equation}
But (\ref{ineq i-l}) holds for any $i< l\leq k$, because then $i<2l$, while $3l\leq 3(d-1)/2<i+d+g-1$ provided $d\leq 2g+1$. Therefore, (\ref{wall-crossing (1,i-1)}) holds and in particular $H^p(M_i,\cO_i(1,i-1))=0$ for $p>0$.
\end{proof}

\begin{remark}
Suppose  that $d\leq 2g+1$. Then (\ref{ineq i-l}) holds for $l\in (i/2,v]$, and the same reasoning shows that 
$
R\Gamma_{M_i}(\cO_i(1,i-1))=R\Gamma_{M_l}(\cO_l(1,i-1))
$
for every $\lfloor i/2\rfloor \leq l\leq v$. In particular, under the same hypotheses of Lemma \ref{cohomology of (1,i-1)},  $\cO_l(1,i-1)$ has no higher cohomology whenever $\lfloor i/2 \rfloor\leq l \leq v$.
\end{remark}

\begin{definition}
    Let $L_i$ be the line bundle on $\Sym^iC$ defined by
    \begin{equation}\label{Li}
        L_i=\det\nolimits^{-1}\pi_!\Lambda (-\Delta)\otimes\det\nolimits^{-1}\pi_!\cO(\Delta),
    \end{equation}
    where $\Delta\subset\Sym^iC\times C$ is the universal divisor, cf. \cite[6.5]{thaddeus}.
    To emphasize  the degree $d$, sometimes we denote this line bundle by $L_i(d)$.
\end{definition}

\begin{lemma}\label{KodairaOnSym}
$H^p(\Sym^iC,L_i(d))=0$
if $p>0$, $1\le i\le d-g$.
\end{lemma}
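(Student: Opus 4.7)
The plan is to identify $L_i$ with the tensor bundle $(\omega_C \otimes \Lambda)^{\boxtimes i}$ of Definition~\ref{tensor products and sums} applied to the line bundle $\omega_C \otimes \Lambda$ on $C$, and then compute its cohomology via Künneth on $C^i$.

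For the identification, I would apply $\det R\pi_*$ to the short exact sequences
\[0 \to \Lambda(-\Delta) \to \pi_C^*\Lambda \to \pi_C^*\Lambda|_\Delta \to 0 \quad\text{and}\quad 0 \to \cO \to \cO(\Delta) \to \cO(\Delta)|_\Delta \to 0\]
on $\Sym^i C \times C$. Since $\pi_C^*\Lambda$ and $\cO$ are pulled back from $C$, their derived pushforwards are constant complexes of vector spaces tensored with $\cO_{\Sym^i C}$, so their determinants are trivial. Under the isomorphism $\Delta \cong \Sym^{i-1}C \times C$ sending $(D', x) \mapsto (D'+x, x)$, the restriction $\pi|_\Delta$ becomes the addition map $a\colon \Sym^{i-1}C \times C \to \Sym^i C$, and $\pi_C^*\Lambda|_\Delta \cong \pi_C^*\Lambda$. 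A standard normal-bundle computation gives $\cO(\Delta)|_\Delta \cong \pi_C^*\omega_C^{-1}$. Combining,
\[L_i \cong \det a_*(\pi_C^*\Lambda) \otimes \det\nolimits^{-1}(a_*(\pi_C^*\omega_C^{-1})),\]
which is the norm of $\pi_C^*(\omega_C \otimes \Lambda)$ along the degree-$i$ finite flat cover $a$. A direct check for divisorial $\omega_C \otimes \Lambda = \cO(E)$ (where both sides identify with $\cO(X_E)$, the divisor of divisors meeting $E$) shows this norm is isomorphic to $(\omega_C \otimes \Lambda)^{\boxtimes i}$ on $\Sym^i C$.

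With $L_i \cong (\omega_C \otimes \Lambda)^{\boxtimes i}$ in hand, the cohomology computation is standard. Since $\tau\colon C^i \to \Sym^i C$ is finite and $S_i$-invariants commute with cohomology in characteristic zero,
\[H^p(\Sym^i C, L_i) = H^p\bigl(C^i, \textstyle\bigotimes_j \pi_j^*(\omega_C \otimes \Lambda)\bigr)^{S_i}.\]
By Künneth the right side is a direct sum of terms $\bigotimes_j H^{p_j}(C, \omega_C \otimes \Lambda)$ with $\sum p_j = p$. For $p > 0$ some $p_j \geq 1$, so vanishing follows from $H^1(C, \omega_C \otimes \Lambda) = 0$, which holds by Serre duality since $\deg \Lambda = d \geq 1$ implies $H^0(C, \Lambda^{-1}) = 0$.

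The main technical obstacle is the identification $L_i \cong (\omega_C \otimes \Lambda)^{\boxtimes i}$ as line bundles (not merely cohomology classes). A convenient way to establish this is to factor $\tau = a \circ \tilde\tau$ where $\tilde\tau\colon C^i \to \Sym^{i-1}C \times C$ is the $S_{i-1}$-quotient onto the last factor, and proceed by induction on $i$ using Frobenius reciprocity, as in the proof of Proposition~\ref{proposition stable deformation}. Note the assumption $i \leq d-g$ is stronger than needed for this argument: the Künneth vanishing holds for all $i \geq 1$ provided $d \geq 1$.
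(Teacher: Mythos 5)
There is a genuine gap: the normal-bundle formula $\cO(\Delta)|_{\Delta}\cong\pi_C^*\omega_C^{-1}$ is false for $i\ge 2$, and with it the claimed identification $L_i\cong(\omega_C\otimes\Lambda)^{\boxtimes i}$. Adjunction for the embedding $\Delta\cong\Sym^{i-1}C\times C\hookrightarrow\Sym^iC\times C$ gives
$$N_\Delta=\omega_\Delta\otimes(\omega_{\Sym^iC\times C}|_\Delta)^{-1}=(\omega_{\Sym^{i-1}C}\boxtimes\cO)\otimes a^*\omega_{\Sym^iC}^{-1},$$
and pulling this back along the finite flat cover $C^i\to\Delta$, $(y_1,\ldots,y_i)\mapsto(y_2+\cdots+y_i,\,y_1)$, produces $\pi_1^*\omega_C^{-1}\otimes\cO\bigl(\sum_{k\ge2}\Delta_{1k}\bigr)$ rather than $\pi_1^*\omega_C^{-1}$. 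Equivalently $N_\Delta\cong\pi_C^*\omega_C^{-1}\otimes\cO(\Delta^{(i-1)})$, where $\Delta^{(i-1)}\subset\Sym^{i-1}C\times C$ is the universal divisor; for $i=1$ this extra factor is trivial, which is why a local check away from the discriminant can be misleading. Carrying the correction through your reduction gives $L_i\cong(\omega_C\otimes\Lambda)^{\boxtimes i}\otimes\cO(-\delta)$, where $\delta\subset\Sym^iC$ is the discriminant, and the K\"unneth argument no longer applies.

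You can see the discrepancy numerically. From the paper's \eqref{divisorsSym} and the intersection number $\sigma\cdot\eta^{i-1}=g$ one finds $L_i\cdot\eta^{i-1}=(d-2i)+2g$, whereas for the would-be identification $(\omega_C\otimes\Lambda)^{\boxtimes i}\cdot\eta^{i-1}=\deg(\omega_C\otimes\Lambda)=2g-2+d$; these differ by $2(i-1)$. Likewise, your K\"unneth computation gives $\chi\bigl((\omega_C\otimes\Lambda)^{\boxtimes i}\bigr)=\binom{d+g+i-2}{i}$, which does not match the residue formula \eqref{residue Li} for $i\ge 2$ (for example, with $g=2$, $d=5$, $i=2$ the residue gives $8$ while the binomial gives $21$). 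Your closing remark — that the argument would prove vanishing with no restriction on $i$ — should itself have been a warning: the hypothesis $i\le d-g$ is used substantively in the paper, precisely to make $L_i-K_{\Sym^iC}=(d-i-g+1)\eta+\sigma$ ample (it is a positive multiple of the ample $\eta$ plus the nef $\sigma$), whence the conclusion by Kodaira vanishing. That short argument, resting on the explicit divisor classes from \cite{thaddeus} and \cite{macdonald}, is the paper's actual proof and sidesteps the whole identification problem.
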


\begin{proof}
By \cite[7.5]{thaddeus} (see also \cite{macdonald}), and mixing notation for line bundles and divisors,
\begin{equation}\label{divisorsSym}
L_i(d)=(d-2i)\eta+2\sigma\quad\hbox{\rm and}\quad K_{\Sym^iC}=(g-i-1)\eta+\sigma,
\end{equation}
where 
$\eta=p_0+\Sym^{i-1}C\subset \Sym^i C$
is an ample divisor for any fixed $p_0\in C$
and $\sigma\subset \Sym^i C$ is a pull-back of a theta-divisor via the Abel--Jacobi map, in particular $\sigma$ is nef. It follows that 
$
L_i(d)-K_{\Sym^iC}=(d-i-g+1)\eta+\sigma
$
is ample if 
$i\le d-g$ and the result follows by Kodaira vanishing theorem.
\end{proof}

\begin{lemma}\label{base case d}
    Suppose $i+g\leq d\leq 2g+1$. Then $\chi\bigl(M_i(d),\cO_i(1,i-1)\bigr)=\chi(\Sym^iC,L_i(d)).$
\end{lemma}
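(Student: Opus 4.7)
The plan is to reduce both sides to $h^0$ using the acyclicity results already established, then match the $h^0$'s via the flip locus $\mathbb{P}W_i^+ \subset M_i$ and its projection to $\Sym^i C$. First, I observe that under $i+g \le d \le 2g+1$ (together with the implicit $i \le v$), Lemma~\ref{cohomology of (1,i-1)} applies because $i \le d-g < d+1-g$, and Lemma~\ref{KodairaOnSym} applies because $i \le d - g$. Hence both Euler characteristics equal the respective $h^0$'s, and it suffices to match these dimensions.

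Next, I would exploit that $\cO_i(1,i-1)$ lies on the lower boundary of the ample cone of $M_i$ (Remark~\ref{ample cone on Mi}), so it is pulled back from the wall $M_\sigma$ at $\sigma = d/2 - i$ along the extremal contraction $M_i \to M_\sigma$ that collapses $\mathbb{P}W_i^+$ to $\Sym^i C \subset M_\sigma$. This forces $\cO_i(1,i-1)|_{\mathbb{P}W_i^+}$ to be a pullback $\pi^* L$ along the projection $\pi: \mathbb{P}W_i^+ \to \Sym^i C$, for some line bundle $L$ on $\Sym^i C$. Using Thaddeus's determinantal descriptions of line bundles in \cite[\S 5--7]{thaddeus}, I would identify $L$ with $L_i(d)$ as in \eqref{Li}.

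The third step is to show that the ideal-sheaf twist $\cI_{\mathbb{P}W_i^+} \otimes \cO_i(1,i-1)$ is $\Gamma$-acyclic on $M_i$. Granting this, the short exact sequence
\[
0 \to \cI_{\mathbb{P}W_i^+} \otimes \cO_i(1,i-1) \to \cO_i(1,i-1) \to \pi^* L_i(d) \to 0
\]
together with the projective-bundle identity $R\pi_* \cO_{\mathbb{P}W_i^+} = \cO_{\Sym^i C}$ yields $R\Gamma(M_i, \cO_i(1,i-1)) \cong R\Gamma(\Sym^i C, L_i(d))$, from which the equality of Euler characteristics follows. For the acyclicity, I would use the Koszul resolution of $\cO_{\mathbb{P}W_i^+}$ coming from the smooth codimension-$i$ embedding and reduce to vanishing of $R\Gamma(M_i, \Lambda^j \mathcal{N}^\vee \otimes \cO_i(1,i-1))$ for $1 \le j \le i$, with $\mathcal{N}$ the normal bundle; these vanishings should follow from Theorems~\ref{vanishing with Z on Mi} and \ref{full vanishing of Lambda*} together with an explicit description of $\mathcal{N}$ via the flip data (the relative tangent bundle of $\pi$ twisted by a pullback of $W_i^-$).

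The main obstacle will be the bookkeeping in the second and third steps, since identifying $L = L_i(d)$ and analyzing the Koszul terms both require careful line-bundle calculations inside Thaddeus's flip diagram. A cleaner alternative would be to compute both sides directly as polynomial expressions in $d, g, i$: compute $\chi(\Sym^i C, L_i(d))$ via Macdonald's formula applied to the divisor class $L_i(d) = (d-2i)\eta + 2\sigma$ from \eqref{divisorsSym}, compute $\chi(M_i(d), \cO_i(1,i-1))$ via wall-crossing down to $M_1 = \Bl_C \mathbb{P}^{d+g-2}$ combined with Thaddeus's Euler-characteristic wall-crossing formulas, and verify that the resulting polynomials match under $i+g \le d \le 2g+1$.
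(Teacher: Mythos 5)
Your primary route (the three-step argument via the ideal sheaf of $\bP W_i^+$) has a genuine gap. First, the crucial ingredient — ``the Koszul resolution of $\cO_{\bP W_i^+}$ coming from the smooth codimension-$i$ embedding'' — does not exist in general. A smooth closed subvariety of codimension $i>1$ admits a Koszul resolution by $\Lambda^j\mathcal N^\vee$ only when it is the zero locus of a regular section of a rank-$i$ vector bundle on the ambient variety; this is true for $i=1$ (where $\bP W_1^+=E_1$ is a divisor) but fails here for $i>1$, since the normal bundle $\mathcal N\cong \pi^*W_i^-\otimes\cO_\pi(-1)$ is not the restriction of a bundle on $M_i$. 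Second, and more fundamentally, the claim that $\cI_{\bP W_i^+}\otimes\cO_i(1,i-1)$ is $\Gamma$-acyclic on $M_i$ is essentially equivalent to the conclusion of Theorem~\ref{full vanishing of Lambda*} itself: once that theorem is proved, one indeed gets $R\Gamma_{M_i}(\cO_i(1,i-1))\simeq R\Gamma_{\bP W_i^+}(\pi^*L_i(d))$, hence the $\Gamma$-acyclicity of the ideal-sheaf twist — but Lemma~\ref{base case d} is used as the base case \emph{inside} the proof of that theorem, so assuming the acyclicity here is circular. The paper avoids exactly this trap: the passage from $M_i$ to $\bP W_i^+$ is handled in several steps through the loci $E_i^\alpha$ (the conductor-square argument of Lemma~\ref{commutative diagram conductor} and Corollary~\ref{induction corollary}), which in turn requires the inductive hypothesis on lower $\Lambda_M^{-k}$; and the residual numerical input that this machinery cannot supply is precisely Lemma~\ref{base case d}. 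So Lemma~\ref{base case d} must be a genuinely numerical statement, proved without reference to the geometry of $\bP W_i^+\subset M_i$.

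Your ``cleaner alternative'' at the end is in fact the route the paper takes. The paper plugs the line bundle $\cO_i(1,i-1)$ into Thaddeus's residue formula for $\chi(M_i,\cO_i(m,n))$ (combining \cite[6.2]{thaddeus} with \cite[7.8]{thaddeus}, valid because $i\le d-g$), computes $\chi(\Sym^iC,L_i(d))$ by Hirzebruch--Riemann--Roch using $\ch L_i=e^{(d-2i)\eta+\sigma}$ and Macdonald's formula for $\td(\Sym^iC)$, rewrites both as explicit residues, and matches them through a change of variables ($u=e^\eta-1$ followed by $u=t/(t^2+t+1)$). You would need to carry out some version of this computation to close the argument; the wall-crossing-down-to-$M_1$ variant you suggest could also work, but either way the content is a concrete residue/Euler-characteristic identity, not the cohomological reduction of your steps 2--3.
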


\begin{proof}
Since $i\leq d-g$, we can use Lemma~\ref{cohomology of (1,i-1)} 
together with \cite[7.8]{thaddeus} to compute $\chi(\cO_i(1,i-1))=$
\begin{gather}
\begin{aligned}\label{residue Mi}
    =&\underset{t=0}{\Res}\left\{\frac{(1-t^3)^{2i-d-1}(1-t^2)^{2d+1-2i-2g}}{t^{i+1}(1-t)^{d+g-1}}(1-5(1-t)t^2-t^5)^gdt\right\}\\
    =&\underset{t=0}{\Res}\left\{\frac{(1+t)^{2d+1-2i-2g}(1+3t+t^2)^g(1-t)}{t^{i+1}(1+t+t^2)^{d+1-2i}}dt\right\}.
\end{aligned}
\end{gather}
On the other hand, we use Hirzebruch--Riemann--Roch theorem to compute, using the formulas 
$$
\ch (L_i)=e^ {(d-2i)\eta+2\sigma}, \quad\td (\Sym^iC)=\left(\frac{\eta}{1-e^{-\eta}}\right)^{i-g+1}\exp \left(\frac{\sigma}{e^\eta-1}-\frac{\sigma}{\eta}\right)
$$
(see \cite[\S 7]{thaddeus}) and notation from the proof of Lemma~\ref{KodairaOnSym}, that
$$
\chi (L_i)=\underset{\eta=0}{\Res}\left\{\frac{e^{\eta(d-2i)}}{(1-e^{-\eta})^{i-g+1}}\left(2+\frac{1}{e^\eta-1}\right)^gd\eta\right\},
$$
where we have used \cite[7.2]{thaddeus} with
$$
A(\eta)=e^{\eta(d-2i)}\left(\frac{\eta}{1-e^{-\eta}}\right)^{i-g+1},\quad B(\eta)=2+\frac{1}{e^\eta-1}-\frac{1}{\eta}.
$$
If we let $u(\eta)=e^\eta-1$, then $u$ is biholomorphic near $\eta=0$, with $u(0)=0$, $u'(0)=1$, so we can do a change of variables $u=e^\eta-1$, $du=e^\eta d\eta$ to obtain
\begin{align}\label{residue Li}
\chi(L_i)=\underset{u=0}{\Res}\left\{\frac{(1+u)^{d-i-g}(2u+1)^g}{u^{i+1}}du\right\}.
\end{align}
Next, we apply an {\em ad hoc} change of variables 
$$
u=\frac{t}{t^2+t+1},\quad du=\frac{1-t^2}{(t^2+t+1)^2}dt
$$
to (\ref{residue Li}) and we get precisely (\ref{residue Mi}) after some algebraic manipulations. 
\end{proof}

For what follows we need some geometric constructions. Fix a point $p_0\in C$ and consider a subvariety
$M_{i-1}(d-1)\subset M_i(d+1)$ of codimension~$2$ as in Remark~\ref{restriction of O(m,n)}, with $D=p_0$. Let $B$ be the blow-up of $M_i(d+1)$ in $M_{i-1}(d-1)$ with exceptional divisor $\cE$.

Consider the $\bP^1$-bundle $\bP F_{p_0}$ over $M_i(d+1)$ that parametrizes triples $(E,\phi,l)$, where  $\phi$ is a non-zero section of $E$ and $l\subset E_{p_0}$ is a line, subject to the usual stability condition (see Section~\ref{ThaddeusSpaces}) that
for every line subbundle $L\subset E$, one must have
\begin{equation}\label{qrwgwRGwg}
\deg L\le\begin{cases}
    i+\frac{1}{2}\quad & \text{ if }\phi\in H^0(L),\\
d-i+\frac{1}{2} \quad & \text{ if }\phi\notin H^0(L).
\end{cases}
\end{equation}

\begin{lemma}\label{blowup lemma}
With the notation as above, the blow-up $B$ of $M_i(d+1)$ in $M_{i-1}(d-1)$ is isomorphic to the following locus: 
$$Z=\{(E,\phi,l)\,:\,\phi(p_0)\in l\}\subset \bP\!F_{p_0}.$$
\end{lemma}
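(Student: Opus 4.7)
The approach is to exploit the classical identification between the blow-up of a smooth variety along the zero locus of a regular section of a rank-$2$ vector bundle and the corresponding incidence variety in the projectivized bundle. Here the relevant regular section is $\tilde\phi_{p_0}\colon\cO_{M_i(d+1)}\to F_{p_0}$: by Lemma~\ref{BasicKoszul} applied with $x=p_0$, its zero locus is precisely the smooth codimension-$2$ subvariety $M_{i-1}(d-1)=M_{i-1}(\Lambda(-2p_0))\subset M_i(d+1)$, and the Koszul resolution yields a canonical isomorphism $N_{M_{i-1}(d-1)/M_i(d+1)}\simeq F_{p_0}|_{M_{i-1}(d-1)}$.

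First I would construct a morphism $f\colon B\to Z$. Let $\pi\colon B\to M_i(d+1)$ be the blow-up with exceptional divisor $\cE$. The pulled-back section $\pi^{*}\tilde\phi_{p_0}$ of $\pi^{*}F_{p_0}$ vanishes along $\cE$, so dividing by a local equation of $\cE$ produces a section of $\pi^{*}F_{p_0}\otimes\cO_{B}(-\cE)$ which is nowhere vanishing: on $B\setminus\cE$ this is clear, and on $\cE\simeq\bP(N_{M_{i-1}(d-1)/M_i(d+1)})\simeq\bP(F_{p_0}|_{M_{i-1}(d-1)})$ the restriction reduces to the tautological inclusion via the Koszul identification above. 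Equivalently, one obtains a line subbundle $\cO_{B}(\cE)\hookrightarrow\pi^{*}F_{p_0}$, which defines $f\colon B\to\bP F_{p_0}$ over $M_i(d+1)$. By construction $\tilde\phi_{p_0}$ lies in the selected line at every point, so the image of $f$ is contained in $Z$.

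Next I would verify that $f\colon B\to Z$ is an isomorphism. Away from the exceptional divisor, both $B$ and $Z$ project isomorphically onto $M_i(d+1)\setminus M_{i-1}(d-1)$, because there $\phi(p_0)\neq 0$ forces $l=\langle\phi(p_0)\rangle$. Over $M_{i-1}(d-1)$, both $\cE$ and $Z|_{M_{i-1}(d-1)}$ are identified with $\bP(F_{p_0}|_{M_{i-1}(d-1)})$ via the Koszul isomorphism of the normal bundle, and $f$ restricts to the identity on this projective bundle. Thus $f$ is a bijective morphism from the smooth variety $B$ to $Z$. To conclude, one verifies that $Z$ is normal: it is cut out in the smooth ambient $\bP F_{p_0}$ by the section of the line-bundle quotient $\pi^{*}F_{p_0}/\cO_{\bP F_{p_0}}(-1)$ induced by $\tilde\phi_{p_0}$, meeting the expected codimension $1$ off $\pi^{-1}(M_{i-1}(d-1))$ and degenerating to the $\bP^{1}$-fibers over $M_{i-1}(d-1)$. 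An application of Zariski's main theorem then gives that $f\colon B\to Z$ is an isomorphism.

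The main obstacle will be the scheme-theoretic smoothness/reducedness verification for $Z$, in particular handling the locus over $M_{i-1}(d-1)$ where the defining section vanishes identically, so that the bijective morphism $f$ genuinely qualifies as an isomorphism of varieties and the above Koszul identification upgrades to an identification of the $\bP^{1}$-fibers of $Z$ with the exceptional divisor $\cE$.
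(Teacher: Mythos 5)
Your argument follows a genuinely different route from the paper's: you build a global morphism $f\colon B\to Z$ by dividing the pulled-back section $\pi^*\tilde\phi_{p_0}$ by a local equation of the exceptional divisor, show that $f$ is bijective, and then try to conclude via normality of $Z$ and Zariski's main theorem. The paper instead argues entirely locally: trivialize $F_{p_0}\simeq\cO\oplus\cO$ near a point of $M_{i-1}(d-1)$, write the universal section as a regular sequence $s=(a,b)$, and observe that $Z$ is then cut out by $ay-bx=0$, which is literally the standard local model of the blow-up. That one computation simultaneously establishes the scheme-theoretic structure of $Z$ and the isomorphism, with no separate appeal to ZMT.

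The gap in your write-up is exactly where you flag it, but it is not a cosmetic loose end --- it is the crux. Being a Cartier divisor in the smooth variety $\bP F_{p_0}$ makes $Z$ Cohen--Macaulay, hence $S_2$; the fact that $Z$ has the expected codimension off $p^{-1}(M_{i-1}(d-1))$ makes it generically reduced, hence reduced. But none of this gives $R_1$: the potentially singular locus $Z\cap p^{-1}(M_{i-1}(d-1))$ has codimension exactly $1$ in $Z$, so normality is precisely the question of whether $Z$ is regular at the generic point of that locus, and the argument you sketch does not address it. Bijectivity of $f$ alone does not help either, since $f$ is only known to be an isomorphism away from the exceptional divisor, a codimension-$1$ complement. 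To decide smoothness (or even just normality) of $Z$ along that divisor one has to look at the defining equation near a point of $M_{i-1}(d-1)$, and doing so in a local trivialization $F_{p_0}\simeq\cO\oplus\cO$ reproduces the paper's computation $ay-bx=0$. So your construction of $f$ and the Koszul identification of $\cE$ with $\bP(F_{p_0}|_{M_{i-1}(d-1)})$ are correct and pleasant, but the proof is incomplete until the local verification you defer is actually carried out.
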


\begin{proof}
Indeed, the projection of $Z$ onto $M_i(d+1)$ is clearly
an isomorphism outside of $M_{i-1}(d-1)$, since the latter is precisely the locus where $\phi(p_0)=0$.
Over $M_{i-1}(d-1)$, the fiber of this projection is $\bP^1$. By the universal property of the blow-up,
it suffices to check that $Z$ is the blow-up of 
$M_i(d+1)$ in $M_{i-1}(d-1)$ locally near $(E,\phi)\in M_{i-1}(d-1)$, where we can trivialize $F_{p_0}\simeq\cO\oplus\cO$. Its universal section can be written as $s=(a,b)$, where $a,b\in\cO$ is a regular sequence (its vanishing locus is $M_{i-1}(d-1)$ locally near $(E,\phi)$). Then $Z$ is locally given by the equation 
$ay-bx=0$, where $[x:y]$ are homogeneous coordinates of the $\bP^1$-bundle $\bP F_{p_0}$ given by the trivialization $F_{p_0}\simeq\cO\oplus\cO$. Thus $Z$ is indeed isomorphic to the blow-up $B$.
\end{proof}

Now we can prove the main result of this section.

\begin{proof}[Proof of Theorem~\ref{full vanishing of Lambda*}]

By Lemma \ref{windowforward}, it suffices to prove that $R\Gamma_{M_i}(\Lambda^{-i}_
M)$ is zero for every $i = 1,\ldots,v$, which we will do by induction on $i$. The
base case $i = 1$ is Lemma \ref{RGamma(lambda-1)=0}.
Recall that $\cO_{M_i}(E_i) = \cO_i(1,-1)$. Twist the tautological short exact sequence for $E_i\subset M_i$ by $\cO_i(1,i-1)$ to get 
$$0\to \Lambda_M^{-i}\to \cO_i(1,i-1)\mathop{\to}^\gamma \cO_{E_i}(1,i-1)\to 0.$$
It suffices to prove that $R\Gamma_{M_i}(\cO_i(1,i-1))\simeq R\Gamma_{E_i}(\cO_{E_i}(1,i-1))$ via $R\gamma$.
By the induction hypothesis, we can apply Corollary \ref{induction corollary} to see that 
$$
R\Gamma(\cO_{E_i}(1,i-1))\simeq\ldots\simeq R\Gamma (\cO_{E_i^i}(1,i-1))=R\Gamma (\cO_{\mathbb{P}W_i^+}(1,i-1)).
$$
But $\mathcal{O}_{\mathbb{P}W_i^+}(1,i-1)$ restricts trivially to each fiber of $\mathbb{P}W_i^+$. Arguing as in  \cite[6.5]{thaddeus}, where an analogous statement is proved for $\mathcal{O}_{\mathbb{P}W_i^-}(1,i-1)$ (but using \cite[(3.2)]{thaddeus} instead of \cite[(3.3)]{thaddeus}), the restriction 
$\mathcal{O}_{\mathbb{P}W_i^+}(1,i-1)$ is a pull-back of the line bundle $L_i$  on $\Sym^i C$ defined in (\ref{Li}). 
Alternatively, it is clear that $\mathcal{O}_{\mathbb{P}W_i^+}(1,i-1)$ and 
$\mathcal{O}_{\mathbb{P}W_i^-}(1,i-1)$ are pull-backs of the same line bundle on $\Sym^i C$ because these projective bundles are contracted to their base $\Sym^i C$ by birational morphisms from 
$M_i(d)$ and $M_{i-1}(d)$ to the 
(singular) GIT quotient $M_\sigma(d)$, where $\sigma=\frac{d}{2}-i$ is the slope of the wall between the moduli spaces $M_i(d)$ and $M_{i-1}(d)$. Furthermore, $\cO_i(1,i-1)$ is a pull-back of a line bundle from that GIT quotient.

This implies that $R\Gamma (\cO_{\mathbb{P}W_i^+}(1,i-1))\simeq R\Gamma (\Sym^i C,L_i)$. Therefore, it suffices to show that
\begin{equation}\label{eq to prove}
R\Gamma_{M_i(d)}(\cO_i(1,i-1))\simeq R\Gamma_{\Sym^iC}(L_i(d))
\end{equation}
via the composition of functors as above.

\begin{claim}\label{claim base d}
    If $d\geq i+g$, then \eqref{eq to prove} holds.
\end{claim}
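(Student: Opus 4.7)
The plan is to exploit the fact that, in the range $i+g \leq d \leq 2g+1$, the higher cohomology on both sides of \eqref{eq to prove} vanishes, the Euler characteristics agree, and the composition map is injective on $H^0$. First, since $d \geq i+g$ gives $i \leq d-g \leq d+1-g$ (and $i \leq v$ throughout), Lemma \ref{cohomology of (1,i-1)} gives $H^{>0}(M_i(d), \cO_i(1,i-1)) = 0$ and Lemma \ref{KodairaOnSym} gives $H^{>0}(\Sym^i C, L_i(d)) = 0$. Hence both complexes in \eqref{eq to prove} are concentrated in degree zero.

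Next I would unpack the composition defining the map in \eqref{eq to prove}. Its first arrow is the map $R\gamma\colon R\Gamma(\cO_i(1,i-1)) \to R\Gamma(\cO_{E_i}(1,i-1))$ induced by the tautological short exact sequence $0 \to \Lambda_M^{-i} \to \cO_i(1,i-1) \to \cO_{E_i}(1,i-1) \to 0$, while the remaining arrows assemble into the quasi-isomorphism
$$R\Gamma(\cO_{E_i}(1,i-1)) \simeq \ldots \simeq R\Gamma(\cO_{\bP W_i^+}(1,i-1)) \simeq R\Gamma(\Sym^i C, L_i(d))$$
supplied by Corollary \ref{induction corollary} (applicable via the induction hypothesis on $k$) together with the identification of $\cO_{\bP W_i^+}(1,i-1)$ as the pullback of $L_i(d)$ along $\bP W_i^+ \to \Sym^i C$. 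So it suffices to show that $R\gamma$ is a quasi-isomorphism.

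By the long exact sequence attached to the tautological sequence, combined with the higher vanishings above, $R\gamma$ being a quasi-isomorphism reduces to showing the map $H^0(M_i,\cO_i(1,i-1)) \to H^0(E_i, \cO_{E_i}(1,i-1))$ is an isomorphism. Injectivity is immediate from $H^0(M_i,\Lambda_M^{-i}) = 0$, provided by Lemma \ref{noglobalsections}. For equality of dimensions, Lemma \ref{base case d} gives $\chi(M_i(d),\cO_i(1,i-1)) = \chi(\Sym^i C, L_i(d))$, and combined with the vanishings this yields $\dim H^0(M_i(d),\cO_i(1,i-1)) = \dim H^0(\Sym^i C, L_i(d))$. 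An injection of equal-dimensional finite-dimensional vector spaces is an isomorphism, proving \eqref{eq to prove}. I do not expect a serious obstacle: the heavy lifting was done by Lemma \ref{base case d} (matching residues on both sides) and the inductive setup of Corollary \ref{induction corollary}, and what remains is the clean reconciliation of vanishings with the Euler characteristic equality.
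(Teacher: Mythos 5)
Your proof is correct and matches the paper's argument essentially line for line: higher cohomology vanishes on both sides by Lemmas \ref{cohomology of (1,i-1)} and \ref{KodairaOnSym}, the composition is injective on $H^0$ since the first arrow has kernel $H^0(\Lambda_M^{-i})=0$ by Lemma~\ref{noglobalsections} and the remaining arrows are already known to be isomorphisms via Corollary~\ref{induction corollary}, and equality of Euler characteristics (Lemma~\ref{base case d}) then forces the injection of concentrated-in-degree-zero complexes to be a quasi-isomorphism. No gaps.
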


\begin{proof}
In this case $H^p(M_i,\cO_i(1,i-1))=H^p(\Sym^iC,L_i)=0$ for $p>0$ by Lemmas \ref{cohomology of (1,i-1)} and \ref{KodairaOnSym}. Using this together with the fact that $\Lambda_M^{-i}=\cO_i(0,i)$ has no global sections by Lemma~ \ref{noglobalsections}, it suffices to prove that
$h^0(M_i,\cO_i(1,i-1))=h^0(\Sym^iC,L_i)$
or, equivalently, that
$\chi(M_i,\cO_i(1,i-1))=\chi(\Sym^iC,L_i).$ Thus, Lemma \ref{base case d} proves the Claim.
\end{proof}

We now proceed by a downward induction on $d$, starting with any $d$ such that $d\geq i+g$. For such $d$, we have the result by the Claim above.

Next we perform a step of the downward induction assuming the theorem holds for degree $d+1$. As above, we fix a point $p_0\in C$ and consider the subvariety
$M_{i-1}(d-1)\subset M_i(d+1)$ of codimension~$2$ described in Remark~\ref{restriction of O(m,n)}. Let $\cI\subset \cO_{M_i(d+1)}$ be its ideal sheaf.
As in the proof of Lemma~\ref{KodairaOnSym},
we denote the divisor $p_0+\Sym^{i-1}C\subset \Sym^{i}C$ by $\eta$ and, by abuse of  notation, we denote its pull-back to the projective bundle $\bP W_i^+$ by $\eta$ as well. 
Note that $M_{i-1}(d - 1) \cap \bP W^+_{i} = \bP W^+_{i-1}$.
To summarize, we have a commutative diagram of sheaves on $M_i(d+1)$ with exact rows, where we suppress closed embeddings from notation.
\begin{equation}\label{blow-upideal}
    \begin{tikzcd}
    0 \arrow[r] & \cI \arrow[d]\arrow[r] &\cO_{M_i(d+1)}\arrow[d,"\beta"]\arrow[r] &\cO_{M_{i-1}(d-1)} \arrow[d,"\gamma"]\arrow[r] &0\\
    0\arrow[r] &\cO_{\bP W_i^+}(-\eta)\arrow[r] &\cO_{\bP W_i^+}\arrow[r] &\cO_{\bP W_{i-1}^+}\arrow[r] &0
    \end{tikzcd}
\end{equation}
We tensor \eqref{blow-upideal} with $\cO(1,i-1)$. Recall that the restriction of $\cO(1,i-1)$ to $M_{i-1}(d-1)$ is $\cO(1,i-2)$, to 
$\bP W_i^+$ is the pull-back of $L_i(d+1)$ from $\Sym^iC$, and to $\bP W_{i-1}^+$ is the pull-back of 
$L_{i-1}(d-1)$ from $\Sym^{i-1}C$. 
By inductive hypothesis on $i$, the arrow $\gamma$ in \eqref{blow-upideal} gives an isomorphism in cohomology after tensoring with $\cO(1,i-1)$. The same is true for $\beta$ by our inductive assumption on $d$.
By the 5-lemma, we conclude that we have an isomorphism
\begin{equation}\label{some eq 1}
R\Gamma(\cI(1,i-1))\simeq R\Gamma\bigl(\cO_{\bP W_i^+}(-\eta)(1,i-1)\bigr).
\end{equation}
As $\cO_{\bP W_i^+}(1, i - 1)$ is the pull-back of $L_i(d+1)$, it follows that $\cO_{\bP W_i^+}(-\eta)(1, i-1)$ is the pull-back of $L_i(d)$ to the
projective bundle, see \eqref{divisorsSym}.
Hence, we can rewrite \eqref{some eq 1} as
\begin{equation}\label{some eq 2}
R\Gamma_B\bigl(\cO_B(1,i-1)(-\cE)\bigr)\simeq R\Gamma_{\Sym^iC}(L_i(d)),
\end{equation}
where $B$ is the blow-up of $M_i(d+1)$ in $M_{i-1}(d-1)$ and $\cE$ its exceptional divisor.

Recall that the goal is to prove \eqref{eq to prove}. We can do one extra simplification. Let $\sigma=\frac{d}{2}-i$ be the slope on the wall between the moduli spaces $M_i(d)$ and $M_{i-1}(d)$ and let $M_\sigma(d)$ be the corresponding (singular) GIT quotient. The birational morphism $M_i(d)\to M_\sigma(d)$ contracts the projective bundle $\bP W_i^+$ to its base $\Sym^i C$, and in particular proving \eqref{eq to prove} is equivalent to proving that
\begin{equation}\label{some eq 3}
R\Gamma_{M_\sigma(d)}(\cO_i(1,i-1))\simeq R\Gamma_{\Sym^iC}(L_i(d))
\end{equation}
by projection formula and Boutot's theorem \cite{boutot}. To show how \eqref{some eq 2} implies \eqref{some eq 3}, we need a geometric construction, a variant of the Hecke correspondence, relating $B$ to $M_\sigma(d)$.


By Lemma \ref{blowup lemma}, $B$ carries a family of parabolic (at $p_0\in C$) rank~$2$ vector bundles $E$ with a section $\phi$. The parabolic line at $p_0$ defines a quotient $E\to\cO_{p_0}$, and we define a rank $2$ vector bundle $E'$ as an elementary transformation, by the formula
\begin{equation}\label{elemtr}
0\to E'\to E\to \cO_{p_0}\to0.
\end{equation}
Our condition $\phi(p_0)\in l$ implies that the section $\phi$ lifts to a section $\phi'$ of $E'$. Elementary transformation is well-known to be a functorial construction \cite[\S4]{narasimhan-ramanan75}, in fact we claim that $(E',\phi')$ is a $\sigma$-semistable pair, i.e. we have a morphism
$$
h:\,B\to M_\sigma(d),\quad (E,\phi,l)\mapsto (E',\phi').
$$
Indeed, we need to check that 
\begin{align*}
\deg L' \leq  
\begin{cases}
i\quad & \text{ if }\phi'\in H^0(L'),\\
d-i \quad & \text{ if }\phi'\notin H^0(L').
\end{cases}
\end{align*}
for every line subbundle $L'\subset E'$, which follows from \eqref{qrwgwRGwg} applied to $L'$.

By the Koll\'ar vanishing theorem \cite[Theorem~7.1]{kollar}, $Rh_*\cO_B=\cO_{M_\sigma(d)}$. Indeed, $B$ is smooth, $M_\sigma(d)$ has rational singularities and a general geometric fiber of $h$ is isomorphic to $\bP^1$ (given by extensions \eqref{elemtr} with fixed $E'$). By projection formula,  \eqref{some eq 2} implies \eqref{some eq 3} if we can show that
$$h^*\cO_i(1,i-1)\simeq\cO_B(1,i-1)(-\cE).$$
Outside of $\cE$ and for any $q\in C$, the bundle $F_q$ over the stack of the $\sigma$-semistable pairs (resp.~its determinant $\Lambda'$),  pulls back to the bundle $F_q$ over $B\setminus\cE$  (resp.~its determinant $\Lambda$), by \eqref{elemtr}.
On the other hand, the divisor $E'_i$ of $\sigma$-semistable stable pairs $(E',\phi')$ such that $\phi'$ has a zero, pulls back to the analogous divisor $E_i$ of $B\setminus\cE$, because the section $\phi$ of $E$ is the same as the section $\phi'$ of $E'$. Since $E$ and $\Lambda$ generate the Picard group of $B\setminus\cE$, it follows that 
$h^*\cO_i(1,i-1)\simeq\cO_B(1,i-1)(-c\cE)$ for some integer $c$. 
It remains to show that $c=1$. To this end, we re-examine the diagram \eqref{blow-upideal}.
Note that the proper transform $\tilde\bP$ of $\bP W_i^+$ in $B$
is isomorphic to its blow-up in $\bP W_{i-1}^+$,
which is the Cartier divisor $\eta$. Therefore, 
$\tilde\bP\simeq\bP W_i^+$. However, the restriction
$h^*\cO_i(1,i-1)|_{\tilde\bP}$ is isomorphic to the pull-back of $L_i(d)$ from $\Sym^iC$, while the restriction $\cO_B(1,i-1)|_{\tilde\bP}$ is isomorphic to the pull-back of $L_i(d+1)$. Since $L_i(d)\simeq L_i(d+1)(-\eta)$, and $\cE$ restricts to $\tilde\bP$ as $\eta$, the claim follows.
\end{proof}

\section{Acyclic vector bundles on $M_i$ -- hard cases}\label{hard section}

The main goal of the present section is to prove the following result.

\begin{theorem}\label{vanishing without Z on Mi}
Suppose $2<d\leq 2g+1$ and $1\leq i\leq v$. Let $D=x_1+\ldots +x_\alpha$, $D'=y_1+\ldots +y_\beta$ (possibly with repetitions) of degrees $\alpha,\ \beta\leq d+g-2i-1$, and let $t$ be an integer satisfying
\begin{align}\label{ineq vanishing without Z}
    \deg D-i-1 < t < d+g-2i-1-\deg D'.
\end{align}
If $t\notin [0,\ \deg D]$, then we have 
$$
R\Gamma_{M_i(d)}\left(\left(\bigotimes_{k=1}^{\alpha}F_{x_k}^\vee\right)\otimes \overline{G}_{D'}\otimes\Lambda_M^t\right)=0.
$$
Equivalently, if $\deg D\notin [t,\ t+\deg D']$, then 
$$
R\Gamma_{M_i(d)}\left(G_D^\vee\otimes \left(\bigotimes_{k=1}^{\beta} F_{y_k}\right)\otimes\Lambda_M^t\right)=0.
$$
\end{theorem}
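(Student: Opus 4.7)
The plan is to prove cases (1) and (2) simultaneously by induction on the pair $(i,\alpha+\beta)$ in lexicographic order, combining the wall-crossing Theorem~\ref{windows wall-crossing} with the four-term Koszul sequences of Lemma~\ref{BasicKoszul}. For a case-(1) bundle, Theorem~\ref{weights computations of all bundles} shows that its $\lambda$-weights at a wall $M_{l-1}\to M_l$ lie in the range $[-t-\beta,\alpha-t]$. The hypothesis $t<d+g-2i-1-\beta$ gives the lower window inequality at $l=i$, while $t>\alpha-i-1$ combined with $t\notin[0,\alpha]$ is just enough to force the upper inequality. Concretely, if $t\geq\alpha+1$ then $\alpha-t\leq-1$ and windows apply at every wall $l=2,\dots,i$, reducing the problem to acyclicity on $M_1(d)$ via Theorem~\ref{windows wall-crossing}. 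If instead $t\leq-1$, the conditions force $\alpha\leq i-1$, and windows only bring us down to $M_{\max(\alpha-t,1)}$; the remaining reductions are carried out with Koszul sequences.

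For the Koszul reduction in case (1) with $\alpha\geq 1$, I would apply the first sequence of Lemma~\ref{BasicKoszul} to $F_{x_1}^\vee$, tensored with the locally free sheaf $K=(\bigotimes_{k\geq 2}F_{x_k}^\vee)\otimes G_{D'}\otimes\Lambda_M^t$. Splitting the resulting four-term sequence into two short exact sequences reduces acyclicity of the original bundle to that of three simpler terms: $K\otimes\Lambda_M^{-1}$ and $K$ on $M_i(d)$ with parameters $(\alpha-1,\beta,t-1)$ and $(\alpha-1,\beta,t)$ (handled by the inner induction on $\alpha+\beta$), and $K|_{M_{i-1}(d-2)}$ (handled by the outer induction on $i$, identifying the restricted universal bundle via Lemma~\ref{usefuliso}). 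A short case analysis on the sign of $t$ shows that the non-membership condition $t'\notin[0,\alpha']$ propagates: for $t\leq-1$ both $t$ and $t-1$ remain $\leq-1$, and for $t\geq\alpha+1$ both remain $\geq\alpha$. For case (1) with $\alpha=0$ and $\beta\geq 1$, I would first use Proposition~\ref{proposition stable deformation} with Remark~\ref{semi-continuityremark} to replace $G_{D'}$ by $F_{y_1}\otimes G_{\tilde D'}$ via semi-continuity, then apply the second (dual) Koszul sequence to $F_{y_1}$. Case (2) is handled symmetrically via the second sequence applied to $F_{y_1}$.

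The base case $i=1$, $\alpha=\beta=0$ amounts to $R\Gamma_{M_1(d)}(\Lambda_M^t)=0$ for $t\in\{-1\}\cup[1,d+g-4]$; the first value follows from Lemma~\ref{RGamma(lambda-1)=0} and the positive range from Lemma~\ref{O(-kH+lE)} applied to $\Lambda_M^t=\cO_{M_1}(-tH+tE_1)$. The main obstacle I expect is the fat-point sub-case of case (1) with $\alpha=0$, $t=-1$ and $D'=2y$, where $G_{D'}\otimes\Lambda_M^{-1}$ is a deformation over $\bA^1$ of $\operatorname{End}(F_y)$, which is not $\Gamma$-acyclic by Theorem~\ref{Poincare is fully faithful on M1} and Lemma~\ref{acyclicty of Fx, Fx*}. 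A direct semi-continuity argument via Remark~\ref{semi-continuityremark} thus cannot conclude, and the four-term Koszul sequence must instead be analyzed carefully to produce the desired cancellation between the cohomology contributions on $M_i(d)$ and on the subvariety $M_{i-1}(d-2)$.
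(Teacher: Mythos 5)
Your overall scaffolding (lexicographic induction on $(i,\alpha+\beta)$, wall-crossing via Theorem~\ref{windows wall-crossing} using the weights of Theorem~\ref{weights computations of all bundles}, Koszul reductions via Lemma~\ref{BasicKoszul} plus semi-continuity) matches the paper's, but you are missing the two hard inputs that constitute the technical core of the argument, and your proposed workarounds do not close those gaps.

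\textbf{Missing input 1: acyclicity of $\Lambda_M^{-k}$.} When $D=D'=0$ and $t\leq -2$ (which is allowed by \eqref{ineq vanishing without Z} once $i\geq 2$), the $\lambda$-weight of $\Lambda_M^{t}$ is $-t$, and the window inequality $-t<\eta_+=l$ fails for every wall $l\leq -t$. So Theorem~\ref{windows wall-crossing} only descends you from $M_i$ to $M_{-t}$, and there are no $F$-factors left to run a Koszul sequence on. You are left needing $R\Gamma_{M_{-t}(d)}(\Lambda_M^{t})=0$, i.e.\ $R\Gamma_{M_l(d)}(\Lambda_M^{-k})=0$ for all $1\leq k\leq l\leq v$. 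This is exactly Theorem~\ref{full vanishing of Lambda*}, which is not a routine corollary of windows and Koszul: the paper devotes all of Section~6 to it (conductor sequences for the loci $E_i^\alpha$, Kawamata--Viehweg on $M_k$, a residue identity between $\chi(\cO_i(1,i-1))$ and $\chi(L_i)$, a Hecke-type elementary-transformation morphism $B\to M_\sigma(d)$ and Koll\'ar vanishing). Your base case only treats $t\in\{-1\}\cup[1,d+g-4]$ on $M_1$, which is not enough.

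\textbf{Missing input 2: the $t=-1$ bottleneck.} You correctly identify that for $\alpha=0$, $t=-1$, $D'=2y$, the bundle $G_{D'}\otimes\Lambda_M^{-1}$ is a deformation of $F_y\otimes F_y^\vee$, which has $h^0=h^1=1$ by full faithfulness on $M_1$, so semi-continuity cannot conclude. But your proposed fix---``analyze the four-term Koszul sequence carefully to produce cancellation''---cannot work. The Koszul sequence twisted by $F_y\otimes\Lambda_M^{-1}$ computes the cohomology of $F_y^{\otimes 2}\otimes\Lambda_M^{-1}$, and the boundary map $H^0(F_y)\to H^0(F_y|_{M_{i-1}(\Lambda(-2y))})$ is \emph{zero} (the universal section $\tilde\phi_y$ vanishes identically on $M_{i-1}(\Lambda(-2y))$), so the spectral sequence gives precisely $h^0=h^1=1$ for the tensor product and merely $h^p=0$ for $p\geq 2$ together with $h^0=h^1$ for $G_{2y}\otimes\Lambda_M^{-1}$. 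Nothing in the Koszul picture rules out $h^0(G_{2y}\otimes\Lambda_M^{-1})=1$. The paper handles this via Lemma~\ref{bottleneck}: $H^0(G_{2y}\otimes\Lambda_M^{-1})$ is identified with an $S_2$-skew-invariant Hom-space on the nonreduced cover $\mathbb B_2\times M_1$ (Claim~\ref{sRGSRHSARHR}), and a nonzero element would split $\bar F=\Phi_F(\cO_{2y})$ into $F_y'\oplus F_y''$, contradicting indecomposability that comes from full faithfulness of $\Phi_F$ (Theorem~\ref{Poincare is fully faithful on M1}). For general fat points the same Claim~\ref{sRGSRHSARHR} supplies the representation-theoretic decomposition of $H^0$. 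This is a genuinely new mechanism, not recoverable from the Koszul spectral sequence, and without it (and its case-(b) counterpart $t=\deg D-1$, used via Lemma~\ref{generalized bottleneck}) the induction cannot be closed.

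A smaller structural point: the paper runs a simultaneous induction coupling Theorem~\ref{vanishing without Z on Mi} with Proposition~\ref{GD=C} through Lemma~\ref{thm implies prop}, because the spectral-sequence arguments at $t=-1$ need to know that $R\Gamma(G_{\tilde D})=\mathbb{C}$ with a section vanishing on precisely the expected locus. Your single induction on $(i,\alpha+\beta)$ would need to be enlarged to carry along that statement.
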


\begin{remark}\label{remark hard vanishing}
In the vanishings of Theorem \ref{vanishing without Z on Mi}, we can write $G_D^\vee$ or $\overline{G}_D^\vee$ in place of $\bigotimes_{k=1}^\alpha F_{x_k}^\vee$, and $G_{D'}$ or $\overline{G}_{D'}$ in place of $\bigotimes_{k=1}^\beta F_{y_k}$. This follows from Corollary \ref{standarddeformation} and semi-continuity.
\end{remark}

These computations will allow us to verify both the Bondal-Orlov conditions for the fully faithful embeddings of $D^b(\Sym^\alpha C)$ into $D^b(M_i)$, for $\alpha\leq i$, as well as the vanishings needed in order to show semi-orthogonality between the corresponding subcategories of $D^b(M_i)$ thus defined.

We start with a lemma on $M_0(d)$.

\begin{lemma}\label{lemma i=0 vanishing no Z}
Let $d>0$ and $i=0$. Let $D=x_1+\ldots +x_\alpha$, $D'=y_1+\ldots +y_\beta$ (possibly with repetitions) of degrees $\alpha,\ \beta\leq d+g-1$, and let $t$ be an integer satisfying $\deg D < t < d+g-1-\deg D'$.
Then 
$R\Gamma_{M_0(d)}\bigl((\bigotimes_{k=1}^{\alpha} F_{x_k}^\vee)\otimes (\bigotimes_{k=1}^{\beta} F_{y_k})\otimes\Lambda_M^t\bigr)=0$.
\end{lemma}

\begin{proof}
The vector bundle 
$\left.(\bigotimes_{k=1}^{\alpha} F_{x_k}^\vee)\otimes (\bigotimes_{k=1}^{\beta} F_{y_k})\otimes\Lambda_M^t\right|_{M_0}$
has the form $\bigoplus\cO_{\mathbb{P}^{d+g-2}}(s_j-t)$ on $M_0=\mathbb{P}^{d+g-2}$, where $-\beta\leq s_j\leq \alpha$ (see Lemma \ref{Fx=O+O(-1)}). By hypothesis, $\alpha-t<0$ and $-\beta-t \geq -(d+g-2)$, so this bundle is $\Gamma$-acyclic.
\end{proof}

\begin{theorem}\label{Thm:vanishing}
Let $d>2$ and $1\leq i\leq v$.
Let $D=x_1+\ldots +x_\alpha$, $D'=y_1+\ldots +y_\beta$ (possibly with repetitions) of degrees $\alpha,\ \beta\leq d+g-1$, and let $t$ be an integer satisfying 
$$\deg D < t < d+g-1-2i-\deg D'.$$
Then 
$R\Gamma_{M_i(d)}\bigl((\bigotimes_{k=1}^{\alpha} F_{x_k}^\vee)\otimes (\bigotimes_{k=1}^{\beta} F_{y_k})\otimes\Lambda_M^t\bigr)=0$.
\end{theorem}

\begin{proof}
By Theorem \ref{weights computations of all bundles}, the bundle $(\bigotimes_{k=1}^{\alpha} F_{x_k}^\vee)\otimes (\bigotimes_{k=1}^{\beta} F_{y_k})\otimes\Lambda_M^t$ descends from an object with weights in $[-\beta-t,\alpha-t]$. For every $1< j\leq i$, these weights live in the window between $M_{j-1}$ and $M_j$, since by hypothesis $1+2j-d-g<-\beta-t$ and $\alpha-t<0< j$. Then using Theorem \ref{windows wall-crossing}, 
$R\Gamma_{M_i(d)}\bigl((\bigotimes_{k=1}^{\alpha} F_{x_k}^\vee)\otimes (\bigotimes_{k=1}^{\beta} F_{y_k})\otimes\Lambda_M^t\bigr)=
R\Gamma_{M_1(d)}\bigl((\bigotimes_{k=1}^{\alpha} F_{x_k}^\vee)\otimes (\bigotimes_{k=1}^{\beta} F_{y_k})\otimes\Lambda_M^t\bigr)$,
so it suffices to show 
the theorem for the case $i=1$.

Also, using 
$(\bigotimes_{k=1}^{\alpha} F_{x_k}^\vee)\otimes (\bigotimes_{k=1}^{\beta} F_{y_k})\otimes\Lambda_M^t\simeq(\bigotimes_{k=1}^{\alpha} F_{x_k})\otimes (\bigotimes_{k=1}^{\beta} F_{y_k})\otimes\Lambda_M^{t-\alpha},$ it is easy to see that it suffices to show the theorem for the case $\alpha=0$. So we assume $\alpha=0$ and do induction on $\beta$. If $\beta=0$, then $0< t \leq d+g-4$ and the result follows from Lemma \ref{O(-kH+lE)}. If $\beta>0$, write $D'=\Tilde{D}'+y_\beta$. We use the sequence \eqref{reduction F non-dual} from Lemma \ref{BasicKoszul} with $F_{y_\beta}$ and twist it by $(\bigotimes_{k=1}^{\beta-1} F_{y_k})\otimes\Lambda_M^t$ to obtain an exact sequence
\begin{gather*}
\begin{aligned}
0\to \bigotimes_{k=1}^{\beta-1}F_{y_k}\otimes\Lambda_M^t\to \bigotimes_{k=1}^{\beta}F_{y_k}\otimes\Lambda_M^t \to
\qquad\qquad\qquad\qquad\qquad\qquad\qquad\\
\qquad\qquad\qquad\qquad
\to\bigotimes_{k=1}^{\beta-1}F_{y_k}\otimes\Lambda_M^{t+1}\to \left.\bigotimes_{k=1}^{\beta-1}F_{y_k}\otimes\Lambda_M^{t+1}\right|_{M_0(\Lambda(-2y_{\beta}))}\to 0.
\end{aligned}
\end{gather*}

Of these terms, $R\Gamma_{M_0(d-2)}(\bigotimes_{k=1}^{\beta-1}F_{y_k}\otimes\Lambda_M^{t+1})=0$ by Lemma \ref{lemma i=0 vanishing no Z}, since $0<t+1<(d-2)+g-1-(\beta-1)$, while by induction
$
R\Gamma_{M_1(d)}(\bigotimes_{k=1}^{\beta-1}F_{y_k}\otimes\Lambda_M^t)=
R\Gamma_{M_1(d)}(\bigotimes_{k=1}^{\beta-1}F_{y_k}\otimes\Lambda_M^{t+1})=0
$. Therefore, we obtain $R\Gamma_{M_1(d)}(\bigotimes_{k=1}^{\beta}F_{y_k}\otimes\Lambda_M^t)=0$ as well.
\end{proof}

\begin{corollary}\label{Coro:GD=C}
Suppose $0<d\leq 2g+1$ and $0\leq i\leq v$. Let $D=x_1 +\ldots + x_\alpha$ (possibly with repetitions), with $\alpha=\deg D < d+g-2i-1$. Then 
\begin{equation}\label{important section}
R\Gamma_{M_i}\left(\bigotimes_{k=1}^\alpha F_{x_k}\right)=R\Gamma_{M_i}(G_D)=R\Gamma_{M_i}(\overline{G}_D)=\mathbb{C}.
\end{equation}
Moreover, the unique (up to a scalar) global section of these bundles vanishes precisely along 
the union of codimension~$2$ loci $M_{i-1}(\Lambda(-2x_k))$, for $k\in\{1,\ldots,\alpha\}$.
\end{corollary}


\begin{proof}
When $i=0$, $F_{x_k}=\cO_{\mathbb{P}^r}\oplus\cO_{\mathbb{P}^r}(-1)$ on $M_0=\mathbb{P}^r$, $r=d+g-2$ (see Lemma \ref{Fx=O+O(-1)}), and $\bigotimes F_{x_k}$ splits as a sum of line bundles $\bigoplus\cO_{\mathbb{P}^r}(s_j)$, where $-\alpha\leq s_j\leq 0$ and exactly one of the summands is $\cO_{\mathbb{P}^r}$. Since $\alpha \leq d+g-2$, $R\Gamma_{M_i}(\bigotimes_{k=1}^\alpha F_{x_k})=\mathbb{C}$ in this case. Since $G_D$ and $\overline{G}_D$ are deformations of $\bigotimes_{k=1}^\alpha F_{x_k}$ over $\bA^1$, we have \eqref{important section} by semi-continuity and equality of the Euler characteristic.

Let $i\geq 1$. We see that, using Theorem \ref{windows wall-crossing}, it suffices to prove \eqref{important section} on $M_1(d)$. In fact, by Theorem \ref{weights computations of all bundles}, $\bigotimes_{k=1}^\alpha F_{x_k}$ descends from an object with weights within $[-\alpha,\ 0]$, all of which live in the window $(1+2j-d-g,\ j)$ for $1< j\leq i$, since $1+2j-d-g \leq 1+2i-d-g<-\alpha$ by hypothesis. This way we get $R\Gamma_{M_i}(\bigotimes_{k=1}^\alpha F_{x_k})=R\Gamma_{M_1}(\bigotimes_{k=1}^\alpha F_{x_k})$. Similarly, $R\Gamma_{M_i}(G_D)=R\Gamma_{M_1}(G_D)$ and $R\Gamma_{M_i}(\overline{G}_D)=R\Gamma_{M_1}(\overline{G}_D)$.

Hence, we take $i=1$ and $\alpha<d+g-3$. In this case $d>2$.
Let us show that $R\Gamma_{M_1}(\bigotimes F_{x_k})\simeq \mathbb{C}$ first.
We do induction on $\alpha$. If $D=0$, the result is trivial. Otherwise, use the sequence \eqref{reduction F non-dual} from Lemma \ref{BasicKoszul} on $F_{x_\alpha}$ to obtain an exact sequence
\begin{align*}
    0\to \bigotimes_{k=1}^{\alpha-1} F_{x_k} \to \bigotimes_{k=1}^{\alpha} F_{x_k} \to \bigotimes_{k=1}^{\alpha-1} F_{x_k}\otimes\Lambda_M\to\left. \bigotimes_{k=1}^{\alpha-1} F_{x_k}\otimes\Lambda_M\right|_{M_0(d-2)}\to 0.
\end{align*}
Of these terms, we get $R\Gamma_{M_1(d)}(\bigotimes_{k=1}^{\alpha-1} F_{x_k}\otimes\Lambda_M)=0$ from Theorem \ref{Thm:vanishing}. Also, we have $R\Gamma_{M_0(d-2)}(\bigotimes_{k=1}^{\alpha-1} F_{x_k}\otimes\Lambda_M)=0$ from Lemma \ref{lemma i=0 vanishing no Z}, given that $t=1$ and $0<1<d+g-3-(\alpha-1)$. Using the hypercohomology spectral sequence $E_1^{p,q}=H^q(X,\cF^p)$ and induction, we obtain
\begin{align*}
R\Gamma_{M_1}\left(\bigotimes_{k=1}^\alpha F_{x_k}\right)=R\Gamma_{M_1}\left(\bigotimes_{k=1}^{\alpha-1} F_{x_k}\right)=\mathbb{C}.
\end{align*}

Finally, by Corollary \ref{standarddeformation} both $G_D$ and $\overline{G}_D$ are deformations over $\bA^1$ of $\bigotimes_{k=1}^\alpha F_{x_k}$, so we have \eqref{important section} by semi-continuity and equality of the Euler characteristic. It~also 
follows that the global section of $G_D$ (resp.,~ $\overline{G}_D$) is a deformation of the global section of $\bigotimes_{k=1}^\alpha F_{x_k}$ over $\bA^1$, which does not vanish outside of the union of loci $M_{i-1}(\Lambda(-2x_k))$ for $k=1,\ldots,\alpha$. On the other hand, the tautological sections of these bundles, 
that is,
the descent of the tensor product of tautological sections of $\bigotimes\pi_j^*\cF_k$ 
(resp., this tensor product tensored with the sign representation)
for $G_D$ (resp., $\overline{G}_D$),
vanish precisely along these loci.
\end{proof}


A key step in the proof of Theorem \ref{vanishing without Z on Mi} will be the following proposition.

\begin{proposition}\label{bottleneck}
Suppose $2< d\leq 2g+1$ and $1\leq i\leq v$. 
Let $D$ be an effective divisor on $C$ and suppose that $\deg D\leq d+g-2i-1$. 
Then 
\begin{equation}\label{onecopyofLambda}
R\Gamma_{M_i(d)}(G_D^\vee\otimes\Lambda_M^{\deg D-1})=R\Gamma_{M_i(d)}(\overline{G}_{D}\otimes\Lambda_M^{-1})=0.
\end{equation}
\end{proposition}

We will first show how Theorem \ref{vanishing without Z on Mi}
follows from Proposition \ref{bottleneck} 
and then proceed with the proof of Proposition \ref{bottleneck}.

\begin{proof}[Proof of Theorem \ref{vanishing without Z on Mi}]
Note that, by rewriting $\overline{G}_{D'}$ in terms of $G_{D'}^\vee$ using Corollary \ref{relation G bar and G}, both statements can be seen to be equivalent, so we will only prove the first one.

We first suppose $D=0$ and do induction on $\deg D'$. 
If $D=D'=0$, we need to show that for $t\neq 0$ with $-i-1 < t < d+g-2i-1$ we have $R\Gamma_{M_i(d)}(\Lambda_M^t)=0$. If $t>0$, Lemma \ref{O(-kH+lE)} ensures $R\Gamma_{M_1(d)}(\Lambda_M^t)=0$, since $i\geq 1$ and so $t\leq d+g-4$. But also for every $1<j\leq i$ we have $1+2j-d-g<-t<0< j$, that is, the weight of $\Lambda_M^t$ lives in the window between $M_{j-1}$ and $M_j$, so we conclude $R\Gamma_{M_i(d)}(\Lambda_M^t)=R\Gamma_{M_1(d)}(\Lambda_M^t)=0$ by Theorem \ref{windows wall-crossing}. Suppose now $t<0$, so that $-i\leq t<0$. By Theorem \ref{full vanishing of Lambda*}, $R\Gamma_{M_i(d)}(\Lambda_M^t)=0$.

Let $D=0$ and $\deg D'\geq 1$. By induction, we may assume the result holds for divisors $\Tilde{D'}$ with $\deg\Tilde{D'}<\deg D'$. By Proposition \ref{bottleneck}, $R\Gamma_{M_i(d)}(\overline{G}_D\otimes\Lambda_M^{-1})=0$, since $\deg D'\leq d+g-2i-1$. We need to show that this implies $R\Gamma_{M_i(d)}(\overline{G}_{D'}\otimes\Lambda_M^t)=0$ for $-i-1<t<d+g-2i-1-\deg D'$ and $t\neq 0$. If $t=-1$, this is \eqref{onecopyofLambda}. If $t< -1$, we write $D'=\tilde{D'}+y$ and use the fact that $\overline{G}_{D'}$ is a stable deformation of $F_y\otimes \overline{G}_{\tilde{D'}}$ over $\bA^1$ (see Proposition \ref{proposition stable deformation}). If we take the second sequence of Lemma \ref{BasicKoszul} twisted by $\overline{G}_{\tilde{D'}}\otimes \Lambda_M^t$, we get an exact sequence
\begin{align*}
0\to \overline{G}_{\tilde{D'}}\otimes\Lambda_M^{t}\to F_y\otimes \overline{G}_{\tilde{D'}}\otimes\Lambda_M^t\to \overline{G}_{\tilde{D'}}\otimes\Lambda_M^{t+1} \to \left.\overline{G}_{\tilde{D'}}\otimes\Lambda_M^{t+1}\right|_{M_{i-1}}\to 0.
\end{align*}
Observe that this is an acyclic chain complex involving $F_y\otimes \overline{G}_{\tilde{D'}}\otimes\Lambda_M^t$ and where the remaining three terms satisfy the corresponding inequalities from \eqref{ineq vanishing without Z}:
$-i-1 < t < d+g-2i-1-\deg \tilde{D'}$,
$-i-1 < t+1 < d+g-2i-1-\deg \tilde{D'}$,
$-(i-1)-1 < t+1 < d-2+g-2(i-1)-1-\deg \tilde{D'}$.
Notice that the inequality $\deg \Tilde{D'}\leq (d-2)+g-2(i-1)-1$ is preserved too. Given that $t< -1$, we have both $t, \ t+1 \neq 0$ so by induction we see that $R\Gamma_{M_i(d)}(\overline{G}_{\tilde{D'}}\otimes\Lambda_M^{t})=R\Gamma_{M_i(d)}(\overline{G}_{\tilde{D'}}\otimes\Lambda_M^{t+1})=0$. On the other hand, we obtain $R\Gamma_{M_{i-1}(d-2)}(\overline{G}_{\tilde{D'}}\otimes\Lambda_M^{t+1})=0$ either by induction if $i>1$, or from Lemma \ref{lemma i=0 vanishing no Z} if $i=1$. Therefore we get the desired vanishing from the corresponding hypercohomology spectral sequence and semi-continuity.

Next we do induction on $\alpha=\deg D$. If $\alpha\geq 1$, we write $D=\Tilde{D}+x_{\alpha}$ and take the first sequence of Lemma \ref{BasicKoszul} with $F_{x_\alpha}^\vee$, twisted by $(\bigotimes_{k=1}^{\alpha-1}F_{x_k}^\vee)\otimes \overline{G}_{D'}\otimes \Lambda_M^t$. This way we get an exact sequence involving $(\bigotimes_{k=1}^{\alpha}F_{x_k}^\vee)\otimes \overline{G}_{D'}\otimes\Lambda_M^t$, and where the remaining terms are $(\bigotimes_{k=1}^{\alpha-1}F_{x_k}^\vee)\otimes \overline{G}_{D'}\otimes\Lambda_M^{t-1}$ and $(\bigotimes_{k=1}^{\alpha-1}F_{x_k}^\vee)\otimes \overline{G}_{D'}\otimes\Lambda_M^{t}$ on $M_i(d)$, and $(\bigotimes_{k=1}^{\alpha-1}F_{x_k}^\vee)\otimes \overline{G}_{D'}\otimes\Lambda_M^{t}$ on $M_{i-1}(d-2)$. All three still satisfy the inequalities (\ref{ineq vanishing without Z}):
$\deg \tilde{D}-i-1 < t-1 < d+g-2i-1-\deg D'$,
$\deg \tilde{D}-i-1 < t < d+g-2i-1-\deg D'$,
$\deg \tilde{D}-(i-1)-1 < t < d-2+g-2(i-1)-1-\deg D'$.
Further, $t,\ t-1\notin [0,\ \deg \Tilde{D}]$ so by induction $R\Gamma_{M_i(d)}((\bigotimes_{k=1}^{\alpha-1}F_{x_k}^\vee)\otimes \overline{G}_{D'}\otimes\Lambda_M^{t-1})=R\Gamma_{M_i(d)}((\bigotimes_{k=1}^{\alpha-1}F_{x_k}^\vee)\otimes \overline{G}_{D'}\otimes\Lambda_M^{t})=0$, while $R\Gamma_{M_{i-1}(d-2)}((\bigotimes_{k=1}^{\alpha-1}F_{x_k}^\vee)\otimes \overline{G}_{D'}\otimes\Lambda_M^{t})=0$ either by induction when $i>1$ or by Lemma \ref{lemma i=0 vanishing no Z} when $i=1$ (observe that when $i=1$ we must have $t>\deg\Tilde{D}$). By looking at the corresponding hypercohomology spectral sequence we obtain the vanishing $R\Gamma_{M_i(d)}((\bigotimes_{k=1}^{\alpha}F_{x_k}^\vee)\otimes \overline{G}_{{D'}} \otimes\Lambda_M^t)=0$.
\end{proof}

It remains to prove Proposition \ref{bottleneck},
which will take the rest of this section and require several steps.
First, we see that it reduces to showing that $\overline{G}_D\otimes\Lambda_M^{-1}$ has no global sections on $M_1(d)$.

\begin{lemma}
Under the assumptions of Proposition \ref{bottleneck}, \eqref{onecopyofLambda} is equivalent to proving
\begin{equation}\label{kjgc,gcmc}
H^0(M_1(d),\overline{G}_{D}\otimes\Lambda_M^{-1})=0
\end{equation}
for the case that every point in $D$ has multiplicity at least $2$.
\end{lemma}

\begin{proof}
First, we see that \eqref{kjgc,gcmc} is clearly necessary, so we need to show it is sufficient. Note that $G_D^\vee\otimes\Lambda_M^{\deg D-1}\simeq \overline{G}_{{D}}\otimes\Lambda_M^{-1}$ by Corollary \ref{relation G bar and G}.
We know by Theorem \ref{weights computations of all bundles} that for $1<j\leq i$ this bundle descends from an object with weights within $[-\deg D+1,\ 1]$, where $1<j$ and $-\deg D+1>1+2j-d-g$ by hypothesis. Hence, by Theorem \ref{windows wall-crossing}, it suffices to show \eqref{onecopyofLambda} when $i=1$.

We write $D=\alpha_1 x_1+\ldots+\alpha_s x_s$ with $x_k\ne x_j$. If $\deg D=0$ then we are done by Theorem~\ref{full vanishing of Lambda*}. 
Let us now assume that some $\alpha_i = 1$, say, for simplicity, $\alpha_1 = 1$. 
Then we can write $D = \tilde{D} +x_1$ and argue by induction on $\deg D$ as follows. By Lemma~\ref{BasicKoszul}, we obtain an exact sequence
\begin{align*}
    0\to \overline{G}_{\tilde D}\otimes\Lambda_M^{-1}
    \to \overline{G}_{D}\otimes\Lambda_M^{-1}\to 
    \overline{G}_{\tilde D}\to 
    \overline{G}_{\tilde D}|_{M_{0}}\to 0,
\end{align*}
where $M_0=M_{0}(\Lambda(-2x_1))$.
By the induction hypothesis, the first term in each sequence is $\Gamma$-acyclic.
By Corollary \ref{Coro:GD=C}, the last two terms in each sequence have vanishing higher cohomology and $H^0=\bC$ with a global section that does not vanish along $M_{0}(\Lambda(-2x_1))$. Thus 
$$R\Gamma_{M_1(d)}(\overline{G}_{D}\otimes\Lambda_M^{-1})=0
$$
by the hypercohomology spectral sequence $E_1^{p,q}=H^q(X,\cF^p)$ and semi-continuity. So we can assume that $\alpha_k>1$ for all~$k$.
Again, we write $D=\tilde{D}+x_1$ and get
\begin{align}\label{reductions22}
    0\to \overline{G}_{\tilde D}\otimes\Lambda_M^{-1}
    \to \overline{G}_{\tilde{D}}\otimes F_{x_1}\otimes\Lambda_M^{-1}\to 
    \overline{G}_{\tilde D}\to 
    \overline{G}_{\tilde D}|_{M_{0}}\to 0,
\end{align}
The last two terms in \eqref{reductions22} still have $R\Gamma=\bC$, but now the global section vanishes along $M_0(\Lambda(-2x_1))$.
Therefore, applying the same hypercohomology spectral sequence, we conclude that $F_{x_1}\otimes \overline{G}_{\tilde D}\otimes\Lambda_M^{-1}$ has the following cohomology: $h^p=0$ for $p\ge2$ and $h^0=h^1=1$. By Remark \ref{eulerdef}, its stable deformation $\overline{G}_{D}\otimes\Lambda_M^{-1}$ must have $h^p=0$ for $p\ge2$ and $h^0=h^1$.
Hence, it suffices to show that
$H^0(M_1(d),\overline{G}_{D}\otimes\Lambda_M^{-1})=0$, as claimed.
\end{proof}

In what follows, we focus on proving \eqref{kjgc,gcmc}, under the assumptions of Proposition \ref{bottleneck}, and with $D=\alpha_1x_1+\ldots+\alpha_sx_s$, $\alpha_k>1$.
We recall the construction of $\overline{G}_D$ from the proof of Corollary~\ref{standarddeformation} adapted to our case when $D$ is not necessarily a fat point.
Let $M=M_1(d)$.

Let $B_\alpha=\frac{\mathbb{C}[t_1,\ldots,t_\alpha]}{(\sigma_1,\ldots,\sigma_\alpha)}$, the covariant algebra, and $\bB_\alpha=\Spec B_\alpha
$.
Write the indexing set $\{1,\ldots,\alpha\}$ 
as a disjoint union of sets $A_k$ of cardinality $\alpha_k$ for $k=1,\ldots,s$, and denote $B=B_{\alpha_1}\otimes\ldots\otimes B_{\alpha_s}$
For every $j\in A_k$, we have a diagram of morphisms as in \eqref{diagram tau},
\begin{equation}\label{bigger diagram tau}
\begin{tikzcd}
&\mathbb{B}_{\alpha_1}\times\ldots\times\bB_{\alpha_s}\times M \arrow[r,"\pi_j"]\arrow{dr}[swap]{\tau} &\mathbb{D}_{\alpha_k}\times M \arrow[r,"q_k"]\arrow[d,bend left=20,"\rho"] & C\times M \arrow[dl] \\
& & M\arrow[u,bend left = 20,"\imath"] &
\end{tikzcd}
\end{equation}

We let $\cF_k=q_k^*{F}$, where $F$ is the universal bundle, and therefore $\overline{G}_D=\tau_*^{S_{\alpha_1}\times\ldots\times S_{\alpha_s}}\left(\bigotimes\pi_j^*\cF_k\otimes\sign\right)$.
Here $\tau_*$ does not change local sections of sheaves, but just forgets the $B$-algebra structure.
Thus \eqref{kjgc,gcmc} is equivalent to the following: $\Lambda_M^{-1}\otimes\bigotimes\pi_j^*\cF_k$ does not have
skew-invariant global sections (with respect to each factor of $S_{\alpha_1}\times\ldots\times S_{\alpha_s}$).

The restriction of $\Lambda_M^{-1}\otimes\bigotimes\pi_j^*\cF_k$ to the special fiber $M$ is $\Lambda_M^{-1}\otimes\bigotimes F_{x_k}^{\otimes\alpha_k}$. While the group $S_{\alpha_1}\times\ldots\times S_{\alpha_s}$ acts trivially on the special fiber, the action on the vector bundle is still non-trivial (the action permutes tensor factors within each block).

\begin{lemma}\label{indecomposable}
Suppose $s=1$, that is, $D=\alpha x$ is a fat point. Write $\cF=q_1^*F$ and let $\rho$ be as in \eqref{bigger diagram tau}. Then $\End \rho_*\cF=\mathbb{D}_\alpha$. In particular, $\rho_*\cF$ is indecomposable.
\end{lemma}

\begin{proof}
We see that $\rho_*\cF=\Phi_F(\mathcal{O}_{\alpha x})$, where $\Phi_F$ is the Fourier--Mukai functor with kernel $F$. The result follows from full faithfulness of $\Phi_F$, which is given by Theorem \ref{Poincare is fully faithful on M1}.
\end{proof}

\begin{lemma}\label{sRGSRHSARHR}
As a representation of 
$S_{\alpha_1}\times\ldots\times S_{\alpha_s}$, the space  
$H^0(M,\Lambda_M^{-1}\otimes\bigotimes F_{x_k}^{\otimes\alpha_k})$ 
is isomorphic to the direct sum $V_{\alpha_1}\oplus\ldots\oplus V_{\alpha_s}$  of irreducible representations, where each $V_{\alpha_k}$ is the standard $(\alpha_k-1)$-dimensional irreducible representation of $S_{\alpha_k}$ and the other factors $S_{\alpha_l}$, $l\ne k$, act on $V_{\alpha_k}$ trivially. If we realize the representation $V_{\alpha_k}$ as $\{\sum a_je_j\,|\,\sum a_j=0\}\subset\bC^{\alpha_k}$
then the vector $e_{j'}-e_{j''}\in V_{\alpha_k}$ corresponds to the global section $s_{j'j''}$ of $\Lambda_M^{-1}\otimes\bigotimes F_{x_k}^{\otimes\alpha_k}$ that can be written as a tensor product of the universal sections 
$s_l$ of $F_{x_l}$ with $l\neq k$,  the universal sections 
$s_k$ of $F_{x_k}$ in positions  $j\ne j',j''$ and the  section of 
$\Lambda_M^{-1}\otimes F_{x_k}\otimes F_{x_k}$ 
(in positions $j'$, $j''$)
given by wedging (recall that $\Lambda_M$ is the determinant of $F_{x_k}$).
\end{lemma}

\begin{proof}
The sections $s_{j'j''}$ satisfy the same linear relations as the difference vectors $e_{j'}-e_{j''}$, namely that $s_{j_1j_2}+s_{j_2j_3}+\ldots+s_{j_{r-1}j_r}+s_{j_rj_1}=0$ for $j_1,\ldots,j_r\in A_k$. Indeed, choose a basis $\{f_1,f_2\}$ in  a fiber of the rank~$2$ bundle $F_{x_k}$ so that the universal section is equal to $f_2$ and the determinant is given by $f_1\wedge f_2$. 
After reordering of $j_1,\ldots,j_r$,
and ignoring factors of $s_{jj'}$ given by the universl sections $s_l$ of $F_{x_l}$ with $l\neq k$, we have  \begin{align*}
s_{12}+s_{23}+\ldots+s_{r1}=&
(f_1\otimes f_2)\otimes f_2\otimes\ldots \otimes f_2-
(f_2\otimes f_1)\otimes f_2\otimes\ldots \otimes f_2+\\
&f_2\otimes (f_1\otimes f_2)\otimes\ldots \otimes f_2-
f_2\otimes (f_2\otimes f_1)\otimes\ldots \otimes f_2+\\
&\ldots=0.
\end{align*}

Let $j_k = \min (A_k)$ for $k=1,\ldots,s$. 
It suffices to prove that the sections $s_{j_kj}$ for $k=1,\ldots,s$ and $j\in A_k\setminus\{j_k\}$ form a basis of  $H^0(M,\Lambda_M^{-1}\otimes\bigotimes F_{x_k}^{\otimes\alpha_k})$. We~prove this by induction on $\alpha$. This is true if $\alpha=0$ by Lemma \ref{RGamma(lambda-1)=0} and if $\alpha=1$ by Lemma \ref{acyclicty of Fx, Fx*}. Let $\tilde F=F_{x_1}^{\otimes\alpha_1}\otimes\ldots\otimes F_{x_s}^{\otimes(\alpha_s-1)}$.
We have the usual exact sequence 
obtained from Lemma~\ref{BasicKoszul}:
\begin{align}\label{reductioninproduct}
    0\to 
    \Lambda_M^{-1}\otimes
    \tilde F
    \to 
    \Lambda_M^{-1}\otimes
    \tilde F\otimes F_{x_s}
    \to 
    \tilde F\to 
    \tilde F|_{M_{0}}\to 0,
\end{align}
where $M_0=M_{0}(\Lambda(-2x_s))$.
By Corollary \ref{Coro:GD=C}, the last two terms have vanishing higher cohomology and  $H^0=\bC$.
If $\alpha_s=1$ or, equivalently, $A_s=\{\alpha\}$,
then the global section of $\tilde F$
does not vanish along $M_{0}$ and therefore 
$H^0(\Lambda_M^{-1}\otimes\tilde F)=H^0(\Lambda_M^{-1}\otimes
    \tilde F\otimes F_{x_s})$ 
    by the corresponding hypercohomology spectral sequence, and the basis stays the same.
On the other hand, if $\alpha\ne j_s$  then 
the global section of $\tilde F$ (the tensor product of universal sections)
vanishes along $M_{0}$ inducing the zero map
$H^0(\tilde F)\to H^0( 
    \tilde F|_{M_{0}})$.
Moreover, the section $s_{j_s\alpha}\in H^0(\Lambda_M^{-1}\otimes
    \tilde F\otimes F_{x_s})$ maps onto the global section
of~$\tilde F$.
Thus  the claim also follows 
from the hypercohomology spectral sequence.
\end{proof}

The sheaf $\bigotimes\pi_j^*\cF_k$ carries a filtration by $B_{\geq d}\left(\bigotimes\pi_j^*\cF_k\right)$, where $B_{\geq d}$ is the ideal of monomials of degree $\geq d$. The associated graded object is $\gr \left(\bigotimes\pi_j^*\cF_k\right):=\bigotimes_kF_{x_k}^{\otimes\alpha_k}\otimes_{\mathcal{O}_M}B$. If $\Lambda_M^{-1}\otimes\bigotimes\pi_j^*\cF_k$ has a skew-invariant global section, an associated graded section will be a skew-invariant global section of $\Lambda_M^{-1}\otimes\gr\left(\bigotimes\pi_j^*\cF_k\right)$.

By Frobenius reciprocity, the space of skew-invariants in $(V_{\alpha_1}\boxtimes\Id\boxtimes\ldots\boxtimes\Id)\otimes B\subset H^0(M,\Lambda_M^{-1}\otimes\bigotimes F_{x_k}^{\alpha_k})\otimes B$ has dimension $\alpha_1-1$ and basis
\begin{equation}\label{basis}
\sum_{i<j}\left(\frac{\partial^r\Delta_1}{\partial t_i^r}-\frac{\partial^r\Delta_1}{\partial t_j^r}\right)s_{ij}\boxtimes\Delta_2\boxtimes\ldots\boxtimes\Delta_s,
\end{equation}
$r=1,\ldots,\alpha-1$, where $\Delta_i\in\bC[t_1,\ldots,t_{\alpha_i}]$ is the Vandermonde determinant. Global sections of $H^0(M,\Lambda_M^{-1}\otimes\bigotimes F_{x_k}^{\alpha_k})\otimes B$ coming from $V_{\alpha_k}$, $k>1$ are analogous. We will show that these global sections of $\Lambda_M^{-1}\otimes \gr\left(\bigotimes\pi_j^*\cF_k\right)$ do not lift to sections of $\Lambda_M^{-1}\otimes\bigotimes\pi_j^*\cF_k$.

\begin{lemma}
It suffices to prove \eqref{kjgc,gcmc} for $s=1$ and $\alpha=\alpha_1$.
\end{lemma}

\begin{proof}
We argue by induction on $s$. Let $\Tilde{D}=\alpha_2x_2+\ldots+\alpha_sx_s$ and suppose $H^0(\Lambda_M^{-1}\otimes \overline{G}_{\Tilde{D}})=0$. Arguing as in the proof of Lemma \ref{sRGSRHSARHR}, using the usual spectral sequences, we get $H^0(\Lambda_M^{-1}\otimes F_{x_1}^{\alpha_1}\otimes \overline{G}_{\Tilde{D}})=V_{\alpha_1}$, with a basis given by \eqref{basis}. Note that $\Delta_i\in B_{\alpha_i}$ is the element of top degree. Therefore, lifting basis elements to sections of $\overline{G}_D$ is equivalent to lifting them to $\overline{G}_{\alpha_1x_1}$.
\end{proof}

From now on, we let $\alpha=\alpha_1$, $x=x_1$ and $\cF=\cF_1$. The space of skew-invariants in $H^0(\Lambda_M^{-1}\otimes F_x^{\otimes\alpha})\otimes B_\alpha$ has basis $I_r=\sum_{i<j}\left(\frac{\partial^r\Delta}{\partial t_i^r}-\frac{\partial^r\Delta}{\partial t_j^r}\right)s_{ij}$, $r=1,\ldots,\alpha-1$. Writing, formally, $s_{ij}=e_i-e_j$, we also have $I_r=\sum_{i}\frac{\partial^r\Delta}{\partial t_i^r}e_i$. We claim that no $I_r$ lifts to a global skew-invariant section $\widetilde{I}_r$ of $\Lambda_M^{-1}\otimes\bigotimes\pi_j^*\cF$. We argue by induction on $\alpha$.

\begin{lemma}\label{lemma E}
Let $D=\alpha x$, $D'=(\alpha-1)x$. Assuming \eqref{kjgc,gcmc} holds for $D'$, we have
$$
H^0\left(\bB_\alpha\times M, \Lambda_M^{-1}\otimes\bigotimes\pi_j^*\cF\otimes\sign\right)^{S_{\alpha-1}}=\bC^{\alpha-1},
$$
where $S_{\alpha-1}\subset S_\alpha$ is the subgroup fixing the last index.
\end{lemma}

\begin{proof}
We start with the Koszul complex on $C\times M$
\begin{equation}\label{some koszul}
0\to\det F^\vee\to F^\vee\to\cO_{C\times M}\to \cO_{\cD'}\to 0,
\end{equation}
where $\cD'\subset C\times M$ is the vanishing locus of the universal section. Recall that $\cD'$ is smooth over $C$ with fibers $M(\Lambda(-2x))\subset M$ of codimension $2$ over $x\in C$. In particular, $\cD'$ is flat over $C$, and so the local generator $t\in\mathfrak{m}_x$ for $x\in C$ is not a zero divisor in $\cO_{\cD'}$. It follows that the pullback of \eqref{some koszul} to $\bD_\alpha\times M$ is also exact:
$$
0\to\Lambda_M^{-1}\to \cF^\vee\to\cO_{\bD_{\alpha}\times M}\to \cO_{\bD_{\alpha}\times M(\Lambda(-2x))}\to 0.
$$
We pullback to $\bB_\alpha\times M$ and tensor with the locally free sheaf $\bigotimes_{j=1}^{\alpha-1}\pi_j^*\cF$ to obtain
\begin{gather}
\begin{aligned}\label{another koszul}
0\to \Lambda_M^{-1}\otimes\bigotimes_{j=1}^{\alpha-1}\pi_j^*\cF\to
\Lambda_M^{-1}\otimes\bigotimes_{j=1}^{\alpha}\pi_j^*\cF\to
\qquad\qquad\qquad\qquad\qquad\\
\qquad\qquad\qquad\qquad\qquad
\to\bigotimes_{j=1}^{\alpha-1}\pi_j^*\cF\to
\left.\bigotimes_{j=1}^{\alpha-1}\pi_j^*\cF\right|_{\bB_\alpha\times M(\Lambda(-2x))}
\to 0.
\end{aligned}
\end{gather}
Next we compute $S_{\alpha-1}$-skew-invariant cohomology of the first, third and fourth terms of \eqref{another koszul}. For each of these terms $U$, we have $H^0(U\otimes\sign)^{S_{\alpha-1}}=\rho_*\pi_{\alpha,*}^{S_\alpha-1}(U\otimes\sign)$, which by Lemma \ref{deformation lemma} is a deformation of $\alpha$ copies of $\rho^*\pi_{\alpha,*}^{S_\alpha-1}(U\otimes\sign)$ over $\mathbb{A}^1$. For the first term $U=\Lambda_M^{-1}\otimes\bigotimes_{j=1}^{\alpha-1}\pi_j^*\cF$ in \eqref{another koszul}, we have $\rho^*\pi_{\alpha,*}^{S_\alpha-1}(U\otimes\sign)$ is isomorphic to $\Lambda_M^{-1}\otimes\overline{G}_{{D'}}$ (see the proof of Proposition \ref{proposition stable deformation}), which is $\Gamma$-acyclic by induction assumption. For the last two terms, $\rho^*\pi_{\alpha,*}^{S_\alpha-1}(U\otimes\sign)$ is isomorphic to $\overline{G}_{D'}$ and $\overline{G}_{D'}|_{M(\Lambda(-2x))}$, respectively, both of which have $R\Gamma=\bC$ by Corollary \ref{Coro:GD=C}.

From this, it follows that $H^0\left(\Lambda_M^{-1}\otimes\bigotimes_{j=1}^{\alpha-1}\pi_j^*\cF\otimes\sign\right)^{S_{\alpha-1}}=0$, while $H^0\left(\bigotimes_{j=1}^{\alpha-1}\pi_j^*\cF\otimes\sign\right)^{S_{\alpha-1}}=H^0\left(\bigotimes_{j=1}^{\alpha-1}\pi_j^*\cF|_{\bB_\alpha\times M(\Lambda(-2x))}\otimes\sign\right)^{S_{\alpha-1}}=\bC^{\alpha}$ and their higher cohomology vanishes. Furthermore, the last two groups are isomorphic to $\bD_\alpha$ as $\bD_\alpha$-modules and generated by the universal section $\left(\bigotimes_{j=1}^{\alpha-1}\pi_j^*\Sigma\right)\otimes\Delta_{\alpha-1}$, which under the restriction map to $\bB_\alpha\times M(\Lambda(-2x))$ goes to $\left(\bigotimes_{j=1}^{\alpha-1}t_\alpha\pi_j^*\Sigma\right)\otimes\Delta_{\alpha-1}$. Therefore, the first page of the spectral sequence $E_1^{p,q}=H^q(X,\cF^p)$ associated with \eqref{another koszul} has the following shape:
\begin{equation*}
\begin{tikzcd}[cramped, sep=scriptsize]
\vdots & \vdots & \vdots & \vdots \phantom{.}\\
0\arrow[r] &H^2\left(\bB_\alpha\times M, \Lambda_M^{-1}\otimes\bigotimes_{j=1}^\alpha\pi_j^*\cF\otimes\sign\right)^{S_{\alpha-1}}\arrow[r] & 0\arrow[r] & 0\phantom{.}\\
0\arrow[r] &H^1\left(\bB_\alpha\times M, \Lambda_M^{-1}\otimes\bigotimes_{j=1}^\alpha\pi_j^*\cF\otimes\sign\right)^{S_{\alpha-1}}\arrow[r] & 0\arrow[r] & 0\phantom{.}\\
0\arrow[r] &H^0\left(\bB_\alpha\times M, \Lambda_M^{-1}\otimes\bigotimes_{j=1}^\alpha\pi_j^*\cF\otimes\sign\right)^{S_{\alpha-1}}\arrow[r] & \bD_\alpha\arrow[r,"\cdot t_\alpha^{\alpha-1}"] & \bD_\alpha.
\end{tikzcd}
\end{equation*}
We conclude that $H^0\left(\bB_\alpha\times M, \Lambda_M^{-1}\otimes\bigotimes_{j=1}^\alpha\pi_j^*\cF\otimes\sign\right)^{S_{\alpha-1}}=\bC^{\alpha-1}$.
\end{proof}

\begin{proof}[Proof of Proposition \ref{bottleneck}]
We need to show that none of the $S_{\alpha-1}$-skew-invariant global sections found in Lemma \ref{lemma E} is $S_\alpha$-skew-invariant. We can explicitly write a basis of $H^0\left(\bB_\alpha\times M, \Lambda_M^{-1}\otimes\bigotimes_{j=1}^\alpha\pi_j^*\cF\otimes\sign\right)^{S_{\alpha-1}}=\Hom\left(\pi_\alpha^*\cF,\bigotimes_{j=1}^{\alpha-1}\pi_j^*\cF\otimes\sign\right)^{S_{\alpha-1}}$. Namely, consider the surjection $\pi_\alpha^*\cF\twoheadrightarrow F_x$, followed by an isomorphism $F_x\xrightarrow{\sim}t_1^{\alpha-1}\cF\simeq F_x$. Then we tensor with $\bigotimes_{j=2}^{\alpha-1}\pi_j^*\Sigma$, multiply by $t_2^{\alpha-3}t_3^{\alpha-4}\cdots t_{\alpha-2}$ and skew-symmetrize over $\{1,2,\ldots,\alpha-1\}$. This way we obtain a morphism 
$$\mu\in \Hom\left(\pi_\alpha^*\cF,\bigotimes_{j=1}^{\alpha-1}\pi_j^*\cF\otimes\sign\right)^{S_{\alpha-1}}$$
and therefore, also morphisms
\begin{equation}\label{mu basis}
\mu,t_\alpha\mu,\ldots,t_\alpha^{\alpha-2}\mu\in \Hom\left(\pi_\alpha^*\cF,\bigotimes_{j=1}^{\alpha-1}\pi_j^*\cF\otimes\sign\right)^{S_{\alpha-1}}.
\end{equation}

We claim that $t_\alpha^{\alpha-2}\mu\neq 0$, and therefore \eqref{mu basis} gives a basis of the space $H^0\left(\bB_\alpha\times M, \Lambda_M^{-1}\otimes\bigotimes_{j=1}^\alpha\pi_j^*\cF\otimes\sign\right)^{S_{\alpha-1}}$ over $\bC$. Indeed, notice that $t_\alpha^{\alpha-2}(t_1^{\alpha-1}t_2^{\alpha-3}t_3^{\alpha-4}\cdots t_{\alpha-2})$ is equal (up to sign) to the Vandermonde determinant $\Delta_\alpha\in \bB_\alpha$, and it is also equal (up to a multiple) to $t_1^{\alpha-1}\Delta_{\alpha-1}$, where $\Delta_{\alpha-1}$ is the Vandermonde determinant in $t_1,\ldots,t_{\alpha-1}$. 
We show that these two expressions are not equal to zero. Let $\bB_\alpha^{\topp}$ be the degree $\alpha\choose 2$ component of $\bB_\alpha$. Being spanned by $\Delta_\alpha$, $\bB_\alpha^{\topp}$ is isomorphic to $\sign$ as an $S_\alpha$-module. Consider a monomial $m=t_1^{d_1}\cdots t_\alpha^{d_\alpha}\in\bB_\alpha^{\topp}$. If $d_j=d_k$, then $m$ is fixed by $(j\:k)\in S_\alpha$, so it must vanish. This leaves only the orbit of $t_1^{\alpha-1}t_2^{\alpha-2}\cdots t_{\alpha-1}$ under $S_\alpha$, which all must be nonzero with
\begin{equation}\label{sigma}
\sigma(t_1^{\alpha-1}t_2^{\alpha-2}\cdots t_{\alpha-1})=(\sign\sigma)t_1^{\alpha-1}t_2^{\alpha-2}\cdots t_{\alpha-1}
\end{equation}
for $\sigma\in S_\alpha$. Monomials in $t_1^{\alpha-1}\Delta_{\alpha-1}$ of the form \eqref{sigma} have $\sigma(1)=1$ and $\sigma(\alpha)=\alpha$. Moreover, they appear with a relative factor of $\sign\sigma$ by anti-symmetry of $\Delta_{\alpha-1}$, so they do not cancel in $\bB_\alpha$, as claimed.

Therefore, $t_\alpha^{\alpha-2}\mu$ can be described as follows: it is the surjection $\pi_\alpha^*\cF\twoheadrightarrow F_x$ followed by an isomorphism $F_x\xrightarrow{\sim}t_1^{\alpha-1}\cF\simeq F_x$, twisted by $\bigotimes_{j=2}^{\alpha-1}\pi_j^*\Sigma$, multiplied by $\Delta_{\alpha-1}$ and then skew-symmetrized over $\{1,2,\ldots,\alpha-1\}$. So the associated graded section of $t_\alpha^{\alpha-2}\mu$ is $\sum_{j=1}^{\alpha-1}s_{j\alpha}\cdot\Delta_\alpha\neq 0$ (cf. Lemma \ref{sRGSRHSARHR}).

Finally, we check that no linear combination of \eqref{mu basis} is $S_\alpha$-skew-invariant. In fact, if $\alpha>2$, the associated graded section does not involve $s_{jk}$ for $j,k<\alpha$, while if $\alpha=2$, the section is $s_{12}(f_1-f_2)$, which is symmetric, not skew-symmetric. This completes the proof.
\end{proof}

\section{Computation of $R\Hom (G_D,G_{D})$}\label{computation section}

Now we will compute some of the $\Ext$ groups between $G_D$ and $G_{D'}$, which will be needed in the proof of our semi-orthogonal decomposition.

\begin{proposition}\label{vanishing beyond alpha-t}
Let $d\leq 2g+1$ and $1\leq i\leq v$. Suppose $D$, $D'$ are effective divisors and let $t$ be an integer satisfying 
$$
\deg D-i-1<t<d+g-1-2i-\deg D'.
$$ 
Then
$$
H^p(M_i(d),G_D^\vee\otimes G_{D'}\otimes\Lambda_M^t)=0
$$
for every $p>\deg D-t$.
\end{proposition}

\begin{proof}
Let $\alpha=\deg D$, $\beta=\deg D'$. We first do the case $\alpha=\beta=0$, for which we need to show vanishing of $H^p(M_i(d),\Lambda_M^t)$ for $p>-t$. If $t=0$, this is trivial. If $t<0$, observe that $i\geq -t$, so Theorem \ref{full vanishing of Lambda*} gives $R\Gamma_{M_i} (\Lambda_M^t)=0$. If $t>0$, we notice $\Lambda_M^t$ has weight $-t$, with $1+2j-d-g<-t<j$ for every $1<j\leq i$, so by Theorem \ref{windows wall-crossing} we must have $R\Gamma_{M_i}(\Lambda_M^t)=R\Gamma_{M_1}(\Lambda_M^t)$. But the latter is $0$ by Lemma \ref{O(-kH+lE)}, since $t\leq d+g-4$.

Now we prove the result for $\beta=0$ and $\alpha\geq 1$ by induction on $\alpha$. Write $D=\Tilde{D}+x$ and twist \eqref{reduction F dual} by $G_{\Tilde{D}}^\vee\otimes\Lambda_M^t$ to get an exact sequence
\begin{gather}
\begin{aligned}\label{SomeExSeq1}
0\to G_{\Tilde{D}}^\vee\otimes\Lambda_M^{t-1}\to F_x^\vee\otimes G_{\Tilde{D}}^\vee\otimes\Lambda_M^t\to
\qquad\qquad\qquad\qquad\qquad\\
\qquad\qquad\qquad\qquad\qquad
\to G_{\Tilde{D}}^\vee\otimes\Lambda_M^t \to \left.G_{\Tilde{D}}^\vee\otimes\Lambda_M^t\right|_{M_{i-1}(d-2)}\to 0.
\end{aligned}
\end{gather}
By induction, the first term has $H^p(M_i(d),G_{\Tilde{D}}^\vee\otimes\Lambda_M^{t-1})=0$ for $p>\alpha-t$, and the third term has $H^p(M_{i}(d),G_{\Tilde{D}}^\vee\otimes\Lambda_M^t)=0$ for $p>\alpha-t-1$. We see that on the last term we also have $H^p(M_{i-1}(d-2),G_{\Tilde{D}}^\vee\otimes\Lambda_M^t)=0$ for $p>\alpha-t-1$. Indeed, if $i>1$ this follows by induction, while if $i=1$, we see that $t>\alpha$ and the restriction of $G_{\Tilde{D}}^\vee\otimes\Lambda_M^t$ to $M_0(d-2)=\mathbb{P}^{d+g-4}$ is a deformation of a sum of line bundles $\bigoplus\mathcal{O}_{\mathbb{P}^{d+g-4}}(s_j)$ with $-(d+g-4)\leq-t\leq s_j \leq \alpha-t-1\leq 0$ (see Corollary \ref{standarddeformation} and Remark \ref{restriction of O(m,n)}). If $\alpha-t-1=0$, this sum of line bundles is $\Gamma$-acyclic, and if $\alpha-t-1=0$, this has vanishing cohomology $H^p$ for $p>0=\alpha-t-1$. In either case, we conclude that the last term has vanishing $H^p$ for $p>\alpha-t-1$ by semi-continuity. 
Taking the hypercohomology spectral sequence $E_1^{p,q}=H^q(X,\mathcal{F}^p)$ of \eqref{SomeExSeq1}, we conclude that $H^p(M_i(d),F_x^\vee\otimes G_{\Tilde{D}}^\vee\otimes \Lambda_M^t)=0$ for $p>\alpha-t$. Since $G_{D}^\vee\otimes \Lambda_M^t$ is a stable deformation over $\mathbb{A}^1$ of $F_x^\vee\otimes G_{\Tilde{D}}^\vee\otimes \Lambda_M^t$ by Proposition \ref{proposition stable deformation}, then by semi-continuity we also have $H^p(M_i(d),G_{D}^\vee\otimes \Lambda_M^t)$ for $p>t-\alpha$.

Finally, we do induction on $\beta\geq 1$. Similarly, write $D'=\Tilde{D}'+y$ and twist \eqref{reduction F non-dual} by $G_D^\vee\otimes G_{\Tilde{D}'}\otimes\Lambda_M^{t}$ to get an exact sequence
\begin{align*}
0\to G_D^\vee\otimes G_{\Tilde{D}'}\otimes\Lambda_M^t\to G_D^\vee\otimes G_{\Tilde{D}'}\otimes F_y\otimes \Lambda_M^t\to
\qquad\qquad\qquad\qquad\qquad\\
\qquad\qquad
\to G_D^\vee\otimes G_{\Tilde{D}'}\otimes\Lambda_M^{t+1}\to \left. G_D^\vee\otimes G_{\Tilde{D}'}\otimes\Lambda_M^{t+1}\right|_{M_{i-1}(d-2)}\to 0.
\end{align*}
By induction, the first term has $H^p=0$ for $p>\alpha-t$ and the third one has $H^p=0$ for $p>\alpha-t-1$. The last term has vanishing $p$-th cohomology for $p>\alpha-t-1$, which follows by induction when $i>1$. It remains to check the case $i=1$. In this case, the restriction $G_D^\vee\otimes G_{\Tilde{D}'}\otimes\Lambda_M^{t+1}|_{M_{i-1}(d-2)}$ is a deformation of a sum $\bigoplus\mathcal{O}_{\mathbb{P}^{d+g-4}}(s_j)$, with $-(d+g-4)\leq -t-\beta\leq \alpha-t-1\leq 0$. As before, we see that this has vanishing $H^p$ for $p>\alpha-t-1$ and the same is true for $G_D^\vee\otimes G_{\Tilde{D}'}\otimes\Lambda_M^{t+1}|_{M_{i-1}(d-2)}$ by semi-continuity. The result then follows from taking the spectral sequence $E_1^{p,q}=H^q(X,\mathcal{F}^p)$ and semi-continuity.
\end{proof}

\begin{corollary}\label{coro:vanishing beyond alpha}
Let $d\leq 2g+1$ and $0\leq i\leq v$. If $\deg D\leq i$ and $\deg D'<d+g-1-2i$, we have
$$
H^p(M_i(d),G_D^\vee\otimes G_{D'})=0
$$
for every $p>\deg D$.
\end{corollary}

\begin{proof}
If $i=0$ then $D$ must be zero and the result follows from Corollary \ref{Coro:GD=C}. For $i\geq 1$, this follows from taking $t=0$ in Proposition \ref{vanishing beyond alpha-t}.
\end{proof}

Using the previous results we can show that $G_D^\vee\otimes G_D$ has exactly one nontrivial global section, up to scalar multiplication. We need a lemma first.

\begin{lemma}\label{arfsGWEGWeg}
Let $d\leq 2g+1$ and let $D$, $D'$ be two effective divisors on $C$ of $\deg D=\alpha\leq i$, $\deg D'<d+g-2i-1$. Write $D=x_1+\ldots+x_\alpha$, in arbitrary order and possibly with repetitions. Then for every $k\leq \alpha$ we have $h^0\bigl(M_i(d),(\bigotimes_{j=1}^k F_{x_j}^\vee)\otimes \overline{G}_{D'}\bigr)\leq 1$.
\end{lemma}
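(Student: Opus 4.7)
The plan is to induct on $k$, peeling off one factor $F_{x_k}^\vee$ at a time using the Koszul-type sequence of Lemma~\ref{BasicKoszul}. The base case $k=0$ reduces to $h^0(M_i(d), G_{D'})=1$, which is exactly Proposition~\ref{GD=C} (applicable since $\deg D'<d+g-2i-1$).

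For the inductive step, set $A=(\bigotimes_{j=1}^{k-1} F_{x_j}^\vee)\otimes G_{D'}$, so $h^0(A)\leq 1$ by the induction hypothesis. Tensor the exact sequence
$$0\to \Lambda_M^{-1}\to F_{x_k}^\vee\to \mathcal{O}_{M_i}\to \mathcal{O}_{M_{i-1}(\Lambda(-2x_k))}\to 0$$
from Lemma~\ref{BasicKoszul} with $A$, and break the resulting four-term acyclic complex into two short exact sequences through the image $K$ of the penultimate arrow:
$$0\to A\otimes\Lambda_M^{-1}\to A\otimes F_{x_k}^\vee\to K\to 0,$$
$$0\to K\to A\to A|_{M_{i-1}(\Lambda(-2x_k))}\to 0.$$
Taking $H^0$ of the first sequence gives $h^0(A\otimes F_{x_k}^\vee)\leq h^0(A\otimes\Lambda_M^{-1})+h^0(K)$, and of the second gives $h^0(K)\leq h^0(A)\leq 1$. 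So it suffices to show that $A\otimes\Lambda_M^{-1}$ is $\Gamma$-acyclic.

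For this I will invoke Theorem~\ref{vanishing without Z on Mi}(a) with the divisor $D-x_k=x_1+\ldots+x_{k-1}$ (degree $k-1$), the divisor $D'$, and exponent $t=-1$. The conditions to verify are $k-1\leq d+g-2i-1$, $\deg D'\leq d+g-2i-1$, the range $k-1-i-1<-1<d+g-2i-1-\deg D'$, and $-1\notin [0,k-1]$. The third reduces to $k\leq i$ (given, since $k\leq\alpha\leq i$) and $\deg D'<d+g-2i$ (given); the last is obvious; and the first follows from $k\leq i\leq v=\lfloor(d-1)/2\rfloor$ combined with $d\leq 2g-1$, which yields $3i\leq d+g$. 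Hence Theorem~\ref{vanishing without Z on Mi}(a) applies, giving $R\Gamma(A\otimes\Lambda_M^{-1})=0$ and completing the induction.

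There is no substantial obstacle here beyond carefully checking that the numerical hypotheses of Theorem~\ref{vanishing without Z on Mi}(a) are satisfied at each induction step; the cohomological mechanism is otherwise straightforward, and the key input—that a single tautological section of $G_{D'}$ cannot proliferate when tensored with duals of universal bundles—is encoded in the $\Gamma$-acyclicity of $A\otimes\Lambda_M^{-1}$.
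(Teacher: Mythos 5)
Your proof is correct and takes essentially the same route as the paper: induction on $k$, tensoring the four-term Koszul complex from Lemma~\ref{BasicKoszul} with $A=(\bigotimes_{j=1}^{k-1}F_{x_j}^\vee)\otimes G_{D'}$, and killing the leftmost term $A\otimes\Lambda_M^{-1}$ via Theorem~\ref{vanishing without Z on Mi}(a) with the same choices of divisor of degree $k-1$ and $t=-1$. The only cosmetic difference is that you splice the acyclic complex into two short exact sequences and chase $H^0$ by hand, whereas the paper reads the same bound off the hypercohomology spectral sequence $E_1^{p,q}=H^q(X,\cF^p)$.
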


\begin{proof}
If $i=0$ then $\alpha=k=0$ and this is given by Corollary \ref{Coro:GD=C}. Let $i\geq 1$, so $d>2$. We do induction on $k$. If $k=0$, this still follows from Corollary \ref{Coro:GD=C}. Otherwise, we use Lemma \ref{BasicKoszul} to get an exact sequence
\begin{align*}
    0\to\bigotimes_{j=1}^{k-1} F_{x_j}^\vee \otimes \overline{G}_{D'}\otimes \Lambda_M^{-1} \to \bigotimes_{j=1}^kF_{x_j}^\vee\otimes \overline{G}_{D'} \to 
    \qquad\qquad\qquad\qquad\qquad \\
    \qquad\qquad\qquad\qquad\qquad
    \to\bigotimes_{j=1}^{k-1}F_{x_j}^\vee\otimes \overline{G}_{D'} \to \left.\bigotimes_{j=1}^{k-1}F_{x_j}^\vee\otimes \overline{G}_{D'}\right|_{M_{i-1}}\to 0
\end{align*}
where $M_{i-1}=M_{i-1}(\Lambda(-2x_k))$. The first term can be seen to be $\Gamma$-acyclic using Theorem \ref{vanishing without Z on Mi}. Indeed, here $t=-1\notin [0,\ k-1]$ and the inequalities $(k-1)-i-1<-1<d+g-2i-1-\deg D'$ are satisfied since $k\leq\alpha\leq i$ and $\deg D' < d+g-2i$. On the other hand, $h^0\bigl(M_i(d),(\bigotimes_{j=1}^{k-1} F_{x_j}^\vee)\otimes \overline{G}_{D'}\bigr)\leq 1$ by induction. Therefore, taking the hypercohomology spectral sequence $E_1^{p,q}=H^q(X,\cF^p)$ of the $\Gamma$-acyclic complex above, we conclude that $h^0\bigl(M_i(d),(\bigotimes_{j=1}^{k} F_{x_j}^\vee)\otimes \overline{G}_{D'}\bigr)\leq 1$ as well.
\end{proof}

\begin{corollary}\label{Hom(D,D)=1}
Suppose $d\leq 2g+1$ and let $0\leq i\leq v$. If $\deg D\leq i$, then $$\Hom_{M_i(d)}(G_{D},G_{D})=\Hom_{M_i(d)}(\overline{G}_{D},\overline{G}_{D})=\mathbb{C}.$$
\end{corollary}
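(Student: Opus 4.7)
The plan is to establish the upper bound $h^0(M_i(d),G_D^\vee\otimes G_D)\le 1$, since the identity endomorphism trivially furnishes the matching lower bound $h^0\ge 1$ through the identification $\Hom(G_D,G_D)=H^0(G_D^\vee\otimes G_D)$.

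The principal input is Lemma~\ref{arfsGWEGWeg}. Writing $\alpha=\deg D$ and choosing an ordering $D=x_1+\ldots+x_\alpha$ (with repetitions allowed), the lemma applied with $D'=D$ and $k=\alpha$ yields $h^0((\bigotimes_{j=1}^\alpha F_{x_j}^\vee)\otimes G_D)\le 1$. I quickly check its numerical hypothesis $\deg D'<d+g-2i-1$: with $\deg D\le i\le v=\lfloor(d-1)/2\rfloor$ and $d\le 2g-1$, it reduces to $3i<d+g-1$, which holds because $3v\le 3(d-1)/2<d+g-1$. When $D$ has distinct support, Lemma~\ref{differentpoints} identifies $G_D^\vee$ with $\bigotimes F_{x_j}^\vee$ and the corollary is immediate.

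To handle a general (possibly non-reduced) $D$, I will bridge the remaining gap by a semi-continuity argument using Corollary~\ref{standarddeformation}. That result presents $G_D^\vee=(\overline{F^\vee}^{\boxtimes\alpha})_D$ as a genuine (not merely stable) deformation of $\bigotimes_{j=1}^\alpha F_{x_j}^\vee$ over $\mathbb{A}^1$, i.e.\ as the generic fiber of a flat family on $M_i(d)\times\mathbb{A}^1$ whose fiber over $0$ is $\bigotimes F_{x_j}^\vee$. Tensoring this family with the pullback of the fixed bundle $G_D$ from $M_i(d)$ preserves flatness, producing a family whose generic fiber is $G_D^\vee\otimes G_D$ and whose fiber at $0$ is $(\bigotimes F_{x_j}^\vee)\otimes G_D$. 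Upper semi-continuity of $h^0$ along the proper projection to $\mathbb{A}^1$ then gives $h^0(G_D^\vee\otimes G_D)\le h^0((\bigotimes F_{x_j}^\vee)\otimes G_D)\le 1$, which combined with the lower bound proves $\Hom(G_D,G_D)=\mathbb{C}$.

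I expect no serious obstacle: the heavy lifting was already done in Lemma~\ref{arfsGWEGWeg} (which itself rests on Theorem~\ref{vanishing without Z on Mi} and the Koszul resolutions of Lemma~\ref{BasicKoszul}), and the deformation step is routine once one notices that the specialization goes in the direction for which upper semi-continuity yields the desired inequality---the special fiber $(\bigotimes F_{x_j}^\vee)\otimes G_D$ has, a priori, at least as many global sections as the generic fiber $G_D^\vee\otimes G_D$.
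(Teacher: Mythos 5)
Your proof is correct and follows essentially the same route as the paper: both deform $G_D^\vee\otimes G_D$ to $(\bigotimes F_{x_j}^\vee)\otimes G_D$ via Corollary~\ref{standarddeformation}, invoke semi-continuity of $h^0$, and bound the latter by Lemma~\ref{arfsGWEGWeg} applied with $D'=D$ and $k=\alpha$, while the identity endomorphism supplies the matching lower bound. Your explicit verification of the numerical hypothesis $\deg D<d+g-2i-1$ and the (correct) observation that it suffices that the deformation is genuine rather than merely stable are both faithfully in line with the paper's argument.
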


\begin{proof} 
We have
$\Hom_{M_i(d)}(G_D,G_D)=H^0(M_i(d),G_D^\vee\otimes G_D)$. But by Corollary \ref{relation G bar and G}, $G_D^\vee\otimes G_D\simeq \overline{G}_D^\vee\otimes\overline{G}_D$, so $\Hom_{M_i(d)}(G_D,G_D)=\Hom_{M_i(d)}(\overline{G}_D,\overline{G}_D)$ has dimension $h^0(G_D^\vee\otimes G_D)=h^0(\overline{G}_D^\vee\otimes \overline{G}_D)$, which by Corollary \ref{standarddeformation} and semi-continuity, is at most $h^0\bigl(M_i(d),(\bigotimes_{j=1}^{\deg D} F_{x_j}^\vee)\otimes \overline{G}_{D}\bigr)$. But by Lemma~\ref{arfsGWEGWeg}, this dimension is at most $1$, since by hypothesis $\deg D\leq i<d+g-2i-1$. On the other hand, the identity provides a nontrivial map $G_D\to G_D$, so $\dim\Hom_{M_i(d)}(G_D,G_D)$ must be exactly $1$.
\end{proof}

\section{Full faithfulness}\label{fully section}

In this section we construct fully faithful embeddings from $D^b(\Sym^\alpha C)$ to $D^b(M_{i}(\Lambda))$, for $1\leq\alpha\leq i\leq v$ and $d\leq 2g-1$.

\begin{definition}
    For $1\leq \alpha\leq i$, let $\Phi_\alpha^i:D^b(\Sym^\alpha C)\to D^b(M_{i}(\Lambda))$ be the Fourier--Mukai functor determined by $F^{\boxtimes\alpha}\in D^b(\Sym^\alpha C\times M_{i}(\Lambda))$, where $F$ is the universal bundle on $C\times M_{i}(\Lambda)$. Similarly, let $\overline{\Phi}_\alpha^i:D^b(\Sym^\alpha C)\to D^b(M_{i}(\Lambda))$ be the Fourier--Mukai functor given by $\overline{F}^{\boxtimes\alpha}\in D^b(\Sym^\alpha C\times M_{i}(\Lambda))$ (see Definition \ref{tensor products and sums} for  $F^{\boxtimes\alpha}$ and $\overline{F}^{\boxtimes\alpha}$).
\end{definition}

We have already proved in Theorem \ref{Poincare is fully faithful on M1} that $\Phi_1^1=\Phi_F$ is fully faithful. The main result of the present section is a generalization of that result.

\begin{theorem}\label{fully faithfulness}
Suppose $d\leq 2g-1$. For $1\le i\le v$, $1\leq\alpha\leq i$, both $\Phi_\alpha^i$ and $\overline{\Phi}_\alpha^i$ are fully faithful functors. 
\end{theorem}

We will use induction to prove Theorem \ref{fully faithfulness}. First we need to investigate $R\Hom(G_D,G_{D'})$ between different divisors. We want to obtain $\Gamma$-acyclicity of $G_D^\vee\otimes G_{D'}$, for which we need some preliminary computations.

\begin{lemma}\label{generalized bottleneck}
Suppose $d\leq 2g+1$ and let $0\leq i\leq v$. Let $D$, $D'$ be effective divisors on $C$ with $D=\alpha x$ and $x\notin D'$. 
If $\alpha+\deg D'<d+g-2i-1$, then
$$
R\Gamma_{M_i}(G_D^\vee\otimes G_{D'}\otimes\Lambda_M^{\alpha})=\mathbb{C}.
$$
Moreover, the unique (up to a scalar) global section of $G_D^\vee\otimes G_{D'}\otimes\Lambda_M^{\alpha}$ vanishes precisely along the union of codimension~$2$ loci $M_0(\Lambda(-2x))$ and $M_{0}(\Lambda(-2y))$ for $y\in \supp (D')$.
\end{lemma}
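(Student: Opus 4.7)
The strategy is to exhibit $G_D^\vee\otimes G_{D'}\otimes\Lambda_M^\alpha$ as a chain of $\mathbb{A}^1$-deformations ending at a tensor product to which Proposition~\ref{GD=C} applies. Using $F_x^\vee\otimes\Lambda_M\cong F_x$, Corollary~\ref{standarddeformation} applied to $F^\vee$ shows that $G_D^\vee\otimes\Lambda_M^\alpha$ is a deformation of $F_x^{\otimes\alpha}$; tensoring with the locally free~$G_{D'}$ preserves this, and a second application of Corollary~\ref{standarddeformation} deforms $G_{D'}$ into $\bigotimes_j F_{y_j}^{\otimes\alpha_j}$ where $D'=\sum_j\alpha_j y_j$. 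Proposition~\ref{GD=C} applied to the divisor $\alpha x+D'$, of degree strictly less than $d+g-2i-1$, gives $R\Gamma=\mathbb{C}$ at the limit, with the unique global section vanishing precisely on $\bigcup_{p\in\{x\}\cup\supp(D')} M_{i-1}(\Lambda(-2p))$. Remark~\ref{semi-continuityremark} propagates the acyclicity of higher cohomology back along both deformations, Remark~\ref{eulerdef} preserves Euler characteristics to give $h^0=1$, and flatness of the family identifies the vanishing locus of the section.

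\textbf{Reduction of Part (1) to $i=1$.} By Theorem~\ref{weights computations of all bundles}, the $\lambda$-weights of $G_D^\vee\otimes G_{D'}\otimes\Lambda_M^{\alpha-1}$ at the wall between $M_{j-1}$ and $M_j$ lie in the interval $[1-\alpha-\deg D',\,1]$, while the window is $(1+2j-d-g,\,j)$. For $1<j\le i$ the upper bound $1<j$ is immediate, and the lower bound $1-\alpha-\deg D'>1+2j-d-g$ follows from $j\le i$ together with $\alpha+\deg D'\le d+g-2i-1$. Theorem~\ref{windows wall-crossing} then reduces the computation to $M_1$.

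\textbf{Inductive step on $M_1$ when $D'$ has a multiplicity-one point.} We induct on $\deg D'$; the base case $D'=0$ is Lemma~\ref{bottleneck} for the divisor $\alpha x$. If $D'$ admits some $y\in\supp(D')$ of multiplicity one, write $D'=y+\tilde D'$ with $y\notin\supp(\tilde D')$. Proposition~\ref{proposition stable deformation} realizes $G_{D'}$ as a stable deformation of $F_y\otimes G_{\tilde D'}$, so by semi-continuity it suffices to show $R\Gamma(G_D^\vee\otimes F_y\otimes G_{\tilde D'}\otimes\Lambda_M^{\alpha-1})=0$. Tensoring the sequence~\eqref{reduction F non-dual} for $F_y$ with $G_D^\vee\otimes G_{\tilde D'}\otimes\Lambda_M^{\alpha-1}$ produces a four-term exact complex whose leftmost term is $\Gamma$-acyclic by the inductive Part~(1), whose third term has $R\Gamma=\mathbb{C}$ by Part~(2) (applied to $\alpha x+\tilde D'$ of degree $\le d+g-4$), and whose fourth term, the restriction to $M_0(\Lambda(-2y))=\mathbb{P}^{d+g-4}$, is a deformation of $(\mathcal{O}\oplus\mathcal{O}(-1))^{\otimes(\alpha+\deg\tilde D')}$ via the splittings $F_x|_{M_0}=\mathcal{O}\oplus\mathcal{O}(-1)$ and $\Lambda_M|_{M_0}=\mathcal{O}(-1)$; for $\alpha+\deg\tilde D'\le d+g-4$ this also has $R\Gamma=\mathbb{C}$. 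The restriction map on $H^0$ is an isomorphism because the vanishing-locus statement of Part~(2) locates the zeros of the third term's section inside $\bigcup_{p\in\{x\}\cup\supp(\tilde D')} M_0(\Lambda(-2p))$, and $y$ lies in neither $\{x\}$ nor $\supp(\tilde D')$. The hypercohomology spectral sequence of the four-term complex then forces $R\Gamma$ of the middle term to vanish.

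\textbf{Main obstacle.} The hard case is when every point of $D'$ occurs with multiplicity at least two, so no multiplicity-one $y$ is available; for any choice of $y\in\supp(D')$ we have $y\in\supp(\tilde D')$, so the tautological section of $G_D^\vee\otimes G_{\tilde D'}\otimes\Lambda_M^\alpha$ vanishes identically on the restriction to $M_0(\Lambda(-2y))$, and the spectral sequence of the previous paragraph no longer collapses to give acyclicity. The plan here is to invoke Lemma~\ref{auxiliary lemma}, which was engineered for precisely this regime and yields $H^0=0$ directly; for the remaining higher cohomology we plan to combine the constancy of the Euler characteristic along the family (which equals $\chi(G_{\alpha x+D'}\otimes\Lambda_M^{-1})=0$ by Lemma~\ref{bottleneck}) with a fat-point analysis modelled on the $\alpha=2$ case of Lemma~\ref{bottleneck}: one deforms $G_{D'}$ to $F_y\otimes G_{\tilde D'}$ via Proposition~\ref{proposition stable deformation} and uses the indecomposability of the Fourier--Mukai transforms $\Phi_F(\mathcal{O}_{kx})$, guaranteed by Theorem~\ref{Poincare is fully faithful on M1}, to rule out nontrivial deformations of the skew-invariant sections that would obstruct higher acyclicity.
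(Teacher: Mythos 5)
Your proposal follows the paper's own argument closely and in the same order: prove Part~(2) first by deforming $G_D^\vee\otimes G_{D'}\otimes\Lambda_M^\alpha$ to the tensor product $F_x^{\otimes\alpha}\otimes\bigotimes F_{y_k}$ and invoking Proposition~\ref{GD=C}; reduce Part~(1) to $M_1$ via Theorem~\ref{windows wall-crossing}; induct on $\deg D'$ with base case Lemma~\ref{bottleneck}; split into the case where some point of $D'$ has multiplicity one and the case where every multiplicity is at least $2$; and in the hard case combine Lemma~\ref{auxiliary lemma} for $H^0$ with an Euler-characteristic argument.

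The one place that needs tightening is your final paragraph. The four-term acyclic complex still forces $h^p=0$ for $p\ge 2$ in the hard case: its first term is $\Gamma$-acyclic by induction, and its last two terms have $R\Gamma$ concentrated in degree $0$, so the hypercohomology spectral sequence bounds the cohomological amplitude of the middle term regardless of whether the restriction map on $H^0$ is an isomorphism. Given that, $\chi=0$ (which you correctly obtain by deforming to $G_{\alpha x+D'}\otimes\Lambda_M^{-1}$ and citing Lemma~\ref{bottleneck}) together with $H^0=0$ from Lemma~\ref{auxiliary lemma} already yields $h^1=0$, and you are done. The additional ``fat-point analysis'' and the appeal to indecomposability of $\Phi_F(\cO_{kx})$ you sketch is not a separate step in the proof of this lemma; it is precisely the internal machinery by which Lemma~\ref{auxiliary lemma} itself is established, and since you are already invoking that lemma as a black box, re-running that argument here would be redundant.
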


\begin{proof}
We use the fact that $G_D^\vee\otimes G_{D'}\otimes\Lambda_M^\alpha$ is a deformation over $\bA^1$ of $(F_x^\vee)^{\otimes \alpha}\otimes\bigotimes_{k=1}^{\deg D'}F_{y_k}\otimes\Lambda_M^\alpha\simeq F_x^{\otimes \alpha}\otimes\bigotimes_{k=1}^{\deg D'}F_{y_k}$, where $D'=\sum y_k$. By Corollary \ref{Coro:GD=C}, we see that $R\Gamma_{M_i}(F_x^{\otimes \alpha}\otimes\bigotimes_{k=1}^{\deg D'}F_{y_k})=\mathbb{C}$, so by semi-continuity and equality of the Euler characteristic, we must have $R\Gamma_{M_i}(G_D^\vee\otimes G_{{D'}}\otimes\Lambda_M^\alpha)=\mathbb{C}$ as well. Furthermore, the global section of $G_D^\vee\otimes G_{D'}\otimes\Lambda_M^\alpha$ is a deformation of the global section of $F_x^{\otimes \alpha}\otimes\bigotimes_{k=1}^{\deg D'}F_{y_k}$ over $\bA^1$, which does not vanish outside of the union of loci $M_{0}(\Lambda(-2x))$ and $M_0(\Lambda(-2y_k))$. On the other hand, the tautological section of this bundle vanishes precisely along these loci.
\end{proof}

\begin{lemma}\label{lemma Z}
Suppose $2<d\leq 2g+1$ and $1\leq i\leq v$. Let $D$, $D'$ be effective divisors with $D=\alpha x$ and $D'=\beta x+\Tilde{D}'$, $x\notin \Tilde{D}'$. Suppose $\alpha=\deg D\leq i$ and $\deg D'<d+g-2i-1$. Then the map $R\Gamma_{M_i(d)}(\overline{G}_{\alpha x}^\vee\otimes \overline{G}_{\beta x})\to R\Gamma_{M_i(d)}(\overline{G}_{\alpha x}^\vee\otimes \overline{G}_{\beta x}\otimes\overline{G}_{\Tilde{D}'})$ given by tensoring with the universal section of $\overline{G}_{\Tilde{D}'}$ (cf. Corollary \ref{Coro:GD=C}) is an isomorphism.
\end{lemma}

\begin{proof}
We argue by induction on $\alpha$. If $\alpha=0$, this is clear, as the map $R\Gamma_{M_i(d)}(\overline{G}_{\beta x})\to R\Gamma_{M_i(d)}(\overline{G}_{\beta x}\otimes\overline{G}_{\Tilde{D}'})$ is $\mathbb{C}\xrightarrow{\sim}\mathbb{C}$ (cf. Corollary \ref{Coro:GD=C}).

For the inductive step, we argue as in the proof of Proposition \ref{bottleneck}, specifically as in Lemma \ref{lemma E}:
$
\overline{G}_{\alpha x}^\vee\simeq \Lambda_M^{-\alpha}\otimes G_{\alpha x} = \Lambda_M^{-\alpha}\otimes\tau_*^{S_{\alpha}}\left(\bigotimes_{j=1}^\alpha\pi_j^*\cF\right),
$
which is a direct summand in
\begin{equation}\label{Sa-1}
\tau_*^{S_{\alpha-1}}\left(\bigotimes_{j=1}^\alpha\pi_j^*\cF\right).
\end{equation}
Here $\cF=q^*F=q_1^*F$ from \eqref{bigger diagram tau}.
So it suffices to prove our claim for the bundle \eqref{Sa-1}. As in the proof of Lemma \ref{lemma E}, we have an exact sequence
\begin{gather}
\begin{aligned}\label{Sequence lemma Z}
0\to\Lambda_M^{-1}\otimes\bigotimes_{j=1}^{\alpha-1}\pi_j^*\cF\to\Lambda_M^{-1}\otimes\bigotimes_{j=1}^{\alpha}\pi_j^*\cF\to
\qquad\qquad\qquad\qquad\qquad\\
\qquad\qquad\qquad\qquad\qquad
\to\bigotimes_{j=1}^{\alpha-1}\pi_j^*\cF\to \left.\bigotimes_{j=1}^{\alpha-1}\pi_j^*\cF\right|_{\mathbb{B}_\alpha\times M_i(\Lambda(-2x))}\to 0,
\end{aligned}
\end{gather}
to which we apply $\tau_{*}^{S_{\alpha-1}}$, then tensor with $\Lambda_M^{1-\alpha}\otimes\overline{G}_{\beta x}$ (resp. with $\Lambda_M^{1-\alpha}\otimes\overline{G}_{\beta x}\otimes\overline{G}_{\Tilde{D}'}$) and then compute $R\Gamma$. The resulting left term is a deformation of $\alpha$ copies of $\Lambda_M^{-1}\otimes\overline{G}_{(\alpha-1)x}^\vee\otimes \overline{G}_{\beta x}$ (resp. $\Lambda_M^{-1}\otimes\overline{G}_{(\alpha-1)x}^\vee\otimes \overline{G}_{\beta x}\otimes \overline{G}_{\Tilde{D}'}$), both of which are $\Gamma$-acyclic by Theorem \ref{vanishing without Z on Mi}.

Therefore, we have two exact triangles related by a commutative diagram:
\begin{equation}\label{exact triangles}
\begin{tikzcd}[cramped,row sep=scriptsize, column sep=tiny]
R\Gamma(\Lambda_M^{-\alpha}\otimes U\otimes \pi_\alpha^*\cF\otimes\overline{G}_{\beta x})^{S_{\alpha-1}}\arrow[r]\arrow[d] 
& R\Gamma(\Lambda_M^{-\alpha}\otimes U\otimes \pi_\alpha^*\cF\otimes\overline{G}_{\beta x}\otimes\overline{G}_{\Tilde{D}'})^{S_{\alpha-1}} \arrow[d]\\
R\Gamma(\Lambda_M^{1-\alpha}\otimes U\otimes \overline{G}_{\beta x})^{S_{\alpha-1}} \arrow[d]\arrow[r] 
&R\Gamma(\Lambda_M^{1-\alpha}\otimes U\otimes \overline{G}_{\beta x}\otimes\overline{G}_{\Tilde{D}'})^{S_{\alpha-1}}\arrow[d]\\ 
R\Gamma(\left.\Lambda_M^{1-\alpha}\otimes U\otimes \overline{G}_{\beta x}\right|_{\mathbb{B}_\alpha\times M'})^{S_{\alpha-1}} \arrow[r]\arrow[d]& R\Gamma(\left.\Lambda_M^{1-\alpha}\otimes U\otimes \overline{G}_{\beta x}\otimes\overline{G}_{\Tilde{D}'}\right|_{\mathbb{B}_\alpha\times M'})^{S_{\alpha-1}}\arrow[d]\\
\phantom{.} &\phantom{.}\\
\end{tikzcd}
\end{equation}
where $U=\bigotimes_{j=1}^{\alpha-1}\pi_j^*\cF$, $M'=M_i(\Lambda(-2x))$ and the horizontal maps are multiplication by the universal section of $\overline{G}_{\Tilde{D}'}$. The middle row of \eqref{exact triangles} is a deformation of $\alpha$ copies of the map $R\Gamma_{M_i(d)}(\overline{G}_{(\alpha-1) x}^\vee\otimes \overline{G}_{\beta x})\to R\Gamma_{M_i(d)}(\overline{G}_{(\alpha-1) x}^\vee\otimes \overline{G}_{\beta x}\otimes\overline{G}_{\Tilde{D}'})$, which is an isomorphism by the induction assumption. The same is true for the third row, on the moduli space $M_i(\Lambda(-2x))$. We conclude that the first row of \eqref{exact triangles} must also be an isomorphism, which completes the proof.
\end{proof}

\begin{lemma}\label{lemma fully faithful}
Suppose $2<d\leq 2g+1$ and $1\leq i\leq v$. Let $D$, $D'$ be effective divisors with $D=\alpha x$ and $\mult_x(D')\leq \alpha-1$. Suppose $\alpha=\deg D\leq i$ and $\deg D'<d+g-2i-1$. If we assume that $\overline{\Phi}_{\alpha'}^i$ 
and $\overline{\Phi}^{i-1}_{\alpha'}$
are fully faithful for every $\alpha'<\alpha$, then $R\Gamma_{M_i(d)}(\overline{G}_D^\vee\otimes\overline{G}_{D'})=0$.
\end{lemma}

\begin{proof}
By Lemma \ref{lemma Z}, it suffices to consider the case $D'=\beta x$, where $\beta<\alpha$. Moreover, arguing as in Lemma \ref{lemma Z}, we can assume that $\alpha=\beta+1$, so it suffices to show that $R\Gamma_{M_i(d)}(\overline{G}_{\alpha x}^\vee,\overline{G}_{(\alpha-1)x})=0$ under the assumptions $\alpha\leq i$, $\alpha<d+g-2i$. As in Lemma \ref{lemma Z}, we consider the exact sequence \eqref{Sequence lemma Z}, twist it by $\Lambda_M^{1-\alpha}\otimes\overline{G}_{(\alpha-1) x}$ and take $S_{\alpha-1}$-invariant global sections. The resulting term on the left vanishes by semi-continuity and Theorem \ref{vanishing without Z on Mi}. It suffices to show that the second term vanishes, because it contains $R\Gamma_{M_i(d)}(\overline{G}_{\alpha x}^\vee\otimes\overline{G}_{(\alpha-1)x})$ as a direct summand. But the last two terms are deformations over $\mathbb{A}^1$ of $\alpha$ copies of the map $R\Hom_{M_i(d)}(\overline{G}_{(\alpha-1)x},\overline{G}_{(\alpha-1)x})\to R\Hom_{M_{i-1}(d-2)}(\overline{G}_{(\alpha-1)x},\overline{G}_{(\alpha-1)x})$, which is an isomorphism by our assumption that $\overline{\Phi}^i_{\alpha-1}$ 
and $\overline{\Phi}^{i-1}_{\alpha-1}$
are fully faithful. This completes the proof.
\end{proof}

\begin{theorem}\label{proof of fully faithfulness}
Suppose $2<d\leq 2g+1$ and $1\leq i\leq v$. Let $D$, $D'$ be effective divisors on~$C$, with $D\not\leq D'$ and satisfying $\deg D\leq i$ and $\deg D'< d+ g-2i-1$. If we assume that $\overline{\Phi}_{\alpha'}^i$ is fully faithful for every $\alpha'<\alpha$, then $R\Gamma_{M_i(d)}(\overline{G}_D^\vee\otimes\overline{G}_{D'})=0$.
\end{theorem}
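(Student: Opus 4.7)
The plan is to proceed by a double induction on the pair $(i,\alpha)$ with $\alpha=\deg D$, ordered lexicographically. At each stage we separate two subcases according to whether $D$ is supported at a single point.

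\emph{Fat-point subcase: $D=\alpha x$.} Since $D\not\leq D'$ forces $\mult_x(D')\leq \alpha-1$, the statement is an immediate application of Lemma~\ref{lemma fully faithful} with $t=0$: the required inequalities $\alpha-i-1<0<d+g-2i-1-\deg D'$ are exactly the standing hypotheses $\alpha\leq i$ and $\deg D'<d+g-2i-1$. This handles the base case $\alpha=1$ entirely (and in particular the entire case $i=1$, in which the only possibility is $\alpha=1$) as well as every later fat-point configuration.

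\emph{General subcase: $|\supp(D)|\geq 2$.} Necessarily $\alpha\geq 2$ and $i\geq 2$. Since $D\not\leq D'$, fix a witness $w\in\supp(D)$ with $\mult_w(D)>\mult_w(D')$ and pick any other point $z\in\supp(D)\setminus\{w\}$; set $D_1=D-z$, so $\deg D_1=\alpha-1$ and, since $z\neq w$, the witness $w$ still verifies $D_1\not\leq D'$. By Proposition~\ref{proposition stable deformation} applied to the dual-tensor construction, $G_D^\vee$ is a stable deformation of $F_z^\vee\otimes G_{D_1}^\vee$ over $\mathbb{A}^1$; by Remark~\ref{semi-continuityremark} it therefore suffices to prove
\[
R\Gamma_{M_i(d)}\bigl(F_z^\vee\otimes G_{D_1}^\vee\otimes G_{D'}\bigr)=0.
\]
Now tensor the four-term exact complex for $F_z^\vee$ from Lemma~\ref{BasicKoszul} with the locally free sheaf $G_{D_1}^\vee\otimes G_{D'}$. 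The hypercohomology spectral sequence of this acyclic complex reduces the desired vanishing to the $\Gamma$-acyclicity of its other three terms: the leftmost term $G_{D_1}^\vee\otimes G_{D'}\otimes\Lambda_M^{-1}$ is killed by Theorem~\ref{vanishing without Z on Mi}\,(a) with $t=-1$; the term $G_{D_1}^\vee\otimes G_{D'}$ on $M_i(d)$ is killed by the inductive hypothesis at $(i,\alpha-1)$; and the restriction $G_{D_1}^\vee\otimes G_{D'}|_{M_{i-1}(d-2)}$ is killed by the inductive hypothesis at $(i-1,\alpha-1)$ on $M_{i-1}(d-2)$.

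The main obstacle is the bookkeeping. First, one must ensure that $D_1\not\leq D'$ persists under the reduction, which is exactly why $z$ must be chosen away from the witness $w$; this is always possible unless $|\supp(D)|=1$, and precisely that scenario is already absorbed by Lemma~\ref{lemma fully faithful}, so the induction is well-founded. Second, one must verify that the hypotheses of Theorem~\ref{vanishing without Z on Mi} and of the theorem being proved transfer cleanly to the smaller data: the inequalities $\alpha-1\leq i$, $\alpha-1\leq i-1$, $\deg D'<(d-2)+g-2(i-1)-1$, and $d-2>2$ (valid because $i\geq 2$ forces $d\geq 2i+1\geq 5$) are all routine once $D_1\not\leq D'$ is secured.
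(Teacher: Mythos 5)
Your proof is correct and follows essentially the same route as the paper's: the base/fat-point case via Lemma~\ref{lemma fully faithful} with $t=0$, and otherwise removing a point away from the multiplicity "witness," tensoring the four-term Koszul complex of Lemma~\ref{BasicKoszul} with $G_{D_1}^\vee\otimes G_{D'}$, and applying Theorem~\ref{vanishing without Z on Mi} (with $t=-1$) together with the induction on the degree of $D$. The paper phrases the induction purely on $\deg D$ (treating $i$ and $d$ as parameters for which the statement is re-invoked on $M_{i-1}(d-2)$), while you make the $(i,\alpha)$ double induction and the degree/chamber bookkeeping explicit, but the substance is identical.
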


\begin{proof}
We do induction on $\deg D$. If $\deg D=1$, then we have $D=x$ and $\mult_x(D')=0$, so the result follows from Lemma \ref{lemma fully faithful} with $\alpha=1$.

Let $\deg D>1$, and so $i>1$ as well. Since $D\not\leq D'$, there is a point $x\in D$ with $\mult_x(D)=\alpha$, $\mult_x(D')\leq \alpha-1$. If $\supp (D)=\{x\}$, then $D=\alpha x$ is a fat point and the result follows  from Lemma \ref{lemma fully faithful}. Otherwise, we can find a point $y\neq x$ such that $\Tilde{D}=D-y$ is effective. From \eqref{reduction F dual}, we get an exact sequence
$$
0\to \overline{G}_{\Tilde{D}}^\vee\otimes \overline{G}_{D'}\otimes\Lambda_M^{-1}\to F_y^\vee\otimes \overline{G}_{\Tilde{D}}^\vee\otimes \overline{G}_{D'}\to \qquad\qquad\qquad\qquad\qquad\qquad
$$
$$
\qquad\qquad\qquad\qquad\qquad\qquad
\to \overline{G}_{\Tilde{D}}^\vee\otimes \overline{G}_{D'}\to \left.\overline{G}_{\tilde{D}}^\vee\otimes \overline{G}_{D'}\right|_{M_{i-1}(d-2)}\to 0.
$$
By induction, $R\Gamma_{M_i(d)}(\overline{G}_{\Tilde{D}}^\vee\otimes \overline{G}_{D'})=R\Gamma_{M_{i-1}(d-2)}(\overline{G}_{\Tilde{D}}^\vee\otimes \overline{G}_{D'})=0$. On the other hand, the term $\overline{G}_{\Tilde{D}}^\vee\otimes \overline{G}_{D'}\otimes\Lambda_M^{-1}$ satisfies the inequalities \eqref{ineq vanishing without Z} with $t=-1\notin [0, \ \deg \Tilde{D}]$, so by Theorem \ref{vanishing without Z on Mi} it is $\Gamma$-acyclic. As usual, the result follows from the hypercohomology spectral sequence and semi-continuity.
\end{proof}

Now we can prove the main result of this section.

\begin{proof}[Proof of Theorem \ref{fully faithfulness}]
By Bondal-Orlov's criterion \cite{bondal-orlov}, we only need to consider the images of skyscraper sheaves, $\Phi_\alpha^i(\mathcal{O}_{\{D\}})=G_D$ and $\overline{\Phi}_\alpha^i(\mathcal{O}_{\{D\}})=\overline{G}_D$. Namely, we need to show that for two divisors $D,\ D'\in \Sym^\alpha C$ we have
\begin{align}\label{cohomologies fully faithfulness}
    R^k\Gamma_{M_{i}(\Lambda)}(\overline{G}_{D}^\vee\otimes \overline{G}_{D'})=
    \begin{cases}
    0 \quad & \text{if $D\neq D'$ or $k<0$ or $k>\alpha$}\\
    \mathbb{C} &\text{if $k=0$ and $D=D'$}
    \end{cases}
\end{align}
and similarly for $R\Gamma_{M_i(\Lambda)}({G}_{D}^\vee\otimes{G}_{D'})$. Observe that since $R\Gamma_{M_i(\Lambda)}(\overline{G}_D^\vee\otimes\overline{G}_{D'})=R\Gamma_{M_i(\Lambda)}({G}_{D'}^\vee\otimes{G}_{D})$, full faithfulness of $\Phi_\alpha^i$ is equivalent to that of $\overline{\Phi}_\alpha^i$, and it suffices to prove \eqref{cohomologies fully faithfulness}. We prove it by induction on $\alpha$, the case $\alpha=0$ being trivial, and $\alpha=1$ is Theorem \ref{Poincare is fully faithful on M1}. So we assume \eqref{cohomologies fully faithfulness} holds for $\alpha'<\alpha$. 
If $D=D'$ then \eqref{cohomologies fully faithfulness} follows directly from Corollary \ref{coro:vanishing beyond alpha} and Corollary \ref{Hom(D,D)=1}. Now let $D\neq D'$ be different divisors of degree $\alpha\leq i$. Notice $i\leq (d-1)/2\leq g-1$, so the inequality $\alpha \leq d+g-2i-2$ holds. Therefore, in this case \eqref{cohomologies fully faithfulness} follows from Theorem \ref{proof of fully faithfulness} by our induction hypothesis. We conclude that $\Phi_\alpha^i$ and $\overline{\Phi}_\alpha^i$ are fully faithful functors.
\end{proof}

\section{Proof of the semi-orthogonal decomposition}\label{ProofOfTheSOD}

Throughout this section we fix $d=\deg\Lambda=2g-1$, so that $v=(d-1)/2=g-1$. We are interested in the moduli spaces $M_i=M_{i}(\Lambda)$, where $i$ will always be assumed to satisfy $1\leq i\leq g-1$. Note that when $d=2g-1$, the canonical bundle is $\omega_{M_i}=\mathcal{O}_i(-3,3-3g)=\Lambda_M^{-1}\otimes\zeta^{-1}\otimes\theta^{-1}$ (see \cite[6.1]{thaddeus} and Definition \ref{notation important line bundles}).

By abuse of notation, we will denote the essential image $\Phi_\alpha^i (\Sym^\alpha C)$ simply by $\Phi_\alpha^i$, and the image $\overline{\Phi}_\alpha^i (\Sym^\alpha C)$ by $\overline{\Phi}_\alpha^i$, which by Theorem \ref{fully faithfulness} are admissible subcategories of $D^b(M_i)$ equivalent to $D^b(\Sym^\alpha C)$. Similarly, we will denote by $\Phi_0^i$ the full triangulated subcategory generated by $\mathcal{O}_{M_i}$, which is an admissible subcategory equivalent to $D^b(\pt)$, since $M_i$ is a rational variety. It can be described as the image of the (derived) pullback functor from a point, $\Phi_0^i=q^*$, $q:M_i\to\pt=\Sym^0C$.

\begin{definition}\label{definition ABCD}
We define the following full triangulated subcategories of $D^b(M_i)$:
\begin{align*}
    \mathcal{A}_{2k}&:=\Phi_{2k}^i\otimes\Lambda_M^{-k}\otimes\theta^{-1},\quad &0\leq 2k\leq i\phantom{.}\\
    \mathcal{B}_{2k}&:=\Phi_{2k}^i\otimes\Lambda_M^{-k},&0\leq 2k\leq i\phantom.{}\\
    \mathcal{C}_{2k+1}&:=\overline{\Phi}_{2k+1}^i\otimes\Lambda_M^{-k}\otimes\zeta\otimes\theta^{-1},&0\leq 2k+1\leq i\phantom{.}\\
    \mathcal{D}_{2k+1}&:=\overline{\Phi}_{2k+1}^i\otimes\Lambda_M^{-k}\otimes\zeta,&0\leq 2k+1\leq i.\\
\end{align*}
\end{definition}

Each of these subcategories is equivalent to some $D^b(\Sym^\alpha C)$ with either $\alpha=2k$ or $\alpha=2k+1$. These four families of subcategories constitute the building blocks of our semi-orthogonal decomposition on $D^b(M_i)$. We will see that different subcategories of the form $\mathcal{A}_{2k}$ are semi-orthogonal to each other, and the same is true for subcategories within the other three blocks. We need the following lemma.

\begin{lemma}\label{lemma spanning class}
Let
$\mathcal{D}_1$, $\mathcal{D}_2$ be admissible subcategories of
a  triangulated category~$\cD$ and  $\Omega_1$, $\Omega_2$ spanning classes  \cite[\S 3.2]{huybrechts} of $\mathcal{D}_1$, $\mathcal{D}_2$. 

If $\Hom_\mathcal{D}(A,B[k])=0$ for every $A\in\Omega_1$, $B\in\Omega_2$ and $k\in\mathbb{Z}$, then  $\Hom_\mathcal{D}(F,G)=0$ for every $F\in\mathcal{D}_1$, $G\in\mathcal{D}_2$.
\end{lemma}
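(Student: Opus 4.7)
The plan is a standard two-step adjunction argument, which is the reason admissibility of both subcategories is assumed. First I would invoke admissibility to obtain the adjoints: the inclusion $i_2:\mathcal{D}_2\hookrightarrow\mathcal{D}$ has a left adjoint $i_2^*:\mathcal{D}\to\mathcal{D}_2$, and the inclusion $i_1:\mathcal{D}_1\hookrightarrow\mathcal{D}$ has a right adjoint $i_1^!:\mathcal{D}\to\mathcal{D}_1$. The hypothesis will be propagated along these two adjunctions in succession.

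For the first propagation, fix $A\in\Omega_1$ and consider $i_2^*A\in\mathcal{D}_2$. For any $B\in\Omega_2$ and $k\in\mathbb{Z}$, adjunction gives
$$\Hom_{\mathcal{D}_2}(i_2^*A,B[k])=\Hom_{\mathcal{D}}(A,B[k])=0$$
by the assumption of the lemma. Since $\Omega_2$ is a spanning class of $\mathcal{D}_2$ (applying the half of the definition dual to condition (i); cf.\ \cite[3.2]{huybrechts}), this forces $i_2^*A=0$. Undoing the adjunction then shows
$$\Hom_{\mathcal{D}}(A,G[k])=\Hom_{\mathcal{D}_2}(i_2^*A,G[k])=0$$
for every $A\in\Omega_1$, every $G\in\mathcal{D}_2$, and every $k\in\mathbb{Z}$. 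In effect, $G$ has been promoted from lying in the spanning class $\Omega_2$ to lying in all of $\mathcal{D}_2$.

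For the second propagation, fix $G\in\mathcal{D}_2$ and consider $i_1^!G\in\mathcal{D}_1$. For any $A\in\Omega_1$ and $k\in\mathbb{Z}$, the adjunction gives
$$\Hom_{\mathcal{D}_1}(A,i_1^!G[k])=\Hom_{\mathcal{D}}(A,G[k])=0$$
by the previous step. Since $\Omega_1$ is a spanning class of $\mathcal{D}_1$, this forces $i_1^!G=0$, and so for every $F\in\mathcal{D}_1$,
$$\Hom_{\mathcal{D}}(F,G)=\Hom_{\mathcal{D}_1}(F,i_1^!G)=0,$$
which is the desired conclusion. There is no real obstacle here; the only point to verify carefully is that one uses the correct half of the spanning-class definition in each of the two steps (the version with $\Hom(-,-)=0$ on the first variable versus on the second), which is exactly matched by the choice of a left adjoint on the $\mathcal{D}_2$ side and a right adjoint on the $\mathcal{D}_1$ side.
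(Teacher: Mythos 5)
Your proof is correct and is essentially the paper's argument in a slightly different dress: where you use the adjoint (projection) functors $i_2^*$ and $i_1^!$ to say that $A$ projects to zero in $\mathcal{D}_2$ and then that $G$ projects to zero in $\mathcal{D}_1$, the paper writes out the corresponding decomposition triangles $D\to A\to D'$ and $D\to G\to D'$ and reads off the same vanishing from the long exact sequences. These are two standard phrasings of the same two-step argument, and your remark about matching the correct half of the spanning-class definition to the correct adjoint is exactly the point to watch.
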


\begin{proof}
We need to show that $\mathcal{D}_1\subset {}^{\perp}\mathcal{D}_2$ or, equivalently, $\mathcal{D}_2\subset\mathcal{D}_1^\perp$. 

First we see that $\Omega_1\subset {}^{\perp}\mathcal{D}_2$. Let $A\in\Omega_1$. Since $\mathcal{D}=\langle\mathcal{D}_2,{}^{\perp}\mathcal{D}_2\rangle$, we can fit $A$ in a exact triangle $D\to A\to D'\to D[1]$ where $D\in{}^{\perp}\mathcal{D}_2$ and $D'\in\mathcal{D}_2$. Applying $\Hom(\cdot,B)$ for $B\in\Omega_2$ we get a long exact sequence where $\Hom (D,B[k])=0$ by definition and $\Hom (A,B[k])=0$ by hypothesis. Therefore $\Hom (D',B[k])=0$ for every $k$ and every $B\in\Omega_2$, so $D'\simeq 0$ since $\Omega_2$ is a spanning class of $\mathcal{D}_2$. As a consequence, $A\simeq D\in {}^{\perp}\mathcal{D}_2$.

Now let $G\in\mathcal{D}_2$. Similarly, there is an exact triangle $D\to G\to D'\to D[1]$ with $D\in\mathcal{D}_1$, $D'\in\mathcal{D}_1^\perp$. Applying $\Hom (A,\cdot)$ with $A\in \Omega_1$ we now see that $\Hom (A,D[k])=\Hom (A,G[k])=0$ by the previous discussion and therefore $D'\simeq 0$. This implies $G\simeq D\in\mathcal{D}_1^\perp$, as desired.
\end{proof}

\begin{proposition}\label{orhtogonal within blocks}
Let $k> l$ and $0\leq 2l< 2k \leq i$. Then
\begin{align*}
    \Hom_{D^b(M_i)}(\mathcal{A}_{2k},\mathcal{A}_{2l})=0,\quad &\Hom_{D^b(M_i)}(\mathcal{B}_{2k},\mathcal{B}_{2l})=0.
\end{align*}
Similarly, if $k< l$ and $0\leq 2k+1<2l+1 \leq i$, we have
\begin{align*}
    \Hom_{D^b(M_i)}(\mathcal{C}_{2k+1},\mathcal{C}_{2l+1})=0,\quad &\Hom_{D^b(M_i)}(\mathcal{D}_{2k+1},\mathcal{D}_{2l+1})=0.
\end{align*}
\end{proposition}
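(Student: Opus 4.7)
The plan is to reduce each orthogonality to a $\Gamma$-acyclicity already proved in Theorem~\ref{vanishing without Z on Mi}. Since $\Phi_\alpha^i$ is an equivalence onto its essential image by Theorem~\ref{fully faithfulness}, and the skyscrapers $\{\cO_{\{D\}}\}$ span $D^b(\Sym^\alpha C)$, the objects $\{G_D = \Phi_\alpha^i(\cO_{\{D\}})\}$ form a spanning class of $\Phi_\alpha^i$, and so do their twists $\{G_D \otimes \mathcal{L}\}$ for any line bundle $\mathcal{L}$. The blocks $\cA_{2k}$, $\cB_{2k}$, $\cC_{2k+1}$, $\cD_{2k+1}$ are admissible as essential images of fully faithful Fourier--Mukai functors, so Lemma~\ref{lemma spanning class} applies: the orthogonality claim reduces to checking, for each pair $D, D'$ of divisors of the appropriate degree and every $j \in \mathbb{Z}$,
$$H^j\bigl(M_i,\ G_D^\vee \otimes G_{D'} \otimes \mathcal{L}_1^{-1} \otimes \mathcal{L}_2\bigr) = 0.$$

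The key observation I would exploit is that within each of the four families the $\zeta$ and $\theta$ twists cancel between source and target, leaving $\mathcal{L}_1^{-1} \otimes \mathcal{L}_2 = \Lambda_M^{k-l}$ in every case. Next I would invoke Theorem~\ref{vanishing without Z on Mi} with $t = k - l$: assuming WLOG $k < l$, one has $t < 0 \notin [0, \deg D]$, so case (a) of that theorem applies. The degree hypothesis $\deg D, \deg D' \leq d + g - 2i - 1$ is automatic, since $d = 2g - 1$ and $i \leq g - 1$ give $d + g - 2i - 1 = 3g - 2i - 2 \geq g$, while $\deg D, \deg D' \leq i \leq g - 1$.

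The final step is to verify the sandwich inequality $\deg D - i - 1 < k - l < d + g - 2i - 1 - \deg D'$. After substituting $\deg D, \deg D' \in \{2k, 2k+1\} \cup \{2l, 2l+1\}$, this reduces to estimates of the form $k + l \leq i - c_1$ and $k + l < 3g - 2i - c_2$ for small nonnegative constants $c_1, c_2$; each follows at once from $2k, 2l \leq i$ (respectively $2k+1, 2l+1 \leq i$), $k \neq l$, and $i \leq g - 1$. I do not anticipate a genuine obstacle: all the real work has already been done in Theorems~\ref{fully faithfulness} and~\ref{vanishing without Z on Mi}, and Proposition~\ref{orhtogonal within blocks} is their formal consequence once the numerical parameters are checked to lie in the admissible ranges.
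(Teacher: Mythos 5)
Your reduction and the numerical range-check are on the right track and match the paper's strategy, but the phrase ``assuming WLOG $k<l$'' hides a genuine gap. There is no symmetry to invoke: $\Hom_{D^b(M_i)}(\cA_{2k},\cA_{2l})$ and $\Hom_{D^b(M_i)}(\cA_{2l},\cA_{2k})$ are different vanishing statements, and after the reduction you obtain $R\Gamma(G_D^\vee\otimes G_{D'}\otimes\Lambda_M^{k-l})$ with $\deg D=2k$ and $\deg D'=2l$, which is \emph{not} symmetric under swapping $k$ and $l$. When $k<l$, indeed $t=k-l<0\notin[0,\deg D]$ and case (a) of Theorem~\ref{vanishing without Z on Mi} applies. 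But when $k>l$, you have $0<t=k-l\le 2k=\deg D$, so $t\in[0,\deg D]$ and case (a) simply does not apply. You must instead verify the hypothesis of case (b), that $\deg D\notin[t,\,t+\deg D']$; this does hold, because $\deg D=2k>k+l=t+\deg D'$ (and likewise $2k+1>k+l+1$ in the odd family), but that is a different condition which you never check. The paper's own proof handles both directions explicitly, using case (a) for $k<l$ and case (b) for $k>l$. With that case split added your argument is complete; as written it only proves the semi-orthogonality in one direction.
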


\begin{proof}
Let us first show semi-orthogonality between subcategories of the form $\mathcal{A}_{2k}$, $\mathcal{A}_{2l}$, $k> l$, as well as semi-orthogonality between those of the form $\mathcal{B}_{2k}$, $\mathcal{B}_{2l}$, $k> l$. Since skyscraper sheaves $\mathcal{O}_{\{D\}}$ of closed points $D\in \Sym^\alpha C$ are a spanning class of $D^b(\Sym^\alpha C)$ (see  \cite[Proposition 3.17]{huybrechts}), Lemma \ref{lemma spanning class} says that semi-orthogonality can be checked on closed points. That is, it suffices to show that for $D\in\Sym^{2k}C$, $D'\in\Sym^{2l}C$, with $0\leq 2l< 2k\leq i\leq g-1$, we have
$
    R\Gamma_{M_i}(G_D^\vee\otimes G_{D'}\otimes\Lambda_M^{k-l})=0.
$
But this follows from Theorem \ref{vanishing without Z on Mi} (and Remark \ref{remark hard vanishing}). Indeed, the inequalities
$$
2k-i-1<k-l<d+g-2i-1-2l
$$
are equivalent to $k+l<i+1$ and $k+l+2i<d-1+g$, which are guaranteed by the fact that $k+l<i\leq (d-1)/2<g$ in this case. Also, since $k>l$ we have $2k\notin [k-l,\ k+l]$. Notice that all divisors involved have degree $\leq g-1<d+g-2i-1$. This proves the first two semi-orthogonality statements.

Similarly, in order to prove semi-orthogonality between subcategories $\mathcal{C}_{2k+1}$, $\mathcal{C}_{2l+1}$, $k< l$, as well as between $\mathcal{D}_{2k+1}$, $\mathcal{D}_{2l+1}$, $k< l$, we need to prove that for $D\in\Sym^{2k+1}C$, $D'\in\Sym^{2l+1}C$, with $0\leq 2k+1<2l+1\leq i\leq g-1$, we must have
\begin{align*}
    R\Gamma_{M_{i}}(\overline{G}_D^\vee\otimes \overline{G}_{D'}\otimes\Lambda_M^{k-l})=0.
\end{align*}
Again, this can be proved using Theorem \ref{vanishing without Z on Mi}: the inequalities
$$
2k+1-i-1<k-l<d+g-1-2i-(2l+1)
$$
are equivalent to $k+l<i$ and $k+l+2i<d+g-2$, both of which follow from the fact that $k+l+1<i\leq (d-1)/2<g$ in this case. Similarly, 
$k<l$ implies $k-l\notin [0,\ 2k+1]$.
This proves the required vanishing.
\end{proof}

\begin{theorem}\label{the sod on Mi}
Let $d=2g-1$ and $1\leq i\leq g-1$. On $D^b(M_i)$, we have a semi-orthogonal list of admissible subcategories arranged in four blocks
\begin{align}\label{four blocks}
\mathcal{A},\mathcal{C},\mathcal{B},\mathcal{D}    
\end{align}
where
\begin{align*}
\mathcal{A}&=\langle\mathcal{A}_{2k}\rangle_{0\leq 2k\leq i}
&\mathcal{C}&=\langle\mathcal{C}_{2k+1}\rangle_{1\leq 2k+1\leq i}\\
\mathcal{B}&=\langle\mathcal{B}_{2k}\rangle_{0\leq 2k\leq \min (i,\ g-2)}
&\mathcal{D}&=\langle\mathcal{D}_{2k+1}\rangle_{1\leq 2k+1\leq \min (i,\ g-2)}
\end{align*}
as given in Definition \ref{definition ABCD}. 
Within the blocks $\mathcal{A}$ and $\mathcal{B}$, the subcategories are arranged in increasing order of $k$. Within the blocks $\mathcal{C}$ and $\mathcal{D}$, the subcategories are arranged in decreasing order of $k$.
\end{theorem}

\begin{proof}
All of these are admissible subcategories of $D^b(M_i)$ by Theorem \ref{fully faithfulness}, and we have already shown in Proposition \ref{orhtogonal within blocks} that, within each of the four blocks in \eqref{four blocks}, the corresponding subcategories are semi-orthogonal in the given order. It remains to prove semi-orthogonality between different blocks.

\begin{step}
Between $\mathcal{A}$ and $\mathcal{C}$: we show that $\Hom_{D^b(M_i)}(\mathcal{C}_{2k+1},\mathcal{A}_{2l})=0$. By Lemma \ref{lemma spanning class}, this amounts to showing that 
$$
R\Gamma_{M_i} (\overline{G}_D^\vee\otimes G_{D'}\otimes\Lambda_M^{k-l}\otimes\zeta^{-1})=0
$$
for $D\in\Sym^{2k+1}C$, $D'\in\Sym^{2l}C$, with $0\leq 2k+1,2l\leq i\leq (d-1)/2=g-1$. We can apply Theorem \ref{vanishing with Z on Mi} (and Remark \ref{remark easy vanishing}) since the inequalities
$$
2k+1-g<k-l<d-2l-i-1
$$
are equivalent to $k+l<g-1$ and $k+l+i<d-1$, which hold in this case as $k+l<i\leq (d-1)/2 = g-1$. This gives the corresponding semi-orthogonality.
\end{step}

\begin{step}
Between $\mathcal{A}$ and $\mathcal{B}$: let us show $\Hom_{D^b(M_i)}(\mathcal{B}_{2k},\mathcal{A}_{2l})=0$. Again by Lemma \ref{lemma spanning class}, we need to show
$
R\Gamma_{M_i} (G_D^\vee\otimes G_{D'}\otimes\Lambda_M^{k-l}\otimes\theta^{-1})=0
$
when $D\in\Sym^{2k}C$, $D'\in\Sym^{2l}C$, $0\leq 2k,2l\leq i\leq(d-1)/2=g-1$ and $2k\leq g-2$. By Serre duality, given that $\omega_{M_i}=\Lambda_M^{-1}\otimes\zeta^{-1}\otimes\theta^{-1}$, this is equivalent to showing that
$
G_{D'}^\vee\otimes G_{D}\otimes\Lambda_M^{l-k-1}\otimes\zeta
^{-1}
$
is $\Gamma$-acyclic on $M_i$ under the conditions above. This is given by Theorem \ref{vanishing with Z on Mi}, because
$$
2l-g<l-k-1<d-2k-i-1
$$
is equivalent to $l+k<g-1$ and $l+k+i<d$, and these inequalities hold since $l+k+i\leq 2i \leq d-1$ and $2l+2k\leq g-1+g-2$ in this case.
\end{step}

\begin{step}
Between $\mathcal{A}$ and $\mathcal{D}$.
For $\Hom_{D^b(M_i)}(\mathcal{D}_{2k+1},\mathcal{A}_{2l})$, we need to show that
$
R\Gamma_{M_i} (\overline{G}_D^\vee\otimes G_{D'}\otimes \Lambda_M^{k-l}\otimes\zeta^{-1}\otimes\theta^{-1})=0
$
whenever $D\in\Sym^{2k+1}C$, $D'\in\Sym^{2l}C$, $0\leq 2l,2k+1\leq i\leq (d-1)/2=g-1$. Again by Serre duality, this is equivalent to $\Gamma$-acyclicity of 
$
G_{D'}^\vee\otimes \overline{G}_{D}\otimes\Lambda_M^{l-k-1}.
$

If $l\leq k$, we check that this is given by Theorem \ref{vanishing without Z on Mi}. Indeed, the corresponding inequalities
$$
2l-i-1<l-k-1<d+g-2i-1-(2k+1)
$$
are equivalent to $k+l<i$ and $l+k+2i<d+g-1$. The former follows from $2l,2k+1\leq i$ and the latter follows from $l+k <i<g$ and $2i\leq d-1$.
Also, the fact that $k\geq l$ implies $l-k-1\notin [0,\ 2l]$.

On the other hand, if $l>k$, we rewrite $
G_{D'}^\vee\otimes \overline{G}_{D}\otimes\Lambda_M^{l-k-1}\simeq G_D^\vee\otimes \overline{G}_{D'}\otimes\Lambda_M^{k-l}
$
using Corollary \ref{relation G bar and G}. Again, we can use Theorem \ref{vanishing without Z on Mi}. Indeed, we see that the inequalities
$$
(2k+1)-i-1<k-l<d+g-2i-1-2l
$$
are equivalent to the ones above and hence are satisfied, while now $l>k$ guarantees $k-l\notin [0,2k+1]$.
Thus, Theorem \ref{vanishing without Z on Mi} gives the required $\Gamma$-acyclicity.
\end{step}

\begin{step}
Next we show semi-orthogonality between $\mathcal{C}$ and $\mathcal{B}$. This amounts to $\Gamma$-acyclicity of
$
G_D^\vee\otimes \overline{G}_{D'}\otimes\Lambda_M^{k-l}\otimes\zeta\otimes\theta^{-1}=G_D^\vee\otimes \overline{G}_{D'}\otimes\Lambda_M^{k-l-1}\otimes\zeta^{-1}
$
(cf. Definition \ref{notation important line bundles}) for $D\in \Sym^{2k}C$, $D'\in\Sym^{2l+1}C$, where $0\leq 2k,2l+1\leq i\leq (d-1)/2=g-1$. We check that Theorem \ref{vanishing with Z on Mi} can be applied in this case:
$$
2k-g<k-l-1<d-(2l+1)-i-1
$$
is equivalent to $k+l<g-1$ and $k+l+i<d-1$, both of which hold in our case. This proves $\Hom_{D^b(M_i)}(\mathcal{B}_{2k},\mathcal{C}_{2l+1})=0$.
\end{step}

\begin{step}
To show that $\Hom_{D^b(M_i)}(\mathcal{D}_{2k+1},\mathcal{C}_{2l+1})=0$, we need to check that
$
\overline{G}_D^\vee\otimes \overline{G}_{D'}\otimes\Lambda_M^{k-l}\otimes\theta^{-1}
$
is $\Gamma$-acyclic on $M_i$, where $D\in\Sym^{2k+1}C$, $D'\in\Sym^{2l+1}C$, $1\leq 2k+1,2l+1 \leq i\leq (d-1)/2=g-1$ and $2k+1\leq g-2$. By Serre duality, this is equivalent to $\Gamma$-acyclicity of
$
\overline{G}_{D'}^\vee\otimes \overline{G}_D\otimes\Lambda_M^{l-k-1}\otimes\zeta^{-1}
$
and this follows from Theorem \ref{vanishing with Z on Mi} since
$$
2l+1-g<l-k-1<d-(2k+1)-i-1
$$
is equivalent to $l+k+1<g-1$ and $l+k+i<d-1$, both of which hold given the conditions above.
\end{step}

\begin{step}
Finally, we show semi-orthogonality between blocks from $\mathcal{B}$ and $\mathcal{D}$. We need to show that if $D\in\Sym^{2k+1}C$, $D'\in\Sym^{2l}C$, $0\leq 2k+1,2l\leq i\leq (d-1)/2=g-1$, we have
$
R\Gamma_{M_i}(\overline{G}_D^\vee\otimes G_{D'}\otimes\Lambda_M^{k-l}\otimes\zeta^{-1})=0.
$ 
We can use Theorem \ref{vanishing with Z on Mi} since
$$
2k+1-g<k-l<d-2l-i-1
$$
is equivalent to the inequalities $k+l<g-1$ and $k+l+i<d-1$, again both of which hold in our situation. We conclude $\Hom_{D^b(M_i)}(\mathcal{D}_{2k+1},\mathcal{B}_{2l})=0$. 
\end{step}
This completes the proof of the theorem.
\end{proof}

\begin{remark}
On $D^b(M_{g-1})$, this defines a semi-orthogonal list of admissible subcategories
$
\mathcal{A}_0,\mathcal{A}_2,\ldots,\ldots\mathcal{C}_3,\mathcal{C}_1,
\mathcal{B}_0,\mathcal{B}_2,\ldots,\ldots\mathcal{D}_3,\mathcal{D}_1
$
where we have two copies of $D^b(\Sym^\alpha C)$ for $0\leq\alpha\leq g-2$ and one copy of $D^b(\Sym^{g-1}C)$. We have chosen $D^b(\Sym^{g-1}C)$ to appear in the block $\mathcal{A}$ when $g-1$ is even and in $\mathcal{C}$ when $g-1$ is odd, but in fact any other choice of even and odd blocks would be valid too. Indeed, a similar computation in the proof of Theorem \ref{the sod on Mi} still gives the required semi-orthogonalities.
\end{remark}

Now let $i=g-1$, and call $\xi:M_{g-1}\to N$ the last map in (\ref{diagram Mi}), where $N=M_C(2,\Lambda)$ is the space of stable rank-two vector bundles  of odd degree. The Picard group of $N$ is generated by an ample line bundle $\theta_N$, such that $\xi^*\theta_N=\theta$ (see \cite[5.8, 5.9]{thaddeus} and \cite[Proposition 2.1]{narasimhan1}). Then we have the following corollary.

\begin{corollary}\label{rearrangement}
Let $\mathcal{E}$ be the Poincar\'e bundle of the moduli space $N=M_C(2,\Lambda)$ over a curve of genus $\geq 3$, normalized so that $\det\pi_!\mathcal{E}=\mathcal{O}_N$ and $\det\mathcal{E}_x=\theta_N$, and where $\Lambda$ is a line bundle on $C$ of arbitrary odd degree.
For $i=0,\ldots, g-1$, let $\cG_i\subset D^b(N)$ (resp. $\overline{\cG}_i$) be the essential image of the Fourier--Mukai functor with kernel $\cE^{\boxtimes i}$ (resp. $\overline{\cE}^{\boxtimes i}$).
Then 
\begin{equation}\label{SODonNrearr}
\begin{matrix}
&\theta_N^*\otimes\cG_0,
&(\theta_N^*)^2\otimes\cG_2,
&(\theta_N^*)^3\otimes\cG_4,
&(\theta_N^*)^4\otimes\cG_6,
&\ldots&&
\\
&\ldots,
&(\theta_N^*)^4\otimes\overline{\cG}_7,
&(\theta_N^*)^3\otimes\overline{\cG}_5,
&(\theta_N^*)^2\otimes\overline{\cG}_3,
&\theta_N^*\otimes\overline{\cG}_1&&
\\
&\cG_0,
&\theta_N^*\otimes\cG_2,
&(\theta_N^*)^2\otimes\cG_4,
&(\theta_N^*)^{3}\otimes\cG_6,
&\ldots&&
\\
&\ldots,
&(\theta_N^*)^3\otimes\overline{\cG}_7,
&(\theta_N^*)^2\otimes\overline{\cG}_5,
&\theta_N^*\otimes\overline{\cG}_3,
&\overline{\cG}_1&&
\\
\end{matrix}
\end{equation}
is a semi-orthogonal sequence of admissible subcategories of $D^b(N)$.
There are two blocks isomorphic to
$D^b(\Sym^i C)$ for each $i=0,\ldots,g-2$ and one block isomorphic to $D^b(\Sym^{g-1}C)$.
\end{corollary}

\begin{proof}
If $\Lambda$, $\Lambda'$ are two line bundles of odd degree, it is easy to see that $M_C(2,\Lambda)\simeq M_C(2,\Lambda')$, so we can assume $d=\deg\Lambda=2g-1$, as before.
Observe that $\xi^*$ is fully faithful. Indeed, $\xi$ is a projective birational morphism of nonsingular varieties, so we have $R\xi_*(\mathcal{O}_{M_{g-1}})=\mathcal{O}_N$ by \cite[5.12]{thaddeus} and \cite[(2), pp.144-145]{hironaka}. Then by adjointness $$\Hom_{D^b(M_{g-1})}(\xi^*A,\xi^*B)=\Hom_{D^b(N)}(A,R\xi_*\xi^*B)=\Hom_{D^b(N)}(A,B).$$ The pullback $\xi^*(\mathcal{E})$ is a vector bundle on $C\times M_{g-1}$ whose restriction to each $C\times\{(E,\phi)\}\subset C\times M_{g-1}$ is exactly $\cE$. Thus, it has to coincide with the universal bundle $F$ up to twist by a line bundle on $M_{g-1}$, so that $\xi^*\mathcal{E}=F\otimes L$. Then $\xi^*\det \mathcal{E}_x=\Lambda_M\otimes L^2$, which by the normalization chosen must be $\xi^*\theta_N=\theta$, so $L=\zeta$. Thus $\xi^*(\mathcal{E})=F\otimes \zeta$ and the result follows from Theorem \ref{the sod on Mi}, together with the fact that $\zeta^{2k} \otimes \theta^{-k} \simeq \Lambda_M^{-k}$ under our assumption $d = 2g - 1$.
\end{proof}

\begin{bibdiv}
\begin{biblist}
\bibselect{mylist}
\end{biblist}
\end{bibdiv}

\end{document}